\documentclass{amsart}

\usepackage{amssymb, amsmath,mathtools}
\usepackage{mathrsfs}
\usepackage{amscd}
\usepackage{verbatim}
\usepackage{stmaryrd}
\usepackage[dvipsnames]{xcolor}
\usepackage{a4wide}
\usepackage{subfig}

\captionsetup[subfloat]{listofformat=parens}
\usepackage{enumerate}

\usepackage[colorlinks,linkcolor={blue},citecolor={blue},urlcolor={purple},]{hyperref}

\usepackage[colorinlistoftodos,prependcaption,textsize=tiny]{todonotes}


\usepackage{forest}
\usepackage{tikz}
\tikzset{>=latex}

\usepackage{tabularx}
\newcolumntype{L}{>{\arraybackslash}X}
\usepackage{multirow}

\DeclareMathOperator*{\argmax}{arg\,max}

\theoremstyle{plain}
\newtheorem{theorem}{Theorem}[section]
\theoremstyle{remark}
\newtheorem{remark}[theorem]{Remark}

\theoremstyle{plain}
\newtheorem{corollary}[theorem]{Corollary}
\newtheorem{lemma}[theorem]{Lemma}
\newtheorem{proposition}[theorem]{Proposition}
\newtheorem{definition}[theorem]{Definition}

\newtheorem{assumption}[theorem]{Assumption}

\numberwithin{equation}{section}


\def\Z{{\mathbb Z}}

\def\R{{\mathbb R}}


\newcommand{\E}{{\mathbb E}}
\renewcommand{\P}{{\mathbb P}}
\newcommand{\F}{{\mathscr F}}

\newcommand{\g}{\gamma}

\newcommand{\om}{\omega}
\renewcommand{\O}{\Omega}

\renewcommand{\a}{\kappa}

\newcommand{\Mart}{\mathcal{M}}
\newcommand{\Det}{\mathcal{D}}


\newcommand{\loc}{{\rm loc}}

\newcommand{\Tor}{\mathbb{T}}
\newcommand{\Dom}{\mathcal{O}}

\newcommand{\A}{\mathcal{A}}




%



\usepackage{stmaryrd}

\newcommand{\one}{{{\bf 1}}}
\newcommand{\embed}{\hookrightarrow}
\newcommand{\s}{\delta}

\renewcommand{\div}{{\mathrm{div}}}

\newcommand{\supp}{\mathrm{supp}\,}
\renewcommand{\l}{\langle}
\renewcommand{\r}{\rangle}

\newcommand{\norm}[1]{{\left\vert\kern-0.25ex\left\vert\kern-0.25ex\left\vert #1
    \right\vert\kern-0.25ex\right\vert\kern-0.25ex\right\vert}}

\renewcommand{\emptyset}{\varnothing}


\newcommand{\m}{a}

\newcommand{\wh}{\widehat}
\newcommand{\dd}{\mathrm{d}}

\newcommand{\Borel}{\mathscr{B}}


\newcommand{\V}{\mathcal{V}}


\newcommand{\ellip}{\nu}

\newcommand{\X}{\mathcal{X}}
\newcommand{\Sp}{Z_{r,\eta}}
\newcommand{\Y}{\mathcal{Y}}
\newcommand{\T}{\mathcal{J}}
\newcommand{\fun}{\pi}
\newcommand{\KN}{\mathcal{B}_N}
\newcommand{\LN}{\mathcal{L}_N}
\newcommand{\MRD}{\mathrm{MR}_{q,p}}
\newcommand{\vd}{v_{{\rm det}}}
\newcommand{\vdi}{v_{{\rm det},i}}
\newcommand{\vcn}{v_{{\rm cut}}^{(n)}}
\newcommand{\vcns}{v_{{\rm cut}}^{(n_*)}}

\newcommand{\non}{\mathcal{N}}

\newcommand{\parameter}{a}
\newcommand{\W}{\mathcal{O}}
\newcommand{\vdiff}{V}
\newcommand{\x}{\mathsf{X}}
\newcommand{\Br}{W}

\allowdisplaybreaks

\begin{document}

\author{Antonio Agresti}
\address{Institute of Science and Technology Austria (ISTA), Am Campus 1, 3400 Klosterneuburg, Austria} \email{antonio.agresti92@gmail.com}
\curraddr{Delft Institute of Applied Mathematics, Delft University of Technology, P.O.\ Box 5031, 2600 GA Delft, The Netherlands}

\thanks{The author has received funding from the European Research Council (ERC) under the Eu\-ropean Union’s Horizon 2020 research and innovation programme (grant agreement No 948819) \includegraphics[height=0.4cm]{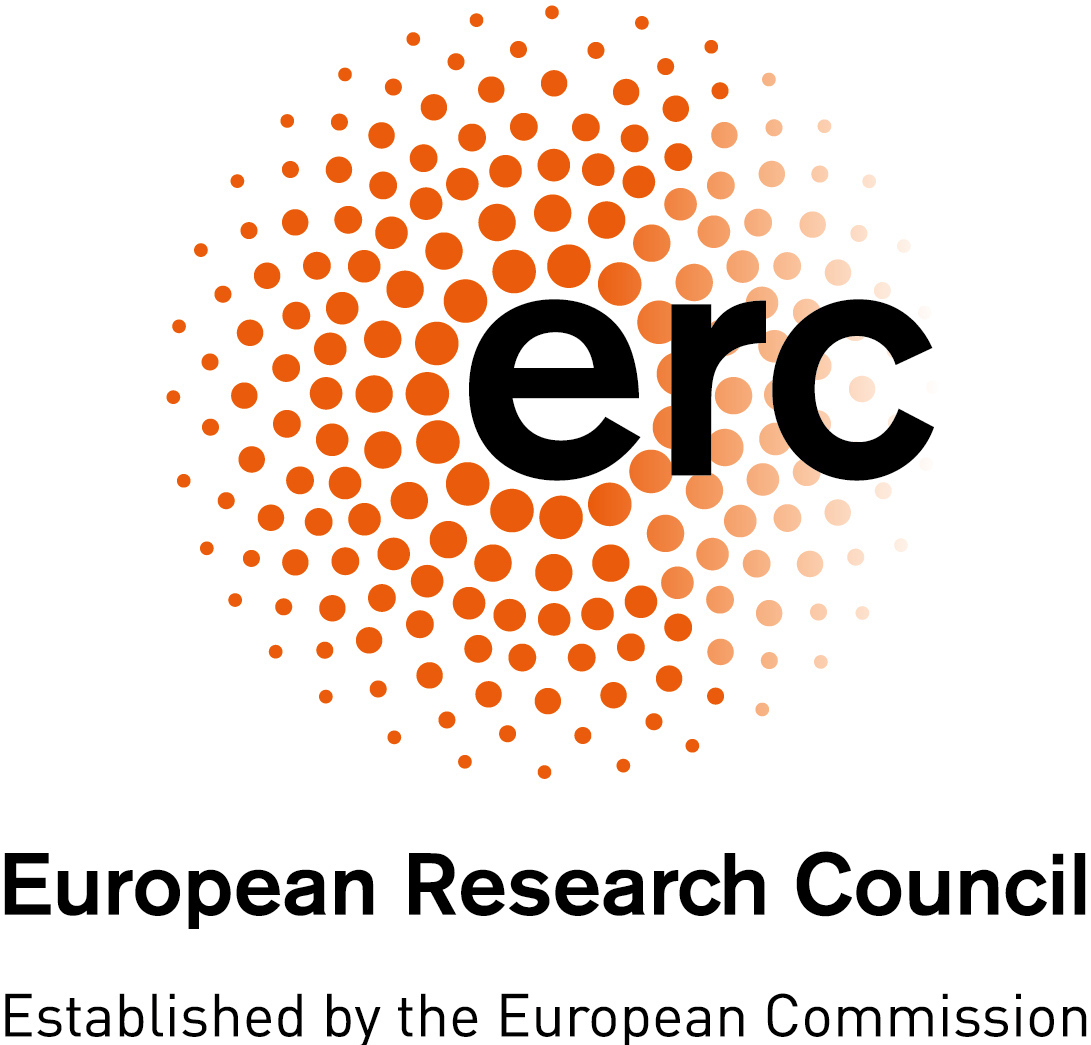}\,\includegraphics[height=0.4cm]{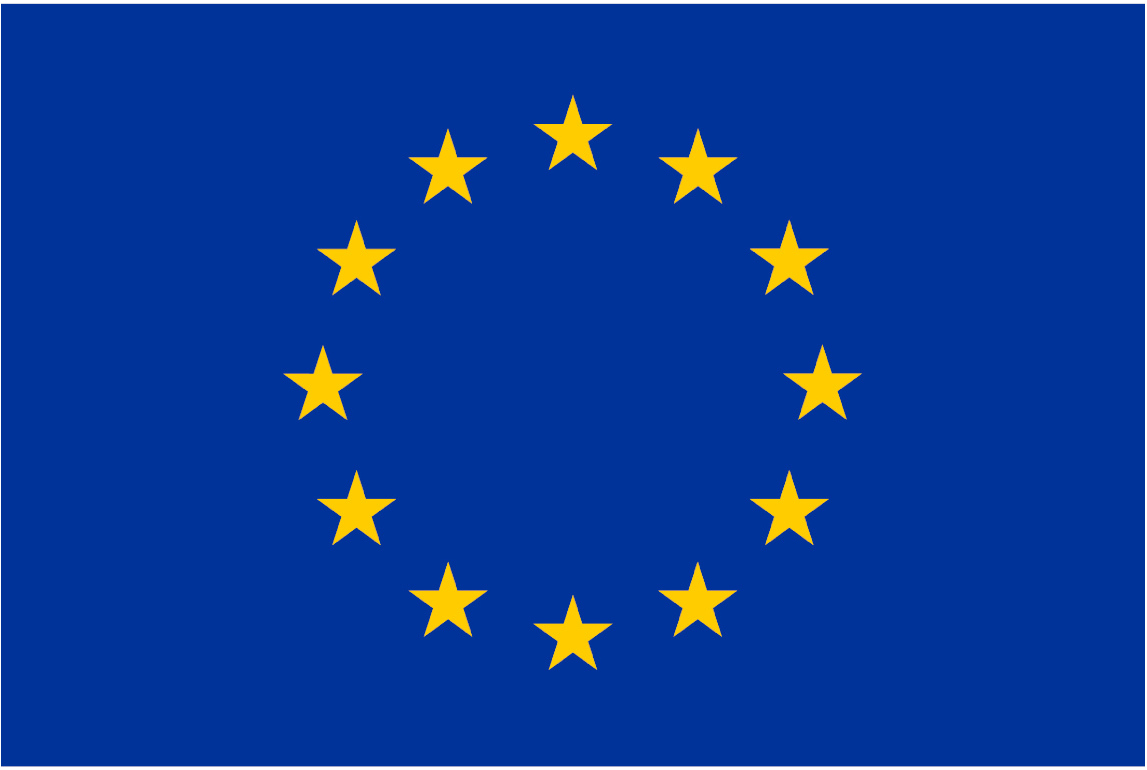}.}

\date\today

\title[Delayed blow--up for system of reaction-diffusion equations]{Delayed blow--up and enhanced diffusion by transport noise for systems of reaction--diffusion equations}

\keywords{Regularization by noise, diffusion enhancement, reaction--diffusion equations, mass control, chemical reactions,  turbulence, transport noise, homogenization, Kraichnan model, blow--up.}

\subjclass[2010]{Primary:  60H15, Secondary: 60H50, 35K57, 35B65, 35R60}

\begin{abstract}
This paper is concerned with the problem of regularization by noise of systems of reaction--diffusion equations with mass control. It is known that \emph{strong} solutions to such systems of PDEs may blow--up in finite time. Moreover, for many systems of practical interest, establishing whether the blow--up occurs or not is an open question.
Here we prove that a suitable multiplicative noise of transport type has a regularizing effect. More precisely, for both a sufficiently noise intensity and a high spectrum, 
the blow--up of strong solutions is delayed up to an arbitrary large time. Global existence is shown for the case of exponentially decreasing mass. 
The proofs combine and extend recent developments in regularization by noise and in the $L^p(L^q)$--approach to stochastic PDEs,  highlighting new connections between the two areas.
\end{abstract}

\maketitle

\tableofcontents

\section{Introduction}
\label{s:intro}

Reaction--diffusion equations arise in many branches of applied science such as biology and chemistry (see e.g.\ \cite{R84_global,P10_survey} and the references therein). 
A major challenge in the study of such equations is the presence of commonly superlinear source terms. 
Even in presence of dissipation of mass, which is sufficient to show global existence in the ODE case, blow--up in finite time of \emph{strong} solutions may occur, see \cite{PS97_blow_up} or \cite[Theorems 4.1 and 4.2]{P10_survey}. 
In addition, for many problems of practical interests, such as reversible chemical reactions (see Subsection \ref{ss:chemical_reactions} below), existence of global unique \emph{strong} solutions is still an open problem. 
However, this is only a first example, see also \cite[Section 7, Problem 1]{P10_survey} for further comments.

In this paper we show that suitable stochastic perturbations of reaction--diffusion equations improve this situation considerably. More precisely, we show delayed blow-up phenomena for reaction-diffusion equations with transport noise and periodic boundary condition:
\begin{equation}
\label{eq:reaction_diffusion_system_intro}
\left\{
\begin{aligned}
\dd v_i -\ellip_i\Delta v_i \,\dd t&=f_{i}(\cdot, v)\,\dd t
+ \sqrt{c_d\ellip} \sum_{k\in \Z^{d}_0} \sum_{1\leq \alpha\leq d-1} \theta_k (\sigma_{k,\alpha}\cdot \nabla) v_i\circ \dd w_t^{k,\alpha}, &\text{ on }&\Tor^d,\\
v_i(0)&=v_{i,0},  &\text{ on }&\Tor^d,
\end{aligned}\right.
\end{equation}
where $i\in \{1,\dots,\ell\}$ for some integer $\ell\geq 1$.
Here we denote by
$v=(v_i)_{i=1}^{\ell}:[0,\infty)\times \O\times \Tor^d\to \R^{\ell}$ the unknown process, $d\geq 2$ the dimension, 
$c_d=\frac{d}{d-1}$, $\Z_0^d =\Z^d\setminus \{0\}$ 
and $\ellip_i$ the diffusivity of $v_i$.
Finally,
$(w^{k,\alpha})_{ k,\alpha}$ 
is a sequence of complex Brownian motions on a filtered probability space and
$$
(\sigma_{k,\alpha}\cdot\nabla) v_i\stackrel{{\rm def}}{=}\sum_{1\leq j\leq d} \sigma_{k,\alpha}^j \partial_j v_i.
$$
The vector fields $\sigma_{k,\alpha}$ are smooth, divergence free and $
\theta=(\theta_k)_{k}\in \ell^2$. A precise description of the noise will be given in Subsection \ref{ss:noise_description}.
The nonlinearities $f_i$ depend on $v=(v_i)_{i=1}^{\ell}$ and are assumed to be of polynomial growth and with mass control. A prototype example is given by \eqref{eq:chemical_reactions} which appears in the study of reversible chemical reactions, see Subsection \ref{ss:chemical_reactions}. The term $\ellip_i \Delta v_i\,\dd t$ in \eqref{eq:reaction_diffusion_system_intro} can be replaced by a general second order operator. For exposition convenience, we do not pursue this here and we only provide some comments in Remark \ref{r:general_second_order_operator}.

In this work we prove that for all $T\in (0,\infty)$, there exists a choice of $(\theta,\ellip)$ such that the strong solutions to \eqref{eq:reaction_diffusion_system_intro} does \emph{not} blow up before time $T$ with high probability. 
Under additional assumptions we are also able to handle the case $T=\infty$.
Since blow--up in finite time occurs for specific instances of \eqref{eq:reaction_diffusion_system_intro} with $\theta\equiv 0$, the presence of the noise is essential. 

Transport noise is often used to study the evolution of passive scalars in turbulent flows, see e.g.\  \cite{F15_book,MK99_simplified}. Such noise is often referred as \emph{Kraichnan model} due to his pioneering works \cite{K68,K94}. Roughly speaking, transport noise can be thought of as an idealization of the effect of ``small scale'' of an underlying turbulent fluid advecting the reaction.
Heuristically, one can assume that the same type of contribution is also present in reaction--diffusion type systems reacting in a turbulent flow, see Subsection \ref{ss:physical_motivations}. 
In this scenario, as experiments with chemical reactions suggest (see e.g.\ \cite{GE63_turbulence,LW76,SH91_turbulence,MC98,KD86_turbulence_chemical_reactions,ZCB20_fluid}), turbulent flows ``effectively'' increase the diffusivity of reactants. This eventually leads to an increased efficiency of the corresponding chemical reaction. 
In practice, the chemical reaction occurs as if the reactants have an increased diffusion compared to the one measured in standard conditions. This phenomenon is usually called \emph{enhanced diffusion}. One of the aim of this paper is to provide a (possible) mathematical description of this fact by showing that, in ``relatively weak'' norms, the solution to \eqref{eq:reaction_diffusion_system_intro} is close to the solution of the corresponding deterministic problem with increased diffusivity (see Theorem \ref{t:delay_enhanced_dissipation_intro} below).
This fact can be thought of as a ``weak'' enhanced diffusion result. 

\subsection{Delayed blow--up and enhanced diffusion: Simplified version}
\label{ss:intro_delay}
To give a flavor of the results in the paper, here we state a simplified version of Theorem \ref{t:delayed_blow_up}. 
To apply it, one fixes three parameters: $T\in (0,\infty)$ the time horizon where one wants the solution to exist, $\varepsilon\in (0,1)$ the size of    
the event where the blow--up may occur and $r\in (1,\infty)$ the time integrability for the norm in which we measure the weak enhanced diffusion.

\begin{theorem}[Simplified version of Theorem \ref{t:delayed_blow_up}]
\label{t:delay_enhanced_dissipation_intro}
Let $\frac{d(h-1)}{2}\vee 2<q<\infty$. Fix $T\in (0,\infty)$, $\varepsilon\in (0,1)$ and $r\in (1,\infty)$.
Assume that $f$ is of polynomial growth with exponent $h>1$ and with mass control (see Assumption \ref{ass:f_polynomial_growth}\eqref{it:f_polynomial_growth_1}--\eqref{it:positivity} below). Let $v_0\in L^q(\Tor^d;\R^{\ell})$ be such that $v_0\geq 0$ (component-wise). Then there exist $\ellip>0$ and 
$
\theta \in \ell^2
$ such that $\#\{k\,:\, \theta_k\neq 0\}<\infty$ for which the unique strong solution $v$ to \eqref{eq:reaction_diffusion_system_intro} exists up to time $T$ with high probability:
$$
\P(\tau\geq T)>1-\varepsilon  \text{ where $\tau$ is the blow--up time of $v$}.
$$
Moreover
\begin{equation}
\label{eq:enhanced_dissipation_intro_statement}
\P\big(\tau\geq T, \, \|v-\vd\|_{L^r(0,T;L^q(\Tor^d;\R^{\ell}))}\leq \varepsilon\big)>1-\varepsilon
\end{equation}
where $\vd=(\vdi)_{i=1}^{\ell}$ is the unique strong solution to the \emph{deterministic} reaction--diffusion equation with increased diffusion on $[0,T]$:
\begin{equation}
\label{eq:enhanced_dissipation_intro_statement_2}
\partial_t\vdi -(\ellip_i +\ellip)\Delta \vdi =f_{i}(\cdot, \vd)\text{ on }\Tor^d, \qquad
\vdi(0)=v_{0,i}\text{ on }\Tor^d.
\end{equation}
\end{theorem}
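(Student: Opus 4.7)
The overall plan is to combine three ingredients: (i) local well-posedness of \eqref{eq:reaction_diffusion_system_intro} in an $L^p(L^q)$-setting, (ii) global-in-$T$ control of the deterministic limiting problem \eqref{eq:enhanced_dissipation_intro_statement_2} whenever the effective diffusion $\ellip$ is large enough, and (iii) a scaling-limit / homogenization argument à la Galeati–Flandoli–Luo for the Kraichnan noise, which makes the stochastic solution close to the deterministic one. The last step is what produces simultaneously the delayed blow--up and the enhanced diffusion bound \eqref{eq:enhanced_dissipation_intro_statement}.

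First I would convert the Stratonovich formulation to Itô. Since the $\sigma_{k,\alpha}$ are smooth and divergence-free and the constant $c_d=d/(d-1)$ is chosen as in the Kraichnan model, the Itô correction for the full sum $c_d \ellip \sum_{k,\alpha}\theta_k^2 (\sigma_{k,\alpha}\cdot\nabla)^2$ adds precisely the extra dissipation $\ellip\Delta v_i$ (up to the normalization fixing $\|\theta\|_{\ell^2}$). Using stochastic maximal $L^p(L^q)$-regularity, I would then obtain a unique maximal strong solution $(v,\tau)$ taking values in $L^q$, with a blow--up criterion saying that if $\|v\|_{L^r(0,\tau;L^q)}$ stays bounded then $\tau=+\infty$. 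Independently, for \eqref{eq:enhanced_dissipation_intro_statement_2}, I would exploit the smoothing of the heat semigroup with the enhanced diffusivity $\ellip_i+\ellip$ together with the polynomial growth exponent $h$ and the mass control (Assumption \ref{ass:f_polynomial_growth}) to show that, for $\ellip$ sufficiently large depending on $T$, $\varepsilon$ and the initial datum, $\vd$ exists on $[0,T]$ and is bounded in $L^r(0,T;L^q(\Tor^d;\R^\ell))$ by a constant $M=M(T,\varepsilon,v_0)$ independent of the stochastic parameters. Positivity of $\vd$ follows from positivity of $v_0$ and the structure of $f$.

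Next comes the stability estimate. I would introduce a stopping time $\sigma_N=\inf\{t:\|v(t)\|_{L^q}+\|\vd\|_{L^r(0,t;L^q)}\ge N\}\wedge\tau$, set $u=v-\vd$ and write the equation for $u$, whose drift contains $\ellip\Delta u$, the Itô correction cancelling the extra Laplacian, the transport martingale $\sqrt{c_d\ellip}\sum_{k,\alpha}\theta_k(\sigma_{k,\alpha}\cdot\nabla)v\,dw^{k,\alpha}$, and the nonlinear remainder $f(\cdot,v)-f(\cdot,\vd)$. On $[0,\sigma_N]$ the latter is Lipschitz with $N$-dependent constant, so a standard $L^p(\Omega;L^r(L^q))$-estimate via stochastic maximal regularity gives
\[
\E\|u\|_{L^r(0,T\wedge\sigma_N;L^q)}^p\le C(N,T)\,\E\big\|\sqrt{c_d\ellip}\,\theta(\sigma\cdot\nabla)v\big\|_{L^p(\Omega;L^r(0,T\wedge\sigma_N;\mathcal{H}))}^p,
\]
where $\mathcal{H}$ is the appropriate $\gamma$-radonifying norm. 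The hard part is to show that the right-hand side can be made arbitrarily small by tuning $(\theta,\ellip)$: this is exactly the scaling-limit phenomenon for Kraichnan noise. Concentrating $\theta$ on high frequencies $|k|\ge K$ with $K\to\infty$ while keeping $\|\theta\|_{\ell^2}=1$ and $\ellip$ fixed, the covariance of the noise converges to the identity on divergence-free vector fields (after proper normalization), and the corresponding martingale term in the equation for $u$ vanishes in $L^p(\Omega;L^r(L^q))$ by the commutator arguments developed in the recent literature on regularization by transport noise.

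Finally, choosing $N$ large (so that $\vd$ stays well below $N/2$ on $[0,T]$) and then $(\theta,\ellip)$ as above, I would combine the two bounds to get $\P(\sigma_N<T\wedge\tau)<\varepsilon/2$ and $\P(\|u\|_{L^r(0,T\wedge\sigma_N;L^q)}>\varepsilon)<\varepsilon/2$. Since $\|v\|_{L^\infty(0,\sigma_N;L^q)}<N$ prevents blow--up before $\sigma_N$, a covering argument gives $\tau>T$ and $\|v-\vd\|_{L^r(0,T;L^q)}\le\varepsilon$ on an event of probability at least $1-\varepsilon$, which is \eqref{eq:enhanced_dissipation_intro_statement}. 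The main obstacle is the third step: making the martingale contribution vanish in a \emph{supercritical} regime where $f$ has superlinear growth $h$ larger than the energy exponent; this forces a careful choice of $(r,q,p)$ compatible with $\frac{d(h-1)}{2}\vee 2<q<\infty$, with maximal regularity exponents, and with the commutator-type estimates of the Kraichnan scaling limit, so that the deterministic $\vd$ can absorb the nonlinearity uniformly while the stochastic fluctuations remain small.
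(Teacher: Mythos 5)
Your steps (i) and (ii) match the paper (local theory from stochastic maximal $L^p(L^q)$-regularity; global solvability of \eqref{eq:enhanced_dissipation_intro_statement_2} for large $\ellip$, uniformly over data with $\|v_0\|_{L^q}\leq N$). The genuine gap is in your third step: the claim that the martingale contribution $\sqrt{c_d\ellip}\sum_{k,\alpha}\theta_k(\sigma_{k,\alpha}\cdot\nabla)v\,dw^{k,\alpha}$ can be made small in $L^p(\O;L^r(0,T;L^q))$ (or in the $\gamma$-radonifying norm that feeds a stochastic maximal regularity estimate for $u=v-\vd$) by pushing $\theta$ to high frequencies is false. Its square function is, pointwise,
\begin{equation*}
c_d\ellip\sum_{k,\alpha}\theta_k^2\,\big|(\sigma_{k,\alpha}\cdot\nabla)v\big|^2=\ellip\,|\nabla v|^2,
\end{equation*}
by the same identity \eqref{eq:ellipticity_noise} that produces the enhanced dissipation; this is completely independent of where $\theta$ is supported in frequency, so no choice of $(\theta,\ellip)$ makes the right-hand side of your stability inequality small, and the Gronwall/buckling step collapses. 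Smallness of the noise appears only after pairing against fixed smooth test functions, i.e.\ in negative-order spatial norms: this is the content of Lemma \ref{l:time_regularity}, where $\|\Mart_i\|_{C^{\g_0}(0,T;H^{-\g_1})}$ with $\g_1>(d+2)/2$ is controlled by $\|\theta\|_{\ell^\infty}$ via Parseval over the orthonormal family $(\sigma_{k,\alpha})$. A direct difference estimate in the strong topology $L^r(0,T;L^q)$ therefore cannot see the scaling limit.

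This is exactly why the paper takes a different route at this point: it truncates the nonlinearity with the cut-off $\phi_{R,r}(\cdot,v)$ built on the $L^r(0,t;L^q)$-norm, proves global existence and, crucially, $\theta$-\emph{independent} a priori bounds by an It\^o-formula/Moser-type iteration in which the transport noise cancels in the $L^q$-balance (Theorem \ref{t:global_cut_off}\eqref{it:global_cut_off_2}), and then obtains convergence $v^{(n)}\to\vd$ in probability in $L^r(0,T;L^q)$ by compactness in the weaker topology $L^2(H^{1-\g})\cap C(H^{-\g})\cap L^r(L^q)$ (Lemma \ref{l:compactness}), identification of every limit point as a weak solution (using the $H^{-\g_1}$-smallness of the martingale, Lemma \ref{l:time_regularity}), and the weak--strong uniqueness of Corollary \ref{cor:uniqueness}; the cut-off is removed a posteriori because $\|\vd\|_{L^r(0,T;L^q)}\leq R-1$. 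Your stopping-time device $\sigma_N$ plays the role the cut-off plays in the paper, but without an argument replacing the compactness/weak-convergence step (or a quantitative homogenization estimate formulated in negative norms and then interpolated), the smallness you need never materializes. A secondary, fixable omission: for general $v_0\in L^q$ (rather than data with positive Besov smoothness) the paper also needs an approximation/stability argument with a strengthened cut-off (Lemmas \ref{l:interp_inequality_final_one}--\ref{l:stability_estimate_cut_off}), which your write-up does not address.
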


In the above result one can even choose $(\theta,\ellip)$ uniformly with respect to $v_0$ such that $\|v_0\|_{L^q}\leq N$, where $N\geq 1$ is fixed. 
In such case $(\theta,\ellip)$ does not depend on $v_0$, but only on $N$. 
Actually, one can always enlarge $\ellip$ still keeping Theorem \ref{t:delay_enhanced_dissipation_intro} true. 
Theorem \ref{t:delay_enhanced_dissipation_intro} shows that the solution to \eqref{eq:reaction_diffusion_system_intro} is close to the solution of a deterministic reaction--diffusion equations with \emph{increased diffusivity}.
The existence of a unique strong solution $\vd$ to \eqref{eq:enhanced_dissipation_intro_statement_2} on $[0,T]$ is also part of the proofs. 
The complete result is given in Theorem \ref{t:delayed_blow_up}. 
In Theorem \ref{t:delayed_blow_up_t_infty} we also allow $T=\infty$, in the case of exponentially decreasing mass.

Theorems \ref{t:delay_enhanced_dissipation_intro} and \ref{t:delayed_blow_up} essentially follow from the scaling limit argument of Theorem \ref{t:weak_convergence}. 
Following the heuristic derivation of Subsection \ref{ss:physical_motivations}, where we introduce the transport noise in \eqref{eq:reaction_diffusion_system_intro} as a model for small scales of the driven turbulent dynamic, one may think of Theorem \ref{t:weak_convergence}  as an ``homogenization'' result for the SPDE \eqref{eq:reaction_diffusion_system_intro} where the role of the scale parameter is played by the ratio $\|\theta\|_{\ell^{\infty}}/\|\theta\|_{\ell^2}$ (cf.\ also Subsection \ref{ss:enhanced_dissipation}). 
%
%
Looking at  Theorem \ref{t:weak_convergence} in this perspective, Theorem \ref{t:delay_enhanced_dissipation_intro} (and the main results of the paper) can be seen as a ``large scale regularity'' result (in the homogenization sense, see e.g.\  \cite{S18_book,SKM19_book,GNO20_regularity}) for the SPDEs \eqref{eq:reaction_diffusion_system_intro}. 
Moreover, the ``homogenized'' system \eqref{eq:enhanced_dissipation_intro_statement_2} can be thought of as the ``effective problem'' for \eqref{eq:reaction_diffusion_system_intro} where the additional diffusive contribution $\ellip\Delta $ in \eqref{eq:enhanced_dissipation_intro_statement_2} takes into account the effect of the underlying turbulent flow.
The homogenization view--point is also interesting for mathematical reasons. Indeed, as it is standard in homogenization theory, even in presence of smooth diffusive matrix, one cannot prove estimates uniformly in the scale parameter. In practice, one cannot use further information on the diffusivity matrix besides ellipticity and boundedness. 
Therefore one is forced to use tools from PDEs with (rough) $L^{\infty}$--coefficients, such as Moser iterations and DeGiorgi--Nash--Moser estimates. 
A similar situation appears here, where, to run the scaling limit argument of Theorem \ref{t:weak_convergence}, one need an estimate coming from a Moser type iteration, see Theorem \ref{t:global_cut_off}\eqref{it:global_cut_off_2}. 

In light of the results in \cite{AV22} (recalled here in Theorem \ref{t:local}), the solution $v$ of Theorem \ref{t:delay_enhanced_dissipation_intro} is not only \emph{strong}, but it is also positive and instantaneously gains regularity:
\begin{align}
\label{eq:positivity_intro}
v&\geq 0 \ \text{(component-wise)} & &\text{ a.e.\ on }[0,\tau)\times \O\times \Tor^d,\\
\label{eq:regularity_intro}
v&\in C^{\gamma_1,\gamma_2}_{\loc}((0,\tau)\times \Tor^d;\R^{\ell}) & &\text{ a.s.\ for all $\g_1\in (0,\tfrac{1}{2})$ and $\gamma_2\in (0,1)$.}
\end{align}
The positivity of solutions to \eqref{eq:reaction_diffusion_system_intro} is very important from an application point of view, as $v_i$ typically models concentrations. Let us stress that an additive noise would destroy the positivity of the initial data. Thus, in the context of reaction--diffusion equations, additive noise seems not appropriate to work with. 
Another interesting feature of transport noise is that it does \emph{not} alter mass conservations, energy balance and, more generally, $L^q$--estimates. 
Here we mean that, when computing $\|v_i\|_{L^q}^q$, one obtains an equality in which the noise does not contribute. Moreover, such equality is the one obtained in absence of noise, see Subsection \ref{ss:enhanced_dissipation}. This shows in particular that the stochastic perturbation does \emph{not} help in proving $L^q$--bounds. The diffusive behavior of the noise can only be seen in norms which are ``below'' the $L^q$--energy level (e.g.\ $L^r(0,T;L^q)$ with $r<\infty$), cf.\  \eqref{eq:enhanced_dissipation_intro_statement} in Theorem \ref{t:delay_enhanced_dissipation_intro}.

Compared to standard deterministic theory of reaction--diffusion equations (e.g.\ \cite{P10_survey}), the strength of the results of Theorem \ref{t:delay_enhanced_dissipation_intro} is that the presence of noise allows us to obtain \emph{strong unique solutions} to \eqref{eq:reaction_diffusion_system_intro} with arbitrary large life (at expense of enforcing the noise). Under some additional assumptions (e.g.\ entropy-dissipation relation), in the deterministic setting, existence of global \emph{weak} solutions to \eqref{eq:reaction_diffusion_system_intro} is shown in \cite{F15_global_renormalized,FKKM22,LW22}. Determining whether or not such solutions are unique and/or smooth is an open problem \cite[Section 4]{F15_global_renormalized}. For the weaker notion of weak--strong uniqueness see 
\cite{F17_weak_strong_uniqueness}. 

Theorem \ref{t:delay_enhanced_dissipation_intro} is a \emph{regularization by noise} result since solutions to the deterministic version of \eqref{eq:reaction_diffusion_system_intro} blow--up in finite time for appropriate choices of $f_i$ satisfying Assumption \ref{ass:f_polynomial_growth}\eqref{it:f_polynomial_growth_1}--\eqref{it:positivity} (\cite{PS97_blow_up} or \cite[Theorems 4.1 and 4.2]{P10_survey}). Regularization by noise started with the seminal work of Veretennikov \cite{V81_reg_by_noise}, where he proved that noise restores existence and uniqueness in ODEs.
This basic result has been later extended in many directions and  in particular to PDEs. It is not possible to provide a complete overview on such results and we content ourself to the case of regularization by \emph{transport} noise. In such area, a first breakthrough result has been established by Flandoli, Gubinelli and Priola \cite{FGP10} where they proved that transport noise improves the well--posedness theory for the transport equation (see also \cite{GM18_conservation_laws} for scalar conservation laws). A second breakthrough has been recently obtained by Flandoli and Luo in \cite{FL19} where they prove that a sufficiently intense noise prevents the blow--up of the Navier--Stokes equations in \emph{three dimensions} and in vorticity formulation.  
Related results can be found in \cite{G20_convergence,FGL21,FHLN20,L22_avereged_NS,FGL21_mixing,GL22,L21} and in the references therein. 

The results of this paper fall into this last line of research, providing new results and highlighting new points of view on the works \cite{FL19,FGL21}. One of the main contribution of the current paper is the connection with the theory of critical spaces for SPDEs developed in \cite{AV19_QSEE_1,AV19_QSEE_2} which relies on the $L^p(L^q)$--theory for SPDEs, pioneered by Krylov \cite{Kry94a,Kry} and later by Van Neerven, Veraar and Weis \cite{NVW1,MaximalLpregularity}. 
We will see that the assumption $q>\frac{d(h-1)}{2}$ in Theorem \ref{t:delay_enhanced_dissipation_intro} is related to the criticality of the space $L^{\frac{d}{2}(h-1)}$ for the SPDEs \eqref{eq:reaction_diffusion_system_intro}, see Subsection \ref{ss:role_criticality} for more details. 
To the best of our knowledge, the current paper is the first regularization by noise result exploiting  $L^p(L^q)$--estimates. Let us stress that the $L^p(L^q)$--setting is \emph{necessary} for proving the results of the current paper in order to match the underlined homogenization argument (see the text below Theorem \ref{t:delay_enhanced_dissipation_intro}) and the polynomial growth of $f$.

The use of the $L^p(L^q)$--theory for SPDEs requires an important re-elaboration of the works  \cite{FL19,FGL21}. Indeed, on the one hand the $L^p(L^q)$--setting requires some smoothness of the coefficients (see e.g.\ \cite{AV21_SMR_torus}) and on the other hand the scaling limit results as in \cite{FL19,FGL21} and Theorem \ref{t:weak_convergence} prohibit the use of such smoothness. 
Hence, one of the main difficulties faced up with in this work is the match of the two techniques which will be accomplished via a careful analysis of the nonlinearities.
Besides this fundamental obstruction, several new analytical difficulties arise, for instance related to the regularity of weak solutions provided by the scaling limit argument behind the proof of Theorem \ref{t:delay_enhanced_dissipation_intro}, see Subsection \ref{ss:uniqueness}. 

\subsection{Reversible chemical reactions}
\label{ss:chemical_reactions}
In this subsection we apply Theorem \ref{t:delay_enhanced_dissipation_intro} to a model for reversible chemical reactions.  
For an integer $\ell\geq 1$ and two collections of nonnegative integers $(q_i)_{i=1}^{\ell},(p_i)_{i=1}^{\ell}$ (note that either $q_i=0$ or $p_i=0$ for some $i$ is allowed), consider the chemical reaction:
\begin{equation}
\label{eq:chemical_reaction_chemistry_formula}
q_1 V_1 +\dots + q_{\ell} V_{\ell} 
\xrightleftharpoons[]{}  
p_1 V_1+\dots + p_{\ell} V_{\ell}
\end{equation}
where $(V_i)_{i=1}^{\ell}$ are the reactants. Let $v_i$ be the concentration of the reactant $V_i$ with diffusivity $\ellip_i>0$. Finally let $R_{\pm}>0$ be the reaction rates.
The \emph{law of mass action} postulates that the concentration $v_i$ satisfies the deterministic version of \eqref{eq:reaction_diffusion_system_intro} with 
\begin{equation}
\label{eq:chemical_reactions}
f_i(\cdot,v)=(p_i -q_i)\Big(R_+\prod_{1\leq j\leq  \ell} v_j^{q_j}-R_-\prod_{1\leq j\leq \ell} v_j^{p_j} \Big),
\end{equation}
where $  i\in \{1,\dots,\ell\}$ 
and $v=(v_i)_{i=1}^{\ell}$.
The conservation of the reactants mass is equivalent to ask for a collection of (strictly) positive constants $(\alpha_i)_{i=1}^{\ell}$ such that $\sum_{1\leq i\leq 1} \alpha_i(q_i-p_i)=0$ (below referred as mass conservation condition).  
The following result is a special case of Theorem \ref{t:delay_enhanced_dissipation_intro}. 

\begin{theorem}
\label{t:chemical_reaction_intro}
Fix $T\in (0,\infty)$, $\varepsilon\in (0,1)$ and $r\in (1,\infty)$. Let $f_i$ be as in \eqref{eq:chemical_reactions}. Assume that the mass conservation condition holds. 
Let $h\in (1,\infty)$ be such that 
$$
h\geq \big(\sum_{1\leq i\leq\ell}q_i\big)\vee \big(\sum_{1\leq i\leq \ell }p_i\big)\quad \text{ and fix }\quad q >\frac{d}{2}(h-1) \vee 2. 
$$
Let $v_0\in L^q(\Tor^d;\R^{\ell})$ be such that $v_0\geq 0$ (component--wise). Then there exist $\ellip>0$ and $\theta\in \ell^2$ such that $\#\{k\,:\, \theta_k\neq 0\}<\infty$ for which the unique strong solution $v$ to \eqref{eq:reaction_diffusion_system_intro} satisfies  
$$
\P(\tau\geq T)>1-\varepsilon \text{ where $\tau$ is the blow--up time of $v$}.
$$
Moreover the following hold:
\begin{enumerate}[{\rm(1)}]
\item {\rm (Weak enhanced diffusion)} \eqref{eq:enhanced_dissipation_intro_statement} holds with $f_i$ as in \eqref{eq:chemical_reactions} and $r$ as above.
\item\label{it:positivity_intro_chemical} {\rm (Positivity)} $v\geq 0$ component--wise a.e.\ on $[0,\tau)\times \O\times \Tor^d$.
\item\label{it:regularity_intro_chemical}  {\rm (Instantaneous regularization)}
$v\in C^{\g,\infty}_{\loc} ((0,\tau)\times \Tor^d;\R^{\ell})$ a.s.\ for all $\gamma\in [0,\tfrac{1}{2})$.
\end{enumerate}
\end{theorem}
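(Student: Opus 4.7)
The plan is to view Theorem \ref{t:chemical_reaction_intro} as a direct specialisation of Theorem \ref{t:delay_enhanced_dissipation_intro}, so the main work reduces to checking that the polynomial reaction \eqref{eq:chemical_reactions} satisfies the polynomial growth, quasi-positivity and mass control conditions of Assumption \ref{ass:f_polynomial_growth}\eqref{it:f_polynomial_growth_1}--\eqref{it:positivity}, plus one additional bootstrap for the stronger spatial smoothness stated in \eqref{it:regularity_intro_chemical}.

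\emph{Polynomial growth.} Every monomial in \eqref{eq:chemical_reactions} has total degree $\sum_j q_j$ or $\sum_j p_j$, both bounded by $h$ by hypothesis. Hence $|f_i(v)|\lesssim (1+|v|)^h$ on the positive orthant $\R^{\ell}_{\geq 0}$, which yields polynomial growth with the required exponent $h$.

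\emph{Quasi-positivity.} If $v_i=0$ and $v_j\geq 0$ for $j\neq i$, a short case analysis shows $f_i(v)\geq 0$: when both $p_i,q_i\geq 1$, each product in \eqref{eq:chemical_reactions} contains a factor $v_i$ and vanishes, so $f_i(v)=0$; when $q_i=0<p_i$, only $\prod_j v_j^{q_j}$ survives and $f_i(v) = p_i R_- \prod_j v_j^{q_j}\geq 0$; the case $p_i=0<q_i$ is symmetric; if $p_i=q_i=0$ then $f_i\equiv 0$.

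\emph{Mass control.} The mass conservation hypothesis provides $\alpha_i>0$ with $\sum_i\alpha_i(q_i-p_i)=0$, which makes $\sum_i\alpha_i f_i(\cdot,v)$ vanish identically. This is the strongest form of the mass control of Assumption \ref{ass:f_polynomial_growth}, in particular the exponential decay case is allowed.

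With these three verifications, Theorem \ref{t:delay_enhanced_dissipation_intro} produces $(\ellip,\theta)$ and simultaneously delivers the delayed blow-up bound $\P(\tau\geq T)>1-\varepsilon$ together with the enhanced diffusion \eqref{eq:enhanced_dissipation_intro_statement}, while \eqref{eq:positivity_intro} gives item \eqref{it:positivity_intro_chemical}. The remaining nontrivial point is upgrading the Hölder regularity \eqref{eq:regularity_intro} to the $C^{\gamma,\infty}_{\loc}$ smoothness in space required by \eqref{it:regularity_intro_chemical}; this is the place where I expect the main technical effort. The strategy is a parabolic bootstrap compatible with the random blow-up time: localise on the events $\{\tau>1/n\}$ and iterate the stochastic maximal $L^p(L^q)$-regularity theory of \cite{AV19_QSEE_1,AV19_QSEE_2} used throughout the paper. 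The local boundedness of $v$ away from $t=0$, granted by \eqref{eq:regularity_intro}, makes $f(v)$ inherit the spatial regularity of $v$; since both the polynomial $f$ and the transport coefficients $\sigma_{k,\alpha}$ are $C^\infty$, each iteration gains one derivative in space on shrinking parabolic cylinders inside $(0,\tau)\times\Tor^d$. The Brownian driver caps time regularity at $1/2^-$ but does not obstruct the unlimited gain in space, which is exactly what is claimed in \eqref{it:regularity_intro_chemical}.
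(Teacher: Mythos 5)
Your proposal is correct and follows essentially the same route as the paper: check Assumption \ref{ass:f_polynomial_growth}\eqref{it:f_polynomial_growth_1}--\eqref{it:mass} for the nonlinearity \eqref{eq:chemical_reactions} (your growth, quasi-positivity and mass-control verifications are all sound, with the mass control holding in the constant-mass form $\m_0=\m_1=0$), then apply Theorem \ref{t:delay_enhanced_dissipation_intro} for delayed blow-up and enhanced diffusion, and \eqref{eq:positivity_intro} for positivity. The only divergence is item \eqref{it:regularity_intro_chemical}: the paper does not run a bootstrap itself but simply invokes Remark \ref{r:regularity_paths}\eqref{it:regularity_paths_1}, i.e.\ the instantaneous-regularization theorem of \cite{AV22} for $x$-independent nonlinearities smooth in $v$, which gives $C^{\g,\infty}_{\loc}((0,\tau)\times\Tor^d;\R^{\ell})$ directly; your sketched localisation-plus-weighted-maximal-regularity iteration is in effect a re-derivation of that cited result and, as written, is only an outline (making it rigorous is precisely the content of \cite[Theorem 4.2]{AV22}), so within this paper the citation is the intended and complete argument. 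Two minor points: the growth and Lipschitz-type bounds in Assumption \ref{ass:f_polynomial_growth}\eqref{it:f_polynomial_growth_1} must hold on all of $\R^{\ell}$, not only on the positive orthant (immediate for polynomials of degree at most $h$, and the Lipschitz estimate with exponent $h-1$ should be noted explicitly); and your remark that the constant-mass case "allows the exponential decay case" is misleading, since Theorem \ref{t:delayed_blow_up_t_infty} requires $\m_0=0$ and $\m_1<0$ strictly, which \eqref{eq:chemical_reactions} does not provide --- this is irrelevant here because the present theorem only concerns finite $T$.
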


Item \eqref{it:positivity_intro_chemical} follows from \eqref{eq:positivity_intro}. Item \eqref{it:regularity_intro_chemical} is stronger than \eqref{eq:regularity_intro} and still follows from the result of \cite{AV22} where one also uses the fact that $f_i$ are smooth (see Remark \ref{r:regularity_paths}\eqref{it:regularity_paths_1}). Interestingly, item \eqref{it:regularity_intro_chemical} shows that $v$ is not only a strong solution to \eqref{eq:reaction_diffusion_system_intro} but it is also \emph{classical} in space.

As before, we remark that the transport noise does \emph{not} interact with the mass, energy and $L^q$--balances. For instance, under the mass conservation condition, by integrating \eqref{eq:reaction_diffusion_system_intro} with \eqref{eq:chemical_reactions}, one can show the \emph{pathwise} conservation of mass:  
\begin{equation}
\sum_{1\leq i\leq \ell} \alpha_i\int_{\Tor^d} v_i(t,x)\,\dd x=\sum_{1\leq i\leq \ell}  \alpha_i\int_{\Tor^d} v_{0,i}(x) \,\dd x\ \ \ \text{ a.s.\ for all $t\in [0,\tau).$}
\end{equation}
In absence of noise, existence for large times $T\gg 1$ of unique \emph{strong} solutions to \eqref{eq:reaction_diffusion_system_intro} with \eqref{eq:chemical_reactions} is generally \emph{not} known even for the (apparently) simple situation of \eqref{eq:chemical_reaction_chemistry_formula} with  $\ell=2$ (cf.\ \cite[Remark 3.2]{P10_survey}). Let us mention that already the case $q_1=p_2=1$ and $q_2=p_1=2$ appears problematic.  
Indeed, in the deterministic setting, existence of global unique smooth solutions is only known in case $(\sum_{1\leq i\leq \ell}q_i)\vee  (\sum_{1\leq i\leq \ell} p_i)\leq 2$. The reader is referred to \cite{FMT20}  for the general situation, and to \cite{CGV19_global} for the four species case, i.e.\ 
$V_1+V_2
\xrightleftharpoons[]{}  
V_3 +V_{4}$. Existence of global unique smooth solutions for reversible chemical-reactions \eqref{eq:chemical_reaction_chemistry_formula} in case $(\sum_{1\leq i\leq \ell}q_i)\vee  (\sum_{1\leq i\leq \ell} p_i)\geq 3$ is still open. 
%
%
In particular, if $(\sum_{1\leq i\leq \ell}q_i)\vee  (\sum_{1\leq i\leq \ell} p_i)\geq 3$, then Theorem \ref{t:chemical_reaction_intro} has no deterministic counterpart. Finally, let us note that, if $h\geq 3$ and $d\geq 2$, then $q>\frac{d(h-1)}{2}\geq 2$. In particular, $L^q$-theory with $q>2$ is necessary to apply Theorem \ref{t:chemical_reaction_intro} in the relevant situations in which $(\sum_{1\leq i\leq \ell}q_i)\vee  (\sum_{1\leq i\leq \ell} p_i)\geq 3$.

It seems that the case $T=\infty$ of Theorem \ref{t:chemical_reaction_intro} does not hold in general and one needs additional assumption on the reaction \eqref{eq:chemical_reaction_chemistry_formula}. This fact goes beyond the scope of the current paper. 

\subsection{Further comments on the literature}
\label{ss:comments_literature}
We collect here further references to the related literature. To the best of our knowledge, 
in the deterministic case, the investigation of the effect of a velocity field on the dynamics of passive scalars was first studied by Constantin, Kiselev, Ryzhik and Zlato\v{s} \cite{CKRZ08_mixing}. For some results in a nonlinear deterministic $L^2$--setting see \cite{IXZ21_TAMS,KX_16_keller}. 
One interesting feature of the stochastic setting is that, in contrast to deterministic results, (stochastic) delayed blow--up type results are always accompanied with a homogenization one which describes the ``effective'' contribution of the driven turbulent dynamics on the system and this effective representation is consistent with physical experiments \cite{GE63_turbulence,LW76,SH91_turbulence,MC98,KD86_turbulence_chemical_reactions,ZCB20_fluid}. 

The case of a linear dynamic in a turbulent fluid, modeled by a transport noise, was also studied by Gess and Yaroslavtsev in \cite{GY21}. There the authors proved stabilization and enhanced dissipation by noise for passive scalars, and they also provide a detailed overview of previous results. A related interesting situation has been studied by Bedrossian, Blumenthal and Punshon--Smith in \cite{BBPS21_PTRF,BBPS22,BBP22_AP}, where they investigate the dynamics of deterministic passive scalars driven by a flow solving a \emph{stochastic} Navier--Stokes type system with additive noise. 
%

\subsection{Notation}
Here we collect the notation which will be used throughout the paper. We write $A\lesssim_{P_1,\dots,P_N} B$ (resp.\ $A\gtrsim_{P_1,\dots,P_N} B$) whenever there exists a positive constant $C$ depending only on the parameters $P_1,\dots,P_N$ such that $A\leq C B$ (resp.\ $A\geq CB$). Furthermore we write $A \eqsim B$ if $A \lesssim B$ and $A \gtrsim B$.     
Similarly, we write $C_{P_1,\dots,P_N}$ or $C(P_1,\dots,P_N)$ if the constant $C$ depends only on $P_1,\dots,P_N$. Moreover, $\R$ is the set of real numbers, $\R_+=(0,\infty)$, $\Z$ is the set of integers, $\Z^d_0=\Z^d\setminus \{0\}$. We also employ the notation
$a \vee b=\max\{a,b\}$ and $a\wedge b=\min\{a,b\}$.
In the following, for an integer $\ell\geq 1$, $s\in\R$ and $q\in (1,\infty)$, we denote by $H^{s,q}(\Tor^d;\R^{\ell})$, $B^{s}_{q,p}(\Tor^d;\R^{\ell})$ the set of $\R^\ell$--valued maps in the Bessel potential and in the Besov classes, respectively (see e.g.\ \cite{Moder_Fourier_Analysis,Tri95,BevosBook}). Often, below we write
$L^q,H^{s,q}$ etc.\ instead of $ L^{q}(\Tor^d;\R^{\ell}),H^{s,q}(\Tor^d;\R^{\ell})$ etc., if no confusion seems likely. For $p\in (1,\infty)$, we denote by $(\cdot,\cdot)_{\theta,p}$ and $[\cdot,\cdot]_{\theta}$ the real and the complex interpolation functor, respectively. The reader is referred to 
\cite{BeLo,Analysis1,Tri95} 
for definitions and basic properties.
Below we collect some further notation which may be non standard. In the following $X$ is a Banach space, $p\in (1,\infty)$ and $I=(a,b)\subseteq \R$ is an open interval.
\begin{itemize}
\item $w_{\a}(t)=|t|^{\a}$ for $t\in \R_+$ and $\a\in \R$ (power weight).
\item $L^p(a,b,w_{\a};X)$ is the set of all strongly measurable maps $f:I\to X$ satisfying 
$$
\|f\|_{L^p(a,b,w_{\a};X)}=\Big(\int_a^b \|f(t)\|^p_X\,w_{\a}(t)\, \dd t\Big)^{1/p}. 
$$
If $\a=0$, then we write $L^p(a,b;X)$ instead of $L^p(a,b,w_{0};X)$.
\item $W^{1,p}(a,b,w_{\a};X)$ or $W^{1,p}(I,w_{\a};X)$  denotes the space of all $f\in 
 L^p(a,b,w_{\a};X)$ such that $f'\in L^p(a,b,w_{\a};X)$ endowed with the natural norm.
\item $H^{\vartheta,p}(a,b,w_{\a};X)=[L^{p}(a,b,w_{\a};X),W^{1,p}(a,b,w_{\a};X)]_{\vartheta}$ for $\vartheta\in (0,1)$.
\item For $\g_1,\g_2>0$, $C^{\g_1,\g_2}((a,b)\times \Tor^d)$ denotes the set of all bounded maps $u$ such that
$$
|u(t,x)-u(s,y)|\lesssim |t-s|^{\g_1} + |x-y|^{\g_2} \ \ \text{ for all } s,t\in (a,b), \ x,y\in \Tor^d.
$$ 
\item For a function space $\A$, we sometimes write $\A(I,w_{\a};X)$ instead of $\A(a,b,w_{\a};X)$.  
Moreover, we write $f\in \A_{\loc}(\Dom,w_{\a};X)$ provided $f\in \A(\Dom',w_{\a};X)$ for all compact set $\Dom'\subseteq \Dom$. 

\end{itemize}
Finally we collect the probabilistic notation. Further notation will be fixed in Subsection \ref{ss:noise_description}.
Throughout the paper, $(\O,\A,(\F_t)_{t\geq 0},\P)$ denotes a filtered probability space. A measurable map $\tau:\O\to [0,\infty]$ is a stopping time if $\{\tau\leq t\}\in\F_t$ for all $t\geq 0$. For a stopping time $\tau$, $\F_{\tau}$ denotes the $\sigma$-algebra of the $\tau$-past, i.e.\ $A\in \F_{\tau}$ provided
$
A\cap \{\tau\leq t\}\in \F_t 
$
for all $t\geq 0$  (see e.g.\ \cite{Kal}). For a Banach space $X$, a stochastic process $\phi:[0,\infty)\times \O\to X$ 
is said to be progressive measurable if $\phi|_{[0,t]\times \O}$ is strongly $\Borel([0,t])\times \F_t$--measurable for all $t\geq 0$.

Finally, we write $\sum_{k,\alpha}$ instead of $\sum_{k\in \Z^d_0}\sum_{1\leq \alpha\leq d-1}$, if no confusion seems likely.

\section{Derivation, enhanced diffusion and criticality}
In this section we illustrate some basic ideas leading to the proof of our main results. However, before going into the mathematical details, we first provide an heuristic derivation of \eqref{eq:reaction_diffusion_system_intro}.

\subsection{Heuristic derivation}
\label{ss:physical_motivations}
Inspired by  \cite[Subsection 1.2]{FL19}, we motivate transport noise by the idea of separating large and small scales and to model the small scale by noise. This corresponds to some intuition of turbulence. With this in mind, we formally derive \eqref{eq:reaction_diffusion_system_intro} by considering its deterministic version in which $ v_i $ is transported by a velocity field $ u $ of a fluid in which $ v_i $ lies:
\begin{equation}
\label{eq:deterministic_intro}
\partial_t v_i + (u\cdot\nabla )v_i = \ellip_i \Delta v_i +f_i(\cdot,v),\quad \text{ on }\Tor^d.
\end{equation}
Following \cite{FL19}, we decompose $u$ as $u_{L}+ u_{S}$, where $u_{L}$ and $u_{S}$ denote the large and the small scale part, respectively. Roughly speaking, in a turbulent regime, the $u_S$ varies very rapidly in time compared to $u_{L}$. In this case, one may replace $u_S$ by an approximation of white noise, i.e.\ $-\sum_{k,\alpha}\theta_k (\sigma_{k,\alpha}\cdot\nabla) v_i\,\circ \dot{w}^{k,\alpha}_t$, and therefore \eqref{eq:deterministic_intro} coincide \eqref{eq:reaction_diffusion_system_intro} with an additional deterministic transport noise. The deterministic transport term $(u_{L}\cdot\nabla) v_i$ does not play any role in the analysis, and therefore we drop it from the results below (see Remark \ref{r:general_second_order_operator} for some comments).
For various fluid dynamics models, the approximation of small scales by a transport term can be made rigorous, see \cite{FP21,DP22_two_scale}. 
Let us also remark that the noise \eqref{eq:reaction_diffusion_system_intro} is in the Stratonovich formulation, which, from a modeling point of view, seems the correct one due to its connections with Wong-Zakai type results. 
Moreover, as we will see in Subsection \ref{ss:enhanced_dissipation} below, the Stratonovich noise does not alter the mass and energy balances. This is consistent with the intuition of the stochastic perturbation in \eqref{eq:reaction_diffusion_system_intro} as a transport term.

\subsection{Enhanced diffusion and the homogenization view-point}
\label{ss:enhanced_dissipation}
The issue of global well-posedness of parabolic PDEs is usually addressed by showing energy estimates. In practice, one derives uniform in time bounds on suitable $L^q_x$--norms of the solutions to the corresponding PDEs. Blow--up criteria for SPDEs (c.f.\ Theorem \ref{t:local}\eqref{it:local_3}) shows that a pathwise $L^{\infty}_t(L^q_x)$--estimate with $q>\frac{d(h-1)}{2}\vee 2$ is sufficient to prove global existence for system of reaction diffusion equations like \eqref{eq:reaction_diffusion_system_intro}. Thus, one is tempted to apply the It\^{o} formula to compute $\|v_i\|_{L^q}^q$ and to derive such bounds. However, due to the divergence free of $\sigma_{k,\alpha}$, one has 
$$
 \int_{\Tor^d} |v_i|^{q-2}[(\sigma_{k,\alpha}\cdot \nabla ) v_i] v_i \,\dd x=0 \quad \text{ for all }q\in [2,\infty).
$$
In particular, the martingale part in the It\^o formula vanishes and one obtains, a.s.\ for all $t\in[0,\tau)$,
\begin{equation}
\begin{aligned}
\label{eq:energy_Lq_balance}
\|v_i(t)\|_{L^q}^q 
&+ \ellip_iq(q-1)\int_{0}^t \int_{\Tor^d}|v_i|^{q-2}|\nabla v_i|^2\,\dd x\, \dd s \\
&
=\|v_{0,i}\|_{L^q}^q
+ q \int_{0}^t\int_{\Tor^d} |v_i|^{q-2} f_i(\cdot,v) v_i\,\dd x\, \dd s.
\end{aligned}
\end{equation}
The above equality coincides with the $L^q$--balance in absence of stochastic perturbation in \eqref{eq:reaction_diffusion_system_intro}. Therefore it is clear that the noise \emph{cannot} help to improve such estimates. To capture the weak enhanced diffusion induced by the transport noise one has to look at \emph{weaker} norms compared to the one appearing in the energy--type balance \eqref{eq:energy_Lq_balance}, e.g.\ $L^r(0,T,L^q)$ with $r\in (1,\infty)$. 

From a mathematical perspective, the key step to understand the weak enhanced diffusive effect of the noise is the scaling limit result of Theorem \ref{t:weak_convergence}. In that result, we consider a sequence of $(\theta^{(n)})_{n\geq 1}$ and the sequence of corresponding solutions $(v^{(n)})_{n\geq 1}$ to \eqref{eq:reaction_diffusion_system_intro} and we show convergence (in ``relatively weak norms'') of the solutions to a deterministic system of reaction--diffusion equations with increased diffusivity provided $\lim_{n\to \infty} \|\theta^{(n)}\|_{\ell^{\infty}}/\|\theta^{(n)}\|_{\ell^2}=0$. 
Here we exploit the fact that the vector fields $\sigma_{k,\alpha}$ are objects with high oscillations and in the limit as $n\to \infty$ they average. The limiting contribution of the noise is the diffusive term $\ellip\Delta v_i$ in \eqref{eq:enhanced_dissipation_intro_statement_2}. 
Now Theorem \ref{t:delay_enhanced_dissipation_intro} follows by choosing $n$ so large that the solution to \eqref{eq:reaction_diffusion_system_intro} is not far from \eqref{eq:enhanced_dissipation_intro_statement_2}.
Since the transport noise in \eqref{eq:reaction_diffusion_system_intro} models small scale effects, the above argument shares the same philosophy of \emph{large scale regularity} theory in the theory of homogenization, see e.g.\ \cite{S18_book,SKM19_book,GNO20_regularity}.
As commented in the Susbection \ref{ss:intro_delay}, this interpretation naturally brings us to the use of tools from the theory of PDEs with $L^{\infty}$--coefficients, such as Moser iterations.

In a way, this view--point allows us to give an heuristic motivation for the failure of the scaling limit argument in \cite{FL19} for the \emph{full} advective noise (see \cite[Appendix 2]{FL19}). 
Recall that the vorticity formulation in \cite{FL19} is obtained by applying $\nabla \times$ to the Navier--Stokes equations with transport noise. Due to Leibniz rule, this creates a (lower order) term which cannot be controlled via the $L^{\infty}$--norm of the coefficients itself and therefore the scaling argument is doomed to fail.

\subsection{The role of criticality}
\label{ss:role_criticality}
Several choices of the spaces done in this paper are motivated by the (local) invariance of the SPDEs \eqref{eq:reaction_diffusion_system_intro} under parabolic scaling. Recall that $f$ is of polynomial growth with exponent $h>1$ (see Assumption \ref{ass:f_polynomial_growth}\eqref{it:f_polynomial_growth_1}). As discussed in \cite[Subsection 1.4]{AV22}, the Lebesgue space $L^{\frac{d}{2}(h-1)}$ is critical for \eqref{eq:reaction_diffusion_system_intro}.
Here we do not discuss the case of critical Besov spaces, as the Lebesgue ones are the natural to deal with when working with $L^{\infty}$--coefficients. 
With an eye towards the main scaling argument of Theorem \ref{t:weak_convergence}, where one needs to use compactness, we work within the subcritical regime $L^q$ with $q>\frac{d(h-1)}{2}$. Indeed, within this range, one lose regularity to obtain compactness, still being in a spaces where \eqref{eq:reaction_diffusion_system_intro} is well--posed. The subcriticality also plays an important role in the main estimates. Indeed, a fairly straightforward consequence of it is the existence of $\varepsilon>0$ such that (cf.\ Lemma \ref{lem:interpolation_strong_setting})
\begin{equation}
\label{eq:interpolation_inequality_estimate}
\|f_i(\cdot,v)\|_{H^{-1,q}} \lesssim 1+ \|v\|_{L^q}^{h-1+\varepsilon} \|v\|_{H^{1,q}}^{1-\varepsilon}.
\end{equation}
The criticality of the $L^{q}$ is equivalent to ask for which $q$ the inequality \eqref{eq:interpolation_inequality_estimate} holds with $\varepsilon=0$. The sub-criticality gives us the play parameter $\varepsilon>0$ which can be used to show 
$L^{\infty}_t(L^q_x)$--estimates via a simple buckling argument. Indeed, the Young inequality shows that, for all $p\in (2,\infty)$,  
\begin{equation}
\label{eq:inequality_interpolation_varepsilon}
\begin{aligned}
\|f_i(\cdot,v)\|_{L^p(0,T;H^{-1,q})}
&\lesssim 1+ \| v\|_{L^{r}(0,T;L^q)}^{h-1+\varepsilon}\|v\|_{L^p (0,T;H^{1,q})}^{1-\varepsilon}\\
&\lesssim 1+C_{\delta} \| v\|_{L^{r}(0,T;L^q)}^{(h-1+\varepsilon)/\varepsilon} + \delta \|v\|_{L^p(0,T;H^{1,q})}
\end{aligned}
\end{equation}
where $r(h,\varepsilon,q)\in (1,\infty)$ is large.
Choosing $\delta>0$ small enough, one can use maximal $L^p$-regularity estimates to close a bound for $\|v\|_{L^p(0,T;H^{1,q})}$ in terms of $ \| v\|_{L^{r}(0,T;L^q)}$. 
However, there is no general way estimate the latter term. Following \cite{FL19,FGL21}, we introduce a cut-off in the equation \eqref{eq:reaction_diffusion_system_intro}. We design the cut-off $\phi_{R,r}(\cdot,v)$ in a way that 
$
\|\phi_{R,r}(\cdot,v) v\|_{L^r(0,T;L^q)}\lesssim_R 1,
$
see \eqref{eq:def_cut_off} below.
Thus, for the cut--off version of \eqref{eq:reaction_diffusion_system_intro}, the inequality \eqref{eq:inequality_interpolation_varepsilon} readily proves an estimate, cf.\ Theorem \ref{t:global_cut_off}\eqref{it:global_cut_off_1}. The cut-off can later be removed by using the (weak) enhanced diffusive effect of the noise.
 
The same sort of argument also enters in the Moser type iteration used in  
Theorem \ref{t:global_cut_off}\eqref{it:global_cut_off_2}.
More precisely, looking at the $L^q$--balance of \eqref{eq:energy_Lq_balance}, the condition $q>\frac{d(h-1)}{2}$ yields the existence of $\beta\in (0,1)$ such that the RHS\eqref{eq:energy_Lq_balance} can be estimated as (cf.\ Lemma \ref{l:interpolation_L_eta})
%
\begin{align*}
\Big|\int_0^T \int_{\Tor^d} |v|^{q-2} f_i(\cdot,v) v_i \,\dd x\, \dd s\Big|
&\lesssim \|v\|_{L^{r}(0,T;L^{q})}^{q+h-1}\\
&+
\|v\|_{L^{r}(0,T;L^{q})}^{\alpha}
\Big(\max_{1\leq i\leq \ell}\int_0^T \int_{\Tor^d} |v_i|^{q-2}|\nabla v_i|^2\,\dd x\, \dd s\Big)^{\beta}.
\end{align*}
Again, by balancing the contribution of $\|v\|_{L^r(0,T;L^q)}$ with the cut--off $\phi_{R,r}$, one sees that the energy term $\max_{1\leq i\leq \ell}\int_0^T \int_{\Tor^d} |v_i|^{q-2}|\nabla v_i|^2\,\dd x\,\dd s$ can be absorbed on the LHS\eqref{eq:energy_Lq_balance} with the same buckling argument via Young inequality.


\section{Statement of the main results}
In this section we state our main result concerning reaction diffusion equations \eqref{eq:reaction_diffusion_system_intro}. Here we actually consider the following generalization of \eqref{eq:reaction_diffusion_system_intro} where we also include a conservative term:
\begin{equation}
\label{eq:reaction_diffusion}
\left\{
\begin{aligned}
\dd v_i -\ellip_i\Delta v_i \,\dd t
&= \Big[\div(F_i(\cdot,v)) +f_{i}(\cdot, v)\Big]\,\dd t \\
&+ \sqrt{c_d \ellip}\sum_{k\in \Z^{d}_0} \sum_{1\leq \alpha\leq d-1}\theta_k (\sigma_{k,\alpha}\cdot \nabla) v_i\circ \dd w_t^{k,\alpha},   & \text{ on }&\Tor^d,\\
v_i(0)&=v_{i,0},  &  \text{ on }&\Tor^d.
\end{aligned}\right.
\end{equation}
As above, $i\in \{1,\dots,\ell\}$ for some integer $\ell\geq 1$. As before $c_d\stackrel{{\rm def}}{=}\frac{d}{d-1}$ and $\ellip,\ellip_i>0$. The unexplained parameters appearing in the stochastic perturbation of \eqref{eq:reaction_diffusion} will be described in Subsection \ref{ss:noise_description}. The nonlinearities $(f,F)$ will be assumed to be of polynomial growth, see Assumption \ref{ass:f_polynomial_growth} for the precise conditions.
This section is organized as follows. In Subsection \ref{ss:noise_description} we describe the noise and its basic properties, in Subsection \ref{ss:assumptions_definitions} we collect the main assumptions, definition and a local existence result taken from \cite{AV22}. Finally in Subsection \ref{ss:main_results} we state our main results whose proofs will be commented in Subsection \ref{ss:strategy_intro}. 

\subsection{Description of the noise}
\label{ss:noise_description}
Here we specify the quantities $(\theta_k,\sigma_{k,\alpha},w^{k,\alpha})$ appearing in the stochastic perturbation in \eqref{eq:reaction_diffusion}. Here we follow \cite{FL19,FGL21}. 
Recall that $\Z_0^d=\Z^d\setminus\{0\}$. Throughout this paper we consider $\theta=(\theta_k)_{k\in \Z^d_0}\in \ell^2(\Z^d_0)$. Moreover, we assume that $\theta$ is normalized and it is radially symmetric, i.e.\
\begin{equation}
\label{eq:theta_normalized_symmetric}
\|\theta\|_{\ell^2(\Z^d_0)}=1 \quad \text{ and } \quad \theta_{j}=\theta_k \  \text{ for all $j,k\in\Z_0^d$ \  such that }|j|=|k|.
\end{equation}
Finally, we assume that $\#\{k\,:\, \theta_k\neq 0\}<\infty$. However this can be weakened, see Remark \ref{r:choice_theta}.

Next we define the family of vector fields $(\sigma_{k,\alpha})_{k,\alpha}$. Let $\Z_{+}^d$ and $\Z_-^d$ be a  partition of $\Z_0^d$ such that $-\Z_+^d=\Z_-^d$. For any $k\in \Z_+^d$, select an complete orthonormal basis  $\{a_{k,\alpha}\}_{\alpha\in \{1,\dots,d-1\}}$ of the hyperplane $k^{\bot}=\{x\in \R^d\,:\, k\cdot x=0\}$, 
and set $a_{k,\alpha}\stackrel{{\rm def}}{=}a_{-k,\alpha} $ for $k\in \Z^d_-$. Then, let
\begin{equation*}
\sigma_{k,\alpha}\stackrel{{\rm def}}{=} a_{k,\alpha} e^{2\pi i k\cdot x}  \ \  \text{ for all } \ \ x\in \Tor^d,\ k\in \Z^d_0,\ \alpha\in \{1,\dots,d-1\}.
\end{equation*}
By construction we have that $\sigma_{k,\alpha}$ are smooth and divergence free vector fields.

Finally, $(w^{k,\alpha}_t\,:\,t\geq 0)_{k,\alpha}$ denotes a family of complex Brownian motions on a filtered probability space $(\O,\A,(\F_t)_{t\geq 0},\P)$ such that (below $\overline{\cdot}$ denotes the complex conjugate)
\begin{equation}
\label{eq:complex_conjucation_preserves_w}
\overline{w^{k,\alpha}_t}= w^{-k,\alpha}_t \ \  \text{ for all } \ \ t\geq 0,\ k\in \Z^d_0,\ \alpha\in \{1,\dots,d-1\}.
\end{equation}
Moreover $w^{k,\alpha}$ and $w^{j,\beta}$ are independent whenever either $k\neq -j $ or $\alpha\neq \beta$.
The above conditions can be summarized as:
$$
[w^{k,\alpha}, w^{j,\beta}]_t = 2t \delta_{k,-j} \delta_{\alpha,\beta} \ \ \text{ for all $t\geq 0$, }k,j\in \Z^d_0, \text{ and } \alpha,\beta\in \{1,\dots,d-1\},
$$
where $[\cdot,\cdot]_t$ denotes the covariation. As in \cite[Section 2.3]{FL19} or \cite[Remark 1.1]{FGL21}, by \eqref{eq:theta_normalized_symmetric} and the definition of the vector fields $\sigma_{k,\alpha}$, at least formally, one has
\begin{equation}
\begin{aligned}
\label{eq:Ito_stratonovich_change}
\sqrt{c_d \ellip}\sum_{k,\alpha}\theta_k (\sigma_{k,\alpha}\cdot \nabla) v_i\circ \dd w_t^{k,\alpha}
&=
\ellip \Delta v_i+  \sqrt{c_d \ellip}\sum_{k,\alpha}\theta_k (\sigma_{k,\alpha}\cdot \nabla) v_i\,  \dd w_t^{k,\alpha}.
\end{aligned}
\end{equation}
To prove \eqref{eq:Ito_stratonovich_change}, one uses that $\div\,\sigma_{k,\alpha}=0$ and the elementary identity (cf.\ \cite[eq.\ (2.3)]{FL19} or \cite[eq.\ (3.2)]{FGL21})
\begin{equation}
\label{eq:ellipticity_noise}
\sum_{k,\alpha} \theta_k^2 \sigma_{k,\alpha}^{n} \overline{\sigma_{k,\alpha}^{m}}
=
\sum_{k,\alpha} \theta_k^2 a_{k,\alpha}^{n} a_{k,\alpha}^{m}
 =\frac{1}{c_d}\delta_{n,m}\ \   \text{ on }\Tor^d  \text{ for all $1\leq n,m\leq d$}.
\end{equation}
Let us remark that the stochastic integration on the RHS\eqref{eq:Ito_stratonovich_change} is understood in the It\^{o}--sense.
In the paper we will always understood the Stratonovich noise on the LHS\eqref{eq:Ito_stratonovich_change} as the RHS\eqref{eq:Ito_stratonovich_change}, namely an It\^o noise plus a diffusion term. 
However, note that the diffusion term $\ellip\Delta v_i$ does not provide any additional diffusion, as in the usual energy estimates, it is balanced by the It\^{o} correction coming from the  It\^{o}--noise. In particular \eqref{eq:Ito_stratonovich_change} is consistent with Subsection \ref{ss:enhanced_dissipation}.

\subsection{Main assumptions, definitions and local existence}
\label{ss:assumptions_definitions}
In this subsection we collect our main definitions and assumptions. 
The following will be in force throughout this paper.

\begin{assumption} 
\label{ass:f_polynomial_growth}
Suppose that $d\geq 2$ and $\min_{1\leq i\leq \ell}\ellip_i>0$. 
Let the following be satisfied:
\begin{enumerate}[{\rm(1)}]
\item\label{it:integrability_exponent_main_assumption} \emph{(Smoothness and integrability exponents) } $\delta\in [1,2)$, $q,p\in (2,\infty)$ and $\a\in [0,\frac{p}{2}-1)$.
\item\label{it:f_polynomial_growth_1} \emph{ (Polynomial growth) } 
For all $i\in \{1,\dots,\ell\}$, the following mappings are Borel measurable:
\begin{align*}
f_i:\R_+\times \Tor^d\times \R^{\ell}\to \R \ \ \text{ and } \ \
F_i=(F_{i,j})_{j=1}^d:\R_+\times \Tor^d\times \R^{\ell}\to \R^{d}.
\end{align*}
Moreover, there exists $h>1$ such that, a.e.\ on $\R_+\times \Tor^d$ and for all $i\in \{1,\dots,\ell\}$, $y,y'\in \R^{\ell}$,
\begin{align*}
|f_i(t,x,y)|\lesssim 1+ |y|^{h},\ \ \quad \qquad |F_i(t,x,y)|\lesssim 1+ |y|^{\frac{h+1}{2}},&\\
|f_i(t,x,y)-f_i(t,x,y')|\lesssim (1+|y|^{h-1}+|y'|^{h-1})|y-y'|,&\\
|F_i(t,x,y)-F_i(t,x,y')|\lesssim (1+|y|^{\frac{h-1}{2}}+|y'|^{\frac{h-1}{2}})|y-y'|.&
\end{align*}
\item\label{it:positivity} {\rm (Positivity)} For all $i\in \{1,\dots,\ell\}$, there exist a measurable function  $c_i:\R_+\to \R^{d}$ such that, a.e.\ on $\R_+\times \Tor^d$ and for all $i\in \{1,\dots,\ell\}$, $y=(y_i)_{i=1}^{\ell}\in [0,\infty)^{\ell}$,
\begin{align*}
f_i(\cdot,y_1,\dots,y_{i-1},0,y_{i+1},\dots,y_{\ell})&\geq 0,\\
F_i(\cdot,y_1,\dots,y_{i-1},0,y_{i+1},\dots,y_{\ell})&=c_{i}(\cdot).
\end{align*}
\item\label{it:mass} {\rm (Mass control)}
There exist $(\alpha_i)_{i=1}^{\ell}\subseteq (0,\infty)$ and $(\m_j)_{j=1}^2\in  \R$ such that, a.s.\ for all $t\in \R_+$, $x\in \R^d$ and  $y=(y_i)_{i=1}^{\ell}\in [0,\infty)^{\ell}$,
$$
\sum_{1\leq i\leq \ell}\alpha_i  f_i (t,x,y)\leq \m_0 + \m_1 \sum_{1\leq i \leq \ell}\alpha_i  y_i.
$$
\end{enumerate}
\end{assumption}

As we will see in Definition \ref{def:solution} and Theorem \ref{t:local} below, the parameter $\delta$ in \eqref{it:integrability_exponent_main_assumption} rule the Sobolev smoothness of $v$ with integrability $q$, while $p$ its time integrability with weight $\a$.
Conditions \eqref{it:f_polynomial_growth_1}-\eqref{it:mass} in  
Assumption \ref{ass:f_polynomial_growth}  are typically employed in the study of reaction-diffusion equations, see e.g.\ \cite{P10_survey} and the references therein. The growth of the nonlinearities $(F,f)$ in 
\eqref{it:f_polynomial_growth_1} is chosen so that the mapping $v\mapsto f(\cdot,v)$ and $v\mapsto \div(F(\cdot,v))$ has the same (local) scaling (see \cite[Subsection 1.4]{AV22}).  
As shown in \cite{AV22}, the above conditions ensure the existence of solution to \eqref{eq:reaction_diffusion}, with certain properties, under mildly regularity assumption on $v_0$. 
For the reader's convenience, we summarize the one needed in this paper in Theorem \ref{t:local} below.

To introduce the definition of solutions we use the interpretation \eqref{eq:Ito_stratonovich_change} of the Stratonovich noise.
Recall that the family $(w^{k,\alpha})_{k,\alpha}$ induces an $\ell^2$-cylindrical Brownian motion $\Br_{\ell^2}$ given by 
$$
\Br_{\ell^2}(g)\stackrel{{\rm def}}{=}\sum_{k,\alpha} \int_{\R_+} g_{k,\alpha}(t)\,\dd w^{k,\alpha}_t\ \  \ \text{ for } \ \ \ 
 g=(g_{k,\alpha})_{k,\alpha}\in L^2(\R_+;\ell^2),$$ 
where $k\in \Z^d_0$ and $\alpha\in \{1,\dots,d-1\}$.
Note that $\Br_{\ell^2}$ is real valued due to \eqref{eq:complex_conjucation_preserves_w} in case $g_{k,\alpha}\equiv g_{-k,\alpha}$. 

\begin{definition}
\label{def:solution}
Assume that Assumption \ref{ass:f_polynomial_growth} holds for some $h>1$. Suppose that $\theta$ satisfies \eqref{eq:theta_normalized_symmetric}. Let $\tau$ be a stopping time with values in $[0,\infty]$. Finally, let 
$$v=(v_i)_{i=1}^{\ell}:[0,\tau)\times \O\to H^{2-\s,q}(\Tor^d;\R^\ell) \text{ be a stochastic process.} $$
\begin{itemize}
\item 
We say that $(v,\tau)$ is a \emph{local $(p,\a,\s,q)$-solution} to \eqref{eq:reaction_diffusion} if there exists a sequence of stopping times $(\tau_j)_{j\geq 1}$ for which the following hold for all $i\in \{1,\dots,\ell\}$.
\begin{itemize}
\item $\tau_j\leq \tau$ a.s.\ for all $j\geq 1$ and $\lim_{j\to \infty} \tau_j =\tau$ a.s.
\item for all $j\geq 1$, the process $\one_{[0,\tau_j]\times \O} v_i$ is progressively measurable.
\item a.s.\ for all $j\geq 1$, we have $v_i\in L^p(0,\tau_j ,w_{\a};H^{2-\s,q}(\Tor^d))$ and
\begin{equation}
\label{eq:integrability_nonlinearity}
\div(F_i(\cdot, v)) +f_i(\cdot, v)\in L^p(0,\tau_j,w_{\a};H^{-\s,q}(\Tor^d)).
\end{equation}
\item a.s.\ for all $j\geq 1$ the following holds for all $t\in [0,\tau_j]$:
\begin{equation}
\label{eq:reaction_diffusion_global_stochastic_integrated_form}
\begin{aligned}
v_i(t)-v_{0,i}
&=\int_{0}^{t} \Big[(\ellip_i+\ellip) \Delta v_i + \div(F_i(\cdot, v)) +f_i(\cdot, v)\Big]\,\dd s\\
&+\int_{0}^t\one_{[0,\tau_j]}
\Big( \big[ \theta_k(\sigma_{k,\alpha}\cdot\nabla)v_i\big] \Big)_{k,\alpha} \dd \Br_{\ell^2}(s).
\end{aligned}
\end{equation}
\end{itemize}
A sequence of stopping times $(\tau_j)_{j\geq 1}$ satisfying the above is called a \emph{localizing sequence}.
\item $(v,\tau)$ is a \emph{unique} $(p,\a,\s,q)$-solution to \eqref{eq:reaction_diffusion} if for any other local $(p,\a,\s,q)$-solution $(v',\tau')$ to \eqref{eq:reaction_diffusion} we have $v=v'$ a.e.\ on $[0,\tau\wedge \tau')\times \O$.
\item $(v,\tau)$ is a \emph{$(p,\a,\s,q)$-solution} to \eqref{eq:reaction_diffusion} if for any other local $(p,\a,\s,q)$-solution $(v',\tau')$ to \eqref{eq:reaction_diffusion} we have $\tau'\leq \tau$ a.s.\ and $v=v'$ a.e.\ on $[0,\tau')\times \O$.
\end{itemize}
\end{definition}

Note that $(p,\a,\s,q)$-solutions are unique in the class of local $(p,\a,\s,q)$-solutions and are real valued due to \eqref{eq:theta_normalized_symmetric}--\eqref{eq:complex_conjucation_preserves_w}. 
As discussed below \cite[Definition 2.3]{AV22}, if $(v,\tau)$ is a local $(p,\a,\s,q)$-solution, then the deterministic and stochastic integrals in \eqref{eq:reaction_diffusion_global_stochastic_integrated_form} are well-defined. Indeed, the deterministic integral is defined as an $H^{-\s,q}$-valued Bochner integral due to \eqref{eq:integrability_nonlinearity} and 
$
v_i\in L^p(0,\tau_j,w_{\a};H^{2-\s,q})\subseteq L^2(0,\tau_j;H^{2-\s,q})$ a.s.\ (the inclusion follows from the H\"{o}lder inequality and $\a<\frac{p}{2}-1$). Similarly, the stochastic one is defined as an $H^{1-\s,q}$-valued It\^{o}'s integral by \cite[Theorem 4.7]{NVW13}, the previous mentioned regularity of $v_i$, the smoothness of $\sigma_{k,\alpha}$ and  $\#\{k\,:\,\theta_k\neq 0\}<\infty$.

Next we recall the following result from \cite{AV22} which will be needed below. 

\begin{theorem}[Local well-posedness and regularity]
\label{t:local}
Let Assumption \ref{ass:f_polynomial_growth} be satisfied. Assume that $\s\in (1,2)$, 
\begin{equation}
\label{eq:p_a_p_s_assumption_local}
q>\frac{d(h-1)}{2} \vee  \frac{d}{d-\s}, \ \ \  p\geq \frac{2}{2-\s}\vee q \ \ \text{ and } \ \ \a= \a_{p,\s}\stackrel{{\rm def}}{=}p(1-\frac{\s}{2})-1.
\end{equation}
Then for all $v_0\in L^q(\Tor^d;\R^{\ell})$ such that $v_0\geq 0$ (component-wise), there exists a (unique) $(p,\a_{p,\s},\s,q)$-solution to \eqref{eq:reaction_diffusion} such that for all $\g\in [0,\frac{1}{2})$ 
\begin{align}
\label{eq:positivity_local}
v&\geq 0 \ \  \text{ (component-wise)\ \  a.e.\ on $[0,\tau)\times \O\times \Tor^d$,}\\
\label{eq:regularity_v_local}
v&\in H^{\g,p}_{\loc}([0,\tau),w_{\a_{p,\s}};H^{2-\s-2\g,q}(\Tor^d;\R^{\ell}))\cap C([0,\tau);B^{0}_{q,p}(\Tor^d;\R^{\ell})) \text{ a.s.\ } 
\end{align}
Moreover, the following assertions hold.
\begin{enumerate}[{\rm(1)}]
\item\label{it:local_1} {\rm (Mass control)} a.s.\ for all $t\in [0,\tau)$,
\begin{equation*}
\int_{\Tor^d} |v(t,x)|\,\dd x \leq C_{\ell,\alpha_1,\dots\alpha_\ell} \Big( e^{\m_1 t }  \int_{\Tor^d} |v_0(x)|\,\dd x+\m_0\frac{e^{\m_1t}-1}{\m_1}  \Big).
\end{equation*}
\item\label{it:local_2} {\rm (Instantaneous regularization)} 
For all $\g_1\in (0,\frac{1}{2})$ and $\g_2\in (0,1)$
$$
v\in C^{\g_1,\g_2}_{{\rm loc}}((0,\tau)\times \Tor^d;\R^{\ell}) \ \text{ a.s.\ }
$$
\item\label{it:local_3} {\rm (Blow-up criterion)} For all $q_0>\frac{d(h-1)}{2}\vee 2$ and $0<s<T<\infty$,
$$
\P\Big(s<\tau<T,\, \sup_{t\in [s,\tau)}\|v(t)\|_{L^{q_0}}<\infty\Big)=0.
$$
\end{enumerate}
\end{theorem}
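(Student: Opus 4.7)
The plan is to follow the framework of \cite{AV22} from which this result is borrowed, recasting \eqref{eq:reaction_diffusion} as an abstract stochastic evolution equation and invoking the critical-spaces machinery of \cite{AV19_QSEE_1,AV19_QSEE_2}. First I would use the Itô--Stratonovich identity \eqref{eq:Ito_stratonovich_change} to rewrite \eqref{eq:reaction_diffusion} in Itô form, with effective diffusion $(\ellip_i+\ellip)\Delta v_i$ and an Itô transport noise. I would then frame the problem on the couple $X_0=H^{-\s,q}(\Tor^d;\R^{\ell})$, $X_1=H^{2-\s,q}(\Tor^d;\R^{\ell})$, so that the weighted trace space $(X_0,X_1)_{1-\frac{1+\a}{p},p}$ embeds continuously into $L^q(\Tor^d;\R^{\ell})$ precisely under the assumptions \eqref{eq:p_a_p_s_assumption_local} (this is where the choice $\a=\a_{p,\s}$ enters). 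The nonlinearities $v\mapsto \div(F(\cdot,v))$ and $v\mapsto f(\cdot,v)$ send $X_\beta$ into $X_0$ with polynomial growth of order $h$; the subcriticality $q>\frac{d(h-1)}{2}$ ensures that the intermediate exponent $\beta$ can be chosen strictly below $1$, yielding the subcritical Lipschitz conditions of \cite{AV19_QSEE_1}.

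Next I would verify that the linear part, i.e.\ the divergence-form Laplacian $(\ellip_i+\ellip)\Delta$ perturbed by the Itô transport noise $\sqrt{c_d\ellip}\,\theta_k(\sigma_{k,\alpha}\cdot\nabla)$, has stochastic maximal $L^p(w_\a)$-regularity on $(X_0,X_1)$. This uses the identity \eqref{eq:ellipticity_noise} to check that the effective second-order operator remains strongly elliptic (with ellipticity constant at least $\ellip_i$ after subtracting the $-\ellip\Delta$ correction coming from the Itô--Stratonovich conversion), combined with the SMR results for parabolic SPDEs with smooth coefficients on $\Tor^d$ from \cite{NVW1,MaximalLpregularity,AV19_QSEE_2}. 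Applying the general local well-posedness theorem for stochastic evolution equations in critical spaces then yields a unique maximal $(p,\a_{p,\s},\s,q)$-solution with the regularity stated in \eqref{eq:regularity_v_local}.

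For positivity \eqref{eq:positivity_local}, I would first truncate $f_i$ and $F_i$ to be globally defined and globally Lipschitz, extended so that the conditions in Assumption \ref{ass:f_polynomial_growth}\eqref{it:positivity} still hold (e.g.\ by reflection through the positive cone). For the truncated system, I would test the equation for the negative part $(v_i)_-$ against a suitable approximation of the sign function and use an Itô formula argument on $\|(v_i)_-\|_{L^2}^2$: the transport noise contribution vanishes by $\div \sigma_{k,\alpha}=0$, the diffusion gives a nonpositive term, and Assumption \ref{ass:f_polynomial_growth}\eqref{it:positivity} forces the reaction and convective terms to be nonnegative on $\{v_i=0\}$, so Gronwall gives $(v_i)_-\equiv 0$. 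The truncation is then removed by local uniqueness. The mass control \eqref{it:local_1} follows from testing the equation with $\one_{\Tor^d}$: the stochastic and transport integrals vanish because of $\div\sigma_{k,\alpha}=0$ and the periodic setting, so one is reduced to the ODE-type Gronwall inequality coming from Assumption \ref{ass:f_polynomial_growth}\eqref{it:mass}. The Hölder regularization \eqref{it:local_2} is a standard parabolic bootstrap starting from the regularity \eqref{eq:regularity_v_local}, using Sobolev embeddings and internal Schauder estimates for the parabolic operator, exactly as in \cite[Section 7]{AV22}.

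The most delicate step, and the one I would treat as the main obstacle, is the blow-up criterion \eqref{it:local_3}. The strategy is to prove a Serrin-type principle: if $v$ is a $(p,\a_{p,\s},\s,q)$-solution and $\sup_{[s,\tau)}\|v(t)\|_{L^{q_0}}<\infty$ with $q_0>\frac{d(h-1)}{2}\vee 2$, then $v$ can be continued past $\tau$. The condition $q_0>\frac{d(h-1)}{2}$ places $L^{q_0}$ strictly in the subcritical regime, so interpolation inequalities analogous to \eqref{eq:interpolation_inequality_estimate} combined with stochastic maximal $L^p$-regularity applied on shifted intervals $[s',\tau)$ yield closable a priori bounds via a Young--type buckling argument. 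This, together with the compatibility of $L^{q_0}$ with the trace space of the weighted setting (one can restart the equation with $\a_{p,\s}>0$ from any time in $[s,\tau)$), allows one to invoke the blow-up characterization in \cite{AV19_QSEE_1,AV19_QSEE_2}. The subtle point is that $q_0$ need not agree with $q$, so one must verify that a bound in the weaker norm $L^{q_0}$ upgrades to a bound in the trace norm through parabolic smoothing; this is where the explicit use of \eqref{eq:regularity_v_local} together with the subcriticality of $L^{q_0}$ is essential.
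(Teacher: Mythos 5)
Your proposal is correct in outline, but it does far more than the paper does: the paper's own proof of Theorem \ref{t:local} consists of citations to \cite{AV22} (local existence and items \eqref{it:local_2}--\eqref{it:local_3} from Proposition 3.1 and Corollary 2.11(1) there, positivity \eqref{eq:positivity_local} from Theorem 2.13 and Proposition 3.5) plus a direct proof only of the mass control \eqref{it:local_1}. Your mass-control argument coincides with the paper's: integrate \eqref{eq:reaction_diffusion} over $\Tor^d$, use $\div\,\sigma_{k,\alpha}=0$ to kill the transport/martingale contribution, observe via \eqref{eq:integrability_nonlinearity} that $t\mapsto\int_{\Tor^d}v_i\,dx$ is locally absolutely continuous, and close with Assumption \ref{ass:f_polynomial_growth}\eqref{it:mass}, positivity and Gronwall. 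The rest of your plan reconstructs the cited machinery along essentially the lines it is actually proved in \cite{AV22,AV19_QSEE_1,AV19_QSEE_2}: the couple $(H^{-\s,q},H^{2-\s,q})$ with weight $\a_{p,\s}$ so that the trace space matches $L^q$, subcritical nonlinear estimates from $q>\frac{d(h-1)}{2}$, and stochastic maximal $L^p(w_\a)$-regularity for the Laplacian plus It\^o transport noise, where \eqref{eq:ellipticity_noise} guarantees that the It\^o correction is exactly $\ellip\Delta$ and the residual ellipticity is $\ellip_i>0$. So the route is the same; you simply spell out what the paper imports.

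One caution if you intend this as a self-contained proof rather than a reconstruction of the citations: the two places where your sketch is thinnest are precisely the parts the paper outsources. For the blow-up criterion \eqref{it:local_3} you correctly identify the difficulty ($q_0$ need not equal $q$, so an $L^{q_0}$ bound must be upgraded through instantaneous regularization and a restart in the weighted setting before the abstract blow-up characterization of \cite{AV19_QSEE_1,AV19_QSEE_2} applies), but this upgrade is the substantial content of \cite[Corollary 2.11]{AV22} and is not supplied by your buckling remark alone. Similarly, for positivity your truncation-plus-$(v_i)_-$ testing scheme needs a justification of the It\^o formula for $\|(v_i)_-\|_{L^2}^2$ for solutions that a priori only lie in the weighted maximal-regularity class \eqref{eq:regularity_v_local}, and a check that the extension of $(f,F)$ outside the positive cone preserves both the local Lipschitz structure and Assumption \ref{ass:f_polynomial_growth}\eqref{it:positivity}; the paper sidesteps this by invoking \cite[Theorem 2.13]{AV22}. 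Neither point is a wrong idea, but both would require genuine work to complete.
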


To check the condition $q>\frac{d}{d-\s}$ in \eqref{eq:p_a_p_s_assumption_local} it is enough to choose $\s$ close to $1$. Hence the first in \eqref{eq:p_a_p_s_assumption_local} is essentially equivalent to $q>\frac{d(h-1)}{2}$.
By \cite[Proposition 3.5]{AV22}, if the above result is applicable for two sets of exponents $(p,\s,q)$, then the corresponding solutions coincide. 

Eq.\ \eqref{eq:positivity_local} is of particular interest in applications as $v_i$ typically models a concentration. 
In \eqref{it:local_1}, $(\alpha_i,\m_j)$ are as in Assumption \ref{ass:f_polynomial_growth}\eqref{it:mass}.
Due to \eqref{it:local_3}, $\tau$ is called explosion or blow--up time of $v$.

\begin{proof}[Proof of Theorem \ref{t:local}]
The local existence part of Theorem \ref{t:local} and items \eqref{it:local_2} and \eqref{it:local_3} follow from \cite[Proposition 3.1 and Corollary 2.11(1)]{AV22} using that $L^q\embed B^0_{q,p}$ as $p\geq q$ (cf.\  \cite[Remark 2.8(c)]{AV22}). Note that the condition $p\geq \frac{2}{2-\s}$ is needed to ensure $\a_{p,\s}\geq 0$.

The positivity of $v$, i.e.\ \eqref{eq:positivity_local} follows from \cite[Theorem 2.13 and Proposition 3.5]{AV22} (see also \cite[Section 4]{Kry13} for the linear case).
It remains to prove \eqref{it:local_1}. Integrating \eqref{eq:reaction_diffusion} over $\Tor^d$ and using that $\int_{\Tor^d}(\sigma_{k,\alpha}\cdot\nabla) v_i\,\dd x=0$ as $\div\,\sigma_{k,\alpha}=0$, we have 
\begin{equation}
\label{eq:integral_balance_mass}
\int_{\Tor^d} v_i(t,x)\,\dd x =
\int_{\Tor^d} v_{i,0}(x)\,\dd x+\int_0^t \int_{\Tor^d} f_i(s,v)\,\dd x\,\dd s\ \ \text{ a.s.\ for all }t\in [0,\tau).
\end{equation}
Recall that $(v,\tau)$ is a $(p,\a_{p,\s},\s,q)$--solution and therefore \eqref{eq:integrability_nonlinearity} holds.  The latter and  \eqref{eq:integral_balance_mass} show that the mapping $ t \mapsto\int_{\Tor^d} v_i(t,x)\,\dd x$ is a.s.\ locally absolutely continuous on $[0,\tau)$. Let $(\alpha_i,\m_j)$ be as in Assumption \ref{ass:f_polynomial_growth}\eqref{it:mass} and set $M(t)\stackrel{{\rm def}}{=}\sum_{1\leq i\leq \ell} \alpha_i\int_{\Tor^d}  v_i(t,x)\,\dd x$. 
Hence, differentiating, 
multiplying by $\alpha_i$ \eqref{eq:integral_balance_mass}, and then summing over $i\in \{1,\dots,\ell\}$, we obtain 
$$
 \tfrac{\dd}{\dd t}M(t)\leq \m_0 + \m_1 M(t)\ \ \text{ a.s.\ for a.a.\ $t\in (0,\tau)$. }
$$
The Grownall lemma, $\min_{1\leq i\leq \ell} \alpha_i>0$ and \eqref{eq:positivity_local} readily yields \eqref{it:local_1}.
\end{proof}

Before going further let us discuss the role of $\s$ in Theorem \ref{t:local} (in practice, one chooses $\s$ close to $ 1$). 
Note that 
the case $\s=1$ is \emph{not} included in the result as it would lead to a weight $w_{\a_{p,\s}}\not\in A_{p/2}$ as $\a_{p,\s}=\a_{p,1}=\frac{p}{2}-1$ (here $A_{r}$ denotes the $r$-th Muckenhoupt class, see e.g.\ \cite{Grafakos1}). Recall that the $A_{p/2}$-setting are the natural one for SPDEs, see e.g.\ \cite[Section 7]{AV19} or \cite{LV21_singular}. 
Finally, we note that the choice of the value $\a_{p,\s}$ is optimal. Indeed the (space-time) Sobolev index of the path space $H^{\g,p}(0,T,w_{\a_{p,\s}};H^{2-\s-2\g,q})$ is equal to the one of the space of initial data $L^q$.

For later use we collect some further observations in the following

\begin{remark}[Further regularity results]\
\label{r:regularity_paths}
\begin{enumerate}[{\rm(a)}]
\item\label{it:regularity_paths_1}
Theorem \ref{t:local}\eqref{it:local_2} can be improved under additional smoothness assumptions on $(f,F)$, see \cite[Theorem 4.2]{AV22}. For instance, if $(f,F)$ are $x$--independent and smooth in $v$, then   
$$
v\in C^{\g,\infty}_{\loc}((0,\tau)\times \Tor^d;\R^{\ell}) \ \ \text{ a.s.\ for all }\g\in[0,\tfrac{1}{2}). 
$$
\item\label{it:regularity_paths_2}
The regularity near $t=0$ in \eqref{eq:regularity_v_local} can be improved under additional assumptions on $v_0$. In particular, by \cite[Proposition 3.1 and 3.5]{AV22}, if $v_0\in B^{1-2\frac{1+\a}{p}}_{q,p}$ for some $\a\in [0,\frac{p}{2}-1)$, then the $(p,\a_{p,\s},\s,q)$--solution $(v,\tau)$ of Theorem \ref{t:local} is also a $(p,\a,1,q)$--solution and it satisfies
$$
v\in H^{\g,p}_{\loc}([0,\tau),w_{\a};H^{1-2\g,p}(\Tor^d;\R^{\ell})) \cap C([0,\tau);B^{1-2\frac{1+\a}{p}}_{q,p}(\Tor^d;\R^{\ell}))\text{ a.s.\ for all }\g\in [0,\tfrac{1}{2}).
$$
Since $\a<\frac{p}{2}-1$ implies $B^{1-2\frac{1+\a}{p}}_{q,p}(\Tor^d;\R^{\ell})\embed L^q(\Tor^d;\R^{\ell})$, we also have $v\in C([0,\tau);L^q(\Tor^d;\R^{\ell}))$.
\end{enumerate}
\end{remark}

In an attempt to make this work as independent as possible from \cite{AV22}, we use Theorem \ref{t:local}\eqref{it:local_1}--\eqref{it:local_3} only to prove Theorem \ref{t:delayed_blow_up_t_infty}, while Theorem \ref{t:delayed_blow_up} only uses the local well-posedness of \eqref{eq:reaction_diffusion}.
A careful inspection of the proof of Theorem \ref{t:delayed_blow_up} shows that \eqref{eq:positivity_local} is not used (however, it will be needed for solutions of its \emph{deterministic} version, see Proposition \ref{prop:global_high_viscosity}).
Finally, Remark \ref{r:regularity_paths}\eqref{it:regularity_paths_1} (resp.\ \eqref{it:regularity_paths_2}) is used in Theorem \ref{t:chemical_reaction_intro} (resp.\ Proposition \ref{prop:local_cut_off} below).

\subsection{Main results}
\label{ss:main_results}
In this subsection we state the main results of this paper. To this end, let us introduce the following deterministic version of \eqref{eq:reaction_diffusion} with increased diffusion: 
\begin{equation}
\label{eq:reaction_diffusion_det_statements}
\left\{
\begin{aligned}
\partial_t \vdi &=(\ellip+\ellip_i)\Delta \vdi +\big[ \div(F_i(\cdot,\vd)) + f_i(\cdot,\vd)\big], & \text{ on }&\Tor^d,\\
\vdi(0)&=v_{0,i}, & \text{ on }&\Tor^d,
\end{aligned}\right.
\end{equation}
where $\ellip>0$ is as in \eqref{eq:reaction_diffusion}.
The notion of $(p,\a,\s,q)$-solution to \eqref{eq:reaction_diffusion_det_statements} is as in Definition \ref{def:solution}. Compared to Definition \ref{def:solution}, for \eqref{eq:reaction_diffusion_det_statements}, we can use the full positive $A_p$-range $\a\in [0,p-1)$ as the problem \eqref{eq:reaction_diffusion_det_statements} is deterministic. To economize the notation we say that $v$ is a $(p,q)$-solution to  \eqref{eq:reaction_diffusion_det_statements} in case is a $(p,\a,\s,q)$-solution to such problem with $\s=1$ and $\a=\a_{p,\s}=\a_{p,1}$.

To apply the next result one needs to fix five parameters $(N,T,\varepsilon,\ellip_0,r)$. Roughly speaking, $N$ bounds the size of the initial data $v_0$, $T$ is the time horizon where our solutions lives, $\varepsilon$ bounds the size of the event where the solution $v$ might explode, $\ellip_0$ is the lower bound for the increased diffusion and $r$ is the time integrability exponent in which we measure the convergence of \eqref{eq:reaction_diffusion} to the deterministic problem \eqref{eq:reaction_diffusion_det_statements} with increased diffusion.

\begin{theorem}[Delayed blow-up and weak enhanced diffusion]
\label{t:delayed_blow_up}
Let Assumption \ref{ass:f_polynomial_growth} be satisfied. 
Let $(q,p,\a_{p,\s})$ be as in  \eqref{eq:p_a_p_s_assumption_local} for some $\s\in  (1,2)$. 
Fix $
N\geq 1$, $\varepsilon\in (0,1)$,  $T,\ellip_0\in (0,\infty)$ and $r\in (1,\infty)$.  
Then  there exist 
\begin{equation*}
\ellip\geq \ellip_0  \quad  \text{ and } \quad  \theta\in \ell^2(\Z_0^d)  \ \  \text{ with } \ \  \#\{\theta_k\neq 0\}<\infty
\end{equation*}
such that, for all 
\begin{equation}
\label{eq:data_L_q_N_statement}
v_0\in L^{q}(\Tor^d;\R^{\ell})  \ \text{ satisfying } \ v_0\geq 0 \text{ on }\Tor^d\ \text{ and } \ 
\|v_0\|_{L^{q}(\Tor^d;\R^{\ell})}\leq N,
\end{equation}
the unique $(p,\a_{p,\s},\s,q)$-solution $(v,\tau)$ 
to \eqref{eq:reaction_diffusion} provided by Theorem \ref{t:local} satisfies the following.
\begin{enumerate}[{\rm(1)}]
\item\label{it:delayed_blow_up} {\em (Delayed blow-up)}
The solution $v$ exists up to time $T$ with high probability:
$$
\P(\tau\geq T)>1-\varepsilon.
$$
\item\label{it:enhanced_dissipation} {\em (Weak enhanced diffusion)} There exists a (unique) $(p,q)$-solution $\vd$ to \eqref{eq:reaction_diffusion_det_statements} on $[0,T]$, and the solutions $v$ and $\vd$ are close in the following sense:
\begin{align*}
\P\big(\tau\geq T,\, \|v-\vd\|_{L^r(0,T;L^q(\Tor^d;\R^{\ell}))}\leq\varepsilon\big)
&>1-\varepsilon.
\end{align*}
\end{enumerate}
\end{theorem}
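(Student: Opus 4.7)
The plan is to combine a deterministic enhanced-diffusion estimate with a cut-off scaling limit, along the lines sketched in Subsection \ref{ss:role_criticality}. First, via Proposition \ref{prop:global_high_viscosity}, I would choose a threshold $\ellip_{\ast} \geq \ellip_0$, depending only on $N, T, r$ and the structural constants of $f, F$, such that for every $\ellip \geq \ellip_{\ast}$ and every $v_0$ satisfying \eqref{eq:data_L_q_N_statement}, the deterministic enhanced-diffusion system \eqref{eq:reaction_diffusion_det_statements} admits a unique $(p,q)$-solution $\vd$ on $[0,T]$ with a \emph{uniform} bound $\|\vd\|_{L^r(0,T;L^q(\Tor^d;\R^{\ell}))} \leq M$. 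Such a bound rests on the buckling scheme exhibited in \eqref{eq:inequality_interpolation_varepsilon}: a large viscosity $\ellip$ lets maximal $L^p$-regularity absorb the superlinear reaction term, while the mass control from Assumption \ref{ass:f_polynomial_growth}\eqref{it:mass} controls the lower-order norms uniformly in the data.

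Second, fix $R > M$ and consider the cut-off SPDE obtained by multiplying the nonlinearities $f_i$ and $F_i$ in \eqref{eq:reaction_diffusion} by the scalar weight $\phi_{R,r}(\cdot,v)$. Theorem \ref{t:global_cut_off} produces a global-in-time $(p,\a_{p,\s},\s,q)$-solution $v^{R,\theta}$ of the cut-off problem on $[0,T] \times \O$ for every admissible $\theta$, together with uniform $L^r(L^q)$ and Moser-type energy estimates. Since $R > M$, the cut-off is inactive along the deterministic trajectory, so $\vd$ is also the unique solution of the corresponding \emph{cut-off deterministic} problem, which is exactly the target of the scaling limit below.

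Third, I would invoke the scaling limit Theorem \ref{t:weak_convergence}: for any sequence $(\theta^{(n)})_{n \geq 1}$ with $\#\{k : \theta^{(n)}_k \neq 0\} < \infty$ and $\|\theta^{(n)}\|_{\ell^{\infty}} / \|\theta^{(n)}\|_{\ell^{2}} \to 0$, the cut-off solutions $v^{R,\theta^{(n)}}$ converge in probability to $\vd$ in $L^r(0,T; L^q(\Tor^d;\R^{\ell}))$, uniformly for $v_0$ in the $L^q$-ball of radius $N$. Pick $n$ so large that, uniformly over such $v_0$,
\begin{equation*}
\P\!\left(\|v^{R,\theta^{(n)}} - \vd\|_{L^r(0,T;L^q)} > \varepsilon\right) < \varepsilon,
\end{equation*}
and set $\theta := \theta^{(n)}$, $\ellip := \ellip_{\ast}$. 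On the event $E := \{\|v^{R,\theta} - \vd\|_{L^r(0,T;L^q)} \leq \varepsilon\}$ the triangle inequality gives $\|v^{R,\theta}\|_{L^r(0,T;L^q)} \leq M + \varepsilon < R$, so $\phi_{R,r}(\cdot, v^{R,\theta}) \equiv 1$ on $E$ and $v^{R,\theta}$ solves the original SPDE \eqref{eq:reaction_diffusion} there. The local uniqueness in Theorem \ref{t:local} then forces $\tau \geq T$ and $v = v^{R,\theta}$ on $E$, giving both \eqref{it:delayed_blow_up} and \eqref{it:enhanced_dissipation}.

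The principal obstacle is the scaling limit step itself: tightness of $(v^{R,\theta^{(n)}})_n$ must be proved in a space that compactly embeds into $L^r(0,T;L^q)$, and the weak limit identified as the deterministic enhanced-diffusion solution, \emph{uniformly} in $v_0$. Because the highly oscillating coefficients $\sigma_{k,\alpha}$ can only be handled through their ellipticity \eqref{eq:ellipticity_noise}, the argument must proceed via a Moser-type iteration (Theorem \ref{t:global_cut_off}\eqref{it:global_cut_off_2}), the subcritical interpolation \eqref{eq:interpolation_inequality_estimate}, and the It\^o--Stratonovich correction \eqref{eq:Ito_stratonovich_change}, which is precisely the mechanism converting stochastic oscillations into the effective diffusion $\ellip \Delta$ in the limit.
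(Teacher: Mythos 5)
Your outline reproduces the strategy of the paper's Proposition \ref{prop:delayed_blow_up_smooth} (high-viscosity deterministic bound via Proposition \ref{prop:global_high_viscosity}, cut-off SPDE via Theorem \ref{t:global_cut_off}, scaling limit via Theorem \ref{t:weak_convergence}, then a stopping-time/maximality argument to remove the cut-off on a high-probability event), but as written it does not prove Theorem \ref{t:delayed_blow_up} for the data class \eqref{eq:data_L_q_N_statement}. The obstruction is that both Theorem \ref{t:global_cut_off} and the scaling limit Theorem \ref{t:weak_convergence} require the initial data of the \emph{stochastic} cut-off problem to lie in $B^{1-2\frac{1+\a}{p}}_{q,p}$, which has strictly positive smoothness since $\a<\frac{p}{2}-1$; a general $v_0\in L^q$ is not admissible there. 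So your ``second'' and ``third'' steps only apply to smooth data, and the theorem for $v_0\in L^q$ does not follow from them directly. The paper closes this gap by an approximation argument: it first proves the statement for Besov-smooth data, including the extra uniform maximal-regularity bound \eqref{eq:uniform_estimate_smooth_case_H_s_q}, and then passes to $L^q$ data using a \emph{strengthened} cut-off $\Phi_{K,r,\s,\eta}$ (which also truncates the $L^p(w_{\a_{p,\s}};H^{2-\s-\eta,q})$-norm) together with the Lipschitz estimate of Lemma \ref{l:interp_inequality_final_one} and the stability estimate of Lemma \ref{l:stability_estimate_cut_off}; the weaker cut-off $\phi_{R,r}$ alone is not enough to estimate differences $f(\cdot,v^{(1)})-f(\cdot,v^{(2)})$ (cf.\ Remark \ref{r:blow_up_criteria_necessity}), so the density argument genuinely needs this second layer.

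A second, smaller gap: you assert that Theorem \ref{t:weak_convergence} gives convergence ``uniformly for $v_0$ in the $L^q$-ball of radius $N$.'' The theorem is stated for a single sequence of data converging weakly to a fixed limit, and uniformity over the ball is not automatic. In the paper it is obtained by contradiction: if the convergence were not uniform, one extracts a sequence $v_0^{(n)}\rightharpoonup v_0$ by weak compactness of the ball, uses the continuity of the deterministic solution map (Proposition \ref{prop:global_high_viscosity}\eqref{it:sol_operator_continuous} and Proposition \ref{prop:weak_to_strong_convergence_deterministic_problem}) and the weak--strong uniqueness of Corollary \ref{cor:uniqueness} to identify the deterministic limits, and then contradicts Theorem \ref{t:weak_convergence} applied to that sequence. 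Your final identification step (cut-off inactive on the event $E$, hence $\tau\geq T$ and $v=v^{R,\theta}$ there) is correct in spirit but should be formalized through the stopping time $\tau_*=\inf\{t:\|v^{R,\theta}\|_{L^r(0,t;L^q)}\geq R\}$ and the maximality of $(v,\tau)$, as in Step 1 of the paper's proof of Proposition \ref{prop:delayed_blow_up_smooth}; one also needs the compatibility of the $(p,\a,1,q)$- and $(p,\a_{p,\s},\s,q)$-solution notions (Remark \ref{r:regularity_paths}\eqref{it:regularity_paths_2}) to compare $v^{R,\theta}$ with the solution provided by Theorem \ref{t:local}.
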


It is interesting to note that the parameters $(\ellip,\theta)$ are independent of $v_0$ satisfying \eqref{eq:data_L_q_N_statement} (however, they may depend on $N$). The choice of $(\ellip,\theta)$ is not unique. Indeed, as the proof of Theorem \ref{t:delayed_blow_up} shows, one can always enlarge $\ellip$ still keeping the assertions \eqref{it:delayed_blow_up}--\eqref{it:enhanced_dissipation} true. The same is also valid for Theorem \ref{t:delayed_blow_up_t_infty} below.
Other possible choices of $\theta$ will be given in Remark \ref{r:choice_theta} below.
Finally, let us remark that $\vd$ in Theorem \ref{t:delayed_blow_up}\eqref{it:enhanced_dissipation} is actually a $(p,q)$--solution to \eqref{eq:reaction_diffusion_det_statements} given by Proposition \ref{prop:global_high_viscosity} below. In particular $\vd\in L^{\infty}(0,T;L^q(\Tor^d;\R^{\ell}))$ for all $T<\infty$.

In case of exponentially decreasing mass we can allow $T=\infty$ in Theorem \ref{t:delayed_blow_up}.  
By Theorem \ref{t:local}\eqref{it:local_1}, exponentially decreasing mass happens if Assumption \ref{ass:f_polynomial_growth}\eqref{it:mass} holds with $\m_0=0$ and $\m_1<0$.
To apply the following result one fixes five parameters $(N,\varepsilon,\ellip_0,r,q_0)$. Compared to Theorem \ref{t:delayed_blow_up}, the time horizon is $T=\infty$ and we have an additional parameter $q_0<q$ for the space integrability in the weak enhanced diffusion assertion. 

\begin{theorem}[Global existence and weak enhanced diffusion]
\label{t:delayed_blow_up_t_infty}
Let Assumption \ref{ass:f_polynomial_growth} be satisfied.  
Suppose that Assumption \ref{ass:f_polynomial_growth}\eqref{it:mass} with $\m_0=0$ and $\m_1<0$. 
Let $(q,p,\a_{p,\s})$ be as in  \eqref{eq:p_a_p_s_assumption_local}  for some $\s\in  (1,2)$.
Fix $
N\geq 1$, $\varepsilon\in (0,1)$,  $\ellip_0\in (0,\infty)$, $r\in (1,\infty)$ and $q_0\in (1,q)$. 
Then  there exist 
\begin{equation*}
\ellip\geq \ellip_0  \quad  \text{ and } \quad  \theta\in \ell^2(\Z_0^d)  \ \  \text{ with } \ \  \#\{\theta_k\neq 0\}<\infty
\end{equation*}
such that, for all 
\begin{equation}
\label{eq:data_L_q_N_statement_global}
v_0\in L^{q}(\Tor^d;\R^{\ell})  \ \text{ satisfying } \ v_0\geq 0 \text{ on }\Tor^d\ \text{ and } \ 
\|v_0\|_{L^{q}(\Tor^d;\R^{\ell})}\leq N,
\end{equation}
the unique $(p,\a_{p,\s},\s,q)$-solution $(v,\tau)$  
to \eqref{eq:reaction_diffusion} provided by Theorem \ref{t:local} satisfies the following.
\begin{enumerate}[{\rm(1)}]
\item\label{it:delayed_blow_up_global} {\em (Global existence)} The solution $v$ is \emph{global} in time with high-probability:
$$
\displaystyle{\P(\tau= \infty)>1-\varepsilon.}
$$
\item\label{it:enhanced_dissipation_global} {\em (Weak enhanced diffusion)} There exists a (unique) $(p,q)$-solution $\vd$ to \eqref{eq:reaction_diffusion_det_statements} on $[0,\infty)$ and
\begin{align*}
\P\big(\tau= \infty,\, \|v-\vd\|_{L^{r}(\R_+;L^{q_0}(\Tor^d;\R^{\ell}))}\leq \varepsilon\big)>1-\varepsilon.
\end{align*}
\end{enumerate}
\end{theorem}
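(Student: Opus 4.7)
\emph{Proof plan.} The strategy is to upgrade the finite-horizon Theorem \ref{t:delayed_blow_up} to $T=\infty$ by iterating on consecutive windows, using the exponential mass decay from Theorem \ref{t:local}\eqref{it:local_1} (now available since $\m_0=0$, $\m_1<0$) to drive the $L^q$-norm at the start of each window to zero geometrically, and converting the resulting uniform-in-time $L^q$-bound into $\tau=\infty$ via the blow-up criterion Theorem \ref{t:local}\eqref{it:local_3} (applicable with $q_0=q$ since $q>\tfrac{d(h-1)}{2}\vee 2$). The $L^{q_0}$-integrability of the enhanced-diffusion bound (with $q_0<q$) then comes from combining this mass decay with uniform $L^q$-control and the interpolation $\|u\|_{L^{q_0}}\lesssim\|u\|_{L^1}^{\lambda}\|u\|_{L^q}^{1-\lambda}$ for some $\lambda=\lambda(q,q_0,d)\in(0,1)$.

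\emph{Deterministic side.} First I would establish that for $\ellip$ sufficiently large the problem \eqref{eq:reaction_diffusion_det_statements} admits a unique global $(p,q)$-solution $\vd$ satisfying
\begin{equation*}
\sup_{t\ge 0}\|\vd(t)\|_{L^q}\lesssim 1,\qquad \|\vd(t)\|_{L^{q_0}}\lesssim e^{-ct}\ \text{ for }\ t\ge 1,
\end{equation*}
uniformly for $v_0\ge 0$ with $\|v_0\|_{L^q}\le N$. The $L^q$-bound follows from a Moser-type argument applied to the energy balance \eqref{eq:energy_Lq_balance} together with the deterministic exponential mass decay; the $L^{q_0}$-bound then follows from the interpolation above. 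Pick $T^\ast$ so that $\|\vd(T^\ast)\|_{L^q}\le N/4$ uniformly in such $v_0$.

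\emph{Stochastic iteration and tail.} Apply Theorem \ref{t:delayed_blow_up} with horizon $T^\ast$, data size $N$, viscosity lower bound equal to the $\ellip$ above, and accuracy $\eta_1\ll\varepsilon$ in $L^r(0,T^\ast;L^q)$. This yields $(\ellip,\theta)$ and an event $A_1$ with $\P(A_1)\ge 1-\eta_1$ on which $\tau>T^\ast$ and $\|v-\vd\|_{L^r(0,T^\ast;L^q)}\le\eta_1$. Averaging in time over $[T^\ast/2,T^\ast]$, Chebyshev plus the instantaneous regularization in Theorem \ref{t:local}\eqref{it:local_2} produce a stopping time $\xi_1\in[T^\ast/2,T^\ast]$ such that $\|v(\xi_1)\|_{L^q}\le\|\vd(\xi_1)\|_{L^q}+O(\eta_1^{1-1/r})\le N/2$ on $A_1$. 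Uniqueness of $(p,\a_{p,\s},\s,q)$-solutions (Definition \ref{def:solution}) and the strong Markov property let one restart at $\xi_1$ with data of size $N/2$ and iterate; setting $B:=\bigcap_n A_n$ gives $\xi_n\nearrow\infty$ and $\|v(\xi_n)\|_{L^q}\le N/2^n$ on $B$, so that $\sup_t\|v(t)\|_{L^q}<\infty$ on $B$ and Theorem \ref{t:local}\eqref{it:local_3} yields $\tau=\infty$. For the enhanced-diffusion estimate split $\R_+=[0,T^\ast]\cup[T^\ast,\infty)$: on the finite part use the embedding $L^r(0,T^\ast;L^q)\hookrightarrow L^r(0,T^\ast;L^{q_0})$ and Theorem \ref{t:delayed_blow_up}; on the tail both $\vd$ and (on $B$) $v$ have uniformly bounded $L^q$-norm and $L^1$-mass decaying like $e^{\m_1 t}$ (for $v$ by Theorem \ref{t:local}\eqref{it:local_1}), so the interpolation above gives $\|v(t)-\vd(t)\|_{L^{q_0}}\lesssim e^{-c't}$ and makes the tail contribution $\le\varepsilon^r/2$ when $T^\ast$ is large.

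\emph{Main obstacle.} The chief difficulty is that Theorem \ref{t:delayed_blow_up} fixes a \emph{single} pair $(\ellip,\theta)$ per application, so naively iterating with the same $(\ellip,\theta)$ produces a constant error $\eta$ per step and non-summable failure probabilities. To achieve $\sum_n\eta_n<\varepsilon$ one must show that the failure probability at step $n$ genuinely shrinks with the (smaller) initial size $\|v(\xi_n)\|_{L^q}\le N/2^n$. I expect this to follow by re-inspecting the cut-off plus scaling-limit scheme underlying Theorem \ref{t:delayed_blow_up}: the $L^q$-subcriticality $q>\tfrac{d(h-1)}{2}$ explained in Subsection \ref{ss:role_criticality} provides the play parameter $\varepsilon>0$ in \eqref{eq:inequality_interpolation_varepsilon}, and a careful tracking of the Young-inequality buckling together with the cut-off radius $R$ should give an exceptional probability scaling like a positive power of the initial $L^q$-norm, which then sums geometrically across the iteration.
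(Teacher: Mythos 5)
There is a genuine gap, and it is exactly the obstacle you flag at the end: your iteration requires the failure probability of Theorem \ref{t:delayed_blow_up} to shrink (summably) with the size of the data at the start of each window, but no such rate is available. The exceptional probability in Theorem \ref{t:delayed_blow_up} does not come from the buckling/Young-inequality estimates you propose to re-track; it comes from the scaling-limit statement (Theorem \ref{t:weak_convergence}), whose convergence in probability is obtained by a purely qualitative compactness argument (uniform bounds, Prokhorov, and uniqueness of the limit point via Corollary \ref{cor:uniqueness}), so for a fixed $(\ellip,\theta^{(n)})$ one only knows ``there exists $n_*$'' with no quantitative dependence on $\|v_0\|_{L^q}$, and a fortiori no positive power of the initial norm. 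Consequently each window contributes an error of fixed size for the fixed pair $(\ellip,\theta)$, and $\sum_n\eta_n<\varepsilon$ cannot be arranged within the paper's framework; making the scaling limit quantitative would be a substantially different (homogenization-rate) problem. There are also two secondary problems in the iteration itself: (i) your halving step needs $\|\vd(T^\ast)\|_{L^q}\le N/4$, i.e.\ decay of the deterministic solution in the \emph{full} $L^q$-norm, whereas Lemma \ref{l:global_high_viscosity_infty} only gives decay in $L^{q_0}$ for $q_0<q$ (the $L^q$-bound is merely uniform, with a constant that need not be $\le N/4$); (ii) restarting Theorem \ref{t:delayed_blow_up} at the stopping times $\xi_n$ with $\F_{\xi_n}$-measurable data is not covered by its statement, which is formulated for deterministic initial data at time $0$.

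The paper avoids the iteration altogether: Theorem \ref{t:delayed_blow_up} is applied exactly once, on a horizon $[0,T+1]$ chosen so that, by Lemma \ref{l:global_high_viscosity_infty}, the deterministic solution is already small in $L^{q_1}$ ($q_1<q$, $q_1>\frac{d(h-1)}{2}\vee 2$) on $[T,T+1]$; combined with the enhanced-diffusion estimate and path continuity this produces a stopping time $\g\in[T,T+1]$ at which $\|v(\g)\|_{L^{q_1}}\le \eta$ with high probability. From there one invokes Proposition \ref{prop:small_global_q_0}: for the \emph{stochastic} equation itself, small $L^{q_1}$-data at a late time forces global existence and an exponentially decaying tail, because the transport noise drops out of the $L^{q_1}$-energy balance (divergence-free $\sigma_{k,\alpha}$), the mass decays exponentially ($\m_0=0$, $\m_1<0$), and the same buckling argument as in Step 3 of Proposition \ref{prop:global_high_viscosity} closes with smallness of the data playing the role that large diffusivity played before; the blow-up criterion Theorem \ref{t:local}\eqref{it:local_3} then yields $\tau=\infty$ on that event. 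In other words, after the first window the enhanced-diffusion mechanism (and hence any further choice of $(\ellip,\theta)$ or scaling limit) is never used again, which is what removes the need for summable failure probabilities. If you want to salvage your scheme, the missing ingredient is precisely a small-data global result of the type of Proposition \ref{prop:small_global_q_0}, not a rate for the scaling limit.
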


The parameters $(\ellip,\theta)$ in Theorem \ref{t:delayed_blow_up_t_infty} are independent of $v_0$ satisfying \eqref{eq:data_L_q_N_statement_global}. Moreover, we remark that $\vd$ in item \eqref{it:enhanced_dissipation_global} is  as in Lemma \ref{l:global_high_viscosity_infty} and therefore $\vd\in L^{\zeta}(\R_+;L^{q_0}(\Tor^d;\R^{\ell}))$ for all $\zeta<\infty$.
Let us conclude this subsection with several remarks.

\begin{remark}[Refined weak enhanced diffusion]
\label{r:refined_enhanced_dissipation}
As the proof of Theorem \ref{t:delayed_blow_up}  (resp.\  Theorem \ref{t:delayed_blow_up_t_infty}) shows that the norm $L^r(0,T;L^q)$ (resp.\ $L^q(\R_+;L^{q_0})$) in Theorem \ref{t:delayed_blow_up}\eqref{it:enhanced_dissipation} (resp.\ \ref{t:delayed_blow_up_t_infty}\eqref{it:enhanced_dissipation_global}) 
can be replaced by $C([0,T];H^{-\g})\cap L^2(0,T;H^{1-\g})\cap L^r(0,T;L^q)$ (resp.\ $C([0,\infty);H^{-\g})\cap L^r(\R_+;L^{q_0})$) where $\g>0$. In such a case the parameters $(\theta,\ellip)$ also depend on $\g>0$.
\end{remark}

\begin{remark}[On the choice of $\theta$]
\label{r:choice_theta}
The proof of Theorems \ref{t:delayed_blow_up} and \ref{t:delayed_blow_up_t_infty} also reveals other possible choices of $\theta$. Indeed, for each sequence $(\theta^{(n)})_{n\geq 1}\subseteq \ell^2(\Z^d_0)$ satisfying \eqref{eq:theta_normalized_symmetric} for all $n\geq 1$ and 
\begin{equation}
\label{eq:lim_l_infty_zero}
\lim_{n\to \infty} \|\theta^{(n)}\|_{\ell^{\infty}(\Z^d_0)}=0,
\end{equation}
there exists $n_*>0$ sufficiently large such that the assertions of Theorems \ref{t:delayed_blow_up}--\ref{t:delayed_blow_up_t_infty} hold for all $\theta=\theta^{(n)}$ with $n\geq n_*$ (cf.\ Proposition \ref{t:weak_convergence} below). As in \cite{FL19}, an example is given by 
\begin{equation}
\label{eq:choice_theta}
\theta^{(n)} =\frac{\Theta^{(n)}}{\|\Theta^{(n)}\|_{\ell^2}} , \quad \text{ where }\quad 
\Theta^{(n)}(k)\stackrel{{\rm def}}{=}\one_{\{n\leq |k|\leq 2n\}} \frac{1}{|k|^{\g}}  \   \text{ for $\g>0$ and }k\in \Z_0^d.
\end{equation}
The above example also satisfies $\#\{k\,:\,\theta_k^{(n)}\neq 0\}<\infty$ for all $n$. Interestingly, the sequence \eqref{eq:choice_theta} satisfies $\supp\theta^{(n)}\subseteq \{n\leq k\leq 2n\}$ and therefore it only acts on high Fourier modes. Moreover, as we may enlarge $n$, such frequencies can be chosen as large as needed. 
We will employ such sequence later on, but of course other choices are possible, see e.g.\ \cite[Remark 5.7]{FL19} and \cite[Remark 1.8]{FGL21_mixing} for Kraichnan's type noise. 
\end{remark}

\begin{remark}[Inhomogeneous diffusion/deterministic transport]
\label{r:general_second_order_operator}
The operator $\ellip_i\Delta v$ in \eqref{eq:reaction_diffusion} can be replaced by a general second order operator $\div(a_i\cdot\nabla v_i)+ (b_i\cdot\nabla) v_i + \div(B_i v_i)+ c_i v_i$ where the (deterministic) coefficients $(a_i,b_i,B_i,c_i)$ are $\alpha$--H\"{o}lder continuous with $\alpha>0$ and $a_i$ is a bounded elliptic matrix with ellipticity constant $\ellip_i>0$. In such a case, the results of Theorems \ref{t:delayed_blow_up}--\ref{t:delayed_blow_up_t_infty} still hold if $\s<1+\alpha$ (this restriction comes from the application of \cite{AV21_SMR_torus} in Theorem \ref{t:global_cut_off} below). 
\end{remark}

\begin{remark}[The case of constant mass]
The assumptions $\m_0=0$ and $\m_1<0$ in Theorem \ref{t:delayed_blow_up_t_infty} cannot be removed in general.
 However, in case of constant mass (i.e.\ $\m_0=\m_1=0$ in Assumption \ref{ass:f_polynomial_growth}\eqref{it:mass}), we expect that Theorem \ref{t:delayed_blow_up_t_infty} still holds. Indeed, it is often true that solutions to the deterministic version of \eqref{eq:reaction_diffusion} converges exponentially to a steady state $v_{\infty}$, see e.g.\ \cite{AMT00_convergence,DF06,DiFFM08_entropy,DFT17,FT18_normalized_entropy,DJT20} for some examples.
In this scenario, Theorem \ref{t:delayed_blow_up_t_infty} concerns the case $v_{\infty}=0$. However, compared to the references before, here we do not assume any global existence a--priori and in particular any assumption on $h$. 
It would be interesting to see if entropy methods, as used in the above references, can allow us to extend 
Theorem \ref{t:delayed_blow_up_t_infty} in case $\m_0=\m_1=0$. 
\end{remark}

\begin{remark}[Navier-Stokes equations]
\label{r:NS_3D}
It is natural to ask for similar results for Navier-Stokes equations perturbed by transport noise. 
Note that the equations considered in \cite{FL19} are \emph{not} equivalent to those (see \cite[Subsection 1.2 and Appendix 2]{FL19}).  
Although the $L^p(L^q)$--setting for the Navier-Stokes equations with transport noise has been developed in \cite{AV20_NS}, at the moment an extension of Theorems \ref{t:delayed_blow_up}--\ref{t:delayed_blow_up_t_infty} to such problem seems out of reach. Among others, one of the main issue seems the extension of Theorem \ref{t:global_cut_off}\eqref{it:global_cut_off_2} below. To prove the latter we exploit the fact that the nonlinearities $(f(\cdot,v),\div(F(\cdot,v)))$ and the transport noise are \emph{local} in $v$. The latter fact is not true for the Navier-Stokes equations due to the Helmholtz projection.
\end{remark}

\subsection{Strategy of the proofs}
\label{ss:strategy_intro}
In this subsection we summarize the strategy in the proof of our main results. It consists of three main steps:
\begin{enumerate}[{\rm(1)}]
\item\label{it:global_scheme_proof} Global existence and $\theta$--uniform $L^{\infty}_t(L^q_x)$--estimates for \eqref{eq:reaction_diffusion} with cut-off.
\item\label{it:high_viscosity_scheme_proof} Global existence for the deterministic version of \eqref{eq:reaction_diffusion} for high diffusivity. 
\item\label{it:scaling_limit_scheme_proof} Scaling limit for \eqref{eq:reaction_diffusion} with cut-off. 
\end{enumerate}

Roughly, the strategy follows the one of  \cite{FL19,FGL21}. 
 However, as commented in Subsection \ref{ss:role_criticality},  to handle the arbitrary large growth of the nonlinearities in \eqref{eq:reaction_diffusion}, in \eqref{it:global_scheme_proof}--\eqref{it:high_viscosity_scheme_proof}
 we exploit the full strength of maximal $L^p(L^q)$--techniques.

\eqref{it:global_scheme_proof}: In Section \ref{s:global_cut_off}, we consider \eqref{eq:reaction_diffusion_system_intro} with cut-off on $\Tor^d$:
\begin{align}
\label{eq:cut_off_intro}
\dd v_i -\ellip_i\Delta v_i \,\dd t
&=\phi_{R,r}(\cdot,v)\big[\div (F_i(\cdot,v)) + f_{i}(\cdot, v)\big]\,\dd t \\
\nonumber
&\ + \sqrt{c_d\ellip} \sum_{k,\alpha} \theta_k (\sigma_{k,\alpha}\cdot \nabla) v_i\circ \dd w_t^{k,\alpha}.
\end{align}
Here, for $R\geq 1$ and suitable parameters $q,r\in (1,\infty)$, $\phi_{R,r}$ is a cut-off given by
$
\phi_{R,r} (t,v)\stackrel{{\rm def}}{=}\phi\big(R^{-1}\|v\|_{L^r(0,t;L^q)}\big),
$
where $\phi$ is a bump function satisfying $\phi|_{[0,1]}=1$.
As we have seen in Subsection \ref{ss:role_criticality} the choice of the cut--off is related to the subcriticality of $L^q$ with $q>\frac{d(h-1)}{2}$. In Theorem \ref{t:global_cut_off} we prove global existence of unique strong solutions to \eqref{eq:cut_off_intro} and $L^{\infty}_t(L^q_x)$--estimates with constants \emph{independent} of $\theta$ (recall that we are assuming $\|\theta\|_{\ell^2}=1$). The latter estimates are obtained by mimicking a \emph{Moser--type iteration}. Recall that, as commented in Subsection \ref{ss:enhanced_dissipation}, we cannot use the spatial smoothness of the noise to obtain estimates with constants independent of $\theta$. In the proof of Theorem \ref{t:global_cut_off} the subcriticality of $L^q$ plays a key role.

\eqref{it:high_viscosity_scheme_proof}: In Section \ref{s:deterministic_high_diffusion} we show that the deterministic reaction-diffusion equations
\begin{equation}
\label{eq:v_det_intro}
\partial_t\vdi -\mu_i\Delta \vdi \,=\div(F_i(\cdot,\vd))+ f_{i}(\cdot, \vd)\quad \text{ on }\Tor^d,
\end{equation}
has a unique strong solutions on $[0,T]$, for any given $T<\infty$, provided $\mu_i(T) \gg0 $; see Proposition \ref{prop:global_high_viscosity}. 
Moreover, we investigate certain \emph{weak--strong} uniqueness result for a class of weak solutions appearing in the scaling limit argument of \eqref{it:scaling_limit_scheme_proof}, see Corollary \ref{cor:uniqueness}.

\eqref{it:scaling_limit_scheme_proof}: For all $n\geq 1$, consider the solution $v^{(n)}$ to \eqref{eq:cut_off_intro}  with $\theta=\theta^{(n)}$ where $\theta^{(n)}$ is as in \eqref{eq:choice_theta}. Then, using the $\theta$--independence of the $L^{\infty}_t(L^q_x)$--estimates in \eqref{it:global_scheme_proof} and a compactness argument, up to a subsequence, we have that $v^{(n)}\to \vd$ in probability in $L^r(0,T;L^q)$ where $\vd$ solves \eqref{eq:v_det_intro} with $\mu_i=\ellip+\ellip_i$. Here $\ellip$ is as in the stochastic perturbation of \eqref{eq:cut_off_intro}. Theorems \ref{t:delayed_blow_up}--\ref{t:delayed_blow_up_t_infty} now follow by choosing $\ellip$ very large so that \eqref{it:high_viscosity_scheme_proof} applies with $\|\vd\|_{L^r(0,T;L^q)}\leq R-1$ and choose $n_*$ large enough so that $\|v^{(n)}-\vd\|_{L^r(0,T;L^q)}\leq 1$ for all $n\geq n_*$ with high probability. Thus, for all $n\geq n_*$, we have $\phi_{R,r}(\cdot,v^{(n)})=1$ and $v^{(n)}$ solves \eqref{eq:reaction_diffusion}  on $[0,T]$ with high-probability for $\theta=\theta^{(n)}$.

Due to technical problems related to anisotropic spaces (cf.\ the discussion below Theorem \ref{t:local}), the above argument works only if $v_0$ has positive smoothness in a Besov scale, see Proposition \ref{prop:delayed_blow_up_smooth}. To show Theorem \ref{t:delayed_blow_up} we need an approximation argument which requires to study \eqref{eq:reaction_diffusion} with a stronger cut--off compared to the one used in Section \ref{s:global_cut_off}, see Lemma \ref{l:interp_inequality_final_one}. 
Finally, to prove Theorem \ref{t:delayed_blow_up_t_infty}, we exploit that the mass is exponentially decreasing due to Theorem \ref{t:local}\eqref{it:local_1} with $\m_0=0$ and $\m_1<0$. See \cite[Theorem 1.5]{FGL21} and \cite[Theorem 1.6]{FL19} for similar situations.

\section{Stochastic reaction-diffusion equations with cut-off}
\label{s:global_cut_off}
In this section we consider the following version of \eqref{eq:reaction_diffusion_system_intro} with cut-off:
\begin{equation}
\label{eq:reaction_diffusion_system_truncation}
\left\{
\begin{aligned}
\dd v_i -\ellip_i\Delta v_i \,\dd t
&= \phi_{R,r} (\cdot,v)\Big[\div (F(\cdot,v))+f_{i}(\cdot, v)\Big]\,\dd t \\
&+ \sqrt{c_d\ellip} \sum_{k,\alpha} \theta_k (\sigma_{k,\alpha}\cdot \nabla) v_i\circ \dd w_t^{k,\alpha}, &  \text{ on }&\Tor^d,\\
v_i(0)&=v_{i,0},  &  \text{ on }&\Tor^d.
\end{aligned}\right.
\end{equation}
As before $i\in \{1,\dots,\ell\}$ for some integer $\ell\geq 1$. Moreover $\phi_{R,r}(\cdot,v)$ stands for the cut-off 
\begin{equation}
\label{eq:def_cut_off}
\phi_{R,r}(t,v)\stackrel{{\rm def}}{=}\phi\Big(\frac{1}{R}\|v\|_{L^r(0,t;L^{q}(\Tor^d;\R^{\ell}))}\Big) \ \ \text{ where }\ \  R>0, \ r\in (1,\infty),
\end{equation}
and  $\phi\in C^{\infty}(\R)$ satisfies $\phi|_{[0,1]}=1$ and $\phi|_{[2,\infty)}=0$. The notion of  
 $(p,\a,\s,q)$-solutions to \eqref{eq:reaction_diffusion_system_truncation} can be given as in Definition \ref{def:solution}. 
The aim of this section is to prove the following result.

\begin{theorem}[Global existence and uniform estimates for \eqref{eq:reaction_diffusion_system_truncation}]
\label{t:global_cut_off}
Let Assumption \ref{ass:f_polynomial_growth} be satisfied. Assume that $q> \frac{d(h-1)}{2}$ and $
v_0\in B^{1-2\frac{1+\a}{p}}_{q,p}(\Tor^d;\R^{\ell})$.
Suppose that $
\theta=(\theta_k)_{k}$ satisfies \eqref{eq:theta_normalized_symmetric} and $\#\{k\,:\, \theta_k\neq 0\}<\infty$.
Then there exists
$
r_0\in (1,\infty)
$
for which the following hold for all $r\in [r_0,\infty)$.
\begin{enumerate}[{\rm(1)}]
\item\label{it:global_cut_off_1} There exists a (unique) global $(p,\a,1,q)$--solution $v$ to \eqref{eq:reaction_diffusion_system_truncation} such that a.s.\ 
\begin{equation*}
v\in H^{\g,p}_{\loc}( [0,\infty),w_{\a};H^{1-2\g,q}(\Tor^d;\R^{\ell})) \cap C([0,\infty); B^{1-2\frac{1+\a}{p}}_{q,p}(\Tor^d;\R^{\ell})) \ \text{ for all }\g\in [0,\tfrac{1}{2}).
\end{equation*}
\item\label{it:global_cut_off_2} 
For all $T\in (0,\infty)$ there exists $C_T>0$ independent of $(\theta,v_0)$ such that  a.s.\ 
\begin{equation*}
\sup_{t\in [0,T]}\|v(t)\|_{L^{q}(\Tor^d;\R^{\ell})}^{q}
+\max_{1\leq i\leq \ell}\int_0^t \int_{\Tor^d}(1+ |v_i|^{q-2}) |\nabla v_i|^2\,\dd x\, \dd s\leq C_T(1+
\|v_{0}\|_{L^{q}(\Tor^d;\R^{\ell})}^{q}).
\end{equation*}
\end{enumerate}
\end{theorem}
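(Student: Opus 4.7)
My plan is to establish the uniform a--priori bound in part \eqref{it:global_cut_off_2} first, and then deduce global existence in \eqref{it:global_cut_off_1} by combining it with Theorem \ref{t:local} applied to the cut--off nonlinearities $\widetilde f_i=\phi_{R,r}(\cdot,v)f_i$, $\widetilde F_i=\phi_{R,r}(\cdot,v)F_i$ (which satisfy the same polynomial growth bounds as Assumption \ref{ass:f_polynomial_growth}\eqref{it:f_polynomial_growth_1}) and the blow--up criterion of Theorem \ref{t:local}\eqref{it:local_3}.

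The starting point for the a--priori bound is the pathwise $L^q$--balance already described in \eqref{eq:energy_Lq_balance}. Converting Stratonovich to It\^o via \eqref{eq:Ito_stratonovich_change} and \eqref{eq:ellipticity_noise}, the additional diffusion $\ellip\Delta v_i$ is exactly cancelled by the It\^o correction, while the martingale term vanishes pointwise because $\sigma_{k,\alpha}$ is divergence free (so $\int|v_i|^{q-2}v_i\,(\sigma_{k,\alpha}\cdot\nabla)v_i\,dx=\tfrac{1}{q}\int(\sigma_{k,\alpha}\cdot\nabla)|v_i|^q\,dx=0$). Writing the same identity for $\|v_i\|_{L^2}^2$ and adding, one obtains, for every $t<\tau$,
\begin{align*}
\|v_i(t)\|_{L^q}^q + \ellip_i\int_0^t\!\!\int_{\Tor^d}(1+|v_i|^{q-2})|\nabla v_i|^2\,dx\,ds
&\lesssim \|v_{0,i}\|_{L^q}^q\\
&\ +\int_0^t\phi_{R,r}(s,v)\int_{\Tor^d}|v_i|^{q-2}v_i\big[f_i+\div F_i\big]\,dx\,ds,
\end{align*}
which is independent of $\theta$. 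The $\div F_i$ contribution is integrated by parts and handled by Cauchy--Schwarz and Young's inequality: one piece is absorbed in the energy on the left and the other is controlled by $\int|v|^{q+h-1}\,dx$; the $f_i$ contribution is bounded analogously.

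The key interpolation, which drives the whole argument, uses $\|\,|v|^{q/2}\|_{H^1}^2\eqsim\|v\|_{L^q}^q+E(t)$ with $E(t)=\max_i\int|v_i|^{q-2}|\nabla v_i|^2\,dx$, together with the Sobolev embedding $H^1\embed L^{2d/(d-2)}$ and H\"older interpolation between $L^q$ and $L^{qd/(d-2)}$, to obtain
\begin{equation*}
\|v(s)\|_{L^{q+h-1}}^{q+h-1}\lesssim \|v(s)\|_{L^q}^{\g}\big(\|v(s)\|_{L^q}^q+E(s)\big)^{\beta},
\end{equation*}
with $\beta=\tfrac{d(h-1)}{2q}<1$ exactly because of the subcritical assumption $q>\tfrac{d(h-1)}{2}$ (compare \eqref{eq:interpolation_inequality_estimate}). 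Integrating in time, applying H\"older with conjugates $(1/\beta,1/(1-\beta))$ and then Young's inequality splits the right--hand side into $\delta\int_0^t(\|v\|_{L^q}^q+E)\,ds$ plus a term of the form $C_\delta\int_0^t\phi_{R,r}(s,v)\|v(s)\|_{L^q}^{\kappa}\,ds$ for some $\kappa=\kappa(h,q,d)$. Choosing $r_0\geq\kappa$ and any $r\geq r_0$, the cut--off forces $\phi_{R,r}(s,v)\|v(s)\|_{L^r(0,s;L^q)}\lesssim R$, so that H\"older in time bounds this integral by $C_{R,T}$; absorbing the $\delta E$ term on the left and invoking Grönwall on $\delta\|v\|_{L^q}^q$ yields \eqref{it:global_cut_off_2}.

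With \eqref{it:global_cut_off_2} in hand, $\sup_{[0,T]}\|v(t)\|_{L^q}<\infty$ a.s.\ on $\{\tau\leq T\}$, which contradicts Theorem \ref{t:local}\eqref{it:local_3} (applied to the cut--off system, whose local theory and blow--up criterion are verbatim the same as for \eqref{eq:reaction_diffusion}). Hence $\tau=\infty$ a.s., and the regularity statement follows from Remark \ref{r:regularity_paths}\eqref{it:regularity_paths_2}. The main obstacle is \emph{not} the local theory but the $\theta$--independence of the estimates: the only way to achieve it is to work \emph{pathwise} (equations \eqref{eq:energy_Lq_balance}--\eqref{eq:ellipticity_noise}) and to exploit the strict gap $q>\tfrac{d(h-1)}{2}$ to produce the small parameter $1-\beta>0$ that makes the buckling work.
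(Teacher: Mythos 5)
Your argument for the uniform estimate (part \eqref{it:global_cut_off_2}) is essentially the paper's: It\^o's formula for $\|v_i\|_{L^q}^q$ (plus the $q=2$ case for the extra $|\nabla v_i|^2$ term), exact cancellation of the Stratonovich correction against the It\^o correction via \eqref{eq:ellipticity_noise}, vanishing of the martingale part by $\div\,\sigma_{k,\alpha}=0$, and a Gagliardo--Nirenberg interpolation in which subcriticality $q>\tfrac{d(h-1)}{2}$ produces an exponent $\beta<1$, so that the cut-off plus Young's inequality closes the bound with constants independent of $\theta$. Up to the technical justification of the It\^o formula (the paper uses the truncations $\psi_N$ and dominated convergence) this matches Subsection \ref{ss:uniform_estimate_cut_off}.

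The gap is in your deduction of global existence (part \eqref{it:global_cut_off_1}). You invoke Theorem \ref{t:local}\eqref{it:local_3} ``applied to the cut-off system, whose local theory and blow-up criterion are verbatim the same,'' but this is precisely what is not available: the map $v\mapsto\phi_{R,r}(\cdot,v)$ is non-local in time, so the cut-off nonlinearities do not fit the framework of \cite{AV22} behind Theorem \ref{t:local}, and the paper explicitly has to re-establish local well-posedness for \eqref{eq:reaction_diffusion_system_truncation} together with a \emph{different} blow-up criterion, namely Proposition \ref{prop:local_cut_off}\eqref{it:local_cut_off_2}, which is phrased in terms of $\|\phi_{R,r}(\cdot,v)[\div F+f]\|_{L^p(0,\tau,w_{\a};H^{-1,q})}$ rather than $\sup_t\|v(t)\|_{L^{q}}$. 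Your plan cannot be repaired simply by substituting that criterion either: the pathwise $L^\infty_tL^q_x$ and energy bound of part \eqref{it:global_cut_off_2} gives $v\in L^2(0,T;H^1)$ but not the $L^p(0,T,w_{\a};H^{1,q})$-integrability needed (via Lemma \ref{lem:interpolation_strong_setting}) to verify that criterion. The paper's route to global existence is in fact independent of the $\theta$-uniform estimate: it combines the sublinear $H^{-1,q}$-estimates of Lemma \ref{lem:interpolation_strong_setting} with stochastic maximal $L^p$-regularity and a buckling argument to bound $\E\|v\|_{L^p(0,\tau\wedge T,w_{\a};H^{1,q})}^p$ (with constants that may depend on $\theta$), and then applies the nonlinearity-based criterion. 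So either you must prove an $L^q$-norm blow-up criterion for the non-local-in-time cut-off problem (a nontrivial extension of \cite{AV22} that you assert rather than establish), or you should switch to the maximal-regularity argument; as written, the step from the a-priori bound to $\tau=\infty$ does not go through.
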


The proof of Theorem \ref{t:global_cut_off} shows that $r_0\in (1,\infty)$ depends only on $(p,q,\a,h,d)$. Recall that 
\begin{equation}
\label{eq:elementary_embedding_Besov_in_Lq}
B^{1-2\frac{1+\a}{p}}_{q,p}\stackrel{(i)}{\embed} B^0_{q,1} \stackrel{(ii)}{\embed} L^q
\end{equation}
where $(i)$ follows from $2\frac{1+\a}{p}<1$ and $(ii)$ from elementary embeddings (see e.g.\ \cite[Proposition 2.1]{BevosBook}). Hence $v_0\in L^q$ and the RHS in the estimate of \eqref{it:global_cut_off_2} is finite. The crucial point in Theorem \ref{t:global_cut_off}\eqref{it:global_cut_off_2} is the independence of $C_T$ on $\theta$. Note that 
\begin{equation}
\label{eq:composition_rules_sobolev_regularity_power}
\int_{\Tor^d} |v_i|^{q-2} |\nabla v_i|^2 \,\dd x \eqsim \int_{\Tor^d} \Big| \nabla  \big[|v_i|^{q/2}\big]\Big|^2 \,\dd x. 
\end{equation}
Thus Theorem \ref{t:global_cut_off}\eqref{it:global_cut_off_2} and Sobolev embeddings yield, for all $T\in (0,\infty)$,
\begin{equation}
\label{eq:conseguence_of_estimate}
\|v\|_{L^q(0,T;L^{\xi}(\Tor^d;\R^{\ell}))}\lesssim_T 1+
\|v_{0}\|_{L^{q}(\Tor^d;\R^{\ell})} \ \  \ \text{ where }\ \  \
\xi=
\left\{
\begin{aligned}
&\in (2,\infty) &\text{ if }&d=2,\\
&\frac{qd}{d-2}&\text{ if }& d\geq 3,
\end{aligned}\right.
\end{equation}
and the implicit constant is independent of $(\theta,v_0)$.

The proof of Theorem \ref{t:global_cut_off} is spread over this section. 
More precisely, the proof of Theorem \ref{t:global_cut_off}\eqref{it:global_cut_off_1} and \eqref{it:global_cut_off_2} are given in Subsections \ref{ss:global_cut_off} and \ref{ss:uniform_estimate_cut_off}, respectively. In Subsection \ref{ss:local_cut_off} we investigate local existence for \eqref{eq:reaction_diffusion_system_truncation} which is an important preparatory step for the proof of Theorem \ref{t:global_cut_off}\eqref{it:global_cut_off_1}. 

\subsection{Local existence for reaction-diffusion equations with cut-off}
\label{ss:local_cut_off}
In this subsection we begin our analysis of the problem \eqref{eq:reaction_diffusion_system_truncation} with cut-off. 
Here we prove the existence of local unique solutions to \eqref{eq:reaction_diffusion_system_truncation}. Moreover, we provide a general blow-up criterion for the local solution to \eqref{eq:reaction_diffusion_system_truncation} which will be used in Subsection \ref{ss:global_cut_off} to prove that such solutions are actually global.


\begin{proposition}[Local existence and blow-up criterion with cut-off]
\label{prop:local_cut_off}
Let the assumptions of Theorem \ref{t:global_cut_off} be satisfied.
Then there exists $
r_0(p,q,\a,h,d)\in (1,\infty)$
for which the following hold for all $r\in [r_0,\infty)$.
\begin{enumerate}[{\rm(1)}]
\item\label{it:local_cut_off_1} \emph{(Local existence and regularity)} There exists a (unique) $(p,\a,1,q)$-solution $(v,\tau)$ to \eqref{eq:reaction_diffusion_system_truncation} such that a.s.\ $\tau>0$ and 
\begin{align*}
v&\in H^{\g,p}_{\loc}( [0,\tau),w_{\a};H^{1-2\g,q}(\Tor^d;\R^{\ell})) \cap C([0,\tau); B^{1-2\frac{1+\a}{p}}_{q,p}(\Tor^d;\R^{\ell})) \ \text{ for all }\g\in [0,\tfrac{1}{2}).
\end{align*}
\item\label{it:local_cut_off_2} \emph{(Blow-up criterion)} For all $T\in (0,\infty)$,
\begin{align*}
\P\Big(\tau<T,\,\max_{1\leq i\leq \ell}\Big\|\phi_{R,r} (\cdot,v)\big[ \div(F_i(\cdot,v)) + f_i(\cdot,v)\big]\Big\|_{L^p(0,\tau,w_{\a};H^{-1,q})}<\infty \Big)=0.
\end{align*}
\end{enumerate}
\end{proposition}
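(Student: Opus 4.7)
The plan is to deduce Proposition \ref{prop:local_cut_off} from the abstract local well-posedness theory for stochastic evolution equations in critical spaces of \cite{AV19_QSEE_1,AV19_QSEE_2}. That theory requires two ingredients: stochastic maximal $L^p$-regularity in the weighted scale $H^{-1,q}\hookrightarrow H^{1,q}$ for the linear part of \eqref{eq:reaction_diffusion_system_truncation}, and Lipschitz/sublinear estimates for the truncated nonlinearity on bounded sets of the trace space $B^{1-2(1+\a)/p}_{q,p}$.

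For the linear part, the It\^{o}--Stratonovich rewriting \eqref{eq:Ito_stratonovich_change} shows that the effective second-order operator is $(\ellip_i+\ellip)\Delta$, accompanied by the It\^{o} transport noise $\sqrt{c_d\ellip}\,\theta_k(\sigma_{k,\alpha}\cdot\nabla)$. Stochastic parabolicity is a direct consequence of the identity \eqref{eq:ellipticity_noise}: the quadratic-variation contribution of the noise equals $\ellip\Delta$, which is exactly balanced by the Stratonovich correction, so the effective second-order operator is uniformly parabolic independently of $\theta$. Since $\#\{k\,:\,\theta_k\neq 0\}<\infty$ and the $\sigma_{k,\alpha}$ are smooth, stochastic maximal $L^p(L^q)$-regularity with power weights $w_{\a}$ for $\a\in[0,\tfrac{p}{2}-1)$ then follows from the results in \cite{AV21_SMR_torus}.

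For the nonlinearity, the subcriticality assumption $q>\tfrac{d(h-1)}{2}$ yields, via Lemma \ref{lem:interpolation_strong_setting}, the interpolation inequality \eqref{eq:interpolation_inequality_estimate} with some $\varepsilon>0$. Once the cut-off is in place, combining \eqref{eq:inequality_interpolation_varepsilon} with H\"{o}lder's inequality in time gives, for $r\geq r_0(p,q,\a,h,d)$ sufficiently large and any $\delta>0$, an estimate of the form
\begin{equation*}
\big\|\phi_{R,r}(\cdot,v)\big[\div(F(\cdot,v))+f(\cdot,v)\big]\big\|_{L^p(0,t,w_{\a};H^{-1,q})}\leq C_{R,\delta}+\delta\,\|v\|_{L^p(0,t,w_{\a};H^{1,q})}.
\end{equation*}
A matching Lipschitz bound on pairs $v,v'$ follows along the same lines, using smoothness of $\phi$, the local Lipschitz continuity of $(f,F)$ with the polynomial growth from Assumption \ref{ass:f_polynomial_growth}\eqref{it:f_polynomial_growth_1}, and the fact that $v\mapsto\phi_{R,r}(\cdot,v)$ is Lipschitz with respect to the $L^r(0,t;L^q)$-norm.

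With these two ingredients the abstract theorem in \cite{AV19_QSEE_2} produces the unique $(p,\a,1,q)$-solution $(v,\tau)$ with the regularity stated in \eqref{it:local_cut_off_1}; the embedding \eqref{eq:elementary_embedding_Besov_in_Lq} places the initial datum in the correct trace space. The blow-up criterium \eqref{it:local_cut_off_2} is the Serrin-type criterium from the same reference: blow-up cannot occur at $\tau<T$ on the event where the truncated nonlinearity lies in $L^p(0,\tau,w_{\a};H^{-1,q})$. I expect the main technical point to be verifying progressive measurability and the Lipschitz property of the map $v\mapsto\phi_{R,r}(\cdot,v)$ in the required functional setting, since the cut-off is non-local in time through the norm $\|v\|_{L^r(0,t;L^q)}$; once this compatibility is established, the rest of the proof is a direct application of the critical-spaces machinery.
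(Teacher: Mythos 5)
There is a genuine gap, and it sits exactly where you relegate the difficulty to a ``technical point.'' The abstract local well-posedness and blow-up results of \cite{AV19_QSEE_1,AV19_QSEE_2} are formulated for nonlinearities that depend on the \emph{current state} $v(t)$, whereas $\phi_{R,r}(t,v)$ depends on the whole trajectory $v|_{[0,t]}$ through $\|v\|_{L^r(0,t;L^q)}$. The paper states explicitly that, for this reason, Proposition \ref{prop:local_cut_off} does \emph{not} follow from those results; one can only adapt their \emph{methods}. Concretely, the paper obtains the local solution not by a fixed-point argument for the truncated equation, but by taking the $(p,\a,1,q)$-solution of the untruncated problem \eqref{eq:reaction_diffusion} (Theorem \ref{t:local} and Remark \ref{r:regularity_paths}\eqref{it:regularity_paths_2}) and stopping it at the exit time of the $L^r(0,t;L^q)$-ball of radius $R$, where $\phi_{R,r}\equiv 1$; the maximal extension and the blow-up criterium are then proved by re-running the contradiction arguments of \cite{AV19_QSEE_2} with a trajectory-shifted cut-off $\phi_{v,\tau,R,r}$ (needed because restarting the equation at a stopping time changes the meaning of the history-dependent truncation). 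None of this is a mere measurability check; it is the actual content of the proof.

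The second problem is your claim that ``a matching Lipschitz bound on pairs $v,v'$ follows along the same lines.'' With only the $L^r(L^q)$-truncation $\phi_{R,r}$, an estimate of $f(\cdot,v^{(1)})-f(\cdot,v^{(2)})$ in $L^p(w_{\a};H^{-1,q})$ is not available: the factor $1+|v^{(1)}|^{h-1}+|v^{(2)}|^{h-1}$ must be controlled in a norm involving $H^{1,q}$-regularity that the cut-off does not see, and Lemma \ref{lem:interpolation_strong_setting} only yields a sublinear bound for a single function, not a Lipschitz bound for differences. The paper points this out in Remark \ref{r:blow_up_criteria_necessity}: difference estimates ``seem possible only if one enforces the cut-off,'' which is exactly what Lemma \ref{l:interp_inequality_final_one} does later by truncating the stronger norm $L^r(0,t;L^q)\cap L^p(0,t,w_{\a_{p,\s}};H^{2-\s-\eta,q})$ (the modified cut-off $\Phi_{K,r,\s,\eta}$ in \eqref{eq:cut_off_psi}), and that is only used for the stability estimate of Lemma \ref{l:stability_estimate_cut_off}, not here. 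So the contraction-type hypothesis your route requires is precisely what is missing in the $(p,\a,1,q)$-setting with $\phi_{R,r}$ alone; uniqueness in the paper instead comes for free from the identification with the untruncated solution below the exit time and from maximality. Your observations on stochastic maximal regularity via \eqref{eq:Ito_stratonovich_change}, \eqref{eq:ellipticity_noise} and \cite{AV21_SMR_torus}, and on the sublinear estimates of Lemma \ref{lem:interpolation_strong_setting}, are correct, but in the paper these enter the proof of \emph{global} existence (Theorem \ref{t:global_cut_off}\eqref{it:global_cut_off_1}), not the local existence statement at hand.
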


Proposition \ref{prop:local_cut_off} does not follow directly from the results of \cite{AV19_QSEE_1,AV19_QSEE_2} as the setting used there does not allow for the non--local (in time) operator $v\mapsto \phi_{R,r}(\cdot,v)$. 
However, the methods of \cite{AV19_QSEE_1,AV19_QSEE_2} are still applicable with minor modifications. Below we give some indications how to extend the proofs of \cite{AV19_QSEE_1,AV19_QSEE_2} to the present situation.

\begin{proof}[Proof of Proposition \ref{prop:local_cut_off} -- Sketch]
We split the proof into three steps.

\emph{Step 1: \eqref{it:local_cut_off_1} holds}.  
Consider the system of SPDEs \eqref{eq:reaction_diffusion} without cut-off. By Theorem \ref{t:local} and Remark \ref{r:regularity_paths}\eqref{it:regularity_paths_2}, there exists a  $(p,\a,1,q)$--solution $(\wh{v},\wh{\tau})$ to \eqref{eq:reaction_diffusion} (here we use $\wh{\cdot}$ to distinguish from solutions to \eqref{eq:reaction_diffusion_system_truncation} considered in this section). Note that, by \eqref{eq:elementary_embedding_Besov_in_Lq},
$$
\wh{v}\in C([0,\tau);B^{1-2\frac{1+\a}{p}}_{q,p})\subseteq C([0,\tau);L^q)\ \text{ a.s.\ }
$$ 
Thus, the following is a stopping time
$$
\tau_*\stackrel{{\rm def}}{=}\inf\{t\in [0,\wh{\tau})\,:\, \|\wh{v}\|_{L^r(0,t;L^q)}\geq R\} \ \  \text{ where }\ \ \inf\emptyset\stackrel{{\rm def}}{=}\wh{\tau}. 
$$
Note that $\phi_{R,r}(\cdot,\wh{v})=1$ a.e.\ on $[0,\tau_*)\times \O$. Therefore $(\wh{v}|_{[0,\tau_*)\times \O},\tau_*)$ is a local unique $(p,\a,1,q)$--solution to \eqref{eq:reaction_diffusion_system_truncation}. The existence of a local unique  $(p,\a,1,q)$--solution which is \emph{maximal} in the class of  local unique  $(p,\a,1,q)$--solution now follows as in Step 5b of \cite[Theorem 4.5]{AV19_QSEE_1}. 
Here $(v,\tau)$ is maximal in the following sense: for any other local \emph{unique} $(p,\a,1,q)$--solution $(v',\tau')$, one has $\tau'\leq \tau$ a.s.\ and $v'=v$ a.e.\ on $[0,\tau')\times\O$. Note that, at this point, we do not know if $(v,\tau)$ construct above is actually a $(p,\a,1,q)$--solution. However, this will be a consequence of the blow-up criterion of Step 2 below (see \cite[Remark 5.6]{AV19_QSEE_2} for a similar situation).

To establish the blow-up criterion of \eqref{it:local_cut_off_2} we follow the arguments in \cite[Subsection 5.2]{AV19_QSEE_2} which was devoted to the proof of \cite[Theorem 4.10(2)]{AV19_QSEE_2} that is closely related to  \eqref{it:local_cut_off_2}. 
The result of Step 2 should be compared with \cite[Lemma 5.4]{AV19_QSEE_2}.

\emph{Step 2: (Intermediate blow-up criterion).\ Let $(v,\tau)$ be the unique local $(p,\a,1,q)$--solution to \eqref{eq:reaction_diffusion_system_truncation} provided in \eqref{it:local_cut_off_1} (cf.\ Step 1). Then}
\begin{align}
\label{eq:blow_up_easy_truncated}
\P\Big(\tau<T,\, \lim_{t\uparrow \tau}v \text{ exists in }B_{q,p}^{1-\frac{2}{p}},\, \mathcal{N}_{\a}(\tau,v)<\infty\Big)=0\ \  \text{\emph{ for all $T\in (0,\infty)$}},&\\
\nonumber 
\mathcal{N}_{\a}(\tau,v)\stackrel{{\rm def}}{=}\max_{1\leq i\leq \ell}\Big\|\phi_{R,r} (\cdot,v)\big[ \div(F_i(\cdot,v))+f_i(\cdot,v) \big]\Big\|_{L^p(0,\tau,w_{\a};H^{-1,q})}.&
\end{align}
\emph{In particular, $(v,\tau)$ is a $(p,\a,1,q)$--solution to \eqref{eq:reaction_diffusion_system_truncation}.}

The last claim follows as in  \cite[Remark 5.6]{AV19_QSEE_2} once \eqref{eq:blow_up_easy_truncated} is proven.  
To prove \eqref{eq:blow_up_easy_truncated}, we argue by contradiction with the maximality of $(v,\tau)$ (see the text at the end of Step 1). Hence, by contradiction, assume that 
$$
\P\Big(\tau<T,\, \lim_{t\uparrow \tau}v \text{ exists  in }B_{q,p}^{1-\frac{2}{p}},\, \mathcal{N}_{\a}(\tau,v)<\infty\Big)>0.
$$
Thus there exist $M,\eta>0$ and a set $\V\in \F_{\tau}$ such that $\P(\V)>0$, and a.s.\ on $\V$, one has $\tau>\eta$ and
\begin{equation}
\label{eq:contradiction_V_exists_on_V}
\lim_{t\uparrow \tau }v \text{ exists in }B_{q,p}^{1-\frac{2}{p}}, \qquad \ 
\sup_{t\in [0,\tau) }\|v(t)\|_{B_{q,p}^{1-\frac{2}{p}}}\leq M, \qquad \
\mathcal{N}_{\a}(\tau,v)<\infty.
\end{equation}

Let $\phi$ be as below \eqref{eq:def_cut_off}. 
For all $u\in L^r(\tau,T;L^q)$ we set
$$
\phi_{v,\tau,R,r}(t,u)\stackrel{{\rm def}}{=}\phi\big(R^{-1}\| \one_{[0,\tau]} v + \one_{(\tau,T]} u\|_{L^r(0,t;L^q)}\big) \ \ \text{ on }\V,
$$
and $
\phi_{v,\tau,R,r}(t,u)\stackrel{{\rm def}}{=} 0$ on $\O\setminus \V$.
Consider the following version of \eqref{eq:reaction_diffusion_system_truncation} with modified cut-off:
\begin{equation}
\label{eq:reaction_diffusion_system_truncation_V}
\left\{
\begin{aligned}
\dd u_i -\ellip_i\Delta u_i \,\dd t&= \phi_{v,\tau,R,r}(\cdot,u)\Big[\div (F(\cdot,u))+f_{i}(\cdot, u)\Big]\,\dd t \\
&+ \sqrt{c_d\ellip} \sum_{k,\alpha} \theta_n (\sigma_{k,\alpha}\cdot \nabla) u_i\circ \dd w_t^{k,\alpha}, &  \text{ on }&\Tor^d,\\
u_i(\tau\vee \eta)&=\one_{\V}v(\tau),  &  \text{ on }&\Tor^d.
\end{aligned}\right.
\end{equation}
Note that $\one_{\V} v(\tau)\in L^{\infty}_{\F_{\tau}}(\O;B^{1-2/p}_{q,p})$ by \eqref{eq:contradiction_V_exists_on_V}.
One can check that the proof of 
\cite[Proposition 5.1]{AV19_QSEE_2} extends to the present setting (more precisely, the estimates below \cite[(5.9)]{AV19_QSEE_2} also hold). Thus, reasoning as in \cite[Proposition 5.1]{AV19_QSEE_2}, one sees that there exists a $(p,0,\s,q)$-solution $(u,\lambda)$ to \eqref{eq:reaction_diffusion_system_truncation_V} such that $\lambda>\tau\vee \eta$ a.s.\ (note that we use the trivial weight at time $\lambda\geq \eta$).
We remark that the stochastic maximal $L^p$--regularity estimates used in \cite{AV19_QSEE_2} holds by \cite[Theorem 1.2]{AV21_SMR_torus} and \cite[Proposition 3.12]{AV19_QSEE_2}.
Set 
$$
\tau_*\stackrel{{\rm def}}{=}\one_{\V} \lambda+ \one_{\O\setminus \V} \tau \qquad \text{ and }\qquad
v_*\stackrel{{\rm def}}{=}\one_{[0,\tau) \times \O} v+ \one_{[\tau, \lambda)\times \V} u.
$$
One can check that $(v_*,\tau_*)$ is a \emph{unique} local $(p,\a,1,q)$--solution which extends $(v,\tau)$ since $\P(\tau_*>\tau)>0$. This contradicts the maximality of $(v,\tau)$. Hence the claim of Step 2 follows.

\emph{Step 3: \eqref{it:local_cut_off_2} holds}. The claim of this step follows verbatim from the proof of Theorem 4.10(1) in \cite[Subsection 5.2]{AV19_QSEE_2} (here we are using that the SPDEs \eqref{eq:reaction_diffusion_system_truncation} are semilinear). 
\end{proof}

\subsection{Proof of Theorem \ref{t:global_cut_off}\eqref{it:global_cut_off_1}}
\label{ss:global_cut_off}
We begin with the following interpolation inequalities involving the nonlinearities in \eqref{eq:reaction_diffusion}.
Here the subcritical nature of the spaces considered comes into play.

\begin{lemma}
\label{lem:interpolation_strong_setting}
Let Assumption \ref{ass:f_polynomial_growth}\eqref{it:f_polynomial_growth_1} be satisfied. Assume that $\frac{d(h-1)}{2} \vee 2 <q<\infty$.
Then there exist $\varphi\in (0,\frac{1}{h})$ and $\psi\in (0,\frac{2}{h+1})$ such that, 
 for all $u\in H^{1,q}$,
\begin{align}
\label{eq:f_estimate_sublinear}
\|f(\cdot,u)\|_{H^{-1,q}}&\lesssim 1+\|u\|_{L^q}^{(1-\varphi)h}\|u\|_{H^{1,q}}^{\varphi h},\\
\label{eq:F_estimate_sublinear}
\|\div (F(\cdot,u))\|_{H^{-1,q}}&\lesssim 1+\|u\|_{L^q}^{(1-\psi)\frac{h+1}{2}}\|u\|_{H^{1,q}}^{\psi\frac{h+1}{2} }.
\end{align}
\end{lemma}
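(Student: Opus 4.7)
The plan is to derive both inequalities by reducing to the polynomial growth bounds in Assumption \ref{ass:f_polynomial_growth}\eqref{it:f_polynomial_growth_1}, using one Sobolev embedding to land in $H^{-1,q}$ from some $L^{s}$, and then a Gagliardo--Nirenberg type interpolation between $L^q$ and $H^{1,q}$. The subcriticality $q>\frac{d(h-1)}{2}$ will manifest as the strict inequality that provides room to pick exponents $\varphi<1/h$ and $\psi<2/(h+1)$.

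For \eqref{eq:f_estimate_sublinear}, I would first choose $s\geq 1$ with $\frac{1}{s}\leq \frac{1}{q}+\frac{1}{d}$, so that (by duality from $W^{1,q'}\embed L^{s'}$) we have $L^s\embed H^{-1,q}$. The growth bound $|f(\cdot,u)|\lesssim 1+|u|^{h}$ then yields $\|f(\cdot,u)\|_{H^{-1,q}}\lesssim \|f(\cdot,u)\|_{L^s}\lesssim 1+\|u\|_{L^{hs}}^{h}$. Next, the Gagliardo--Nirenberg inequality gives $\|u\|_{L^{hs}}\lesssim \|u\|_{L^q}^{1-\varphi}\|u\|_{H^{1,q}}^{\varphi}$ with $\varphi\in[0,1]$ determined by the scaling relation $\frac{1}{hs}=\frac{1-\varphi}{q}+\varphi\bigl(\frac{1}{q}-\frac{1}{d}\bigr)$, i.e.\ $\varphi=\frac{d}{q}-\frac{d}{hs}$. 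The requirement $\varphi h<1$ becomes $\frac{d}{s}>\frac{hd}{q}-1$, while the Sobolev condition reads $\frac{d}{s}\leq \frac{d}{q}+1$. These are jointly satisfiable precisely when $\frac{(h-1)d}{q}<2$, which is exactly the assumed subcriticality. Hence such $s$ (and therefore such $\varphi\in(0,1/h)$) exists; the side conditions $s\geq 1$ and $\varphi>0$ are then verified from $q>2$ and $h>1$.

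For \eqref{eq:F_estimate_sublinear} the reduction is shorter: the divergence is bounded $L^q\to H^{-1,q}$, so $\|\div F(\cdot,u)\|_{H^{-1,q}}\lesssim \|F(\cdot,u)\|_{L^q}\lesssim 1+\|u\|_{L^{q(h+1)/2}}^{(h+1)/2}$ by the growth of $F$. Applying Gagliardo--Nirenberg to interpolate $L^{q(h+1)/2}$ between $L^q$ and $H^{1,q}$ produces $\psi=\frac{d(h-1)}{q(h+1)}$, and the condition $\psi\frac{h+1}{2}<1$ simplifies again to $q>\frac{d(h-1)}{2}$; positivity and the upper bound $\psi<1$ follow directly.

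The only real obstacle is bookkeeping of exponents: ensuring that $s$ can be chosen so that Sobolev, interpolation, and the strict inequality $\varphi h<1$ all hold simultaneously, together with $s\geq 1$ and $\varphi\in(0,1)$. Because the subcriticality hypothesis produces strict inequality in the key compatibility relation between the Sobolev and interpolation thresholds, picking $s$ slightly away from the Sobolev endpoint yields admissible $\varphi,\psi$ with the required strict bounds.
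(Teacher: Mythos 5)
Your proposal is correct and follows essentially the same route as the paper: a Sobolev embedding of a Lebesgue space into $H^{-1,q}$ (respectively the boundedness of $\div:L^q\to H^{-1,q}$), the polynomial growth bounds, and then interpolation between $L^q$ and $H^{1,q}$, with the subcriticality $q>\frac{d(h-1)}{2}$ giving exactly the strict inequalities $\varphi h<1$ and $\psi\frac{h+1}{2}<1$. The only cosmetic difference is that you keep the integrability exponent $s$ flexible and phrase the interpolation as Gagliardo--Nirenberg, while the paper fixes the endpoint $\zeta=\frac{dq}{q+d}$ and uses $H^{\varphi,q}\embed L^{h\zeta}$ together with $\|u\|_{H^{\varphi,q}}\lesssim\|u\|_{L^q}^{1-\varphi}\|u\|_{H^{1,q}}^{\varphi}$; the exponents and the role of subcriticality are identical.
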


The key point is that the RHS\eqref{eq:f_estimate_sublinear}-\eqref{eq:F_estimate_sublinear} grows sub--linearly in $\|u\|_{H^{1,q}}$.

\begin{proof}[Proof of Lemma \ref{lem:interpolation_strong_setting}]
We split the proof into two steps.

\emph{Step 1: \eqref{eq:f_estimate_sublinear} holds}. Recall that $q> 2$ and $d\geq 2$ by assumption. By Sobolev embeddings, 
$$
L^{\zeta}\embed 
H^{-1,q} \quad \text{ where }\quad \zeta\stackrel{{\rm def}}{=}\frac{dq}{q+d}\in (1,\infty). 
$$
Therefore, using Assumption \ref{ass:f_polynomial_growth}\eqref{it:f_polynomial_growth_1}, we have
\begin{align*}
\|f(\cdot,u)\|_{H^{-1,q}}
\lesssim \|f(\cdot,u)\|_{L^{\zeta}}
\lesssim\|(1+|u|^{h})\|_{L^{\zeta}}\lesssim 1+\|u\|_{L^{h \zeta}}^h.
\end{align*}
Without loss of generality we assume that $h \zeta>q$, otherwise the previous inequality already gives \eqref{eq:f_estimate_sublinear}. If $h \zeta>q$, then by Sobolev embeddings we have $H^{\varphi,q}\embed L^{h \zeta}$ for some $\varphi>0$ such that 
$$
\varphi- \frac{d}{q}=-\frac{d}{h \zeta} \qquad \Longleftrightarrow \qquad 
\varphi=\frac{d}{q}\Big(1-\frac{1}{h}\Big)-\frac{1}{h}.
$$
Since $\|u\|_{H^{\varphi,q}}\lesssim \|u\|_{L^{q}}^{1-\varphi}\|u\|_{H^{1,q}}^{\varphi}$, we have
\begin{equation*}
\|f(\cdot,u)\|_{H^{-1,q}}\lesssim 1+\|u\|_{L^{q}}^{(1-\varphi)h}\|u\|_{H^{1,q}}^{\varphi h}.
\end{equation*}
To conclude Step 1, it remains to note that the condition $\varphi h<1$ follows from $q>\frac{d}{2}(h-1)$.

\emph{Step 2: \eqref{eq:F_estimate_sublinear} holds}. Reasoning as in Step 1, and noticing that $H^{\psi,q}\embed L^{q\frac{h+1}{2}}$ for $\psi-\frac{d}{q}=-\frac{2d}{q(h+1)}$ we have, for all $u\in H^{1,q}$,
\begin{align*}
\|\div(F(\cdot,u))\|_{H^{-1,q}}
\lesssim \|F(\cdot,u)\|_{L^q}
&\lesssim 1+ \|u\|_{L^{q\frac{h+1}{2}}}^{\frac{h+1}{2}}\\
&\lesssim 1+ \|u\|_{L^q}^{\frac{h+1}{2}(1-\psi)}\|u\|_{H^{1,q}}^{\psi \frac{h+1}{2}}.
\end{align*}
Since $\psi=\frac{d}{q}(\frac{h-1}{h+1})$, the condition $\psi \frac{h+1}{2}<1$ follows from $q>\frac{d}{2}(h-1)$.
\end{proof}

We are ready to prove Theorem \ref{t:global_cut_off}\eqref{it:global_cut_off_1}:

\begin{proof}[Proof of Theorem \ref{t:global_cut_off}\eqref{it:global_cut_off_1}]
Let $(\varphi,\psi)$ be as in Lemma \ref{lem:interpolation_strong_setting}. Assume that
$$
r_0\geq  \max\Big\{\frac{(1-\varphi)hp}{(1-\varphi h)}  ,\frac{(1-\psi)p}{(1-\psi \frac{h+1}{2})}\frac{h+1}{2}\Big\}\vee 2 
$$
and recall that $r\in [r_0,\infty)$.
By Proposition \ref{prop:local_cut_off}, it is enough to show $\tau=\infty$ a.s. To prove the latter we employ the blow-up criterion of Proposition \ref{prop:local_cut_off}\eqref{it:local_cut_off_2}.

\emph{Step 1: For all $i\in \{1,\dots,\ell\}$, $t\in [0,T]$ and $u\in L^r(0,t;L^{q})\cap L^p(0,t;H^{1,q})$, }
\begin{align*}
\Big\|\phi_{R,r}(\cdot,u)f_i(\cdot,u)\Big\|_{L^p(0,t,w_{\a};H^{-1,q})}
&\lesssim_{R,T,r} 1+\|u\|_{L^p(0,t,w_{\a};H^{1,q})}^{\varphi h},\\
\Big\|\phi_{R,r}(\cdot,u)\div (F_i(\cdot,u))\Big\|_{L^p(0,t,w_{\a};H^{-1,q})}
&\lesssim_{R,T,r} 1+\|u\|_{L^p(0,t,w_{\a};H^{1,q})}^{\psi\frac{h+1}{2}},
\end{align*}
{\em where the implicit constants are independent of $(t,u)$}.
Below we only prove the first estimate as the second one follows similarly.
Fix $i\in \{1,\dots,\ell\}$, $t\in [0,T]$ and $u\in L^r(0,t;L^{q})\cap L^p(0,t,w_{\a};H^{1,q})$. Let $e_u$ be the following (deterministic) exit time:
\begin{equation}
\label{eq:Xi_u_definition_L_r_q}
e_{u}\stackrel{{\rm def}}{=}\inf\{s\in [0,t]\,:\,\|u\|_{L^r(0,s;L^q)}\geq 2R \} \quad \text{ where }\quad \inf\emptyset\stackrel{{\rm def}}{=}t.
\end{equation}
Since $\phi|_{[2,\infty)}=0$, we have
$
\phi_{R,r}(s,u)=0 $ for all $ s\geq e_u
$.
Set $r_1\stackrel{{\rm def}}{=}\frac{(1-\varphi)hp}{(1-\varphi h)}\leq r_0$ and note that
\begin{align*}
\Big\|\phi_{R,r}(\cdot,u)f_i(\cdot,u)\Big\|_{L^p(0,t,w_{\a};H^{-1,q})}^p
& =
\Big\|\phi_{R,r}(\cdot,u)f_i(\cdot,u)\Big\|_{L^p(0,e_{u},w_{\a};H^{-1,q})}^p\\
&\stackrel{(i)}{\lesssim} 1+\int_0^{e_{u}} \|u(s)\|_{L^q}^{(1-\varphi)hp }\|u(s)\|_{H^{1,q}}^{\varphi h p}\, s^{\a}\, \dd s\\
& \stackrel{(ii)}{\lesssim} 
1+ \|u\|_{L^{r_1}(0,e_{u},w_{\a};L^q)}^{(1-\varphi)h p} \|u\|_{L^{p}(0,e_{u},w_{\a};H^{1,q})}^{\varphi h p}\\
&  \stackrel{\eqref{eq:Xi_u_definition_L_r_q}}{\leq}
1+ (2c R)^{(1-\varphi)h p} \|u\|_{L^{p}(0,t,w_{\a};H^{1,q})}^{\varphi h p}
\end{align*}
where $c(T,r,r_0)>0$ and in $(i)$ we used \eqref{eq:f_estimate_sublinear} and $0\leq \phi_{R,r}(\cdot,u)\leq 1$, in $(ii)$ the H\"{o}lder inequality with exponent $(\frac{1}{\varphi h},\frac{1}{1-\varphi h})$ and $r\geq r_0\geq r_1$. 
Note that $(ii)$ is valid since $\varphi h<1$ by Lemma \ref{lem:interpolation_strong_setting}.

\emph{Step 2: (Intermediate estimate). For all $T\in (0,\infty)$, there exists $c_0(T)>0$ such that, for all  $v_0\in B^{1-2(1+\a)/p}_{q,p}$,}
\begin{equation}
\label{eq:claim_step_2_estimate_a_priori_estimate_besov_space}
\E\|v\|_{L^p(0,\tau\wedge T,w_{\a};H^{1,q})}^p\leq c_0(1+\|v_0\|_{B^{1-2(1+\a)/p}_{q,p}}^p).
\end{equation}
Let $(\tau_n)_{n\geq 1}$ be a localizing sequence for $(v,\tau)$, cf.\ Definition \ref{def:solution}. For all $n\geq 1$, let 
$$
\gamma_n\stackrel{{\rm def}}{=}\inf\{t\in [0,\tau_n)\,:\,\|v\|_{L^p(0,\tau,w_{\a};H^{1,q})}\geq n\}\wedge T\ \ \text{ where }\ \ \inf\emptyset\stackrel{{\rm def}}{=}\tau_n\wedge T. 
$$
Since $(\tau_n)_{n\geq 1}$ is a localizing sequence and $v\in L^p_{\loc}([0,\tau),w_{\a};H^{1,q})$ a.s.\ by Proposition \ref{prop:local_cut_off}\eqref{it:local_cut_off_1}, we have $\lim_{n\to \infty}\gamma_n=\tau\wedge T$ a.s. 
The stochastic maximal $L^p$-regularity estimates of \cite[Theorem 2.1]{AV21_SMR_torus} and Step 1 yield, for some $C_0(R)>0$ independent of $(v_0,n)$
\begin{align*}
&\E\|v\|_{L^p(0,\gamma_n,w_{\a};H^{1,q})}^p\\
&\leq 
C_0  (1+\|v_0\|_{B^{1-2(1+\a)/p}_{q,p}}^p+  \E\|v\|_{L^p(0,\gamma_n,w_{\a};H^{1,q})}^{p \varphi h}+ \E\|v\|_{L^p(0,\gamma_n,w_{\a};H^{1,q})}^{p\psi\frac{h+1}{2}})\\
&\stackrel{(i)}{\leq }
C_1 (1+\|v_0\|_{B^{1-2(1+\a)/p}_{q,p}}^p)+ \frac{1}{2} \E\|v\|_{L^p(0,\gamma_n,w_{\a};H^{1,q})}^{p},
\end{align*}
where $C_1(C_0, \varphi,\psi,h)>0$ and in $(i)$ we used that $\max\{\varphi h ,\psi\frac{h+1}{2}\}<1$ and the Young inequality.
Note that $\|v\|_{L^p(0,\gamma_n,w_{\a};H^{1,q})}\leq n$ a.s.\ by the definition of $\gamma_n$. Thus, the above inequality yields
$$
\E\|v\|_{L^p(0,\gamma_n,w_{\a};H^{1,q})}^p\leq 
2C_1 (1+\|v_0\|_{B^{1-2(1+\a)/p}_{q,p}}^p).
$$
Since $C_1$ is independent of $(v_0,n)$ and $\lim_{n\to\infty}\g_n=\tau\wedge T$ a.s., the claim of Step 2 with $c_0=2C_1$ follows by letting $n\to \infty$ in the above estimate.

\emph{Step 3: Conclusion}. By Step 2 we know that $v\in L^p(0,\tau\wedge T,w_{\a};H^{1,q})$ a.s.\ for all $T<\infty$.  From Step 1 we deduce that, for all $T<\infty$,
\begin{equation}
\label{eq:global_existence_consequence_steps_1_2}
\max_{1\leq i\leq \ell}\Big\|\phi_{R,r} (\cdot,v)\big[ \div(F_i(\cdot,v))+f_i(\cdot,v) \big]\Big\|_{L^p(0,\tau\wedge T,w_{\a};H^{-1,q})}<\infty\ \  \text{a.s.\ }
\end{equation}
Therefore, by Proposition \ref{prop:local_cut_off}\eqref{it:local_cut_off_2},
\begin{align*}
\P(\tau<T)\stackrel{\eqref{eq:global_existence_consequence_steps_1_2}}{=}\P\Big(\tau<T,\,\max_{1\leq i\leq \ell}\Big\|\phi_{R,r} (\cdot,v)\big[ \div(F_i(\cdot,v))+f_i(\cdot,v) \big]\Big\|_{L^p(0,\tau,w_{\a};H^{-1,q})}<\infty \Big)=0.
\end{align*}
Hence $\tau\geq T$ a.s.\ for all $T<\infty$ and therefore $\tau=\infty$ a.s. 
\end{proof}


\begin{remark}[On the use of blow-up criteria]
\label{r:blow_up_criteria_necessity}
In the works \cite{FL19,FGL21} the analogue of Theorem \ref{t:global_cut_off} is proven by showing global existence and pathwise uniqueness which, in combination with a Yamada--Watanabe type argument, yields existence of global unique solutions. Our approach is different and it based on the construction of maximal solutions and blow-up criteria, following the scheme of \cite{AV19_QSEE_1,AV19_QSEE_2}. This strategy has two basic advantages. Firstly, the role of the sub-criticality is clear from the estimates of Lemma \ref{lem:interpolation_strong_setting} which in combination of a (relatively) soft argument gives global existence for \eqref{eq:reaction_diffusion_system_truncation}. Secondly, in an $L^p(L^q)$--setting, pathwise uniqueness is more difficult to achieve as it often difficult to estimate differences  like $f(\cdot,v^{(1)})-f(\cdot,v^{(2)})$. Indeed, such estimate seems possible only if one enforces the cut-off, cf.\ Lemma \ref{l:interp_inequality_final_one} below.
\end{remark}

\subsection{Proof of Theorem \ref{t:global_cut_off}\eqref{it:global_cut_off_2}}
\label{ss:uniform_estimate_cut_off}
Here we prove Theorem \ref{t:global_cut_off}\eqref{it:global_cut_off_2} by applying the It\^{o} formula to the functionals $v=(v_i)_{i=1}^{\ell}\mapsto \|v_i\|^{q}_{L^q}$, for $i\in \{1,\dots,\ell\}$, mimicking a Moser iteration (see e.g.\ \cite{DG15_boundedness}). To handle the nonlinear terms in \eqref{eq:reaction_diffusion_system_truncation} we need the following interpolation inequality.

\begin{lemma}[Interpolation inequality]
\label{l:interpolation_L_eta}
Assume that $d\geq 2$ and $T\in (0,\infty)$. Let $h>1$ and $\frac{d(h-1)}{2}\vee 2< q<\infty$.
Then there exist $r_*\in (1,\infty)$, $\alpha\in (1,\infty)$ and $\beta\in (0,1)$ such that, 
such that for all $t\in [0,T]$ and $u\in L^{\infty}(0,t;L^q(\Tor^d))\cap L^2(0,t;W^{1,q}(\Tor^d))$,
\begin{equation}
\label{eq:interp_inequality_II}
\begin{aligned}
\| u\|_{L^{q+h-1}((0,t)\times \Tor^d)}^{q+h-1}
&\lesssim_T
\|u\|_{L^{r_*}(0,t;L^{q}(\Tor^d))}^{q+h-1}\\
&+
\|u\|_{L^{r_*}(0,t;L^{q}(\Tor^d))}^{\alpha}
\Big(\int_0^t \int_{\Tor^d} |u|^{q-2}|\nabla u|^2\,\dd x\, \dd s\Big)^{\beta},
\end{aligned}
\end{equation}
where the implicit constant is independent of $u$ and $t\in (0,T]$.
\end{lemma}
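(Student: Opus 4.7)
The plan is to reduce Lemma \ref{l:interpolation_L_eta} to a pointwise--in--time Gagliardo--Nirenberg inequality applied to $w=|u|^{q/2}$, and then integrate in time using H\"{o}lder. The starting point is the identity \eqref{eq:composition_rules_sobolev_regularity_power}, which identifies $\int_{\Tor^d}|u|^{q-2}|\nabla u|^2\,dx$ with (a constant multiple of) $\|\nabla w\|_{L^2(\Tor^d)}^2$, together with the elementary observation $\|u\|_{L^{q+h-1}(\Tor^d)}^{q+h-1}=\|w\|_{L^{\xi}(\Tor^d)}^{\xi}$ for the specific choice $\xi = 2(q+h-1)/q$. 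With this change of variable, proving \eqref{eq:interp_inequality_II} becomes proving a space--time estimate for $\|w\|_{L^{\xi}((0,t)\times\Tor^d)}^{\xi}$ in terms of $\|\nabla w\|_{L^2((0,t)\times\Tor^d)}^2$ and a weaker space--time norm.

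The next step is Gagliardo--Nirenberg on the torus: $\|w\|_{L^{\xi}(\Tor^d)}\lesssim \|\nabla w\|_{L^2(\Tor^d)}^{\theta}\|w\|_{L^2(\Tor^d)}^{1-\theta}+\|w\|_{L^2(\Tor^d)}$ with $\theta = d(1/2-1/\xi) = d(h-1)/(2(q+h-1))$. A direct computation then reveals that $\theta\xi = d(h-1)/q$, so the subcriticality assumption $q>d(h-1)/2$ is \emph{exactly} equivalent to $\theta\xi<2$. Raising to the $\xi$--th power, integrating over $(0,t)$, and applying H\"{o}lder in time with conjugate exponents $(2/(\theta\xi),\,2/(2-\theta\xi))$ then produces a gradient factor of the form $\bigl(\int_0^t\|\nabla w\|_{L^2}^2\,ds\bigr)^{\beta}$ with $\beta=\theta\xi/2\in(0,1)$, multiplied by a factor $\bigl(\int_0^t\|w\|_{L^2}^{\gamma}\,ds\bigr)^{(2-\theta\xi)/2}$ for a specific $\gamma$. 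Unpacking $\|w\|_{L^2} = \|u\|_{L^q}^{q/2}$, this factor equals $\|u\|_{L^{r_*}(0,t;L^q)}^{\alpha}$ with $r_* = q(1-\theta)\xi/(2-\theta\xi)$ and $\alpha = q(1-\theta)\xi/2$; the lower--order $\|w\|_{L^2}^{\xi}$ summand from Gagliardo--Nirenberg integrates to give $\|u\|_{L^{q+h-1}(0,t;L^q)}^{q+h-1}$, which is dominated by $T^{c}\|u\|_{L^{r_*}(0,t;L^q)}^{q+h-1}$ since one checks $r_*\geq q+h-1$.

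The main obstacle to watch is verifying that the Gagliardo--Nirenberg interpolation is admissible: for $d\geq 3$ this requires $\xi\leq 2d/(d-2)$, and the bound $\xi = 2+2(h-1)/q<2+4/d\leq 2d/(d-2)$ again comes directly from $q>d(h-1)/2$; for $d=2$ any finite $\xi$ is permitted. A final piece of bookkeeping is ensuring the exponents $\alpha,r_*$ meet the claimed ranges $\alpha>1$ and $r_*>1$. If the $\alpha$ produced by the above computation happens to be $\leq 1$, one uses the trivial bound $X^{\alpha}\leq 1 + X^{\alpha'}$ for any $\alpha'\geq\alpha$; the additive constant is absorbed into the $\|u\|_{L^{r_*}(0,t;L^q)}^{q+h-1}$ term at the cost of enlarging $r_*$ via $\|\cdot\|_{L^{r_*}(0,T)}\lesssim_T\|\cdot\|_{L^{r_*'}(0,T)}$ for $r_*'\geq r_*$, which is harmless since the implicit constant is allowed to depend on $T$.
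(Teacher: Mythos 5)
Your core argument is correct and is essentially the paper's proof in sharper clothing: the paper's Step~1 is a (non-sharp) mixed space--time interpolation which, applied to $u_1=|u|^{q/2}$ together with the Sobolev embedding $H^1(\Tor^d)\embed L^{\xi_1}(\Tor^d)$, is exactly your Gagliardo--Nirenberg-plus-H\"older-in-time computation. Both proofs rest on the substitution $w=|u|^{q/2}$, on \eqref{eq:composition_rules_sobolev_regularity_power}, and on the identity $\theta\xi=\frac{d(h-1)}{q}<2$, which is where subcriticality enters and which gives $\beta<1$; your exponent bookkeeping ($\beta=\frac{d(h-1)}{2q}$, $\alpha=(1-\theta)(q+h-1)=q+h-1-\frac{d(h-1)}{2}$, and $r_*\geq q+h-1$ so the lower-order term is absorbed) is accurate, as is the admissibility check $\xi<2+\frac4d\leq\frac{2d}{d-2}$ for $d\geq3$. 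The only small omission is a word on why $w(t)\in H^1(\Tor^d)$ for a.e.\ $t$ (chain rule plus approximation; the paper reduces to $u\in C^1$ for this reason).

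The one step that does not work is your final patch for forcing $\alpha>1$. Writing $X^{\alpha}\leq 1+X^{\alpha'}$ leaves a term $\bigl(\int_0^t\int_{\Tor^d}|u|^{q-2}|\nabla u|^2\,dxds\bigr)^{\beta}$ with \emph{no} prefactor $\|u\|_{L^{r_*}(0,t;L^q)}$, and this cannot be absorbed into $\|u\|_{L^{r_*}(0,t;L^q)}^{q+h-1}$ (take $u=\varepsilon\,\phi(Nx)$: the gradient factor is of order $(\varepsilon^{q}N^{2})^{\beta}$ while the absorbing term is of order $\varepsilon^{q+h-1}$), so the resulting bound is no longer of the form \eqref{eq:interp_inequality_II}. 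In fact no patch can exist: testing \eqref{eq:interp_inequality_II} on $\lambda u$ forces $\alpha+q\beta=q+h-1$, and testing on the concentrating profiles $u(x)=N^{d/q}\phi(Nx)$ forces $\beta\geq\frac{d(h-1)}{2q}$, hence $\alpha\leq q+h-1-\frac{d(h-1)}{2}$, which is $\leq 1$ for some admissible $(d,h,q)$ (e.g.\ $d=6$, $h=\frac53$, $q$ slightly above $2$). This, however, is a defect of the stated range $\alpha\in(1,\infty)$ rather than of your argument: the paper's own exponent $\alpha=\frac q2\psi(1-\theta)$ with $\theta$ near $\frac{d}{d+2}$ is even smaller and is never verified to exceed $1$, and nowhere downstream is $\alpha>1$ used --- only $\beta<1$ (and $\alpha>0$) enters, e.g.\ in Theorem \ref{t:global_cut_off}\eqref{it:global_cut_off_2} and Lemma \ref{l:time_regularity}, where $\|u\|_{L^{r}(0,t;L^q)}$ is bounded by the cut-off. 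So drop the patch and keep the exponents your computation produces; with that, your proof is complete and matches the paper's.
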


The RHS\eqref{eq:interp_inequality_II} is finite due to the regularity assumptions on $u$. Indeed, $q> 2$ and H\"{o}lder inequality with exponents $(\frac{q}{2},\frac{q}{q-2})$ ensure
\begin{equation}
\label{eq:mixed_estimate_q}
\int_0^t \int_{\Tor^d} |u|^{q-2}|\nabla u|^2\,\dd x\, \dd s \leq \|u\|_{L^{\infty}(0,t;L^q)}^{q-2} \| \nabla u\|_{L^{2}(0,t;L^q)}^2<\infty.
\end{equation}
As in Lemma \ref{lem:interpolation_strong_setting}, the crucial point of Lemma \ref{l:interpolation_L_eta} is that $\beta<1$. This is of course due to the subcriticality of $L^q$ with $q>\frac{d}{2}(h-1)$, see Subsection \ref{ss:role_criticality}.
For exposition convenience we postpone the proof of Lemma \ref{l:interpolation_L_eta} at the end of this subsection.

\begin{proof}[Proof of Theorem \ref{t:global_cut_off}\eqref{it:global_cut_off_2}]
Fix $T\in (0,\infty)$. Without loss of generality we may assume that $r_0\geq r_*$ where $r_*$ is as in Lemma \ref{l:interpolation_L_eta}. 
To prove the claim of Step 1, we compute $\int_{\Tor^d}|v_i|^q\,\dd x$ and we estimate the nonlinearities by employing Lemma \ref{l:interpolation_L_eta}. As in \cite[Lemma 2]{DG15_boundedness}, we need an approximation argument. For $N\geq 1$, set 
$$
\psi_N (y)\stackrel{{\rm def}}{=}
\left\{
\begin{aligned}
&|y|^q  &\ \ \text{ if }&|y|\leq N,\\
&\frac{q(q-1)}{2} n^{q-2}(|y|-N)^2+ q N^q-1(|y|-N)+N^q & \text{ if }& |y|> N.
\end{aligned}\right.
$$
One can check that there exists $c\geq 1$ independent of $N\geq 1$ and $y\in \R$ such that
\begin{equation}
\label{eq:estimate_psi_n}
|\psi_N(y)|\leq c |y|^q , \qquad |\psi_N' (y)|\leq c  |y|^{q-1} , \qquad |\psi_N''(y)|\leq c|y|^{q-2}.
\end{equation}
Moreover, for all $y\in \R$, as $N\to \infty$ we have
\begin{equation}
\label{eq:convergence_psi_n}
\psi_N(y)\to |y|^q, \qquad \psi_N'(y)\to q|y|^{q-2} y, \qquad \psi_N''(y)\to q(q-1)|y|^{q-2}.
\end{equation}
The generalized It\^o formula (see e.g.\ \cite[Section 3]{Kry13} or \cite[Proposition A.1]{DHV16}) yields, for all $i\in \{1,\dots,\ell\}$, $N\geq 1$, $t\in [0,T]$ and a.s.,
\begin{equation}
\label{eq:identity_ito_psi_n}
\begin{aligned}
\int_{\Tor^d} \psi_N (v_i(t))\,\dd x
&-
\int_{\Tor^d} \psi_N (v_{0,i})\,\dd x
=-
 (\ellip+\ellip_i) \int_0^t \int_{\Tor^d} \psi''_N(v_i) |\nabla v_i|^2\,\dd x\, \dd s \\
&
+
\int_0^t   \int_{\Tor^d} \phi_{R,r}(\cdot,v) \big[ \psi_N'(v_i)f_i(\cdot,v)- \psi_N''(v_i)F_i(\cdot,v)\cdot\nabla v_i\big]\,\dd x\, \dd s\\
&
+
c_d \ellip
\sum_{k,\alpha} 
\int_{0}^t \int_{\Tor^d} \theta_k^2\, \psi_N''(v_i) |(\sigma_{k,\alpha}\cdot\nabla) v_i|^2\,\dd x\, \dd s
\end{aligned}
\end{equation}
where we used that the martingale part cancels since 
$$
\int_{\Tor^d} \psi_N'(v_i)[ (\sigma_{k,\alpha}\cdot \nabla) v_i]\,\dd x
\stackrel{(i)}{=}
\int_{\Tor^d}  (\sigma_{k,\alpha}\cdot \nabla)\big[\psi_N(v_i)\big]\,\dd x 
\stackrel{(ii)}{=}0\  \text{ a.s.\ }
$$
Here $(i)$ follows from the chain rule and  $(ii)$ from integrating by parts as well as $\div\,\sigma_{k,\alpha}=0$.

For the reader's convenience, we split the remaining proof into several steps.

\emph{Step 1: For all $i\in \{1,\dots,\ell\}$ we have, a.s.\ for all $t\in [0,T]$,}
\begin{align*}
\|v_i(t)\|_{L^q}^q
&
+ q(q-1) \ellip_i \int_0^t \int_{\Tor^d} |v_i|^{q-2} |\nabla v_i|^2\,\dd x\, \dd s\\
&=  
\|v_{0,i}\|_{L^q}^q 
+
q \int_0^t   \int_{\Tor^d} \phi_{R,r}(\cdot,v)  |v_i|^{q-2} \big[ f_i(\cdot,v) v_i- (q-1)F_i(\cdot,v)\cdot \nabla v_i\big] \,\dd x\, \dd s.
\end{align*}

Fix $i\in \{1,\dots,\ell\}$.
By taking $N\to \infty$ in \eqref{eq:identity_ito_psi_n} and using \eqref{eq:estimate_psi_n} we have, a.s.\ for all $t\in [0,T]$,
\begin{align}
\label{eq:ito_intermediate_with_ito_correction}
\|v_i(t)\|_{L^q}^q
&
+ q(q-1) (\ellip+\ellip_i) \int_0^t \int_{\Tor^d} |v_i|^{q-2} |\nabla v_i|^2\,\dd x\, \dd s\\
\nonumber
&= 
\|v_{0,i}\|_{L^q}^q 
+
q \int_0^t   \int_{\Tor^d} \phi_{R,r}(\cdot,v)  |v_i|^{q-2} \big[ f_i(\cdot,v) v_i- (q-1)F_i(\cdot,v)\cdot \nabla v_i\big] \,\dd x\, \dd s\\
\nonumber
& 
+q(q-1)
c_d \ellip 
\sum_{k,\alpha} 
\int_{0}^t \int_{\Tor^d}\theta_k^2\, |v_i|^{q-2}  |(\sigma_{k,\alpha}\cdot\nabla) v_i|^2\,\dd x\, \dd s.
\end{align}
It remains to discuss the legitimacy of using the Lebesgue domination theorem to take $N \to \infty $ in \eqref{eq:identity_ito_psi_n}.
Firstly, recall that, by Theorem \ref{t:global_cut_off}\eqref{it:global_cut_off_1},  $\a<\frac{p}{2}-1$ and \eqref{eq:elementary_embedding_Besov_in_Lq},
\begin{equation}
\label{eq:regularity_v_q}
v\in L^2(0,T;W^{1,q})\cap L^{\infty}(0,T;L^q) \text{ a.s.\ }
\end{equation}
Hence \eqref{eq:mixed_estimate_q} shows $ |v_i|^{q-2}|\nabla v_i|^2\in L^1((0,T)\times \Tor^d)$ a.s.
Moreover, by Assumption \ref{ass:f_polynomial_growth}\eqref{it:f_polynomial_growth_1},  
$$
|v_i |^{q-2}\phi_{R,r}(\cdot,v)\big| f_i(\cdot,v)v_i\big|\lesssim |v_i|^{q-2}(1+|v|^{h}) \lesssim 1+ |v|^{q+h-1}.
$$ 
Combining the above with \eqref{eq:regularity_v_q}, \eqref{eq:mixed_estimate_q} and Lemma \ref{l:interpolation_L_eta} we get
$$
|v_i |^{q-2}\phi_{R,r}(\cdot ,v) f_i(\cdot,v)v_i \in L^1((0,T)\times \Tor^d) \text{ a.s.\ }
$$ 
For the $F$-term we argue similarly. By Assumption \ref{ass:f_polynomial_growth}\eqref{it:f_polynomial_growth_1} and the Cauchy-Schwartz inequality,
\begin{align*}
\big|\phi_{R}(\cdot,v)|v_i|^{q-2} F_i(\cdot,v)\cdot\nabla v_i \big|
&\lesssim  |F_i(\cdot,v)|^2 |v_i|^{q-2} + |v_i|^{q-2} |\nabla v_i|^2 \\
&\lesssim 
1+|v|^{q+h-1}+ |v_i|^{q-2} |\nabla v_i|^2 \in L^1((0,T)\times \Tor^d) \text{ a.s.\ }
\end{align*}
Thus \eqref{eq:ito_intermediate_with_ito_correction} is proved. 
To conclude the proof of Step 1, it is enough to note that
$$
c_d
\sum_{k,\alpha}\int_{0}^t \int_{\Tor^d}\theta_k^2\, |v_i|^{q-2} |(\sigma_{k,\alpha}\cdot\nabla) v_i|^2\,\dd x\, \dd s
\stackrel{\eqref{eq:ellipticity_noise}}{=}  \int_{0}^t \int_{\Tor^d} |v_i|^{q-2} |\nabla v_i|^2\,\dd x\, \dd s.
$$

\emph{Step 2: Let $\ellip_0\stackrel{{\rm def}}{=}\min_{1\leq i\leq \ell} \ellip_i$. Then there exists $K>0$, independent of $(\theta,v_0)$, such that, for all $i\in \{1,\dots,\ell\}$,}
\begin{multline*}
\Big| \int_0^T   \int_{\Tor^d}   |v_i|^{q-2}\phi_{R,r}(\cdot,v) f_i(\cdot,v) v_i \,\dd x\, \dd s \Big|
+\Big| \int_0^T   \int_{\Tor^d}   |v_i|^{q-2}\phi_{R,r}(\cdot,v) F_i(\cdot,v) \cdot\nabla v_i \,\dd x\, \dd s \Big|\\
 \leq K+ \frac{3\ellip_0}{4}\sum_{1\leq i\leq \ell }  \int_0^T   \int_{\Tor^d}   |v_i|^{q-2}|\nabla v_i|^2 \,\dd x\, \dd s.
\end{multline*}

Fix $i\in \{1,\dots,\ell\}$.
In this step we use that $r_0\geq r_*$ where $r_*$ is as in Lemma \ref{l:interpolation_L_eta}.  
We first estimate the $f$-term. Let $e_v$ be the first exit time of $t\mapsto \|v\|_{L^r(0,t;L^{q})}$ from the interval $[0,2R]$, i.e.
$$
e_v\stackrel{{\rm def}}{=} \inf\big\{t\in [0,T]\,:\,\|v\|_{L^r(0,t;L^{q})}\geq 2R\big\}  \ \ \text{ where }\ \  \inf\emptyset\stackrel{{\rm def}}{=}T.
$$
By \eqref{eq:def_cut_off} and Assumption \ref{ass:f_polynomial_growth}\eqref{it:f_polynomial_growth_1} we have
\begin{align*} 
&\Big|\int_0^T   \int_{\Tor^d}   |v_i|^{q-2}\phi_{R,r}(\cdot,v) f_i(\cdot,v) v_i \,\dd x\, \dd s \Big|
= \Big|\int_0^{e_v} \int_{\Tor^d}  |v_i|^{q-2}\phi_{R,r}(\cdot,v) f_i(\cdot,v) v_i \,\dd x\, \dd s\Big|\\
&\qquad \qquad \qquad \lesssim 1+ \int_0^{e_v} \int_{\Tor^d} |v|^{q+h-1} \,\dd x\, \dd s
\lesssim 1+ \sum_{1\leq i\leq \ell}\int_0^{e_v} \int_{\Tor^d} |v_i|^{q+h-1} \,\dd x\, \dd s
\end{align*}
where the implicit constants depend only on $(q,h,\ell,\|f(\cdot,0)\|_{L^{\infty}(\Tor^d;\R^{\ell})})$.

By Lemma \ref{l:interpolation_L_eta}, for some $\beta\in (0,1)$, 
\begin{equation}
\label{eq:e_u_trick_estimate_R_reaction_diffusion_cut_off}
\begin{aligned}
&\int_0^{e_u} \int_{\Tor^d} |v_i|^{q+h-1} \,\dd x\, \dd s\\
&\lesssim 
\|v_i \|_{L^r(0, e_v;L^{q})}^{q+h-1} + 
\|v_i \|_{L^r(0, e_v;L^{q})}^{\alpha} \Big( \int_0^{e_v}   \int_{\Tor^d}|v_i|^{q-2} |\nabla v_i|^2\,\dd x\, \dd s \Big)^{\beta}\\
&\stackrel{(i)}{\lesssim}_{q,h,R} 1 + \Big( \int_0^{e_v}   \int_{\Tor^d}|v_i|^{q-2} |\nabla v_i|^2\,\dd x\, \dd s \Big)^{\beta}\\
&\leq  C(R,q,h,\ellip_0)+ \frac{\ellip_0}{4 }\int_0^{T}   \int_{\Tor^d}|v_i|^{q-2} |\nabla v_i|^2\,\dd x\, \dd s 
\end{aligned}
\end{equation}
where in $(i)$ we used $\|v\|_{L^r(0,e_v;L^q)}\leq 2R$ by definition of $e_v $. Hence we proved that 
$$
\Big|\int_0^T   \int_{\Tor^d}  \phi_{R,r}(\cdot,v)  |v_i|^{q-2} f_i(\cdot,v) v_i \,\dd x\, \dd s \Big|
\leq K+ \frac{\ellip_0}{4}\sum_{1\leq i\leq \ell}\int_0^{T}   \int_{\Tor^d}|v_i|^{q-2} |\nabla v_i|^2\,\dd x\, \dd s .
$$
Similarly we estimate the $F$-term. By Cauchy-Schwartz inequality and Assumption \ref{ass:f_polynomial_growth}\eqref{it:f_polynomial_growth_1},
\begin{equation}
\label{eq:F_gradient_cauchy_schwartz_proof_theta_ind_estimate}
\begin{aligned}
&\Big| \int_0^T   \int_{\Tor^d} \phi_{R,r}(\cdot,v)  |v_i|^{q-2} F_i(\cdot,v) \cdot\nabla v_i \,\dd x\, \dd s \Big|\\
&\leq \frac{\ellip_0}{4}  \int_0^{e_v}  \int_{\Tor^d}   |v_i|^{q-2}|\nabla v_i|^2 \,\dd x\, \dd s 
+ C(\ellip_0)  \int_0^{e_v}   \int_{\Tor^d}    |v_i|^{q-2}|F_i(\cdot,v)|^2 \,\dd x\, \dd s\\
&\leq \frac{\ellip_0}{4}  \int_0^{e_v}   \int_{\Tor^d}   |v_i|^{q-2}|\nabla v_i|^2 \,\dd x\, \dd s 
+ C(\ellip_0,T)\Big(1+   \int_0^{e_v}  \int_{\Tor^d}  |v|^{q+h-1} \,\dd x\, \dd s\Big).
\end{aligned}
\end{equation}
Since $|v|\leq \sum_{1\leq i\leq \ell} |v_i|$, the last integral can be estimated as in \eqref{eq:e_u_trick_estimate_R_reaction_diffusion_cut_off}.
Putting together the above estimates, one obtains the claim of Step 2. 

\emph{Step 3: Conclusion}. Summing over $i\in \{1,\dots,\ell\}$ the estimate of Step 1 and using the estimate of Step 2, one gets 
\begin{equation}
\label{eq:a_priori_estimates_cut_off_conclusion_1}
\sup_{t\in [0,T]}\|v(t)\|_{L^{q}}^{q}+\max_{1\leq i\leq \ell}\int_0^T \int_{\Tor^d}|v_i|^{q-2} |\nabla v_i|^2\,\dd x\, \dd s
\leq C_T(1+
\|v_{0}\|_{L^{q}}^{q}) \ \ \text{ a.s.\ }
\end{equation}
where $C_T$ is independent of $(\theta,v_0)$.  We remark that    
$ \int_0^T   \int_{\Tor^d}|v_i|^{q-2} |\nabla v_i|^2\,\dd x\, \dd s<\infty$ a.s.\ due to \eqref{eq:mixed_estimate_q}. Therefore the term   
$q(q-1)\frac{3\ellip_0}{4} \sum_{1\leq i \leq \ell}
 \int_0^T   \int_{\Tor^d}|v_i|^{q-2} |\nabla v_i|^2\,\dd x\,\dd s$ obtained by summing the estimate of Step 1 can be absorbed on the LHS of the corresponding estimate.
 
To conclude the proof of Theorem \ref{t:global_cut_off}\eqref{it:global_cut_off_2}, it remains to show
\begin{equation}
\label{eq:L_2_energy_bounds}
\max_{1\leq i \leq \ell}\int_0^T \int_{\Tor^d} |\nabla v_i|^2\,\dd x\,\dd s\leq C_T( 1+
\|v_{0}\|_{L^{q}}^{q})\ \ \text{ a.s.\ }
\end{equation}
where $C_T$ is independent of $(\theta,v_0)$.
By Step 1 with $q=2$, it remains to show that  
\begin{equation}
\label{eq:L_2_estimate_RHS}
\max_{1\leq i\leq \ell}
\Big|\int_0^T\int_{\Tor^d}\phi_{R,r}(\cdot,v) \big[f_i(\cdot,v) v_i -F_i(\cdot,v)\cdot \nabla v_i\big]\,\dd x\, \dd s\Big|
\lesssim_T 1+\|v_0\|_{L^q}^q.
\end{equation}
To this end, recall that $q\geq 2$ and $0\leq \phi_{R,r}(\cdot,v)\leq 1$. Thus, by Assumption \ref{ass:f_polynomial_growth}\eqref{it:f_polynomial_growth_1}, for all $i\in \{1,\dots,\ell\}$,
\begin{align*}
\Big|
\int_0^t\int_{\Tor^d}\phi_{R,r}(\cdot,v)   f_i(\cdot,v)v_i\,\dd x\, \dd s\Big|
\lesssim_T 1+ 
\int_0^t\int_{\Tor^d}\phi_{R,r}(\cdot ,v) |v_i|^{q+h-1}\,\dd x\, \dd s
\lesssim_T 1+\|v_0\|_{L^q}^q
\end{align*}
where the last inequality follows from \eqref{eq:e_u_trick_estimate_R_reaction_diffusion_cut_off} and \eqref{eq:a_priori_estimates_cut_off_conclusion_1}. With a similar argument one can show $\max_{1\leq i\leq \ell}\int_0^T\int_{\Tor^d}\phi_{R,r}(\cdot,v) |F_i(\cdot,v)||\nabla v_i|\,\dd x\, \dd s\lesssim_T 1+\|v_0\|^q_{L^q}$.
Thus we have proved 
\eqref{eq:L_2_estimate_RHS}.
\end{proof}

\begin{proof}[Proof of Lemma \ref{l:interpolation_L_eta}]
As above, we break the proof into steps. Below we set $1/0\stackrel{{\rm def}}{=}\infty$.

\emph{Step 1: For all 
$
1<\psi <\frac{2(d+2)}{d},
$
there exist $\theta\in (0,\frac{d}{d+2})$, $r_1\in (2,\infty)$, $\zeta_1\in (1,2)$ and $\xi_1\in (2,\frac{2d}{d-2})$ such that 
\begin{equation}
\label{eq:interp_step_1_proof}
\|u_1\|_{L^{\psi}((0,t)\times \Tor^d)}\lesssim
\|u_1\|_{L^{r_1}(0,t;L^{\zeta_1})}^{1-\theta} \|u_1\|_{L^2(0,t;L^{\xi_1})}^{\theta}
\end{equation}
where the implicit constant is independent of $u_1\in C([0,T]\times \Tor^d)$ and $t\in (0,T]$}. 

By standard interpolation arguments, one sees that \eqref{eq:interp_step_1_proof} holds provided 
\begin{equation}
\label{eq:conditions_interpolation_step_1}
 \frac{1-\theta}{r_1}+\frac{\theta}{2}\leq \frac{1}{\psi}, \quad \text{ and } \quad 
 \frac{1-\theta}{\zeta_1}+\frac{\theta}{\xi_1}\leq \frac{1}{\psi}.
\end{equation}
Since $\psi<\frac{2(d+2)}{d}$ by assumption, the conditions in \eqref{eq:conditions_interpolation_step_1} hold with the \emph{strict} inequalities in case $(r_1,\zeta_1,\xi_1,\theta)$ are replaced by the corresponding extreme values $(\infty,2, \frac{d}{d-2},\frac{d}{d+2})$. By continuity, there exist $r_1<\infty$, $\zeta_1\in (1,2)$, $\theta<\frac{d}{d+2}$ and $\xi_1<\frac{d}{d-2}$ such that the conditions in \eqref{eq:conditions_interpolation_step_1} hold with the strict inequality. This concludes the proof of Step 1. 

\emph{Step 2: Conclusion}. 
By a standard approximation it is suffices to consider $u\in C^{1}([0,T]\times \Tor^d)$.
We begin by noticing that 
\begin{equation}
\label{eq:def_psi_transformation}
|u|^{q+h-1}=\Big[|u|^{q/2}\Big]^{\psi}
\qquad \text{ and }\qquad \psi\stackrel{{\rm def}}{=}\frac{2}{q}(q+h-1)<\frac{2(d+2)}{d}.
\end{equation} 
Here the last inequality follows from the assumption $q>\frac{d}{2}(h-1)$. Applying Step 1 to $u_1=|u|^{q/2}\in C^1([0,T]\times \Tor^d)$ and $\psi$ as above, we have
\begin{equation}
\label{eq:int_1_u_proof}
\begin{aligned}
\int_0^t \int_{\Tor^d} |u|^{q+h-1}\,\dd x\, \dd s
&\lesssim 
\Big\||u|^{q/2}	\Big\|_{L^{r_1}(0,t;L^{\zeta_1})}^{\psi(1-\theta)} \Big\||u|^{q/2} \Big\|_{L^2(0,t;L^{\xi_1})}^{\psi\theta}\\
&\stackrel{(i)}{\lesssim}
\|u\|_{L^{r_*}(0,t;L^{q})}^{\frac{q}{2}\psi(1-\theta)} \Big\||u|^{q/2} \Big\|_{L^2(0,t;L^{\xi_1})}^{\psi\theta}
\end{aligned}
\end{equation}
where $(i)$ we set $r_*\stackrel{{\rm def}}{=}\frac{qr_1}{2}\in (2,\infty)$ and used that $\zeta_1\frac{q}{2}\leq q$.
Since $\xi_1<\frac{2d}{d-2}$, the Sobolev embedding $H^1(\Tor^d)\embed L^{\xi_1}(\Tor^d)$ yields
\begin{equation}
\label{eq:int_2_u_proof_intermediate}
\begin{aligned}
\Big\||u|^{q/2}\Big\|_{L^2(0,t;L^{\xi_1})}
&\lesssim
\Big\||u|^{q/2}\Big\|_{L^2(0,t;L^{2})} + 
\Big\|\nabla [|u|^{q/2}]\Big\|_{L^2(0,t;L^{2})}\\
&=
\|u\|_{L^{q}(0,t;L^{q})}^{q/2} + 
\Big(\int_{0}^t \int_{\Tor^d} |u|^{q-2}|\nabla u|^2 \,\dd x\, \dd s\Big)^{1/2}
\end{aligned}
\end{equation}
Since $r_*>q$, the previous yields
\begin{equation}
\label{eq:int_2_u_proof}
\Big\||u|^{q/2}\Big\|_{L^2(0,t;L^{\xi_1})}
\lesssim_T
\|u\|_{L^{r_*}(0,t;L^{q})}^{q/2} + 
\Big(\int_{0}^t \int_{\Tor^d} |u|^{q-2}|\nabla u|^2 \,\dd x\, \dd s\Big)^{1/2}.
\end{equation}
Inserting \eqref{eq:int_2_u_proof} in \eqref{eq:int_1_u_proof}, one sees that the estimate \eqref{eq:interp_inequality_II} follows with $\alpha\stackrel{{\rm def}}{=}\frac{q}{2}\psi(1-\theta)$ and $\beta\stackrel{{\rm def}}{=}\frac{\psi\theta}{2}$.
To conclude the proof of Lemma \ref{l:interpolation_L_eta}, it remains to show $\beta<1$, i.e.\
$
\psi \theta<2
$.
To see the latter, recall that $\theta<\frac{d}{d+2}$ and observe that
$$
\psi \frac{d}{d+2}=\Big[\frac{2}{q}(q-1+h)\Big]\frac{d}{d+2}<2  \quad \Longleftrightarrow \quad q>\frac{d}{2}(h-1)
$$
which holds by assumption. This completes the proof of Lemma \ref{l:interpolation_L_eta}.
\end{proof}

From the proof of Lemma \ref{l:interpolation_L_eta} we can extract the following result which will be used later on.

\begin{remark}[Interpolation inequality II]
\label{r:interpolation_L_eta_2}
Assume $h>1$ and $\frac{d(h-1)}{2}\vee 2<q<\infty$. Then there exist $\alpha,\beta>0$, $r\in (2,\infty)$ and $\xi\in [q,\frac{dq}{d-2})$ such that, for all $t\in(0,T]$ and $u\in L^{r}(0,t;L^q)\cap L^q(0,t;L^{\xi})$,
\begin{equation*}
\| u\|_{L^{q+h-1}((0,t)\times \Tor^d)}
\lesssim_T
\|u\|_{L^{r}(0,t;L^{q})}^{\alpha}
\|u\|_{L^{q}(0,t;L^{\xi})}^{\beta}.
\end{equation*}
The above inequality readily  follows 
from Step 1 of Lemma \ref{l:interpolation_L_eta} and \eqref{eq:def_psi_transformation}, cf.\
\eqref{eq:int_1_u_proof}.
\end{remark}

\section{Deterministic reaction-diffusion equations with high diffusivity}
\label{s:deterministic_high_diffusion}
In this section we investigate \emph{deterministic} reaction-diffusion equations:
\begin{equation}
\label{eq:reaction_diffusion_deterministic}
\left\{
\begin{aligned}
\partial_t v_i &=\mu_i\Delta v_i +\big[ \div(F_i(\cdot,v)) + f_i(\cdot,v)\big],  &\text{ on }&\Tor^d,\\
v_i(0)&=v_{0,i},& \text{ on }&\Tor^d,
\end{aligned}\right.
\end{equation}
where $i\in \{1,\dots,\ell\}$ for some integer $\ell\geq 1$, $\mu_i>0$ and $(F,f)$ are as in Assumption \ref{ass:f_polynomial_growth}.
The results of this section will be used in combination with Theorem \ref{t:weak_convergence} below to prove the results stated in Subsection \ref{ss:main_results}.
This section is organized as follows. In Subsection \ref{ss:high_reaction_rate} we show the existence global unique solutions to \eqref{eq:reaction_diffusion_deterministic} provided the diffusivities $\mu_i$ are sufficiently large. Finally, in Subsection \ref{ss:uniqueness} we prove an uniqueness result for a class of weak solutions to \eqref{eq:reaction_diffusion_deterministic} which naturally appears when dealing with certain compactness arguments, see the proof of  Theorem \ref{t:weak_convergence}.

\subsection{Reaction-diffusion equations with high diffusivity}
\label{ss:high_reaction_rate}
Here we show the existence on large time intervals of solutions to \eqref{eq:reaction_diffusion_deterministic} with $\mu_i \gg0$. 
Recall that $(p,q)$--solutions to \eqref{eq:reaction_diffusion_deterministic} have been defined below \eqref{eq:reaction_diffusion_det_statements} and that $(p,q)$--solutions are unique by definition.

\begin{proposition}
\label{prop:global_high_viscosity}
Suppose that $(F,f)$ satisfy Assumption \ref{ass:f_polynomial_growth}\eqref{it:f_polynomial_growth_1}--\eqref{it:mass}. 
Fix $N\geq 1$ and $T\in (0,\infty)$. Let $
\frac{d(h-1)}{2}\vee 2<q<\infty$ and $p\in [q,\infty)$. Set $\a_{p}\stackrel{{\rm def}}{=}\frac{p}{2}-1$.  Suppose that 
\begin{equation}
\label{eq:data_v_0_high_diffusion}
v_0\in L^q(\Tor^d;\R^{\ell}) \ \ \text{ satisfies }\ \ v_0\geq 0 \text{ on }\Tor^d \ \text{ and }\ \|v_0\|_{L^q}\leq N.
\end{equation}
Then there exists 
$
\mu_0(N,q,p,d,T,h,\m_i)>0
$ for which the following assertions hold provided
$$
\min_{1\leq i\leq \ell} \mu_i\geq \mu_0.
$$ 
\begin{enumerate}[{\rm(1)}]
\item\label{it:global_high_viscosity_1} 
There exists a $(p,q)$--solution $v$ to \eqref{eq:reaction_diffusion_deterministic} on $[0,T]$ satisfying
\begin{equation*}
v\in W^{1,p}(0,T,w_{\a_p};W^{-1,q}(\Tor^d;\R^{\ell}))\cap L^p(0,T,w_{\a_p};W^{1,q}(\Tor^d;\R^{\ell})).
\end{equation*}
\item\label{it:sol_operator_continuous} The solution mapping $v_0\mapsto v$ is continuous from $L^q(\Tor^d;\R^{\ell})$ into 
$$
W^{1,p}(0,T,w_{\a_p};W^{-1,q}(\Tor^d;\R^{\ell}))\cap L^p(0,T,w_{\a_p};W^{1,q}(\Tor^d;\R^{\ell})).
$$
\item\label{it:global_high_viscosity_2}
For some $C_0(T,N,q,p,d,h)>0$, the $(p,q)$--solution $v$ of \eqref{it:global_high_viscosity_1} satisfies
\begin{equation*}
\sup_{t\in (0,T]}\|v(t)\|^q_{L^q}+\max_{1\leq i\leq \ell}\int_0^T\int_{\Tor^d}|v_i|^{q-2}|\nabla v_i|^2\,\dd x\,\dd s
\leq C_0.
\end{equation*}
\end{enumerate} 
\end{proposition}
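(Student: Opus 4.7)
The proof naturally splits into three tasks, to be carried out in the following order: (i) local existence with a blow--up criterion; (ii) an a priori $L^q$--estimate valid for $\mu:=\min_{1\le i\le \ell}\mu_i$ sufficiently large, which closes the blow--up criterion and proves \eqref{it:global_high_viscosity_2}; (iii) the continuity of the solution map for \eqref{it:sol_operator_continuous}, by linearization.

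For (i), I apply the deterministic analogue of Theorem \ref{t:local}, which for $\s=1$ reduces to classical semilinear maximal $L^p$--regularity in the weighted setting (\cite{AV19_QSEE_1}); Lemma \ref{lem:interpolation_strong_setting} verifies that $v\mapsto \div F(\cdot,v)+f(\cdot,v)$ has sub--linear growth from $H^{1,q}$ into $H^{-1,q}$ since $q>\tfrac{d}{2}(h-1)\vee 2$. This yields a unique maximal $(p,q)$--solution $(v,\tau_{\max})$ with the regularity in \eqref{it:global_high_viscosity_1}, non--negative componentwise thanks to Assumption \ref{ass:f_polynomial_growth}\eqref{it:positivity}, together with the blow--up criterion $\tau_{\max}<T \Rightarrow \limsup_{t\uparrow \tau_{\max}}\|v(t)\|_{L^q}=\infty$. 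As in Theorem \ref{t:local}\eqref{it:local_1}, integrating \eqref{eq:reaction_diffusion_deterministic} over $\Tor^d$, pairing with $(\alpha_i)$ and using Assumption \ref{ass:f_polynomial_growth}\eqref{it:mass} and Gronwall furnishes a bound $\sup_{t\in[0,\tau_{\max}\wedge T]}\|v(t)\|_{L^1(\Tor^d;\R^{\ell})}\le M_1=M_1(T,N,\m_0,\m_1,\alpha_i)$ \emph{independent of $\mu$}.

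For (ii), repeating verbatim Step 1 of the proof of Theorem \ref{t:global_cut_off}\eqref{it:global_cut_off_2} (with no It\^o correction and no cut--off) gives the $L^q$--balance; the $F$--term is controlled by Cauchy--Schwarz as in \eqref{eq:F_gradient_cauchy_schwartz_proof_theta_ind_estimate}, absorbing a fraction of the dissipation. Writing $M(t):=\sup_{s\le t}\|v(s)\|_{L^q}^q$ and $E(t):=\max_i\int_0^t\!\!\int_{\Tor^d}|v_i|^{q-2}|\nabla v_i|^2\,dxds$, everything reduces to estimating $\int_0^t\!\!\int_{\Tor^d}|v|^{q+h-1}\,dxds$. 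The key is to apply Gagliardo--Nirenberg to $w_i:=|v_i|^{q/2}$, but \emph{traded against the uniform $L^1$--bound}: choosing the Sobolev exponents so that the endpoint $\|w_i\|_{L^{p_*}}$ converts to $\|v_i\|_{L^1}^{\g'}$, one obtains
\begin{equation*}
\int_{\Tor^d}|v_i|^{q+h-1}\,dx \lesssim \|v_i\|_{L^1}^{\g}\Bigl(\int_{\Tor^d}|v_i|^{q-2}|\nabla v_i|^2\,dx\Bigr)^{\beta}+\|v_i\|_{L^1}^{q+h-1},
\end{equation*}
with some $\g>0$ and $\beta\in(0,1)$; the bound $\beta<1$ is exactly the sub-criticality $q>\tfrac{d}{2}(h-1)$. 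Integrating in time, using $\|v_i\|_{L^1}\le M_1$ and Young's inequality, the right--hand side becomes $C(T,N,d,q,h)+\tfrac{\mu}{4\ell}E(t)$ provided $\mu\ge \mu_0$ is chosen large enough (in terms of the Young constants, hence of $N,T,q,p,d,h,\m_i$). Summing over $i$ and absorbing $\tfrac{\mu}{2}E(t)$ on the left yields
\begin{equation*}
M(\tau_{\max}\wedge T)+\tfrac{\mu}{2}E(\tau_{\max}\wedge T)\le \|v_0\|_{L^q}^{q}+C_0(T,N,q,p,d,h),
\end{equation*}
which, via the blow--up criterion, forces $\tau_{\max}\ge T$ and simultaneously proves \eqref{it:global_high_viscosity_2}. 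The regularity in \eqref{it:global_high_viscosity_1} then follows from maximal $L^p(w_{\a_p})$--regularity applied to the RHS now known to lie in $L^p(0,T,w_{\a_p};H^{-1,q})$ by Lemma \ref{lem:interpolation_strong_setting}.

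The main obstacle is the super--linear reaction integral in step (ii): without the cut--off Lemma \ref{l:interpolation_L_eta} cannot be applied directly, since $\|v\|_{L^{r_*}(0,t;L^q)}$ is a priori unknown, and using $\|v\|_{L^{r_*}(0,t;L^q)}\le t^{1/r_*}M(t)^{1/q}$ leaves a term $M(t)^{(q+h-1)/q}$ that \emph{cannot} be absorbed into $\mu E(t)$ for any $\mu$. Replacing this $L^q$--bootstrap by the $L^1$--Gagliardo--Nirenberg trade above is the crucial step; here the mass control of Assumption \ref{ass:f_polynomial_growth}\eqref{it:mass} plays a role entirely analogous to that of the $L^1$ a priori bound in Moser--Alikakos iterations for semilinear heat equations. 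Finally, \eqref{it:sol_operator_continuous} is obtained by writing the equation for the difference $w=v^{(1)}-v^{(2)}$ of two $(p,q)$--solutions: the Lipschitz estimates in Assumption \ref{ass:f_polynomial_growth}\eqref{it:f_polynomial_growth_1} together with the uniform bound of \eqref{it:global_high_viscosity_2} and Lemma \ref{lem:interpolation_strong_setting} give, via deterministic maximal $L^p(w_{\a_p})$--regularity, $\|w\|_{W^{1,p}(0,T,w_{\a_p};H^{-1,q})\cap L^p(0,T,w_{\a_p};H^{1,q})}\lesssim \|v_0^{(1)}-v_0^{(2)}\|_{L^q}$.
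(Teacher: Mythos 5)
There is a genuine gap in your step (ii), and it is the heart of the matter. The interpolation inequality you propose,
\begin{equation*}
\int_{\Tor^d}|v_i|^{q+h-1}\,dx \lesssim \|v_i\|_{L^1}^{\g}\Bigl(\int_{\Tor^d}|v_i|^{q-2}|\nabla v_i|^2\,dx\Bigr)^{\beta}+\|v_i\|_{L^1}^{q+h-1},\qquad \beta\in(0,1),
\end{equation*}
is false in the stated generality. Writing $w=|v_i|^{q/2}$, you are asking for $\|w\|_{L^{r}}^{r}\lesssim \|\nabla w\|_{L^2}^{2\beta}\|w\|_{L^{s}}^{\ast}+\dots$ with $r=\tfrac{2(q+h-1)}{q}$ and $s=\tfrac{2}{q}$ (so that $\|w\|_{L^s}=\|v_i\|_{L^1}^{q/2}$); the Gagliardo--Nirenberg scaling relation then \emph{forces} $\beta=\tfrac{d(q+h-2)}{d(q-1)+2}$, which is $<1$ if and only if $h<1+\tfrac2d$, independently of $q$. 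So subcriticality $q>\tfrac d2(h-1)$ does not give $\beta<1$ once you trade all the way down to $L^1$: that trade is exactly what Lemma \ref{l:interpolation_L_eta} avoids by interpolating against the spatial $L^q$ norm. (E.g.\ $d=3$, $h=3$: $\beta=\tfrac{3(q+1)}{3q-1}>1$ for every admissible $q$.) There is also a structural red flag: if such an inequality with $\beta<1$ and a $\mu$-independent $L^1$ bound were available, your Young absorption would work with an arbitrarily small $\delta$, hence for \emph{every} $\mu>0$; the conclusion would then be global existence for all diffusivities, contradicting the blow--up examples of \cite{PS97_blow_up} that motivate the whole paper. In other words, in your scheme the largeness of $\mu$ is never genuinely used, which cannot be right.

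The paper closes the estimate by a different mechanism. Its Step 1 keeps the superlinear term $\|v\|_{L^{q+h-1}(0,t;L^{q+h-1})}^{q+h-1}$ on the right-hand side. Its Step 2 then bounds $\|v\|_{L^{q+h-1}(0,t;L^{q+h-1})}^{q}$ itself, splitting $|v_i|^{q/2}$ into its mean (controlled via the $\mu$-independent $L^1$ mass bound \eqref{eq:mass_conservation_estimate} and interpolation) and its mean-free part, to which the \emph{homogeneous} inequality \eqref{eq:interpolation_inequality_mean_zero} is applied; this is what produces an explicit small prefactor $\mu^{-\g}$ in front of the superlinear term. Since the resulting inequality $\x(t)\le c_T(1+N^q)+c_T\mu^{-\g}\x(t)^{1+\theta}$ is superlinear in $\x(t)=\|v\|_{L^{q+h-1}(0,t;L^{q+h-1})}^{q}$, it cannot be closed by Young at all; instead the paper uses a continuity/barrier argument (Step 3, Figure \ref{fig:1}): choosing $\mu_0$ so large that $\max_{\R_+}\psi_{\mu,R}>1+N^q$, the continuous function $\x$ starting from $0$ can never cross $m_{\mu,R}$, yielding a uniform bound, after which the blow-up criterion of \cite{CriticalQuasilinear} gives $\tau\geq T$. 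Two further points you gloss over: the estimate down to $t=0$ requires an approximation argument (the weighted regularity does not make $\sup_{t}\|v(t)\|_{L^q}$ finite near $0$ for data only in $L^q$), and your direct Lipschitz estimate for \eqref{it:sol_operator_continuous} via maximal regularity of the difference is not straightforward in this $L^p(L^q)$ setting (cf.\ Remark \ref{r:blow_up_criteria_necessity}); the paper instead combines the local continuous dependence of \cite{CriticalQuasilinear} with a compactness argument.
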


Note that $(\mu_0,C_0)$ are independent of $v_0$ satisfying the conditions in \eqref{eq:data_v_0_high_diffusion}.
Before going into the proof of the above result, we collect some observations. To apply $L^p(L^q)$-techniques, it is convenient to use that $v_0\in B^0_{q,p}$ since $L^q\stackrel{(p\geq q)}{\embed} B^0_{q,p}$. Moreover, \eqref{it:global_high_viscosity_1} and the trace embedding of anisotropic maps yield (see e.g.\ \cite[Theorem 3.4.8]{pruss2016moving} or \cite[Theorem 1.2]{ALV21})
\begin{equation}
\label{eq:trace_regularity_v}
v\in C([0,T];B^0_{q,p})\cap C((0,T];B^{1-2/p}_{q,p}).
\end{equation}
The previous and $p>2$ imply that $v(t)\in B^{1-2/p}_{q,p}\subseteq L^q$ for all $t>0$ (cf.\ \eqref{eq:elementary_embedding_Besov_in_Lq} for the inclusion). In particular, the term $
\sup_{t\in (0,T]}\|v(t)\|^q_{L^q}$ in \eqref{it:global_high_viscosity_2} is well-defined. However, since $ B^0_{q,p} \not\embed L^q$, it is a part of the proof of Proposition \ref{prop:global_high_viscosity} to show its finiteness for $t$ small. A similar remark holds for the second term estimated in \eqref{it:global_high_viscosity_2} since the argument in \eqref{eq:mixed_estimate_q} holds only on the interval $[s,T]$ with $s>0$.
Finally, as in \eqref{eq:composition_rules_sobolev_regularity_power}-\eqref{eq:conseguence_of_estimate}, Proposition \ref{prop:global_high_viscosity}\eqref{it:global_high_viscosity_2} and Sobolev embeddings yield
\begin{equation}
\label{eq:conseguence_of_estimate_deterministic}
\|v\|_{L^q(0,T;L^{\xi})}\leq c_0(T,N,q,p,d,h) \ \  \text{ where }  \ \ 
\xi=
\left\{
\begin{aligned}
&\in (2,\infty) &\text{ if }&d=2,\\
&\frac{qd}{d-2}&\text{ if }& d\geq 3.
\end{aligned}\right.
\end{equation}
In the following we need another interpolation inequality. For all $t\in \R_+$ and $u\in L^{\infty}(0,t;L^2(\Tor^d))\cap L^2(0,t;H^1(\Tor^d))$ such that $\int_{\Tor^d} u\,\dd x=0$ a.e.\ on $[0,t]$,  
\begin{equation}
\label{eq:interpolation_inequality_mean_zero}
\|u\|_{L^{2/\g}((0,t)\times \Tor^d)}
\lesssim
\|u\|_{L^{\infty}(0,t;L^2(\Tor^d))}^{1-\g} \|\nabla u\|_{ L^2(0,t;L^2(\Tor^d))}^{\g}\ \  \text{ where }\ \ 
\g =\frac{d}{d+2}
\end{equation}
and the implicit constant is independent of $(t,u)$.
The estimate \eqref{eq:interpolation_inequality_mean_zero} follows from the Poincar\'{e} inequality, interpolation and the Sobolev embedding $H^{\g}(\Tor^d)\embed L^{2/\gamma}(\Tor^d)$.

\begin{proof}[Proof of Proposition \ref{prop:global_high_viscosity}]
Through the proof, we fix $T\in (0,\infty)$ and $v_0\in L^q\subseteq B^0_{q,p}$.  To economize the notation, here we denote by $c_T$ a constant which may change from line to line and depends only on $(N,q,p,d,T,h,\m_i)$, where $(h,\m_i)$ is as in Assumption \ref{ass:f_polynomial_growth}. 
Next we collect some useful facts. By \cite[Theorem 1.2]{CriticalQuasilinear}, there exists a $(p,q)$--solution $(v,\tau)$ to \eqref{eq:reaction_diffusion_deterministic} such that 
\begin{equation}
\label{eq:W_1p_loc_tau_deterministic_regularity}
v\in W^{1,p}_{\loc}([0,\tau),w_{\a_p};W^{-1,q}(\Tor^d;\R^{\ell}))\cap L^p_{\loc}([0,\tau),w_{\a_p};W^{1,q}(\Tor^d;\R^{\ell})).
\end{equation}
Moreover, \cite{CriticalQuasilinear} also shows the existence of positive constants $(T_0(v_0),\varepsilon_0(v_0))$ for which the following holds:
For all $u_0\in L^q$ such that $\|v_0-u_0\|\leq \varepsilon_0$ there exists a $(p,q)$--solution $(u,\lambda)$ to \eqref{eq:reaction_diffusion_deterministic} with initial data $u_0$ satisfying $\lambda>T_0$ and
\begin{equation}
\label{eq:continuity_dependence_deterministic}
\|v-u\|_{W^{1,p}(0,T_0,w_{\a_p};W^{-1,q})\cap L^p(0,T_0,w_{\a_p};W^{1,q})}\lesssim \|v_0-u_0\|_{L^q}
\end{equation}
where the implicit constant is independent of $u_0$ (but depends on $v_0$).

Combining a linearization argument and the maximum principle for the heat equation, one can check that 
Assumption \ref{ass:f_polynomial_growth}\eqref{it:positivity} and $v_0\geq 0$ a.e.\ on $\Tor^d$ yield (see e.g.\ \cite{P10_survey} and \cite[Theorem 2.13]{AV22} for the conservative term $\div(F(\cdot,v))$)
\begin{equation}
\label{eq:positivity_deterministic}
v(t,x)\geq 0\ \  \text{ a.e.\ on }[0,\tau)\times \Tor^d.
\end{equation}
Arguing as for Theorem \ref{t:local}\eqref{it:local_1}, by Assumption \ref{ass:f_polynomial_growth}\eqref{it:mass} and \eqref{eq:positivity_deterministic}, we have, for all $t\in [0,\tau)$,
\begin{equation}
\label{eq:mass_conservation_estimate}
\int_{\Tor^d} |v(t,x)| \,\dd x\lesssim 
 \Big[ e^{\m_1 t }  \int_{\Tor^d} |v_0(x)|\,\dd x+\m_0\frac{e^{\m_1t}-1}{\m_1} \Big]
\leq e^{\m_1 t } N+\m_0\frac{e^{\m_1t}-1}{\m_1}.
\end{equation}

Below, w.l.o.g., we assume that $\mu_i\geq 1$ for all $i\in \{1,\dots,\ell\}$. Finally, we set
$$
\mu\stackrel{{\rm def}}{=}\min_{1\leq i\leq \ell} \mu_i.
$$
Now we break the proof into several steps. 
In Steps 1-3 we prove the estimate in Proposition  \ref{prop:global_high_viscosity}\eqref{it:global_high_viscosity_2} with $T$ replaced by $\tau\wedge T$. In Step 4 we prove that $\tau\geq T$ and therefore 
Proposition  \ref{prop:global_high_viscosity}\eqref{it:global_high_viscosity_1} and \eqref{it:global_high_viscosity_2} follows from Steps 1-4. Finally, in Step 5 we Proposition  \ref{prop:global_high_viscosity}\eqref{it:sol_operator_continuous}.
This will complete the proof of Proposition  \ref{prop:global_high_viscosity}.

\emph{Step 1: There exists $c_T>0$, independent of $(v,v_0,\mu_i)$, such that, for all $0\leq  t< \tau\wedge T$,}\begin{equation}
\begin{aligned}
\label{eq:Step_1_claim_high_reaction_rate}
\sup_{r\in (0,t]}\|v(r)\|_{L^q}^q
&+\mu \max_{1\leq i\leq \ell}\int_0^t \int_{\Tor^d} |v_i|^{q-2}|\nabla v_i|^2\,\dd x\, \dd r\\
&
\leq  c_T \Big(1+\|v_0\|_{L^q}^q+\|v\|_{L^{q+h-1}(0,t;L^{d+h-1})}^{q+h-1} \Big).
\end{aligned}
\end{equation}
\emph{Finally, $c_T$ can be chosen independently of $T$ if Assumption \ref{ass:f_polynomial_growth}\eqref{it:mass} holds with $\m_0=0$ and $\m_1<0$}. 

As we remarked below the statement of Proposition \ref{prop:global_high_viscosity}, the case $s=0$ of \eqref{eq:Step_1_claim_high_reaction_rate} is not immediate as $\sup_{s\in (0,t]}\|v(s)\|_{L^q}^q$ and $\int_{0}^t \int_{\Tor^d} |v_i|^{q-2}|\nabla v_i|^2\,\dd x\,\dd s$ might not be finite for small $t$ if $v$ is as in \eqref{eq:W_1p_loc_tau_deterministic_regularity}.
To overcome this difficulty, we use an approximation argument. To this end, we first prove \eqref{eq:Step_1_claim_high_reaction_rate} on an interval separated from $t=0$. Namely, for all $0<s<t<\tau\wedge T$, we claim that
\begin{equation}
\begin{aligned}
\label{eq:Step_1_claim_high_reaction_rate_s}
\sup_{r\in (s,t]}\|v(r)\|_{L^q}^q
&+\mu \max_{1\leq i\leq \ell}\int_s^t \int_{\Tor^d} |v_i|^{q-2}|\nabla v_i|^2\,\dd x\, \dd r\\
&
\leq  c_T \Big(1+\|v(s)\|^q_{L^q}+\|v\|_{L^{q+h-1}(s,t;L^{d+h-1})}^{q+h-1} \Big).
\end{aligned}
\end{equation}
Here it is important that $c_T$ on the RHS\eqref{eq:Step_1_claim_high_reaction_rate_s} does not depend on $s$ but only on $(N,q,p,d,T,h)$.

To see \eqref{eq:Step_1_claim_high_reaction_rate_s}, we can argue as follows. Firstly, as the weight $w_{\a}$ acts only at $t=0$, we have
\begin{equation}
\label{eq:regularity_det_high_diffusion_far_from_zero}
v\in  W^{1,p}_{\loc}([s,\tau);W^{-1,q})\cap L^p_{\loc}([s,\tau);W^{1,q}) \text{ for all }s>0.
\end{equation}
In particular, the terms on the LHS are finite, cf.\ \eqref{eq:mixed_estimate_q}. Similarly, one can also show that RHS\eqref{eq:mixed_estimate_q} is finite (see \eqref{eq:embedding_L_q_h_1_detereministic} below for the more involved weighted case). Now, since $q>2$, the proof of \eqref{eq:Step_1_claim_high_reaction_rate} for $s>0$ follows as in the proof of Theorem \ref{t:global_cut_off}\eqref{it:global_cut_off_2} in Subsection \ref{ss:uniform_estimate_cut_off} by computing $\partial_t \|v_i(t)\|_{L^q}^q$ for a fixed $i\in \{1,\dots,\ell\}$ and them summing up over $i$.  
Compared to Subsection \ref{ss:uniform_estimate_cut_off}, the term
$\int_0^t\int_{\Tor^d} |v_i|^{q-2}f_i(\cdot,v)v_i\,\dd x$ can be estimated as
$$
\Big|\int_0^t \int_{\Tor^d} |v_i|^{q-2}f_i(\cdot,v)v_i\,\dd x\Big|\stackrel{(i)}{\lesssim} 
\int_{\Tor^d} |v|^{q-1}(1+|v|^h)\,\dd x 
\stackrel{(ii)}{\lesssim} \|v\|_{L^1(0,t;L^1)}+\|v\|_{L^{q+h-1}(0,t;L^{q+h-1})}^{q+h-1}
$$
where in $(i)$ we used Assumption \ref{ass:f_polynomial_growth}\eqref{it:f_polynomial_growth_1} and in $(ii)$ $q>2$. The $F$--term can be estimate similarly also using the Cauchy-Schwartz inequality, cf.\ \eqref{eq:F_gradient_cauchy_schwartz_proof_theta_ind_estimate} for a similar situation. 

Now we would like to take the limit as $s\downarrow 0$ in \eqref{eq:Step_1_claim_high_reaction_rate_s}. To this end,  let us first prove that
\begin{equation}
\label{eq:L_q_h_1_deterministic}
v\in L^{q+h-1}_{\loc}([0,\tau);L^{q+h-1}) .
\end{equation}
In particular, the last term on RHS\eqref{eq:Step_1_claim_high_reaction_rate_s} is finite also if $s=0$. To prove \eqref{eq:L_q_h_1_deterministic}, due to \eqref{eq:W_1p_loc_tau_deterministic_regularity}, it suffices to show that,  for all $t\in (0,\infty)$,
\begin{equation}
\label{eq:embedding_L_q_h_1_detereministic}
W^{1,p}(0,t;w_{\a_{p}};W^{-1,q})\cap L^p(0,t;w_{\a_p};W^{1,q})\embed L^{q+h-1}(0,t;L^{q+h-1}).
\end{equation} 
By mixed-derivative embeddings (see e.g.\ \cite[Proposition 2.8]{AV19_QSEE_1}), we have that the space on LHS\eqref{eq:embedding_L_q_h_1_detereministic}
embeds into $H^{\theta,p}(0,T,w_{\a_p};H^{1-2\theta,q})$ for all $\theta\in (0,1)$. Thus, to prove \eqref{eq:embedding_L_q_h_1_detereministic}, it is suffices to show the existence of $\theta\in (0,1)$ such that
\begin{equation}
\label{eq:emb_H_theta_L_q_h_1_det}
H^{\theta,p}(0,T,w_{\a_p};H^{1-2\theta,q})\embed L^{q+h-1}(0,T;L^{q+h-1}).
\end{equation}
By Sobolev embeddings with power weights (see e.g.\ \cite[Proposition 2.7]{AV19_QSEE_1} or \cite[Corollary 1.4]{MV12}), the above holds provided we find $\theta\in (0,1)$ such that 
\begin{equation}
\label{eq:condition_theta_d_h_1_deterministic}
\theta-
\frac{1+\a_p}{p}=\theta-\frac{1}{2}> -\frac{1}{q+h-1}, \quad \text{ and }\quad 
1-2\theta-\frac{d}{q}\geq -\frac{d}{q+h-1}.
\end{equation}
In case $p>q+h-1$, then in the first condition in \eqref{eq:condition_theta_d_h_1_deterministic} the equality is \emph{not} allowed and in that case one also needs to use \cite[Proposition 2.1(3)]{AV19_QSEE_2} in combination with the above mentioned Sobolev embeddings with power weights.
To check \eqref{eq:condition_theta_d_h_1_deterministic}, we can argue as follows. The second condition in \eqref{eq:condition_theta_d_h_1_deterministic} is satisfied with $\theta=\frac{1}{2}\big[1-\frac{d(h-1)}{q(q+h-1)}\big]$. Note that $\theta\in (0,1)$ since $h>1$, $q\geq 2$ and $q>\frac{d(h-1)}{2}\geq \frac{d(h-1)}{q+h-1}$ by assumption.
With the above choice of $\theta$, one can check that also the first condition in \eqref{eq:condition_theta_d_h_1_deterministic} is satisfied since $q>\frac{d}{2}(h-1)$.
Therefore \eqref{eq:embedding_L_q_h_1_detereministic} holds.

It remains to show \eqref{eq:Step_1_claim_high_reaction_rate}. Due to \eqref{eq:Step_1_claim_high_reaction_rate_s}, to prove \eqref{eq:Step_1_claim_high_reaction_rate} it suffices to show \eqref{eq:Step_1_claim_high_reaction_rate} for $t\in (0,T_0]$ where $T_0>0$ is as before \eqref{eq:continuity_dependence_deterministic} and $c_T$ independent of $T_0$.
The advantage is that, for $t\in (0,T_0]$, we can use the continuous dependence on the initial data \eqref{eq:continuity_dependence_deterministic} and prove the claimed estimate by approximation.

Let $(\varepsilon_0,T_0)$ be as before \eqref{eq:continuity_dependence_deterministic}.
Take a sequence $(v_0^{(n)})_{n\geq 1}\subseteq C^{\infty}$ such that $v_0^{(n)}\to v_0$ in $L^q$ and $\|v^{(n)}-v_0\|_{L^q}\leq \varepsilon_0$ for all $n\geq 1$.  Fix $r\in (q,\infty)$. As in  \eqref{eq:continuity_dependence_deterministic}, there exists a $(r,r)$--solution $(v^{(n)},\tau^{(n)})$ to \eqref{eq:reaction_diffusion_deterministic} with data $v_0^{(n)}$ satisfying $\inf_{n\geq 1}\tau^{(n)}\geq T_0$ and
$$
v^{(n)}\in W^{1,r}_{\loc}([0,\tau^{(n)});W^{-1,r})\cap L^{r}_{\loc}([0,\tau^{(n)});W^{1,r}) \embed C([0,\tau^{(n)});B^{1-\frac{2}{r}}_{r,r})
\ \ \text{ for all }n\geq 1.
$$
Since $v^{(n)}$ is sufficiently smooth and $B^{1-2/r}_{r,r}\embed L^q$ for $r>q$, \eqref{eq:Step_1_claim_high_reaction_rate} with $(v,\tau)$ replaced by $(v^{(n)},\tau^{(n)})$ can be proven by letting $s\downarrow 0$ in \eqref{eq:Step_1_claim_high_reaction_rate_s}. 
Moreover, by \eqref{eq:continuity_dependence_deterministic},
\begin{equation*}
v^{(n)}\to v \  \text{ in } \ 
W^{1,p}(0,T_0;w_{\a_{p}};W^{-1,q})\cap L^p(0,T_0;w_{\a_p};W^{1,q}).
\end{equation*}
In particular, there exists a (not relabeled) subsequence such that $(v^{(n)},\nabla v^{(n)})\to (v,\nabla v)$ a.e.\ on $[0,T_0]\times \Tor^d$ and, by \eqref{eq:embedding_L_q_h_1_detereministic} and \cite[Theorem 1.2]{ALV21}, 
\begin{equation*}
v^{(n)}\to v \ \text{ in } \ L^{q+h-1}(0,T_0;L^{q+h-1})\cap C((0,T_0];L^q).
\end{equation*}
Thus, by Fatou's lemma and the above considerations, \eqref{eq:Step_1_claim_high_reaction_rate} with $t\in (0,T_0]$ and $c_T$ independent of $T_0$ follows by letting $n\to \infty$ in the corresponding estimate for $v^{(n)}$ using also that $\inf_{n\geq 1} \tau^{(n)}\geq T_0$.

To prove the last assertion in Step 1, note that, if  $\m_0=0$ and $\m_1<0$, then \eqref{eq:mass_conservation_estimate} yields $\|v_i\|_{L^1(0,\tau;L^1)}\lesssim \|v_0\|_{L^1}$ and therefore all the constants in the starting estimate \eqref{eq:Step_1_claim_high_reaction_rate_s} can be chosen independently of $T$.

\emph{Step 2: Recall that $\g=\frac{d}{d+2}$, cf.\ \eqref{eq:interpolation_inequality_mean_zero}. Then, for all $0\leq t<  \tau\wedge T$,}
\begin{equation}
\label{eq:step_2_deterministic_proof}
\|v\|_{L^{q+h-1}(0,t;L^{q+h-1})}^q\leq c_T\Big(1+ \|v_0\|_{L^q}^q +
 \mu^{-\g} \|v\|_{L^{q+h-1}(0,t;L^{q+h-1})}^{q+h-1}\Big),
\end{equation}
\emph{Finally, $c_T$ can be chosen independently of $T$ if Assumption \ref{ass:f_polynomial_growth}\eqref{it:mass} holds with $\m_0=0$ and $\m_1<0$}. 

In this step we use the interpolation inequality \eqref{eq:interpolation_inequality_mean_zero} in a similar way as we did in the proof of Lemma \ref{l:interpolation_L_eta} with \eqref{eq:interp_step_1_proof}. However, here we need an explicit dependence on the diffusivity $\mu$ and therefore we use the homogeneous estimate \eqref{eq:interpolation_inequality_mean_zero}. 
Let us fix $i\in\{1,\dots,\ell\}$. Since $q>\frac{d(h-1)}{2}$, we have $q+h-1< \frac{q}{\g}$. By interpolation, we have, for all $0\leq t<\tau\wedge T$,
\begin{equation}
\label{eq:high_diffusion_estimate_q_h_1}
\|v_i\|_{L^{q+h-1}(0,t;L^{q+h-1})}^q \lesssim \|v_i\|_{L^{1}(0,t;L^1)}^q+  \|v_i\|_{L^{q/\g}(0,t;L^{q/\g})}^{q}
\stackrel{\eqref{eq:mass_conservation_estimate}}{\leq} c_T+ \|v_i\|_{L^{q/\g}(0,t;L^{q/\g})}^{q}.
\end{equation}
Next we estimate the second term on the RHS\eqref{eq:high_diffusion_estimate_q_h_1}.
Note that, for all $0\leq t<\tau\wedge T$,
\begin{align}
\label{eq:L_q_h_1_splitting}
\|v_i\|_{L^{q/\g}(0,t;L^{q/\g})}^q
&=
\Big\||v_i|^{q/2}\Big\|_{L^{2/\g}(0,t;L^{2/\g})}^2\\
\nonumber
&\leq   \underbrace{
\Big\||v_i|^{q/2}-\int_{\Tor^d} |v_i|^{q/2}\,\dd x \Big\|_{L^{2/\g}(0,t;L^{2/\g})}^2}_{I_{1,i}\stackrel{{\rm def}}{=}}+ 
\underbrace{\Big\| \int_{\Tor^d} |v_i|^{q/2}\,\dd x\Big\|_{L^{2/\g}(0,t)}^2}_{I_{2,i}\stackrel{{\rm def}}{=}}
\end{align}
Next we estimate $I_{1,i}$ and $I_{2,i}$, separately. We begin with $I_{1,i}$. By  \eqref{eq:interpolation_inequality_mean_zero} with $u=|v_i|^{q/2}$, 
\begin{align*}
I_{1,i}
&  \lesssim
\Big\||v_i|^{q/2}\Big\|_{L^{\infty}(0,t;L^2)}^{2(1-\g)}
\Big\|\nabla \big[|v_i|^{q/2}\big]\Big\|_{L^{2}(0,t;L^2)}^{2\g}\\
& = \mu^{-\g} \Big(\|v_i\|_{L^{\infty}(0,t;L^{q})}^{q}\Big)^{1-\g} 
\Big(\mu\int_0^t \int_{\Tor^d}|v_i|^{q-2} |\nabla v_i|^2\,\dd x\, \dd s\Big)^{\g}\\
& \stackrel{\eqref{eq:Step_1_claim_high_reaction_rate}}{\lesssim_T} 
\mu^{-\g} \big(1+\|v_0\|^q_{L^q}+\|v\|_{L^{d+h-1}(0,t;L^{q+h-1})}^{q+h-1}  \big)\\
& \stackrel{(i)}{\lesssim} 1+\|v_0\|^q_{L^q} + \mu^{-\g}\|v\|_{L^{d+h-1}(0,t;L^{q+h-1})}^{q+h-1} 
\end{align*}
where in $(i)$ we used that $\mu\geq 1$. Next we look at $I_{2,i}$. Recall that $q>2$ and let $\varphi(q,h,d)\in (0,1)$ be such that $1-\varphi+\frac{\varphi \g}{q}= \frac{2}{q}$. Again, by interpolation,
\begin{align*}
I_{i,2}
=\|v_i\|_{L^{q/\g}(0,t;L^{q/2})}^{q}
&\leq \| v_i\|_{L^{q/\g}(0,t;L^1)}^{q(1-\varphi)}\|v_i\|_{L^{q/\g}(0,t;L^{q/\g})}^{q\varphi}\\
&\stackrel{\eqref{eq:mass_conservation_estimate}}{\leq} 
c_T(1+\| v_{0}\|_{L^1}^q)^{(1-\varphi)}\|v_i\|_{L^{q/\g}(0,t;L^{q/\g})}^{q\varphi}\\
&\leq 
c_{T} (1+\|v_0\|_{L^{q}}^{q})
+ \frac{1}{2}\|v_i\|_{L^{q/\g}(0,t;L^{q/\g})}^q.
\end{align*}
Using the estimates for $I_{i,1}$ and $I_{i,2}$ in \eqref{eq:L_q_h_1_splitting}, we have, for all $0\leq t<\tau\wedge T$,
\begin{equation}
\label{eq:step_2_deterministic_proof_i} 
\|v_i\|_{L^{q/\g}(0,t;L^{q/\g})}^q
\leq c_{T} \big(1+\|v_0\|_{L^{q}}^{q}\big)+ c_T \mu^{-\g}\| v\|_{L^{q+h-1}(0,t;L^{q+h-1})}^{q+h-1}
\end{equation}
where we have absorbed the term $2^{-1}\|v_i\|_{L^{q/\g}(0,t;L^{q/\g})}^q$ appearing in the estimate of $I_{i,2}$ in the LHS of \eqref{eq:step_2_deterministic_proof_i}. This is possible since $\|v_i\|_{L^{q/\g}(0,t;L^{q/\g})}<\infty$ for all $0\leq t<\tau\wedge T$, as it follows from the estimates of $I_{1,i}$, the fact that $I_{2,i}\lesssim_T\sup_{r\in (0,t)}\|v(r)\|_{L^q}^q<\infty$ for $0\leq t<\tau\wedge T$, \eqref{eq:Step_1_claim_high_reaction_rate} and \eqref{eq:embedding_L_q_h_1_detereministic}. 

By using \eqref{eq:step_2_deterministic_proof_i}  in \eqref{eq:high_diffusion_estimate_q_h_1}, we obtain 
\begin{equation*}
\|v_i\|_{L^{q+h-1}(0,t;L^{q+h-1})}^q\leq c_T\Big(1+ \|v_0\|^q_{L^q} +
 \mu^{-\g} \|v\|_{L^{q+h-1}(0,t;L^{q+h-1})}^{q+h-1}\Big).
\end{equation*}
The claimed estimate \eqref{eq:step_2_deterministic_proof} follows by taking the sum over $i\in \{1,\dots,\ell\}$.

The last assertion in Step 2 follows by using that $c_T$ in \eqref{eq:Step_1_claim_high_reaction_rate} can be chosen to be independent of $T$ and,  in the estimate of $I_{2,i}$, that $v_i\in L^{q/\g}(0,\tau;L^1)$ by \eqref{eq:mass_conservation_estimate} with $\m_0=0$ and $\m_1<0$.

\emph{Step 3: Fix $N\geq 1$ and let $v_0$ be as in \eqref{eq:data_v_0_high_diffusion}. Then there exist $\mu_0,K_0>0$ depending only on $(N,q,p,d,T,h,\m_i)$ such that the $(p,q)$--solution $(v,\tau)$ to \eqref{eq:reaction_diffusion_deterministic}  satisfies}
\begin{equation}
\label{eq:claim_step_3_deterministic_bound}
 \|v\|_{L^{q+h-1}(0,\tau\wedge T;L^{q+h-1})}\leq K_0\ \ \text{\emph{ provided }} \ \  \mu\geq \mu_0. 
\end{equation}  
The proof requires some preparation. Recall that $\|v_0\|_{L^q}\leq N$ and $v\in L^{q+h-1}_{\loc}([0,\tau);L^{q+h-1})$ by \eqref{eq:data_v_0_high_diffusion} and \eqref{eq:embedding_L_q_h_1_detereministic} respectively. Thus the estimate of Step 2 implies:
\begin{equation}
\label{eq:psi_bound_ellip}
\psi_{\mu,R}\big(\|v\|_{L^{q+h-1}(0,t;L^{q+h-1})}^q\big)
\leq 1+N^q \ \ \  \text{ for all }t\in [0,\tau\wedge T),
\end{equation}
where $\psi_{\mu,R}(x)=R \,x- \mu^{-\g} x^{1+\theta}$ with $x\in [0,\infty)$ and $\theta\stackrel{{\rm def}}{=}\frac{h-1}{q}>0$,   $R\stackrel{{\rm def}}{=}c^{-1}_T$ are independent of $(t,v_0,\mu)$.
It is routine to check that $\psi_{\mu,R}$ has a unique maximum on $[0,\infty)$ and 
\begin{equation}
\label{eq:M_m_ellip}
m_{\mu,R}\stackrel{{\rm def}}{=}\argmax_{\R_+} \psi_{\mu,R}=\Big(\frac{R \mu^{\g}}{1+\theta}\Big)^{1/\theta} \quad \text{ and } \quad M_{\mu,R}\stackrel{{\rm def}}{=} \max_{\R_+} \psi_{\mu,R}=
\frac{R\theta}{1+\theta} \Big(\frac{R \mu^{\g}}{1+\theta}\Big)^{1/\theta}.
\end{equation}

The idea is to choose $\mu_0(R,\theta,d)$ large enough so that $M_{\mu_0,R}>1+ N^q$, cf.\ Figure \ref{fig:1}. This eventually leads to a contradiction with \eqref{eq:psi_bound_ellip}. To this end, let us begin by
noticing that $[0,\infty)\ni \mu\mapsto M_{\mu,R}$ is increasing. Thus there exists $\mu_0(N,q,p,d,T,h)>0$ such that 
\begin{equation}
\label{eq:choice_ellip_high}
\mu\geq \mu_0 \qquad \Longrightarrow \qquad M_{\mu,R}\geq 2+N^q .
\end{equation}

Now suppose that $\mu\geq \mu_0$. 
Assume by contradiction that 
\begin{equation}
\label{eq:assumption_proof_by_contradiction}
\sup_{t\in [0,\tau\wedge T)}\|v\|_{L^{q+h-1}(0,t;L^{q+h-1})}=\infty.
\end{equation}
Next, note that the mapping
$$
\x:[0,\tau\wedge T)\to [0,\infty)\ \  \text{ defined as }\ \  t\mapsto \x(t)\stackrel{{\rm def}}{=}\|v\|_{L^{q+h-1}(0,t;L^{q+h-1})}^{q}
$$ 
is continuous, non-decreasing and satisfies $\x(0)=0$. Thus \eqref{eq:assumption_proof_by_contradiction} implies the existence of $\tau_0>0$ such that $\x(\tau_0)=m_{\mu,R}<\infty$. Note that \eqref{eq:choice_ellip_high} imply $\psi_{\mu,R}(\x(\tau_0))=M_{\mu,R}>1+N^q$. This leads to a contradiction with \eqref{eq:psi_bound_ellip}. 
The same argument also yields 
\begin{equation*}
\x(t)\leq m_{\mu,R} \ \  \text{ for all } 0\leq t<\tau\wedge T,
\end{equation*} 
where $m_{\mu,R}$ is as in \eqref{eq:M_m_ellip}. Combining the above with \eqref{eq:psi_bound_ellip}, we get, for all $0\leq t<\tau\wedge T$,
\begin{align*}
R\, \x(t)\leq 1+ N^q + \mu^{-\g} (\x(t))^{\theta} \x(t)\stackrel{\eqref{eq:M_m_ellip}}{\leq} 
1+N^q +\frac{R\, \x(t)}{1+\theta}.
\end{align*}
Therefore $\x(t)\leq \frac{1+\theta}{R\theta}(1+N^q)$ for all $0\leq t<\tau\wedge T$.
Whence the estimate \eqref{eq:claim_step_3_deterministic_bound} with $K_0\stackrel{{\rm def}}{=}\frac{1+\theta}{R\theta}(1+N^q)$ follows from the definition of $\x(t)$ and the Fatou lemma.

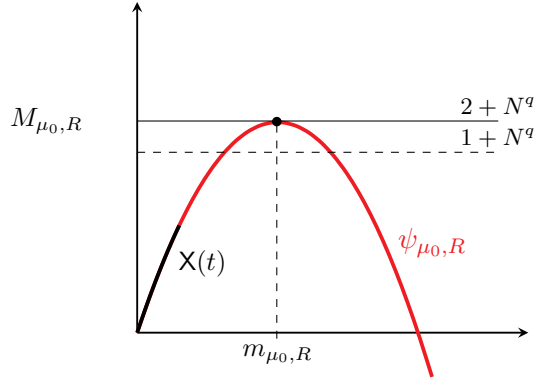
\begin{figure}
\begin{tikzpicture}[scale=0.8]
\draw[-stealth, line width=0.25mm] (0, 0) -- (6.5, 0);
\draw[-stealth, line width=0.25mm] (0, 0) -- (0, 5.5);
\draw[scale=0.7, domain=0:7, smooth, variable=\x, Red, line width=0.5mm] plot ({\x}, {\x*3-0.05*\x*\x*9});
\draw[scale=0.7, domain=0:1, smooth, variable=\x, black, line width=0.5mm] plot ({\x}, {\x*3-0.05*\x*\x*9}) node[right, xshift=-0.15cm, yshift=-0.5cm] {$\x(t)$};
\draw[dashed] (0,3) -- (6,3) node[above, yshift=-0.06cm]{\small$1+N^{q}$};
\draw (0,3.52) -- (6,3.52) node[left, xshift=-5.4cm]{$M_{\mu_0,R}$}  node[above, yshift=-0.06cm]{\small$2+N^{q}$};

\filldraw[black] (2.32,3.51) circle (2pt); 
\node[Red] at (4.9,1.5) {\large$\psi_{\mu_{0},R}$};
\draw[dashed] (2.32,-0.1) node[left, xshift=0.6cm, yshift=-0.2cm]{$m_{\mu_0,R}$} -- (2.32,3.52);
\end{tikzpicture}
\caption{Intuitive picture of the argument in Step 3 of Proposition \ref{prop:global_high_viscosity}.}
\label{fig:1}
\end{figure}

\emph{Step 4: Let $\mu_0$ be as in Step 3 and assume that $\mu\geq \mu_0$. Then $\tau\geq T$}. 
Combining the estimates of Steps 1 and 3 we have
\begin{equation}
\label{eq:estimate_steps_3_4_together}
\sup_{t\in (0,\tau\wedge T)}\|v_i(t)\|_{L^q}^q+\int_0^{\tau\wedge T} \int_{\Tor^d} |v_i|^{q-2}|\nabla v_i|^2\,\dd x\, \dd s
\leq C_0(T,N,q,p,d,h).
\end{equation}
To conclude the proof it remains to show that $\tau\geq T$. To this end, we apply the blow-up criterion of \cite[Corollary 2.3$(ii)$]{CriticalQuasilinear}, which ensures that 
\begin{equation}
\label{eq:blow_up_criterium}
\tau<T\qquad \Longrightarrow\qquad 
\sup_{t\in (0,\tau\wedge T)} \|v(t)\|_{B^0_{q,p}}=\infty.
\end{equation}
Here we also used that $B^0_{q,p}=(W^{-1,q},W^{1,q})_{1-\frac{1+\a_{p}}{p},p}$ and $\a_{p}=\frac{p}{2}-1$. 
Let us note that even if \cite{CriticalQuasilinear} deals with bilinear nonlinearities, the blow-up criterion of \cite[Corollary 2.3$(ii)$]{CriticalQuasilinear} still holds. Indeed, one can reproduce the argument in the proof of \cite[Theorem 4.10(3)]{AV19_QSEE_2} where  we recall that the weight $\a=\frac{p}{2}-1$ is allowed in case of deterministic equations.

We prove $\tau\geq T$ by contradiction. Assume that $\tau<T$. Then \eqref{eq:estimate_steps_3_4_together} and the embedding $L^q\embed B^0_{q,p}$ yield 
$
\sup_{t\in [0,\tau)}\|v(t)\|_{B^0_{q,p}}<\infty
$ 
which contradicts \eqref{eq:blow_up_criterium}. Hence $\tau\geq T$.

\emph{Step 5: \eqref{it:sol_operator_continuous} holds}. Recall that $\tau\geq T$ by Step 4. Let
\begin{equation}
\label{eq:T_star_def}
T_*\stackrel{{\rm def}}{=}\sup\Big\{t\in [0,T]\,:\, \lim_{u_0\to v_0}\|v-u\|_{\MRD(t)}=0\Big\},
\end{equation}
 where the limit is taken in the $L^q$-norm, $u$ is the solution of \eqref{eq:reaction_diffusion_deterministic} with initial data $u_0\in L^q$ and $\MRD(t)\stackrel{{\rm def}}{=}W^{1,p}(0,t,w_{\a_p};W^{-1,q})\cap L^p(0,t,w_{\a_p};W^{1,q})$ for all $t\in\R_+$. 
Note that $T_*>0$ by \eqref{eq:continuity_dependence_deterministic}. 

To prove \eqref{it:sol_operator_continuous} it is enough to show that  $T=T_*$ and that the supremum in \eqref{eq:T_star_def} is reached. To this end, one can argue by contradiction, we leave the details to the reader. In the argument it is convenient to use that $v([s,T])\subseteq B^{1-2/p}_{q,p}$ is compact for all $s>0$ by \eqref{eq:trace_regularity_v} and the local continuous dependence of solutions to \eqref{eq:reaction_diffusion_deterministic} on the initial data taken from the compact set $v([T_0,T])\subseteq B^{1-2/p}_{q,p}$ (see e.g.\ \cite[Theorem 1.2]{CriticalQuasilinear}).
\end{proof}

In the proof of Theorem \ref{t:delayed_blow_up_t_infty} we need uniform estimates on the half-line $(0,\infty)$. 
In case of exponentially decreasing mass, we obtain them by slightly modifying the proof of Proposition \ref{prop:global_high_viscosity}.

\begin{lemma}
\label{l:global_high_viscosity_infty}
Suppose that $(F,f)$ satisfy Assumption \ref{ass:f_polynomial_growth}\eqref{it:f_polynomial_growth_1}--\eqref{it:mass}. Assume that 
Assumption \ref{ass:f_polynomial_growth}\eqref{it:mass} holds with $\m_0=0$ and $\m_1<0$. 
Fix $N\geq 1$. Let $\frac{d(h-1)}{2}\vee2 < q<\infty$ and $p\in [q,\infty)$.
Then there exists 
$
\mu_0>0
$ 
such that if
$$
\min_{1\leq i\leq \ell} \mu_i\geq \mu_0,
$$ 
then the following assertion holds: 

For each $v_0\in L^q(\Tor^d;\R^{\ell})$ such that $v_0\geq 0$ a.e.\ on $\Tor^d$ and $\|v_0\|_{L^q}\leq N$, there exists a (unique) $(p,q)$--solution $v$ to \eqref{eq:reaction_diffusion_deterministic} on $[0,\infty)$ such that, for all $q_0\in [1,q)$,
\begin{equation}
\label{eq:decay_v_L_p_estimate_deterministic}
\|v(t)\|_{L^{q_0}}
\leq C(N,q,p,d,h,\alpha_i,\m_j)e^{-c_0 t} \ \ \text{ for all } t\geq 0,
\end{equation}
where $c_0>0$ depends only on $(\m_i,q_0,q)$. 
\end{lemma}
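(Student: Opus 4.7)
The plan is to reuse the machinery of Proposition \ref{prop:global_high_viscosity} with all constants made uniform in the horizon $T$ (this is the content of the two parenthetical remarks in Steps 1 and 2 of that proof), and then to interpolate against the exponentially decaying $L^1$-mass to get decay in $L^{q_0}$.

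\textbf{Step 1 (Uniform global existence).} First I would revisit Steps 1--3 of the proof of Proposition \ref{prop:global_high_viscosity} under the present hypotheses $\m_0=0$, $\m_1<0$. The final sentence of Step 1 there explicitly states that $c_T$ is independent of $T$ in this case, because the mass control \eqref{eq:mass_conservation_estimate} gives $\|v(t)\|_{L^1} \leq e^{\m_1 t} N \leq N$ uniformly in $t$. The same remark applies to Step 2: the only use of $T$-dependent bounds was through $\|v\|_{L^1(0,t;L^1)}$, and this is now bounded by $-N/\m_1$ uniformly. Plugging a $T$-independent $c_T$ into the barrier argument of Step 3 (the picture in Figure \ref{fig:1}), one obtains $\mu_0$ and $K_0$ depending only on $(N,q,p,d,h,\m_i,\alpha_i)$ such that, for $\min_i \mu_i \geq \mu_0$,
\begin{equation*}
\|v\|_{L^{q+h-1}(0,\tau \wedge T; L^{q+h-1})} \leq K_0 \quad \text{for every } T<\infty.
\end{equation*}
The blow-up criterium \eqref{eq:blow_up_criterium} applied as in Step 4 of Proposition \ref{prop:global_high_viscosity} then forces $\tau \geq T$ for every $T$, hence $\tau = \infty$, and the $L^q$-bound from Step 1 of that proof passes to the half-line:
\begin{equation*}
\sup_{t\in (0,\infty)} \|v(t)\|_{L^q}^q \leq C_1(N,q,p,d,h,\m_i,\alpha_i).
\end{equation*}

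\textbf{Step 2 (Exponential $L^{q_0}$-decay by interpolation).} Specializing \eqref{eq:mass_conservation_estimate} to $\m_0=0$ yields
\begin{equation*}
\|v(t)\|_{L^1} \leq C(\alpha_i)\, e^{\m_1 t} \|v_0\|_{L^1} \leq C(\alpha_i)\, N \, e^{\m_1 t}, \qquad t \geq 0,
\end{equation*}
with $\m_1<0$. For any $q_0 \in [1,q)$, define $\theta \in (0,1]$ by $\frac{1}{q_0} = \theta + \frac{1-\theta}{q}$, i.e.\ $\theta = \frac{q-q_0}{q_0(q-1)}$, and use the log-convexity of $L^p$-norms together with Step 1:
\begin{equation*}
\|v(t)\|_{L^{q_0}} \leq \|v(t)\|_{L^1}^{\theta} \|v(t)\|_{L^q}^{1-\theta} \leq C_2(N,q,p,d,h,\m_i,\alpha_i) \, e^{\m_1 \theta t}.
\end{equation*}
Setting $c_0 \stackrel{\mathrm{def}}{=} -\m_1 \theta > 0$ gives \eqref{eq:decay_v_L_p_estimate_deterministic}. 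Uniqueness of the $(p,q)$-solution on $[0,\infty)$ follows by restricting to $[0,T]$ and invoking Proposition \ref{prop:global_high_viscosity}\eqref{it:global_high_viscosity_1}.

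\textbf{Main obstacle.} There is no new analytic difficulty beyond Proposition \ref{prop:global_high_viscosity}; the entire content is the observation that the hypotheses $\m_0=0,\ \m_1<0$ are exactly what is needed to run the Moser-type buckling of Steps 1--3 of that proof with constants independent of the horizon, so that $\mu_0$ can be chosen once and for all. The only point requiring a little care is checking that when one absorbs $\|v_i\|_{L^{q/\gamma}(0,t;L^1)}$-type terms arising in the estimate of $I_{2,i}$ in Step 2 of Proposition \ref{prop:global_high_viscosity}, the uniform bound $\sup_t \|v(t)\|_{L^1} \leq N$ is used in place of the growing bound available for general $\m_0,\m_1$.
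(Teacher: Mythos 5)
Your proposal is correct and follows essentially the same route as the paper: it exploits precisely the $T$-independence of the constants in Steps 1--2 of Proposition \ref{prop:global_high_viscosity} (valid since $\m_0=0$, $\m_1<0$), reruns the barrier argument of Step 3 to get a uniform-in-time bound, concludes global existence via the blow-up criterium as in Step 4, and obtains \eqref{eq:decay_v_L_p_estimate_deterministic} by interpolating the exponentially decaying $L^1$-mass against the uniform $L^q$-bound. No gaps; the choice $c_0=-\m_1\theta$ with $\theta=\frac{q-q_0}{q_0(q-1)}$ matches the claimed dependence of $c_0$ on $(\m_i,q_0,q)$.
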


\begin{proof}
Since 
Assumption \ref{ass:f_polynomial_growth}\eqref{it:mass} holds with $\m_0=0$ and $\m_1<0$, by \eqref{eq:mass_conservation_estimate} we have
\begin{equation}
\label{eq:expentially_decreasing_mass}
\int_ {\Tor^d} |v|\,\dd x \lesssim e^{- |\m_1| t}\|v_0\|_{L^1}\leq e^{- |\m_1 | t} N.
\end{equation}

As in the proof of Proposition \ref{prop:global_high_viscosity} the existence of a $(p,q)$--solution $(v,\tau)$ to \eqref{eq:reaction_diffusion_deterministic} follows from \cite[Theorem 1.2]{CriticalQuasilinear}. It remains to prove $\tau=\infty$.
Arguing as in the Step 4 of Proposition \ref{prop:global_high_viscosity}, it is enough to show that, for some $\mu_0(N,q,p,d,h,\alpha_i,\m_j)>0$, one has
\begin{equation}
\label{eq:uniform_estimates_half_line}
\sup_{t\in [0,\tau)}\|v(t)\|^q_{L^{q}}\leq C(N,q,p,d,h,\alpha_i,\m_j).
\end{equation}
Indeed, if \eqref{eq:uniform_estimates_half_line} holds, then \eqref{eq:decay_v_L_p_estimate_deterministic} follows by interpolating \eqref{eq:expentially_decreasing_mass} and \eqref{eq:uniform_estimates_half_line}.

To prove \eqref{eq:uniform_estimates_half_line}, one can repeat the arguments in Step 3 of Proposition \ref{prop:global_high_viscosity}. Indeed, due to Step 2 of the same proof, the constant $c_T$ in \eqref{eq:step_2_deterministic_proof} can be made independent of $T$ since we are assuming $\m_0=0$ and $\m_1<0$.
\end{proof}


\subsection{Uniqueness for weak solutions to reaction-diffusion equations}
\label{ss:uniqueness}
In this subsection we prove uniqueness results for weak solutions to deterministic reaction-diffusion equations. 
Such results will be needed in the proof of Theorem \ref{t:delayed_blow_up}. In particular, the class of maps considered in the following result is the one used in Lemma \ref{l:compactness} below. 
We begin by proving the following uniqueness result for \eqref{eq:reaction_diffusion_deterministic}.

\begin{proposition}
\label{prop:uniqueness}
Let Assumption \ref{ass:f_polynomial_growth}\eqref{it:f_polynomial_growth_1}--\eqref{it:mass} be satisfied. 
Let 
$\frac{d(h-1)}{2}\vee 2<q<\infty$ and $v_0\in L^q(\R^d;\R^{\ell})$. Let either $\xi=\frac{dq}{d-2}$ and $d\geq 3$ or $\xi\in [\xi_0,\infty)$ for some sufficiently large $\xi_0(q,h,d)\in (q,\infty)$ and $d=2$. For $\g\in (0,1)$,
set
\begin{equation}
\label{eq:X_uniqueness}
\X\stackrel{{\rm def}}{=}L^2(0,T;H^{1-\g})\cap C([0,T];H^{-\g})\cap L^{\infty}(0,T;L^{q})\cap L^q(0,T;L^{\xi}).
\end{equation}
Let $
v^{(1)},v^{(2)}\in  \X
$
be weak solutions to \eqref{eq:reaction_diffusion_deterministic} in the following sense: 

For all $j\in \{1,2\}$, $\eta\in C^{\infty}(\Tor^d;\R^{\ell})$ and $t\in [0,T]$,
\begin{equation}
\label{eq:weak_formulation_deterministic_equation}
\begin{aligned}
\l v^{(j)}(t), \eta\r 
&= \int_{\Tor^d} v_{0}\cdot \eta\,\dd x \\
& +\sum_{1\leq i\leq \ell }\int_0^t\int_{\Tor^d}\Big(\mu_i\, v_i^{(j)} \Delta\eta_i +  f_i(\cdot,v^{(j)})\eta_i -F_i(\cdot,v^{(j)})\cdot \nabla \eta_i  \Big)\,\dd x\, \dd s .
\end{aligned}
\end{equation}
Then $v^{(1)}\equiv v^{(2)}$.
\end{proposition}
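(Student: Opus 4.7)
The plan is to prove uniqueness via an $L^2$-energy inequality for the difference $w \stackrel{{\rm def}}{=} v^{(1)} - v^{(2)}$, closed by Gr\"onwall. Subtracting the weak formulations \eqref{eq:weak_formulation_deterministic_equation}, $w$ satisfies distributionally
\begin{equation*}
\partial_t w_i - \mu_i \Delta w_i = g_i + \div G_i, \qquad w(0)=0,
\end{equation*}
with $g_i \stackrel{{\rm def}}{=} f_i(\cdot,v^{(1)}) - f_i(\cdot,v^{(2)})$ and $G_i \stackrel{{\rm def}}{=} F_i(\cdot,v^{(1)}) - F_i(\cdot,v^{(2)})$. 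Assumption \ref{ass:f_polynomial_growth}\eqref{it:f_polynomial_growth_1} gives
\begin{equation*}
|g_i| \lesssim (1 + |v^{(1)}|^{h-1} + |v^{(2)}|^{h-1})|w|,\quad
|G_i| \lesssim (1 + |v^{(1)}|^{(h-1)/2} + |v^{(2)}|^{(h-1)/2})|w|.
\end{equation*}

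Next I would upgrade the regularity of $w$. The a priori bound $w \in L^2(0,T;H^{1-\g})\cap C([0,T];H^{-\g})$ is insufficient for a direct energy identity, but using the additional integrability $v^{(j)} \in L^\infty(0,T;L^q)\cap L^q(0,T;L^\xi)$ together with the polynomial growth, one places $g_i$ and $\div G_i$ in a space compatible with deterministic parabolic maximal $L^p(L^q)$-regularity (cf.\ \cite{AV21_SMR_torus}). The choice $\xi = \frac{dq}{d-2}$ for $d\geq 3$ (and $\xi \geq \xi_0$ large for $d=2$) is dictated precisely by the parabolic scaling so that the source terms fall into an admissible negative-regularity class. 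Since $w(0)=0$, this produces $w \in L^2(0,T;H^1)\cap C([0,T];L^2)$, after which the standard energy identity
\begin{equation*}
\tfrac{1}{2}\|w(t)\|^2_{L^2} + \sum_{i}\mu_i\!\int_0^t\!\|\nabla w_i\|^2_{L^2}\,ds = \sum_{i}\int_0^t\!\!\int_{\Tor^d}\!(g_i w_i - G_i\cdot\nabla w_i)\,dx\,ds
\end{equation*}
becomes legitimate.

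With the energy identity in hand, the subcriticality $q>\frac{d(h-1)}{2}$ lets me close the estimate. By H\"older and $v^{(j)}\in L^\infty(0,T;L^q)$,
\begin{equation*}
\int_{\Tor^d}(1+|v^{(j)}|^{h-1})w^2\,dx \leq (1+\|v^{(j)}\|_{L^q}^{h-1})\|w\|^2_{L^{2q/(q-h+1)}},
\end{equation*}
and since $q>\frac{d(h-1)}{2}$ gives $\frac{2q}{q-h+1}<\frac{2d}{d-2}$, Gagliardo--Nirenberg yields $\|w\|_{L^{2q/(q-h+1)}}\lesssim \|w\|_{L^2}^{1-\theta}\|w\|_{H^1}^{\theta}$ with $\theta = \frac{d(h-1)}{2q}<1$. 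The $G_i$-contribution is handled via Cauchy--Schwarz in the same fashion. Young's inequality absorbs the $\|\nabla w\|_{L^2}^2$ terms into the left-hand side, leaving a linear Gr\"onwall inequality for $\|w(t)\|^2_{L^2}$ with coefficient in $L^\infty(0,T)$; since $w(0)=0$, this forces $w\equiv 0$. The main obstacle is the regularity upgrade: identifying the correct functional-analytic framework in which the $L^q(0,T;L^\xi)$ bound is exactly what is needed to make the testing rigorous, and treating the $d=2$ case where the endpoint Sobolev embedding fails and $\xi_0$ must be chosen appropriately.
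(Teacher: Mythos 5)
Your proposal is correct, and its core uniqueness step differs genuinely from the paper's. After the regularity upgrade (which both arguments need and treat in essentially the same spirit: the paper's Step~1 places $f(\cdot,v^{(j)})\in L^2(0,T;H^{-1})$ and $F(\cdot,v^{(j)})\in L^2(0,T;L^2)$ via $L^{\zeta}\embed H^{-1}$ and the interpolation of $L^{\infty}(0,T;L^q)$ with $L^q(0,T;L^{\xi})$ --- this is exactly where the size of $\xi$ matters and where, as in the paper, you must also identify $w$ with the regular linear solution, i.e.\ invoke uniqueness for the heat equation in the weak class $\X$), the paper closes the estimate by a \emph{local-in-time} argument: it uses the embedding $\X\embed L^{\psi}((0,T)\times\Tor^d)$ with $\psi>(h-1)(1+\tfrac d2)$ and the absolute continuity of the integral to make $\|v^{(j)}\|_{L^{(h-1)(1+d/2)}}$ small on intervals of a fixed length $\delta_*$, absorbs the nonlinear terms with the resulting small constant, and then iterates over $[s,s+\delta_*]$. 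You instead exploit the subcriticality $q>\tfrac d2(h-1)$ quantitatively: H\"older against the $L^{\infty}(0,T;L^q)$ bound, Gagliardo--Nirenberg with $\theta=\tfrac{d(h-1)}{2q}<1$, and Young's inequality, which yields a genuinely linear Gr\"onwall inequality with an $L^{\infty}(0,T)$ coefficient and kills the difference in one shot from $w(0)=0$. Your route avoids the iteration and the auxiliary embedding of the paper's Step~3, and it only uses the $L^q(0,T;L^{\xi})$ information in the regularity upgrade; the paper's route avoids the GN/Young bookkeeping at the price of the interval-splitting argument. The only point where your write-up is thinner than the paper is the upgrade itself: you should verify concretely (with exponents, as in the paper's Step~1, where the case $d=2$ forces $\xi_0$ large because the endpoint embedding fails) that the nonlinearities land in $L^2(0,T;H^{-1})$, rather than appealing loosely to maximal $L^p(L^q)$-regularity; once that is done the energy identity and your Gr\"onwall closure are sound.
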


In \eqref{eq:weak_formulation_deterministic_equation}, $\l \cdot,\cdot\r$ denotes the pairing in the duality $H^{-\g}\times H^{\g}$.
Step 1 in the proof below shows that $f(\cdot,v^{(j)}), F(\cdot,v^{(j)})\in L^1(0,T;L^1)$ for $j\in \{1,2\}$. Thus all the terms on the RHS\eqref{eq:weak_formulation_deterministic_equation} are well-defined.
In the case $d=2$, the proof below provides a description of $\xi_0$. More precisely, $\xi_0=\xi_*\vee \xi_{**}$ where $\xi_{*}$ and $ \xi_{**}$ are as in Step 1  in the proof below and \eqref{eq:case_2_d_X_embedding}, respectively. 

The result of Proposition \ref{prop:uniqueness} is not really surprising since $\X\subseteq L^{\infty}(0,T;L^{q})\cap L^q(0,T;L^{\xi})$ and therefore 
the class of solutions considered there are somehow close to the strong ones. 

It will prove convenient later to see that $(p,q)$--solutions of Proposition \ref{prop:global_high_viscosity} belongs to $\X$. In particular, they are in the class of weak solutions considered in Proposition \ref{prop:uniqueness}.

\begin{remark}
\label{r:regularity_class_uniqueness}
Here we prove that the $(p,q)$--solutions to \eqref{eq:reaction_diffusion_deterministic} provided by Proposition \ref{prop:global_high_viscosity} satisfies $v\in \X$ where $\X$ is as in \eqref{eq:X_uniqueness}. Fix $T\in (0,\infty)$, $q,p\in (2,\infty)$ and let $\a_{p}=\frac{p}{2}-1$. By Proposition \ref{prop:global_high_viscosity}\eqref{it:global_high_viscosity_2} and \eqref{eq:conseguence_of_estimate_deterministic},  it suffices to show that
\begin{multline}
\label{eq:MR_embed_X}
\MRD(T)\stackrel{{\rm def}}{=}W^{1,p}(0,T,w_{\a_p};W^{-1,q})\cap L^p(0,T,w_{\a_p};W^{1,q})\\
\embed 
L^2(0,T;H^{1-\g})\cap C([0,T];H^{-\g})  \ \text{ for all }\g\in (0,1).
\end{multline} 
By mixed-derivative embeddings (see e.g.\ \cite[Proposition 2.8]{AV19_QSEE_1}), for all $\theta\in (0,1)$,
\begin{equation}
\label{eq:mixed_derivative_W_1_p_deterministic}
\MRD(T)\embed H^{\theta,p}(0,T,w_{\a_p};H^{1-2\theta,q}).
\end{equation}
Letting $\theta=\frac{1+\g}{2}$, then the RHS\eqref{eq:mixed_derivative_W_1_p_deterministic} coincides with $H^{\frac{1+\g}{2},p}(0,T;w_{\a_p};H^{-\g,q})\embed C([0,T];H^{-\g,q})$. While, letting $\theta=\frac{\g}{2}$ in the RHS\eqref{eq:mixed_derivative_W_1_p_deterministic} we have, for some $p_0\in (p,\infty)$,
\begin{align*}
H^{\g/2,p} (0,T,w_{\a_p};H^{1-\g,q})
\stackrel{(i)}{\embed}
L^{p_0} (0,T,w_{\a_p};H^{1-\g,q})
\stackrel{(ii)}{\embed}
L^{2} (0,T;H^{1-\g,q})
\end{align*}
where in $(i)$ we used Sobolev embeddings \cite[Proposition 2.7]{AV19_QSEE_1} and in $(ii)$ follows from the H\"{o}lder inequality and $\frac{1+\a_p}{p_0}<\frac{1+\a_p}{p}=\frac{1}{2}$ (see e.g.\ \cite[Proposition 2.1(3)]{AV19_QSEE_2}).
Thus \eqref{eq:MR_embed_X} follows by collecting the previous embeddings as well as by $H^{s,q}\embed H^s$ since $q\geq 2$.
\end{remark}

\begin{proof}[Proof of Proposition \ref{prop:uniqueness}]
In the following proof, for $ j\in \{1,2\}$, $v^{(j)}$ denotes a map from $\X$ (see \eqref{eq:X_uniqueness}) and solves \eqref{eq:weak_formulation_deterministic_equation} for all $\eta\in C^{\infty}(\Tor^d;\R^{\ell})$.
We break the proof into several steps.

\emph{Step 1: Let either $\xi=\frac{dq}{d-2}$ and $d\geq 3$ or $\xi\in [\xi_*,\infty)$ for some sufficiently large $\xi_*(q,h,d)\in (q,\infty)$ and $d=2$ (see the comments at the end of sub-steps 2a and 2b). Then }
\begin{equation}
\label{eq:L_2_integrability_weak_solution_f_F}
f(\cdot,v^{(j)})\in L^2(0,T;H^{-1}) \qquad \text{ and }\qquad F_i(\cdot,v^{(j)})\in L^2(0,T;L^2),
\end{equation}
\emph{for all  $i\in \{1,\dots,\ell\}$. In particular
$v^{(j)}\in L^2(0,T;H^{1})\cap H^{1}(0,T;H^{-1})\subseteq C([0,T];L^2).$}
The last claim of Step 1 follows from \eqref{eq:L_2_integrability_weak_solution_f_F} and the uniqueness of the heat equation in the $L^2$--setting. 

\emph{Sub-step 1a: $f(\cdot,v)\in L^2(0,T;H^{-1})$ for all $v\in \X$}. 
Here we mainly consider $d\geq 3$. We provide some comments at the end of this sub-step for the case $d=2$.  Recall that $L^{\zeta}\embed H^{-1}$ where $\zeta=\frac{2d}{d+2}$ (note that $\zeta>1$ since $d\geq 3$). Set $h_0\stackrel{{\rm def}}{=} 1+\frac{2q}{d}$ and note that $q=\frac{d}{2}(h_0-1)$ as well as $h_0\geq h\vee (1+\frac{4}{d})$.  By Assumption \ref{ass:f_polynomial_growth}\eqref{it:f_polynomial_growth_1},
\begin{equation}
\label{eq:f_estimate_det_zeta}
\|f(\cdot,v)\|_{ L^2(0,T;H^{-1})} \lesssim 
\|f(\cdot,v)\|_{ L^2(0,T;L^{\zeta})}\lesssim
1+\|v\|_{L^{2h_0}(0,T;L^{h_0\zeta})}^{h_0}.
\end{equation}
It remains to check that 
\begin{equation}
\label{eq:embedding_X_L_2h_substep_1a}
L^{\infty}(0,T;L^q)\cap L^q(0,T;L^{\frac{dq}{d-2}})\embed L^{2h_0}(0,T;L^{h_0\zeta}).
\end{equation}
Without loss of generality we assume $q<2h_0$, otherwise if $q\geq 2h_0$, then the above embedding follows from $L^q(0,T;L^{\frac{dq}{d-2}})\embed L^{2h_0}(0,T;L^{h_0\zeta})$ as $\frac{qd}{d-2}\geq 2h_0 > \frac{2dh_0}{d+2}= \zeta h_0$.
Thus, assuming that $q<2h_0$, by standard interpolation theory, \eqref{eq:embedding_X_L_2h_substep_1a} holds provided, for some $\varphi\in (0,1)$,
\begin{equation}
\label{eq:varphi_condition_f_v_X}
\frac{\varphi}{q} \leq \frac{1}{2h_0}
\quad \text{ and }\quad 
\frac{1-\varphi}{q}+\frac{\varphi(d-2)}{qd} \leq \frac{1}{h_0\zeta}.
\end{equation}
The first inequality in \eqref{eq:varphi_condition_f_v_X} is verified for $\varphi=\frac{q}{2h_0}\in (0,1)$. With the latter choice, one can readily check that the second inequality in \eqref{eq:varphi_condition_f_v_X} is equivalent to $q\geq \frac{2dh_0}{d+4}$. The latter condition holds with the strict inequality as $q=\frac{d}{2}(h_0-1)$ and $h_0> 1+\frac{4}{d}$.

If $d=2$, then the above argument works similarly. However, we have to choose $\zeta\in (1,2)$ for the embedding $L^{\zeta}\embed H^{-1}$ in \eqref{eq:f_estimate_det_zeta} as the sharp case $\zeta=1$ is not true in general.  Indeed if $q\geq 2h_0$, then one can choose $\xi_*\geq 2h_0$. While,  if $q<2h_0$, then one can choose $\xi_*\in (1,\infty)$ large and $\zeta\in (1,\infty)$ small such that 
\begin{equation}
\label{eq:choice_xi_h_0_d_2}
\frac{1-\varphi}{q}+\frac{\varphi}{\xi_*} < \frac{1}{h_0\zeta} \ \ \text{ where } \ \ \varphi=\frac{q}{2h_0}.
\end{equation}
To see that such a choice is possible, one can argue as follows. By a continuity argument, it is enough to check \eqref{eq:choice_xi_h_0_d_2} with $(\xi_*,\zeta)$ replaced by its $(\infty,1)$. The first in \eqref{eq:choice_xi_h_0_d_2} is equivalent to $q>\frac{2h_0}{3}$ which is satisfied since $d=2$, $q=h_0-1$ and $h_0>3$ by construction.

\emph{Substep 1b: $F(\cdot,v)\in L^2(0,T;L^2)$ for all $v\in \X$}. As in Substep 1a, we set $h_0=1+\frac{2q}{d}$.
By Assumption \ref{ass:f_polynomial_growth}\eqref{it:f_polynomial_growth_1},
$$
\|F(\cdot,v)\|_{L^2(0,T;L^2)}\lesssim 1+ \|v\|_{L^{h_0+1}(0,T;L^{h_0+1})}^{(h_0+1)/2}.
$$
As above, we first consider the case $d\geq 3$. 
 Thus, it remains to check that
\begin{equation}
\label{eq:embedding_X_L_2h_substep_1b}
L^{\infty}(0,T;L^q)\cap L^q(0,T;L^{\frac{dq}{d-2}})\embed L^{h_0+1}(0,T;L^{h_0+1}).
\end{equation} 
Without loss of generality, we assume that $q<h_0+1$. Indeed, if $q\geq h_0+1$, then the above embedding follows from $L^{q}(0,T;L^{\frac{dq}{d-2}})\embed L^{h_0+1}(0,T;L^{h_0+1})$ as $\frac{dq}{d-2}>q\geq h_0+1$. Next we consider $q<h_0+1$. In this case, by interpolation, \eqref{eq:embedding_X_L_2h_substep_1b} follows provided 
\begin{equation}
\label{eq:varphi_condition_F_v_X}
\frac{\varphi}{q} \leq \frac{1}{h_0+1}
\quad \text{ and }\quad 
\frac{1-\varphi}{q}+\frac{\varphi(d-2)}{qd} \leq \frac{1}{h_0+1}.
\end{equation}
The first inequality in \eqref{eq:varphi_condition_F_v_X} is verified for $\varphi=\frac{q}{h_0+1}\in (0,1)$. With the latter choice, one can readily check that the second inequality  in \eqref{eq:varphi_condition_F_v_X} is equivalent to $q\geq \frac{d(h_0+1)}{d+2}$. As above, the latter condition is satisfied with the strict inequality since $q=\frac{d}{2}(h_0-1)$ and $h_0> 1+\frac{4}{d}$. The case $d=2$ works in the same way as in Substep 1a. We omit the details. 

Before going into the next step we collect some facts. Step 1 shows that $v^{(j)}$ solves \eqref{eq:reaction_diffusion_deterministic_cut_off} in its differential form where the equality is understood in $H^{-1}$.
For exposition convenience, in Step 2 we prove the claim of Proposition \ref{prop:uniqueness} assuming that (in case $d=2$ we choose $\xi_0$ large enough)
\begin{equation}
\label{eq:X_embedding_h_type_space}
\X\embed L^{\psi}((0,T)\times \Tor^d;\R^{\ell}) \ \  \text{ for some }  \ \ \psi> (h-1)\Big(1+\frac{d}{2}\Big),
\end{equation}
where $\psi$ depends only on $(h,d,q)$. Step 3 is devoted to the proof of \eqref{eq:X_embedding_h_type_space}. 

\emph{Step 2: $v^{(1)}\equiv v^{(2)}$}. 
By a standard iteration argument, to prove the claim of Step 2 it suffices to show the existence of $\delta_*>0$ such that, for all $s\in [0,T]$, 
\begin{equation}
\label{eq:induction_uniqueness}
v^{(1)}(s)=
v^{(2)}(s)\text{ a.e.\ on }\Tor^d 
 \quad \Longrightarrow \quad 
v^{(1)}=
v^{(2)}
\text{ a.e.\ on }[s,s+\delta_*]\times \Tor^d.
\end{equation} 
Note that the evaluation at $s$ in first condition of \eqref{eq:induction_uniqueness} is well defined since $v^{(j)}\in C([0,T];L^2)$ by Step 1.
The remaining part of this step is devoted to the proof of \eqref{eq:induction_uniqueness}. Let $\varepsilon>0$ be fixed later. The embedding \eqref{eq:X_embedding_h_type_space} and the H\"{o}lder inequality show the existence of $\delta(\varepsilon)>0$, independent of $s\in [0,T]$, such that 
\begin{equation}
\label{eq:choice_varepsilon_v_h}
\int_{s}^{s+\delta} \int_{\Tor^d} |v^{(j)}|^{(h-1)(1+\frac{d}{2})}\,\dd x\, \dd s\leq \varepsilon.
\end{equation}
Up to replace $\delta$ by $\delta\wedge \varepsilon$, we can assume that $\delta(\varepsilon)\leq \varepsilon$. Next we prove the existence of $\varepsilon_*>0$ such that \eqref{eq:induction_uniqueness} holds for $\delta_*=\delta(\varepsilon_*)$.

Fix $s\in [0,T]$ and assume that $v^{(1)}(s)=
v^{(2)}(s)$.  Let $\ellip_0\stackrel{{\rm def}}{=}\min_{1\leq i\leq \ell} \ellip_i$.
Recall that $v^{(j)}$ solves \eqref{eq:reaction_diffusion_deterministic_cut_off} in $H^{-1}$ on $[s,T]$, and that the claim of Step 1 holds. Computing $\frac{\dd}{\dd t}\|v^{(1)}-v^{(2)}\|_{L^2}^2$ one obtains, for all $t\in [s,T]$,
\begin{align*}
\|v^{(1)}(t)- v^{(2)}(t) \|_{L^2}^2
&+2\ellip_0\int_{s}^{t} \int_{\Tor^d}\Big|\nabla \big[v^{(1)}-v^{(2)}\big]\Big|^2\,\dd x\, \dd s\\
&\leq 2\underbrace{\int_{s}^{t}\int_{\Tor^d} (f(\cdot,v^{(1)})-f(\cdot,v^{(2)}))\cdot(v^{(1)}-v^{(2)})\,\dd x\, \dd s}_{ I_{f}(t)\stackrel{{\rm def}}{=}}\\
&+ 2\sum_{1\leq i\leq \ell}\underbrace{\int_{s}^{t}\int_{\Tor^d} (F_i(\cdot,v^{(1)})-F_i(\cdot,v^{(2)}))\cdot\nabla [v^{(1)}_i-v^{(2)}_i]\,\dd x\, \dd s}_{I_F(t)\stackrel{{\rm def}}{=}}.
\end{align*}
Next we estimate the terms $I_f$ and $I_F$ separately. 
We begin by considering $I_f$. Let $\g=\frac{d}{d+2}$.  By Assumption \ref{ass:f_polynomial_growth}\eqref{it:f_polynomial_growth_1} we have, for all $t\in [s,s+\delta]$,
\begin{align*} 
|I_f(t)|
&\lesssim  \int_{s}^t\int_{\Tor^d} \big(1+|v^{(1)}|^{h-1}+ |v^{(2)}|^{h-1}\big)\big|v^{(1)}-v^{(2)}\big|^2\,\dd x\,\dd s\\
&\stackrel{(i)}{\leq} \Big[t-s+ \|v^{(1)}\|_{L^{\frac{h-1}{1-\g}}(s,t;L^{\frac{h-1}{1-\g}} )}^{h-1} 
+\|v^{(2)}\|_{L^{\frac{h-1}{1-\g}}(s,t;L^{\frac{h-1}{1-\g}})}^{h-1} \Big] 
\|v^{(1)}-v^{(2)}\|_{L^{\frac{2}{\g}}(s,t; L^{\frac{2}{\g}})}^2\\	
&\stackrel{(ii)}{\lesssim} 
\big[\varepsilon+2\varepsilon^{h-1}\big] \|v^{(1)}-v^{(2)}\|_{L^{\frac{2}{\g}}(s,t; L^{\frac{2}{\g}})}^2\\
&\stackrel{(iii)}{\leq} 
c_{\varepsilon}
\|v^{(1)}-v^{(2)}\|_{L^{\infty}(s,t;L^{2})}^2
+ \frac{\ellip_0}{2} \Big\|\nabla [v^{(1)}-v^{(2)}] \Big\|_{L^2(s,t;L^2)}^2
\end{align*}
where in $(i)$ we used the H\"{o}lder inequality with exponents $(\frac{1}{1-\g},\frac{1}{\g})$  and in $(ii)$  we used \eqref{eq:choice_varepsilon_v_h}, $\frac{1}{1-\g}=1+\frac{d}{2}$ and $\delta\leq \varepsilon$ by construction. In $(iii)$ we used \eqref{eq:interpolation_inequality_mean_zero}, $c_{\varepsilon}$ depend only on $(\|f(\cdot,0)\|_{L^{\infty}},\ellip_0)$ and satisfies $\lim_{\varepsilon\downarrow 0} c_{\varepsilon}=0$. 

We estimate $I_F$ in a similar way. To begin, note that for all $t\in [s,s+\delta]$,
\begin{align*}
|I_F(t)|
&\leq \frac{\ellip_0}{2} \int_{s}^t \int_{\Tor^d} \big|\nabla [v^{(1)}-v^{(2)}]\big|^2\,\dd x\, \dd s\\
&+ C(\ellip_0) \sum_{1\leq i\leq \ell} 
\int_{s}^t \big| F_i(\cdot,v^{(1)})-F_i(\cdot,v^{(2)})\big|^2\,\dd x\, \dd s.
\end{align*}
Again, by Assumption \ref{ass:f_polynomial_growth}\eqref{it:f_polynomial_growth_1},
\begin{align*}
&\int_{s}^t\int_{\Tor^d} \big| F_i(\cdot,v^{(1)})-F_i(\cdot,v^{(2)})\big|^2\,\dd x\, \dd s \\
&\lesssim \int_{s}^t\int_{\Tor^d}\big(1+|v^{(1)}|^{h-1}+ |v^{(2)}|^{h-1}\big)\big|v^{(1)}-v^{(2)}\big|^2\,\dd x\, \dd s \\
&\leq c_{\varepsilon}
\|v^{(1)}-v^{(2)}\|_{L^{\infty}(s,t;L^{2})}^2
+ \frac{\ellip_0}{2} \Big\|\nabla [v^{(1)}-v^{(2)}] \Big\|_{L^2(s,t;L^2)}^2
\end{align*}
where the last inequality follows by noticing that the the second line in the above estimate coincides with the LHS in the first line in the estimate of $I_f$.

Using the above estimates, we get
\begin{align*}
\|v^{(1)}- v^{(2)} \|_{L^{\infty}(s,t;L^2)}^2
&+2\ellip_0\int_{s}^{t} \int_{\Tor^d}\Big|\nabla \big[v^{(1)}-v^{(2)}\big]\Big|^2\,\dd x\, \dd s\\ 
&\leq 
2c_{\varepsilon}
\|v^{(1)}-v^{(2)}\|_{L^{\infty}(s,t;L^{2})}^2
+ \frac{3}{2}\ellip_0 \Big\|\nabla [v^{(1)}-v^{(2)}] \Big\|_{L^2(s,t;L^2)}^2.
\end{align*}
By choosing $\varepsilon_*>0$ so that $c_{\varepsilon_*}<\frac{1}{2}$, the above yields \eqref{eq:induction_uniqueness} with $\delta_*=\delta(\varepsilon_*)$, as desired.

\emph{Step 3: \eqref{eq:X_embedding_h_type_space} holds}. 
As above we let $\g=\frac{d}{d+2}$. Here we use again an interpolation argument. Note that, by \eqref{eq:X_uniqueness}, 
\begin{equation}
\label{eq:X_embed_uniqueness_proof}
\X\embed L^{\infty}(0,T;L^q)\cap L^q(0,T;L^{\xi})\stackrel{(i)}{\embed} L^{q/\g}(0,T;L^{\eta}),
\end{equation}
where $\eta$ is uniquely determine by the relation 
$
\frac{1-\g}{q}+\frac{\g}{\xi}=\frac{1}{\eta}.
$
Note that $\frac{q}{\g}>(h-1)(1+\frac{d}{2})$ is equivalent to $q>\frac{d}{2}(h-1)$ which holds by assumption. It remains to prove $
\eta>(h-1)(1+\frac{d}{2}).
$
Since $\xi=\frac{dq}{d-2}$ in case $d\geq 3$, the previous follows again from $q>\frac{d}{2}(h-1)$. Finally we consider the case $d=2$. In the latter situation $\g=\frac{1}{2}$ and arguing as in \eqref{eq:X_embed_uniqueness_proof}, by interpolation, we have to choose $\xi_{**}\in (1,\infty)$ such that
\begin{equation}
\label{eq:case_2_d_X_embedding}
\frac{1}{2q}+\frac{1}{2\xi_{**}}<\frac{1}{2(h-1)}.
\end{equation} 
To see that \eqref{eq:case_2_d_X_embedding} is solvable, it is enough to let $\xi_{**}\to \infty$ and note that it reduces to $\frac{2}{q}<\frac{2}{h-1} $, i.e.\ $q>h-1=\frac{d}{2}(h-1)$. In particular, there exists $\xi_{**}(q,h)\in (1,\infty)$ for which  \eqref{eq:case_2_d_X_embedding} holds.
\end{proof}

As a by product of Proposition \ref{prop:uniqueness} we can establish a ``weak-strong'' uniqueness result for deterministic reaction-diffusion with cut-off: 
\begin{equation}
\label{eq:reaction_diffusion_deterministic_cut_off}
\left\{
\begin{aligned}
\partial_t v_i &=\mu_i\Delta v_i +\phi_{R,r}(\cdot,v)\big[\div(F_i(\cdot,v)) + f_i(\cdot,v)\big], &\text{ on }&\Tor^d,\\
v_i(0)&=v_{0,i}, &\text{ on }&\Tor^d,
\end{aligned}
\right.
\end{equation}
where $i\in \{1,\dots,\ell\}$. As before, here $\phi_{R,r}(\cdot,v)$ are as in \eqref{eq:def_cut_off} with $R,r\in (1,\infty)$.
The following result  will play a role in the scaling limit result of Theorem \ref{t:weak_convergence}. 

\begin{corollary}[Weak--strong uniqueness for \eqref{eq:weak_formulation_deterministic_equation}]
\label{cor:uniqueness}
Let Assumption \ref{ass:f_polynomial_growth}\eqref{it:f_polynomial_growth_1}--\eqref{it:mass} be satisfied.
Let $\frac{d(h-1)}{2}\vee 2<q<\infty$. Fix $R,r\in (1,\infty)$ and $v_0\in L^q(\Tor^d;\R^{\ell})$. Let $\xi$ and $\X$ be as in Proposition \ref{prop:uniqueness}. 
Assume that there exists a solution $v^{(1)}\in \X$ of \eqref{eq:reaction_diffusion_deterministic} in the weak formulation of \eqref{eq:weak_formulation_deterministic_equation} satisfying 
\begin{equation}
\label{eq:R_bound_v_1}
\|v^{(1)}\|_{L^r(0,T;L^q)}\leq R-1.
\end{equation}
Let $v^{(2)}\in  \X
$
be a weak solution to \eqref{eq:reaction_diffusion_deterministic_cut_off} in the following sense: 

For all $\eta\in C^{\infty}(\Tor^d;\R^{\ell})$ and $t\in [0,T]$,
\begin{equation*}
\begin{aligned}
&\l v^{(2)}(t), \eta\r = \int_{\Tor^d} v_{0}\cdot \eta\,\dd x \\
&\quad +\sum_{1\leq i\leq \ell} \int_0^t\int_{\Tor^d}\Big(\mu_i\, v^{(2)}_i\Delta\eta_i + \phi_{R,r}(\cdot,v^{(2)})\big[ f_i(\cdot,v^{(2)})\eta_i - F_i(\cdot,v^{(2)})\cdot \nabla \eta_i \big] \Big)\,\dd x\,\dd s .
\end{aligned}
\end{equation*}
Then $v^{(1)}\equiv v^{(2)}$.
\end{corollary}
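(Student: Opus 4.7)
\textbf{Proof plan for Corollary \ref{cor:uniqueness}.} The strategy is to reduce the weak--strong uniqueness statement to the pure uniqueness result of Proposition \ref{prop:uniqueness} via a continuity argument that exploits the bound \eqref{eq:R_bound_v_1}. Since $v^{(2)}\in \X\subseteq L^{\infty}(0,T;L^q)\subseteq L^{r}(0,T;L^q)$, the map
$$
t\mapsto \Phi(t)\stackrel{{\rm def}}{=}\|v^{(2)}\|_{L^{r}(0,t;L^q)}
$$
is continuous and non-decreasing with $\Phi(0)=0$. Thus
$$
\tau_*\stackrel{{\rm def}}{=}\sup\big\{t\in [0,T]\,:\,\Phi(t)\leq R\big\}
$$
is well-defined and strictly positive. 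Moreover, since $\phi|_{[0,1]}\equiv 1$, for every $t\in [0,\tau_*]$ and $s\in [0,t]$ we have $\phi_{R,r}(s,v^{(2)})=\phi(R^{-1}\Phi(s))=1$.

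First, I would observe that as a consequence, the weak formulation satisfied by $v^{(2)}$ reduces, on the interval $[0,\tau_*]$, to
$$
\l v^{(2)}(t),\eta\r=\int_{\Tor^d} v_0\cdot\eta\,dx +\sum_{i}\int_0^t\int_{\Tor^d}\Big(\mu_i v_i^{(2)} \Delta\eta_i+f_i(\cdot,v^{(2)})\eta_i-F_i(\cdot,v^{(2)})\cdot\nabla\eta_i\Big)\,dxds
$$
for every $\eta\in C^{\infty}(\Tor^d;\R^\ell)$ and every $t\in [0,\tau_*]$. Hence both $v^{(1)}|_{[0,\tau_*]}$ and $v^{(2)}|_{[0,\tau_*]}$ are elements of $\X|_{[0,\tau_*]}$ (as the restriction of $\X$ to the subinterval satisfies the same class definition) which solve the uncut-off equation \eqref{eq:reaction_diffusion_deterministic} in the weak sense of \eqref{eq:weak_formulation_deterministic_equation}. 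Proposition \ref{prop:uniqueness}, applied on the time interval $[0,\tau_*]$ with the same $q$, $\xi$ and $\g$, then yields
$$
v^{(1)}\equiv v^{(2)} \quad \text{on }[0,\tau_*].
$$

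It remains to upgrade this to an equality on the full interval $[0,T]$. Suppose by contradiction that $\tau_*<T$. By continuity of $\Phi$ together with maximality of $\tau_*$, one has $\Phi(\tau_*)=R$. However, the equality $v^{(1)}=v^{(2)}$ on $[0,\tau_*]$ established above gives
$$
R=\Phi(\tau_*)=\|v^{(1)}\|_{L^r(0,\tau_*;L^q)}\leq \|v^{(1)}\|_{L^r(0,T;L^q)}\stackrel{\eqref{eq:R_bound_v_1}}{\leq} R-1,
$$
which is impossible. Therefore $\tau_*=T$ and $v^{(1)}\equiv v^{(2)}$ on $[0,T]$, which is the desired conclusion. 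The main (and essentially only non-cosmetic) obstacle is ensuring that Proposition \ref{prop:uniqueness} can indeed be applied on the subinterval $[0,\tau_*]$; this is immediate because all the functional-analytic requirements defining $\X$ are stable under restriction to a sub-interval, and the bound \eqref{eq:R_bound_v_1} is what prevents the cut-off from being activated before reaching the terminal time.
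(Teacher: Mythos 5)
Your proposal is correct and follows essentially the same argument as the paper: the paper defines the exit time $e=\inf\{t:\|v^{(2)}\|_{L^r(0,t;L^q)}\geq R\}$ (your $\tau_*$ with sup instead of inf), notes the cut-off equals $1$ up to that time so Proposition \ref{prop:uniqueness} applies on the subinterval, and then uses \eqref{eq:R_bound_v_1} to contradict the norm reaching $R$ before $T$. The only difference is cosmetic (sup versus inf formulation of the exit time).
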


Due to \eqref{eq:R_bound_v_1}, $v^{(1)}$ is also a weak solution to the problem \eqref{eq:reaction_diffusion_deterministic_cut_off} with cut-off. In the proof of Proposition \ref{prop:delayed_blow_up_smooth}, we check \eqref{eq:R_bound_v_1} by taking the (strong) $(p,q)$--solution to \eqref{eq:reaction_diffusion_deterministic}. Hence, to some extend, Corollary \ref{cor:uniqueness} shows that weak solutions coincide with strong ones to \eqref{eq:reaction_diffusion_deterministic_cut_off} (if there are any) and that \eqref{eq:R_bound_v_1} is a regularity assumption. 
This explains the name of Corollary \ref{cor:uniqueness}.

\begin{proof}
The idea is to reduce to the case analyzed in Proposition \ref{prop:uniqueness} by mimicking a stopping time argument.  To this end, let us set
$$
e\stackrel{{\rm def}}{=}\inf\big\{t\in [0,T]\,:\, \|v^{(2)}\|_{L^r(0,t;L^q)}\geq  R\big\}, \quad \text{ where } \quad \inf\emptyset\stackrel{{\rm def}}{=}T.
$$
It remains to prove that $e=T$. Indeed, if the latter holds,  then $\phi_{R,r}(\cdot,v^{(2)}) \equiv 1$ and therefore $v^{(2)}$ is also a weak solution to \eqref{eq:reaction_diffusion_deterministic} (i.e.\ it satisfies \eqref{eq:weak_formulation_deterministic_equation} for all $\eta\in C^{\infty}(\Tor^d;\R^{\ell})$). Hence, applying Proposition \ref{prop:uniqueness}, we eventually have $v^{(1)}\equiv v^{(2)}$.

We prove $e=T$ by contradiction. Assume that $e<T$. Then
\begin{equation}
\label{eq:R_bound_v_2}
\|v^{(2)}\|_{L^r(0,e;L^q)}= R \quad \Longrightarrow \quad \phi_{R,r}(s,v^{(2)})=1 \ \text{ for all }s\in [0,e].
\end{equation}
Therefore
$v^{(2)}|_{[0,e]}$ is a weak solution to \eqref{eq:reaction_diffusion_deterministic} in the sense of \eqref{eq:weak_formulation_deterministic_equation}. Hence, 
by Proposition \ref{prop:uniqueness},
\begin{equation}
\label{eq:equality_on_e_u}
v^{(1)}=v^{(2)} \text{ a.e.\ on }[0,e]\times \Tor^d
\end{equation}
Combining \eqref{eq:equality_on_e_u} and \eqref{eq:R_bound_v_1}, one has  $
\|v^{(2)}\|_{L^r(0,e;L^q)}\leq R-1$. This fact contradicts \eqref{eq:R_bound_v_2} and therefore $e=T$, as desired.
\end{proof}

\section{Proofs of Theorems \ref{t:delayed_blow_up} and \ref{t:delayed_blow_up_t_infty}}
\label{s:proofs}
In this section we prove Theorems \ref{t:delayed_blow_up} and \ref{t:delayed_blow_up_t_infty}.
To prove both results we can now argue as in \cite{FL19,FGL21}. In particular, as a central step we prove a scaling limit result for stochastic reaction-diffusion equations with cut-off \eqref{eq:reaction_diffusion_system_truncation}, see Subsection \ref{ss:scaling_limit_cut_off}.
Theorems \ref{t:delayed_blow_up} and \ref{t:delayed_blow_up_t_infty} will be proved in Subsections \ref{ss:proof_global_T} and \ref{ss:proof_global_infty}, respectively.


\subsection{The scaling limit for reaction-diffusion equations with cut-off}
\label{ss:scaling_limit_cut_off}
In this subsection we continue our investigation of reaction-diffusions with cut-off 
initiated in Section \ref{s:global_cut_off}.
Recall that the cut-off equation reads as follows:
\begin{equation}
\label{eq:reaction_diffusion_system_truncation_2}
\left\{
\begin{aligned}
\dd v_i -\ellip_i\Delta v_i \,\dd t&= \phi_{R,r} (\cdot,v)\Big[\div (F(\cdot,v))+f_{i}(\cdot, v)\Big]\,\dd t \\
&+ \sqrt{c_d\ellip} \sum_{k,\alpha} \theta_k (\sigma_{k,\alpha}\cdot \nabla) v_i\circ \dd w_t^{k,\alpha}, \qquad & \text{ on }&\Tor^d,\\
v_i(0)&=v_{i,0}, \qquad  & \text{ on }&\Tor^d,
\end{aligned}\right.
\end{equation}
where $\phi_{R,r}$ is as in \eqref{eq:def_cut_off} for $R>0$, $r\in [r_0,\infty)$  and $r_0$ is as in Theorem \ref{t:global_cut_off}. 
The aim of this subsection is to prove the following scaling limit result. It can be seen as a version of \cite[Theorem 1.4]{FL19} or
\cite[Proposition 3.7]{FGL21} in our setting 
and it is of independent interest. 

Recall that weak solutions to \eqref{eq:reaction_diffusion_system_truncation_2} are understood as in Corollary \ref{cor:uniqueness}.

\begin{theorem}[Scaling limit]
\label{t:weak_convergence}
Let Assumption \ref{ass:f_polynomial_growth} be satisfied.
Fix $T\in (0,\infty)$ and $v_0\in L^q(\Tor^d;\R^{\ell})$. Assume that $q>\frac{d(h-1)}{2}$. Let $\xi\in [\xi_0,\infty)$ and $r\in [r_0,\infty)$ where $\xi_0$ and $r_0$ are as in Proposition \ref{prop:uniqueness} and Theorem \ref{t:global_cut_off}, respectively. Suppose that the following hold.
\begin{enumerate}[{\rm(1)}]
\item\label{it:weak_convergence_L_q} Let $(v_{0}^{(n)})_{n\geq 1}$ be a sequence such that 
\begin{equation*}
v_0^{(n)}\in B^{1-2\frac{1+\a}{p}}_{q,p}(\Tor^d;\R^{\ell})\  \text{ for all } n\geq 1,
 \  \ \text{ and }\ \ \ 
v_0^{(n)} \rightharpoonup v_0 \text{ in }L^q(\Tor^d;\R^{\ell}).
\end{equation*}
\item\label{it:theta_goes_to_zero} Let $(\theta^{(n)})_{n\geq 1}\subseteq \ell^2(\Z^d_0)$ be a sequence such that  
$ \#\{k\,:\, \theta_k^{(n)}\neq 0\}<\infty $ and
\eqref{eq:theta_normalized_symmetric} with $\theta=\theta^{(n)}$ hold for all $n\geq 1$, and
$$
\lim_{n\to \infty}\|\theta^{(n)}\|_{\ell^{\infty}}=0.
$$
\item\label{it:determinstic_limit_cut_off} 
For some $\g\in (0,1)$, there exists a unique weak solution
$$
v=(v_i)_{i=1}^{\ell}
\in L^2(0,T;H^{1-\g})\cap C([0,T];H^{-\g})
\cap L^{\infty}(0,T;L^{q})
\cap L^{q}(0,T;L^{\xi}) 
$$  
to the following {\rm deterministic} system of reaction-diffusion equation with cut-off:
\begin{align*}
\left\{
\begin{aligned}
\partial_t v_i &=(\ellip_i + \ellip)\Delta v_i +  \phi_{R,r}(\cdot,v)\big[\div(F_i(\cdot,v))+ f_i (\cdot,v)\big] & \text{ on }&\Tor^d,\\ 
v_i(0)&=v_{0,i} & \text{ on }&\Tor^d.
\end{aligned}
\right.
\end{align*}
\end{enumerate}
Denote by $v^{(n)}$ the $(p,\a,1,q)$--strong solution to \eqref{eq:reaction_diffusion_system_truncation_2} with data $v_0^{(n)}$ (see Theorem \ref{t:global_cut_off}) and let $v$ be as in \eqref{it:determinstic_limit_cut_off}.
Then
\begin{equation}
\label{eq:claim_scaling_limit}
\lim_{n\to \infty}\P\big( \|v^{(n)}- v\|_{L^r(0,T;L^{q}(\Tor^d;\R^{\ell}))}>\varepsilon\big)=0 \ \  \text{ for all } \ \varepsilon>0.
\end{equation}  
\end{theorem}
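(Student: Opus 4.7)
The plan is a compactness/tightness argument in the spirit of \cite{FL19,FGL21}, exploiting the $\theta$--independent estimates of Theorem \ref{t:global_cut_off}\eqref{it:global_cut_off_2} together with the weak--strong uniqueness of Corollary \ref{cor:uniqueness}. I will obtain convergence in law to the deterministic limit $v$ along subsequences, then upgrade to convergence in probability via a Gy\"ongy--Krylov type argument (convergence in law to a \emph{deterministic} limit automatically implies convergence in probability).

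\emph{Step 1 (Uniform estimates).} Theorem \ref{t:global_cut_off}\eqref{it:global_cut_off_2} yields a deterministic bound, independent of $n$, for $v^{(n)}$ in $L^{\infty}(0,T;L^q)\cap L^2(0,T;H^1)$ and, via \eqref{eq:conseguence_of_estimate}, also in $L^q(0,T;L^{\xi})$. Combined with Remark \ref{r:interpolation_L_eta_2} these give a deterministic bound on $v^{(n)}$ in $L^{q+h-1}((0,T)\times \Tor^d;\R^{\ell})$, hence on $\phi_{R,r}(\cdot,v^{(n)})[\div F_i(\cdot,v^{(n)})+f_i(\cdot,v^{(n)})]$ in $L^{\rho}(0,T;H^{-1,\rho})$ for some $\rho>1$ (depending only on $(h,q,d)$). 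The stochastic term $\sqrt{c_d\ellip}\sum_{k,\alpha}\theta^{(n)}_k(\sigma_{k,\alpha}\cdot\nabla)v^{(n)}_i\,dw^{k,\alpha}_t$, viewed in It\^o form, has, by the BDG inequality and \eqref{eq:ellipticity_noise}, $L^2_\O$--norm bounded uniformly in $n$ in $C^{\alpha}([0,T];H^{-1-\delta})$ for small $\alpha,\delta>0$.

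\emph{Step 2 (Tightness).} Combining Step 1 with a version of the stochastic Aubin--Lions lemma (e.g.\ \cite[Theorem 5]{FG95} or its modern weighted variants), the laws of $(v^{(n)})_{n\geq 1}$ are tight in
\begin{equation*}
\Y\stackrel{{\rm def}}{=} L^2(0,T;H^{1-\g})\cap C([0,T];H^{-\g})\cap L^{r}(0,T;L^q)\cap L^q(0,T;L^{\xi_1}),
\end{equation*}
for any $\g\in(0,1)$ and any $\xi_1<\xi$, by interpolation between $L^\infty_tL^q_x$ and $L^2_tH^1_x$ together with the H\"older continuity in a negative Sobolev space coming from the deterministic and stochastic drift bounds. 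Here I use $\|v^{(n)}\|_{L^r(0,T;L^q)}\leq 2R$ a.s.\ (built into the cut-off $\phi_{R,r}$) to get $L^r_tL^q_x$--tightness.

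\emph{Step 3 (Passing to the limit).} By Prokhorov and Skorokhod, along a subsequence, there exist $\widetilde v^{(n)},\widetilde v$ on a new probability space with $\widetilde v^{(n)}\to\widetilde v$ a.s.\ in $\Y$ and $\mathrm{Law}(\widetilde v^{(n)})=\mathrm{Law}(v^{(n)})$. Strong convergence in $L^q(0,T;L^{\xi_1})$ with $\xi_1$ close enough to $\xi$, together with the growth of $(f,F)$ and uniform $L^{q+h-1}$--bounds (Step 1), gives $\phi_{R,r}(\cdot,\widetilde v^{(n)})(f_i,F_i)(\cdot,\widetilde v^{(n)})\to \phi_{R,r}(\cdot,\widetilde v)(f_i,F_i)(\cdot,\widetilde v)$ weakly in $L^{\rho}(0,T;L^\rho)$ (uniform integrability via Vitali). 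Testing the It\^o form of \eqref{eq:reaction_diffusion_system_truncation_2} against $\eta\in C^\infty(\Tor^d;\R^\ell)$, the key point is that the martingale term
\begin{equation*}
M^{(n)}_i(t)\stackrel{{\rm def}}{=}\sqrt{c_d\ellip}\sum_{k,\alpha}\int_0^t\int_{\Tor^d}\theta_k^{(n)}(\sigma_{k,\alpha}\cdot\nabla)\widetilde v^{(n)}_i\,\eta_i\,dx\,dw^{k,\alpha}_s
\end{equation*}
has quadratic variation bounded by $c\,\|\theta^{(n)}\|_{\ell^\infty}^2\cdot\|\theta^{(n)}\|_{\ell^2}^2\cdot\|\widetilde v^{(n)}_i\|_{L^2(0,T;H^1)}^2\cdot\|\eta\|_{H^1}^2$, using the orthonormality of $(a_{k,\alpha})$ and the uniform Step 1 bounds; since $\|\theta^{(n)}\|_{\ell^\infty}\to 0$ by hypothesis \eqref{it:theta_goes_to_zero}, this vanishes in probability. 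Thus $\widetilde v$ a.s.\ satisfies the deterministic cut-off equation with enhanced diffusion $(\ellip_i+\ellip)$ (the extra $\ellip\Delta v_i$ arising from the Stratonovich--It\^o correction survives the limit via \eqref{eq:Ito_stratonovich_change} and \eqref{eq:ellipticity_noise}).

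\emph{Step 4 (Identification and probabilistic convergence).} The limit $\widetilde v$ lies in the regularity class of Proposition \ref{prop:uniqueness} (by Step 2 and lower semicontinuity under weak convergence) and solves the deterministic cut--off equation. By Corollary \ref{cor:uniqueness} applied with $v^{(1)}=v$ (the deterministic limit from hypothesis \eqref{it:determinstic_limit_cut_off}) and $v^{(2)}=\widetilde v$, one has $\widetilde v=v$ almost surely. Since the limit is deterministic and does not depend on the subsequence, the full sequence $v^{(n)}$ converges in law to $v$ in $\Y$, which, the limit being a Dirac mass, implies convergence in probability. Projecting onto the $L^r(0,T;L^q)$--component yields \eqref{eq:claim_scaling_limit}.

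\emph{Main obstacle.} The most delicate point is Step 2: upgrading the $\theta$--independent a priori bounds of Theorem \ref{t:global_cut_off}\eqref{it:global_cut_off_2} to genuine compactness of the laws in the \emph{strong} $L^r_tL^q_x$ topology, rather than merely in a weak or negative--index topology. This requires an Aubin--Lions argument in which the time--regularity estimate (coming from the It\^o form of the equation in $H^{-1-\delta}$) must be combined with the interpolation inequality of Remark \ref{r:interpolation_L_eta_2} to upgrade $L^2_tH^1_x\cap L^\infty_tL^q_x$ compactness to $L^r_tL^q_x$. The $\theta$--uniformity throughout this bootstrap, guaranteed precisely by the Moser--type estimate in Theorem \ref{t:global_cut_off}\eqref{it:global_cut_off_2}, is essential and cannot be replaced by naive stochastic maximal regularity bounds (which blow up with $\|\theta\|_{\ell^\infty}^{-1}$).
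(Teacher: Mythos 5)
Your proposal is correct and, at the level of strategy, coincides with the paper's proof: $\theta$--independent energy bounds from Theorem \ref{t:global_cut_off}\eqref{it:global_cut_off_2} (plus \eqref{eq:conseguence_of_estimate}), a time--regularity estimate for the martingale part whose quadratic variation carries a factor $\|\theta^{(n)}\|_{\ell^\infty}^2$ (this is exactly Lemma \ref{l:time_regularity}), tightness of the laws in $L^2(0,T;H^{1-\g})\cap C([0,T];H^{-\g})\cap L^r(0,T;L^q)$, identification of every limit point as a weak solution of the deterministic cut--off problem with enhanced diffusion, and then uniqueness forcing the limit law to be $\delta_v$, which upgrades to convergence in probability because the limit is deterministic. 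The implementation differs in one respect: you go through Skorokhod's representation theorem and a stochastic Aubin--Lions lemma and pass to the limit pathwise on a new probability space, whereas the paper never changes the probability space — it proves a bespoke compactness lemma (Lemma \ref{l:compactness}, including the closedness under the $L^\infty_t L^q_x\cap L^q_t L^\xi_x$ bound, which is what guarantees the limit lies in the uniqueness class) and shows that the test functionals $\T_{\fun}$ are continuous on that bounded set, so that the identification is done directly on the pushforward measures; since the martingale contribution is killed in expectation by $\|\theta^{(n)}\|_{\ell^\infty}\to 0$, no coupling of the noises is ever needed, which is why the Skorokhod step can be dispensed with (your route works too, but you would have to carry the Brownian motions along in the Skorokhod argument, a detail you gloss over). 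Two small corrections: in Step 4 you invoke Corollary \ref{cor:uniqueness}, which formally requires the bound \eqref{eq:R_bound_v_1} on the reference solution — this is not among the hypotheses of the theorem; but hypothesis \eqref{it:determinstic_limit_cut_off} already asserts uniqueness of the weak solution in the relevant class, so you should (as the paper does) quote that assumption directly. Also, your quadratic--variation bound should read $\|\theta^{(n)}\|_{\ell^\infty}^2\|v^{(n)}_i\|_{L^2(0,T;L^2)}^2\|\eta\|_{W^{1,\infty}}^2$ (integrate by parts using $\div\,\sigma_{k,\alpha}=0$ and Bessel's inequality); with $\|\eta\|_{H^1}$ alone the Cauchy--Schwarz step does not close, though since $\eta$ is a fixed smooth test function this is cosmetic.
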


Eq.\ \eqref{eq:claim_scaling_limit} shows the (weak) enhanced diffusive effect of the transport noise in \eqref{eq:reaction_diffusion_system_truncation_2}. Note that the increased diffusivity depends on the strength of the noise through the parameter $\ellip$. 
 The proof of
Theorem \ref{t:weak_convergence} actually gives a stronger result. More precisely, we show that \eqref{eq:claim_scaling_limit} also holds in case the $L^r(0,T;L^{q})$-norm is replaced by $L^2(0,T;H^{1-\g})\cap C([0,T];H^{-\g})
\cap L^r(0,T;L^q)$ where $\g>0$ is arbitrary (this is needed to obtain the assertions of Remark \ref{r:refined_enhanced_dissipation}).

The proof of Theorem \ref{t:weak_convergence} requires some preparation and it will be given at the end of this subsection. 
We begin with a compactness result. 

\begin{lemma}
\label{l:compactness}
Fix $T \in (0,\infty)$. Let  $\g_0,\g_1,\g\in (0,\infty)$, $q,r\in (1,\infty)$ and $\xi\in (q,\infty)$. Set
\begin{align*}
\Y&\stackrel{{\rm def}}{=} L^2(0,T;H^1) \cap L^{\infty}(0,T;L^{q})\cap  C^{\g_0}(0,T;H^{-\g_1})\cap L^q(0,T;L^{\xi}),\\
\X&\stackrel{{\rm def}}{=}L^2(0,T;H^{1-\g})\cap C([0,T];H^{-\g})\cap L^r(0,T;L^{q}).
\end{align*}
Then $\Y\embed \X$ compactly. Moreover, for any $K\in (0,\infty)$, the set 
\begin{align*}
\Big\{u\in \X\,
:\,
\sup_{t\in [0,T]}\|u(t)\|_{L^q}+
\|u\|_{L^q(0,T;L^{\xi})} \leq K \Big\}\  \text{ is closed in }\X.
\end{align*}
\end{lemma}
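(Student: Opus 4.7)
The plan is to handle the two assertions separately. Since $\X$ is an intersection of three Banach spaces, for the compactness it suffices to establish compactness of $\Y$ into each factor of $\X$; for the closedness, I will rely on weak and a.e.\ lower semi-continuity of the relevant norms (Fatou).

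For the two ``weaker'' factors, standard machinery applies. The embedding $\Y\embed L^2(0,T;H^{1-\g})$ is compact by Aubin--Lions--Simon, with the compact embedding $H^1\embed H^{1-\g}$ on $\Tor^d$ (Rellich), the $L^2(0,T;H^1)$-bound, and the uniform time-shift estimate $\|u(\cdot+h)-u\|_{L^\infty(0,T-h;H^{-\g_1})}\lesssim h^{\g_0}$ extracted from the $C^{\g_0}(0,T;H^{-\g_1})$-bound. The embedding $\Y\embed C([0,T];H^{-\g})$ is compact by Arzel\`a--Ascoli: the pointwise relative compactness of $\{u_n(t)\}\subseteq H^{-\g}$ follows from the uniform $L^q$-bound combined with the compact embedding $L^q\embed L^2\embed H^{-\g}$ (Rellich, using $q\geq 2$), while equicontinuity in $H^{-\g}$ is obtained by interpolating the $C^{\g_0}(0,T;H^{-\g_1})$-bound against the $L^\infty(0,T;L^q)$-bound, yielding
\[
\|u_n(t)-u_n(s)\|_{H^{-\g}}\lesssim \|u_n(t)-u_n(s)\|_{H^{-\g_1}}^{\theta}\,\|u_n(t)-u_n(s)\|_{L^2}^{1-\theta}\lesssim |t-s|^{\theta\g_0}
\]
for $\theta=\min\{1,\g/\g_1\}\in(0,1]$.

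The main obstacle is the compactness $\Y\embed L^r(0,T;L^q)$: neither the $H^1$-regularity alone (which need not even embed into $L^q$ when $q\geq 2^*$) nor the $L^\xi$-integrability alone (for $L^\xi\embed L^q$ is never compact) supplies a compact space embedding into $L^q$. My plan is to bypass this via the observation that the \emph{intersection} $H^1\cap L^\xi$ embeds compactly into $L^q$: by Rellich, a bounded sequence therein admits a subsequence converging strongly in $L^p$ for every $p<2^*$, and choosing $p\in(1,2^*)$ with $p\leq q\leq \xi$ (possible since $\xi>q>1$), the interpolation inequality $\|\cdot\|_{L^q}\lesssim\|\cdot\|_{L^p}^\theta\|\cdot\|_{L^\xi}^{1-\theta}$ upgrades this to strong $L^q$-convergence. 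Having this, Aubin--Lions--Simon with $X_0=H^1\cap L^\xi$, $B=L^q$, $Y=H^{-\g_1}$ (noting that $\Y$ is bounded in $L^2(0,T;H^1\cap L^\xi)$ via H\"older in time) yields relative compactness in $L^2(0,T;L^q)$. Finally, the elementary interpolation $\|u_n-u\|_{L^r(0,T;L^q)}\leq\|u_n-u\|_{L^2(0,T;L^q)}^{2/r}\|u_n-u\|_{L^\infty(0,T;L^q)}^{1-2/r}$ (when $r\geq 2$, and a trivial H\"older in time when $r<2$) combined with the uniform $L^\infty(0,T;L^q)$-bound delivers compactness in $L^r(0,T;L^q)$.

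For the closedness of the sublevel set, let $u_n\to u$ in $\X$ with $\sup_t\|u_n(t)\|_{L^q}+\|u_n\|_{L^q(0,T;L^\xi)}\leq K$. The $C([0,T];H^{-\g})$-convergence gives $u_n(t)\to u(t)$ in $H^{-\g}$ for every $t$; combined with the uniform $L^q$-bound, a weak-$*$ extraction in $L^q$ together with uniqueness of the limit in $H^{-\g}$ upgrades this to $u_n(t)\rightharpoonup u(t)$ in $L^q$ for every $t$. Weak lower semi-continuity of the norm then gives $\|u(t)\|_{L^q}\leq \liminf_n \sup_s\|u_n(s)\|_{L^q}$ for all $t$, hence $\sup_t\|u(t)\|_{L^q}\leq\liminf_n\sup_s\|u_n(s)\|_{L^q}$. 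The $L^r(0,T;L^q)$-convergence also furnishes a.e.\ convergence on $(0,T)\times\Tor^d$ along a subsequence, and two successive applications of Fatou's lemma (first in $x$, then in $t$) yield $\|u\|_{L^q(0,T;L^\xi)}\leq\liminf_n\|u_n\|_{L^q(0,T;L^\xi)}$. Adding the two estimates and using $\liminf a_n+\liminf b_n\leq\liminf(a_n+b_n)\leq K$ concludes the proof.
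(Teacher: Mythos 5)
Your argument is correct, but it is organized differently from the paper's. The paper runs a single chain: Ascoli--Arzel\`a in $C([0,T];H^{-\g_1-\varepsilon})$, then interpolation against the $L^2(0,T;H^1)$ and $L^\infty(0,T;L^q)$ bounds to get convergence in $L^{r_0}(0,T;L^{q_0})$ for every $r_0<\infty$, $q_0<q$, and finally the mixed-norm interpolation inequality $\|g\|_{L^r(0,T;L^q)}\lesssim\|g\|_{L^{r_0}(0,T;L^{q_0})}^{1-\theta_0}\|g\|_{L^q(0,T;L^\xi)}^{\theta_0}$ (this is where $\xi>q$ enters) to reach $L^r(0,T;L^q)$; it never invokes an Aubin--Lions type theorem. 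You instead prove compactness into each factor of $\X$ separately, and your key ingredient is the purely spatial compact embedding $H^1\cap L^\xi\embed L^q$ (Rellich into some $L^p$, $p<2^*$, upgraded by the interpolation $\|\cdot\|_{L^q}\lesssim\|\cdot\|_{L^p}^{\theta}\|\cdot\|_{L^\xi}^{1-\theta}$, again using $\xi>q$), fed into Simon's theorem with the $C^{\g_0}(0,T;H^{-\g_1})$ bound supplying the time-shift control, and then a time interpolation with $L^\infty(0,T;L^q)$ to pass from $L^2(0,T;L^q)$ to $L^r(0,T;L^q)$. Both routes use the same three ingredients (negative-Sobolev equicontinuity, the $L^2_tH^1_x$ bound, and the $\xi>q$ interpolation), but your decomposition makes the role of the intersection $H^1\cap L^\xi$ explicit and is arguably more modular, at the price of invoking Aubin--Lions--Simon; the paper's version is more self-contained (only Ascoli plus elementary interpolation). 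Two shared caveats worth noting: like the paper, you implicitly use $q\geq 2$ (e.g.\ $L^q\embed L^2\embed H^{-\g}$ and the H\"older-in-time step $L^q(0,T;L^\xi)\embed L^2(0,T;L^\xi)$), which is harmless since the lemma is only applied with $q>2$; and both proofs quietly use that the a.e.-in-time $L^q$ bound upgrades to an everywhere-in-time bound via the $H^{-\g_1}$ continuity and weak compactness of balls in $L^q$ (you do spell out exactly this mechanism in your closedness argument, which is more detailed than the paper's one-line appeal to Fatou). Finally, if one takes the statement literally with $\g>1$, your choice $Y=H^{-\g_1}$ in Simon's theorem for the factor $L^2(0,T;H^{1-\g})$ should be replaced by $Y=H^{-\max\{\g_1,\g-1\}}$ so that $H^{1-\g}\embed Y$; this is a cosmetic adjustment.
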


\begin{proof}
The proof is similar to the one of \cite[Lemma 3.3]{FGL21}. For the reader's convenience, we include some details. Below $(\g_0,\g_1,\g,r,\xi)$ are as in the statement of Lemma \ref{l:compactness}.
Firstly we show the compactness of the embedding $\Y\embed \X$. 
Let $(u_n)_{n\geq 1}$ be a sequence in $\Y$ such that $\|u_n\|_{\Y}\leq 1$. It remains to show that there exists a subsequence (not relabeled for simplicity) such that $u_n\to u$ in $\X$. 
To begin, note that, by Ascoli-Arzel\`{a}  theorem, there exists a (not relabeled) subsequence such that $u_n\to u$ in $C([0,T];H^{-\g_1-\varepsilon})$ for all $\varepsilon>0$. 
Next we show that $u_n\to u$ in $\X$. 
Combining the uniform bound of $(u_n)_{n\geq 1}$ in $L^{\infty}(0,T;L^2)\subseteq
L^{\infty}(0,T;L^{q})$, one has
\begin{equation}
\label{eq:L_infty_H_varepsilon}
u_n\to u \text{ in }\ C([0,T];H^{-\varepsilon}) \ \ \ \text{ for all } \ \varepsilon\in (0,1).
\end{equation}
Note that $\|g\|_{L^{2(1+ \varepsilon)/{\varepsilon}}(0,T;L^2)}
\lesssim \|g\|_{L^{\infty}(0,T;H^{-\varepsilon})}^{1/(1+ \varepsilon)}
\|g\|_{L^2(0,T;H^1)}^{\varepsilon/(1+ \varepsilon)}$ for all $\varepsilon\in (0,1)$ by interpolation. 
Whence, choosing $\varepsilon>0$ small, the above and the uniform bound in $L^2(0,T;H^1)$ yield
\begin{equation*}
u_n\to u \text{ in }L^{r_0}(0,T;L^2) \ \ \text{ for all }r_0\in (1,\infty).
\end{equation*}
Similarly, interpolating the above with the uniform bound in $L^{\infty}(0,T;L^q)$, we get
\begin{equation}
\label{eq:convergence_L_r_0_L_q_0_proof_compactness}
u_n\to u \text{ in }L^{r_0}(0,T;L^{q_0}) \ \ \text{ for all }r_0\in (1,\infty)\text{ and }q_0\in (1,q).
\end{equation}
We claim that there exist $r_0\in (1,\infty)$, $q_0\in (1,q)$ and $\theta_0\in (0,1)$ such that 
\begin{equation}
\label{eq:mix_integrability_L_r_L_q}
\|g\|_{L^{r}(0,T;L^q)}\lesssim \|g\|_{L^{r_0}(0,T;L^{q_0})}^{1-\theta_0}\|g\|_{ L^{q}(0,T;L^{\xi})}^{\theta_0} .
\end{equation}
To see the above one can argue as follows. Fix $\theta_0\in (0,1)$ such that $\frac{\theta_0}{q}< \frac{1}{r}$. Note that $\frac{1-\theta_0}{q}+ \frac{\theta_0}{\xi}< \frac{1}{q}$ since $\xi>q$. Hence there exist $r_0\in (2,\infty)$ and $q_0\in (1,q)$ such that 
$$
\frac{1-\theta_0}{r_0}+\frac{\theta_0}{q}\leq \frac{1}{r}
\qquad \text{ and }\qquad \frac{1-\theta_0}{q_0}+\frac{\theta_0}{\xi}\leq \frac{1}{q}.
$$
In particular, \eqref{eq:mix_integrability_L_r_L_q} follows with the above choice of $(r_0,q_0,\theta_0)$ and standard interpolation theory. 
Thus, \eqref{eq:convergence_L_r_0_L_q_0_proof_compactness} and the uniform bound in $L^q(0,T;L^{\xi})$,
we obtain $
u_n\to u \text{ in }L^{r}(0,T;L^{q}) 
$.
Combining this with  \eqref{eq:L_infty_H_varepsilon} for some $\varepsilon\in (0, \g]$, to conclude the proof it remains to note that $u_n\to u$ in $L^{2}(0,T;H^{1-\g})$ due to \eqref{eq:L_infty_H_varepsilon} and the uniform bound in $L^{2}(0,T;H^1)$.

The last claim follows from the Fatou lemma.
\end{proof}

To apply Lemma \ref{l:compactness} we have to investigate further  regularity estimate of solutions to reaction-diffusion equations with cut-off \eqref{eq:reaction_diffusion_system_truncation_2}. The following complements Theorem \ref{t:global_cut_off}\eqref{it:global_cut_off_2}.

\begin{lemma}[Time-regularity estimates]
\label{l:time_regularity}
Let Assumption \ref{ass:f_polynomial_growth} be satisfied. Fix $T\in (0,\infty)$, $R\geq 1$ and $\parameter\in (1,\infty)$. Assume that $v_0\in L^q$ and $q>\frac{d}{2}(h-1)$. Let $r\in [r_0,\infty)$ where $r_0$ is as in Theorem \ref{t:global_cut_off}.  Suppose that $\theta\in \ell^2$ satisfies \eqref{eq:theta_normalized_symmetric}. 
Let $v=(v_i)_{i=1}^{\ell}$ be $(p,\a,1,q)$--solution to \eqref{eq:reaction_diffusion_system_truncation_2} provided by Theorem \ref{t:global_cut_off} and set
$$
\Mart_i(t)\stackrel{{\rm def}}{=}\sqrt{c_d \ellip}\sum_{k,\alpha} \theta_k \int_{0}^t (\sigma_{k,\alpha}\cdot \nabla) v_i\,\dd w_s^{k,\alpha}.
$$
Then there exist $\g_0,\g_1,C_0>0$ independent of $(v_0,\theta)$ such that for all $i\in \{1,\dots,\ell\}$
\begin{align}
\label{eq:time_regularity_martingale}
\E\big[\|\mathcal{M}_i\|_{C^{\g_0}(0,T;H^{-\g_1})}^{2 \parameter}\big]
&\leq  C_0\|\theta\|^{2 \parameter}_{\ell^{\infty}}(1+\|v_0\|^{ \parameter q}_{L^q}),\\
\label{eq:time_regularity_v}
\E\big[ \|v_i\|_{C^{\g_0}(0,T;H^{-\g_1})}^{2 \parameter} \big]
&\leq C_0(1+\|v_0\|^{ 2\parameter q}_{L^q}).
\end{align}
\end{lemma}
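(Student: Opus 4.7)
The plan is to first prove the martingale bound \eqref{eq:time_regularity_martingale} and then deduce \eqref{eq:time_regularity_v} from it together with pathwise estimates on the drift terms in the It\^o formulation of \eqref{eq:reaction_diffusion_system_truncation_2}.

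I would start from a key $\theta$--independent Fourier--analytic inequality: choosing $\g_1>d/2$ (and later also $\g_1\geq 2$ large enough for the drift part), one has, uniformly in $v_i$,
\begin{equation*}
\sum_{k,\alpha} \|(\sigma_{k,\alpha}\cdot\nabla) v_i\|_{H^{-\g_1}}^2 \leq C(\g_1,d)\,\|v_i\|_{H^1}^2,
\end{equation*}
which follows from the identity $(\sigma_{k,\alpha}\cdot\nabla) v_i=\sum_m 2\pi i(m\cdot a_{k,\alpha})\hat v_i(m) e^{2\pi i(k+m)\cdot x}$ (using $k\cdot a_{k,\alpha}=0$) and the summability $\sup_m \sum_k (1+|k+m|^2)^{-\g_1}<\infty$. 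The Hilbert--Schmidt norm of the diffusion operator in $H^{-\g_1}$ is therefore controlled by $\ellip\,\|\theta\|_{\ell^\infty}^2 \|v_i\|_{H^1}^2$, so that the BDG inequality combined with the stochastic factorization method yields, for some $\g_0\in(0,1/2)$,
\begin{equation*}
\E\|\Mart_i\|_{C^{\g_0}(0,T;H^{-\g_1})}^{2\parameter} \lesssim \|\theta\|_{\ell^\infty}^{2\parameter}\, \E\Big(\int_0^T \|v_i\|_{H^1}^2\,dt\Big)^{\parameter}.
\end{equation*}
The right--hand side is then bounded by a constant times $(1+\|v_0\|_{L^q}^{\parameter q})$ using the \emph{pathwise}, $\theta$--independent estimate of Theorem \ref{t:global_cut_off}\eqref{it:global_cut_off_2}: indeed the $L^\infty(0,T;L^q)$ bound and $\int_0^T\int_{\Tor^d} |\nabla v_i|^2\,dxds$ bound there, together with $q\geq 2$, give $\int_0^T \|v_i\|_{H^1}^2\,dt\leq C_T(1+\|v_0\|_{L^q}^q)$ a.s., and this proves \eqref{eq:time_regularity_martingale}.

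For \eqref{eq:time_regularity_v}, we rewrite \eqref{eq:reaction_diffusion_system_truncation_2} in It\^o form via \eqref{eq:Ito_stratonovich_change}, obtaining for $0\leq s<t\leq T$,
\begin{equation*}
v_i(t)-v_i(s)=\int_s^t (\ellip+\ellip_i)\Delta v_i\,dr+\int_s^t \phi_{R,r}(\cdot,v)\big[\div F_i(\cdot,v)+f_i(\cdot,v)\big]\,dr+\Mart_i(t)-\Mart_i(s).
\end{equation*}
Enlarging $\g_1$ so that $L^1(\Tor^d)\hookrightarrow H^{-\g_1}(\Tor^d)$ and $\g_1\geq 2$, the deterministic drift is bounded in $H^{-\g_1}$ via Assumption \ref{ass:f_polynomial_growth}\eqref{it:f_polynomial_growth_1} and Sobolev embeddings: $\|\Delta v_i\|_{H^{-\g_1}}\lesssim \|v_i\|_{L^2}$, $\|f_i(\cdot,v)\|_{H^{-\g_1}}\lesssim 1+\|v\|_{L^h}^h$ and analogously $\|\div F_i(\cdot,v)\|_{H^{-\g_1}}\lesssim 1+\|v\|_{L^{(h+1)/2}}^{(h+1)/2}$, all of which are controlled pathwise by Theorem \ref{t:global_cut_off}\eqref{it:global_cut_off_2} and \eqref{eq:conseguence_of_estimate}. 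A H\"older estimate in time for the drift via H\"older's inequality and the bound \eqref{eq:time_regularity_martingale} applied with the trivial $\|\theta\|_{\ell^\infty}\leq \|\theta\|_{\ell^2}=1$ then yield \eqref{eq:time_regularity_v}.

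The main obstacle is the interplay between extracting the $\|\theta\|_{\ell^\infty}^{2\parameter}$ factor in \eqref{eq:time_regularity_martingale} and simultaneously obtaining H\"older regularity in time: a na\"ive BDG plus H\"older-in-time argument would require integrability of $\|v_i\|_{H^1}^{2\parameter}$, which is not a priori available from Theorem \ref{t:global_cut_off}. The factorization method circumvents this by reducing the problem to an $L^2$--in--time estimate on an auxiliary process, for which the $\theta$--independent pathwise bound on $\|v_i\|_{L^2(0,T;H^1)}$ from Theorem \ref{t:global_cut_off}\eqref{it:global_cut_off_2} is exactly what is needed.
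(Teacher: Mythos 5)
Your second part (the bound on $v_i$ via the It\^o form, pathwise estimates of the drift in $H^{-\g_1}$ through Theorem \ref{t:global_cut_off}\eqref{it:global_cut_off_2} and the interpolation estimate, then H\"older-in-time) is essentially the paper's argument and is fine modulo routine details. The genuine gap is in the martingale step. The inequality you take as the centerpiece,
\begin{equation*}
\E\|\Mart_i\|_{C^{\g_0}(0,T;H^{-\g_1})}^{2\parameter} \lesssim \|\theta\|_{\ell^\infty}^{2\parameter}\, \E\Big(\int_0^T \|v_i\|_{H^1}^2\,dt\Big)^{\parameter},
\end{equation*}
does not follow from BDG plus factorization, and as an abstract estimate it is false: already for a scalar It\^o integral $\int_0^t\phi\,dw_s$ with deterministic $\phi=\varepsilon^{-1/2}\one_{[t_0,t_0+\varepsilon]}$ one has $\|\phi\|_{L^2}=1$ while the $C^{\g_0}$--norm blows up like $\varepsilon^{-\g_0}$ as $\varepsilon\downarrow 0$; H\"older norms of stochastic integrals cannot be controlled by the $L^2$--in--time norm of the integrand alone. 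Quantitatively, the factorization method does not ``circumvent'' the missing integrability: writing $\Mart_i=c_\gamma\int_0^t(t-s)^{\gamma-1}Y(s)\,ds$ with $Y(s)=\int_0^s(s-r)^{-\gamma}\,d\Br$, the map $L^{2\parameter}(0,T)\to C^{\g_0}$ needs $\gamma>\frac{1}{2\parameter}$, whereas estimating $\E\|Y\|_{L^{2\parameter}}^{2\parameter}$ by BDG produces $\E\int_0^T\big(\int_0^s(s-r)^{-2\gamma}\|v_i(r)\|_{H^1}^2\,dr\big)^{\parameter}ds$, and with only the pathwise $L^1$--in--time bound on $\|v_i\|_{H^1}^2$ from Theorem \ref{t:global_cut_off}\eqref{it:global_cut_off_2}, Young's convolution inequality forces $\gamma<\frac{1}{2\parameter}$. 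The two requirements are incompatible, so no positive H\"older exponent comes out of your scheme.

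The fix is to not lose a derivative on $v_i$ in the Hilbert--Schmidt bound. Since $\div\,\sigma_{k,\alpha}=0$ (equivalently $m\cdot a_{k,\alpha}=(k+m)\cdot a_{k,\alpha}$ in your Fourier identity), one can integrate by parts, $\l e_j,(\sigma_{k,\alpha}\cdot\nabla)v_i\r=-2\pi i\, j\cdot\int_{\Tor^d}e_j\,v_i\,\sigma_{k,\alpha}\,dx$, and Parseval (the $\sigma_{k,\alpha}$ form an orthonormal system in $L^2(\Tor^d;\R^d)$) gives $\sum_{k,\alpha}|\l e_j,(\sigma_{k,\alpha}\cdot\nabla)v_i\r|^2\lesssim |j|^2\|v_i\|_{L^2}^2$; hence for $\g_1>\frac{d+2}{2}$ the $H^{-\g_1}$--Hilbert--Schmidt norm of the diffusion is $\lesssim\|\theta\|_{\ell^\infty}\|v_i\|_{L^2}$, which is bounded \emph{uniformly in time}, pathwise and with a deterministic constant, by $(1+\|v_0\|_{L^q}^q)^{1/2}$ thanks to the $L^\infty(0,T;L^q)$ part of Theorem \ref{t:global_cut_off}\eqref{it:global_cut_off_2}. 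Then a direct increment estimate yields $\E\|\Mart_i(t)-\Mart_i(s)\|_{H^{-\g_1}}^{2\parameter}\lesssim\|\theta\|_{\ell^\infty}^{2\parameter}(1+\|v_0\|_{L^q}^{q\parameter})|t-s|^{\parameter}$, and Kolmogorov's continuity criterion gives \eqref{eq:time_regularity_martingale} for any $\g_0<\frac{\parameter-1}{2\parameter}$; no factorization is needed. This is exactly the route taken in the paper.
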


The key point is that on the RHS\eqref{eq:time_regularity_martingale} we have the $\ell^{\infty}$--norm of $\theta$.

\begin{proof}
For notational convenience, we fix $i\in \{1,\dots,\ell\}$ and we drop it from the notation if no confusion seems likely.
The proof of \eqref{eq:time_regularity_martingale} follows almost the one of \cite{FGL21}, see p.\ 1779. Since the argument exploits
several basic properties of the noise, we include some details. 

Set $e_j(x)=e^{2\pi i j\cdot x}$ for $j\in \Z^d$ and $x\in \Tor^d$. Let $\g_1\in (0,\infty)$ be decided later. 
The It\^{o} isomorphism yields, for any $0\leq s\leq t\leq T$,
\begin{align*}
\E\big[\|\Mart(t)-\Mart(s)\|_{H^{-\g_1}}^{2 \parameter}\big]
&\eqsim_{\parameter}
 \E \Big[\sum_{k,\alpha}\theta_k^2\int_s^t \|(\sigma_{k,\alpha}\cdot\nabla) v_i\|_{H^{-\g_1}}^2\,\dd r \Big]^{\parameter}\\
&\eqsim \E \Big[\sum_{k,\alpha}\sum_{j\in \Z^d}\frac{\theta_k^2}{ (1+|j|^2)^{\g_1}} \int_s^t\big| \l e_j, (\sigma_{k,\alpha}\cdot\nabla) v_i \r\big|^2\,\dd r \Big]^{\parameter}\\
&\leq \|\theta\|_{\ell^{\infty}}^{2 \parameter} \E \Big[\sum_{k,\alpha}\sum_{j\in \Z^d} (1+|j|^2)^{-\g_1} \int_s^t\big| \l e_j, (\sigma_{k,\alpha}\cdot\nabla) v_i \r\big|^2\,\dd r \Big]^{\parameter}
\end{align*}
where $\l f,g\r =\int_{\Tor^d} f \cdot\overline{g}\,\dd x$.
Since $\div\,\sigma_{k,\alpha}=0$, we have
$$
 \l e_j, (\sigma_{k,\alpha}\cdot\nabla ) v_i\r
=
\int_{\Tor^d} e_j \,\div(\sigma_{k,\alpha} v_i)\,\dd x
=- 2\pi i j\cdot \int_{\Tor^d}  e_j v_i   \sigma_{k,\alpha}\,\dd x.
$$
Recall that 
 $(\sigma_{k,\alpha})_{k,\alpha}$ is an (incomplete) orthonormal basis of $L^2(\Tor^d;\R^{d})$. Therefore, for all $j\in \Z^d$ and a.e.\ on $[0,T]\times \O$, the Parseval identity yields
$$
\sum_{k,\alpha}\big| \l e_j, (\sigma_{k,\alpha}\cdot\nabla) v_i \r\big|^2\leq 
|j|^2 \|e_j v_i\|_{L^2}^2 \lesssim 
|j|^2 \|v_i\|_{L^2}^2\lesssim |j|^2(1+\|v_0\|_{L^q}^q)
$$
where in the last inequality we used Theorem \ref{t:global_cut_off}\eqref{it:global_cut_off_2}. 
Therefore
\begin{equation}
\label{eq:mart_difference_Lp_estimate}
\E\big[\|\Mart(t)-\Mart(s)\|_{H^{-\g_1}}^{2 \parameter} \big]
\lesssim \|\theta\|_{\ell^{\infty}}^{2 \parameter}(1+\|v_0\|_{L^q}^{q \parameter}) |t-s|^{ \parameter} 
\Big[\sum_{j\in \Z^d} \frac{ |j|^2 }{(1+|j|^2)^{\g_1}}\Big]^{\parameter}.
\end{equation}
Note that the sum on the RHS\eqref{eq:mart_difference_Lp_estimate} is finite provided $\g_1>(d+2)/2$. Combining \eqref{eq:mart_difference_Lp_estimate} and the Kolmogorov continuity modification theorem, one gets \eqref{eq:time_regularity_martingale} for all $\g_0\in (0,\frac{\parameter-1}{2\parameter})$.

Next we prove \eqref{eq:time_regularity_v}.
Recall that $v=(v_i)_{i=1}^{\ell}$ is a $(p,\a,1,q)$--solution to \eqref{eq:reaction_diffusion_system_truncation_2}. 
Thus 
$$
v=v_0+\Det(t)+\Mart(t)\ \ \  \text{ a.e.\ on }[0,T]\times \O,
$$
where $\Det\stackrel{{\rm def}}{=}\Det_{\Delta}+ \Det_f+\Det_{F}$ and for $t\in [0,T]$
\begin{align*}
\Det_{\Delta}(t)&=\int_{0}^t\Big( (\ellip_i+\ellip) \Delta v_i \Big)_{i=1}^{\ell}\,\dd s, \\ 
\Det_{f}(t)&=\int_{0}^t\Big( \phi_{R,r}(\cdot,v) f_i(\cdot,v)\Big)_{i=1}^{\ell}\,\dd s,\\
\Det_{F}(t)&=\int_{0}^t \Big(\phi_{R,r}(\cdot,v) \div(F_i(\cdot,v))\Big)_{i=1}^{\ell}\,\dd s.
\end{align*}
Since \eqref{eq:time_regularity_martingale} has been already proved  and $\|\theta\|_{\ell^{\infty}}\leq \|\theta\|_{\ell^2}=1$ by assumption, to prove \eqref{eq:time_regularity_v}
 it is sufficient to estimate $\Det$. 
By Theorem \ref{t:global_cut_off}\eqref{it:global_cut_off_2} and $q\geq 2$, we have a.s.\
\begin{equation*}
\|\mathcal{D}_{\Delta}\|_{H^{1}(0,T;H^{-1})}^{2}\lesssim_T \|v_i\|_{L^2(0,T;H^1)}^{2}
\lesssim 1+\|v_0\|_{L^q}^q .
\end{equation*}

Recall that $r\in [r_0,\infty)$ where $r_0$ is as in Theorem \ref{t:global_cut_off}. Next fix $h_0\in [h,\infty)$ such that  $1+\frac{4}{d}< h_0<1+\frac{2q}{d}$. Note that $q>\frac{d}{2}(h_0-1)>2$ by construction. Finally, set $\zeta\stackrel{{\rm def}}{=}\frac{q+h_0-1}{h_0}$. One can check that $\zeta\in (1,\infty)$ since $q> 2$ by assumption. 
Hence, a.s.,
\begin{align*}
\|\mathcal{D}_{f}\|_{W^{1, \zeta}(0,T;L^{\zeta})}^{\zeta}
&\lesssim \max_{1\leq i\leq \ell}\int_0^T\int_{\Tor^d}(\phi_{R,r}(\cdot,v))^{\zeta} |f_i(\cdot,v)|^{\zeta}\,\dd x\,\dd s\\
&\lesssim_{R,T} 1+ \int_0^T \int_{\Tor^d} (\phi_{R,r}(\cdot,v))^{\zeta}(1+|v|^{h_0 \zeta})\,\dd x\,\dd s	\\
&= 1+ \int_0^T \int_{\Tor^d}(\phi_{R,r}(\cdot,v))^{\zeta} |v|^{q+h_0-1}\,\dd x\,\dd s\\
&\stackrel{(i)}{\lesssim}_R  1+\max_{1\leq i\leq \ell}\Big( \int_0^T \int_{\Tor^d}|v_i|^{q-2}|\nabla v_i|^2 \,\dd x\,\dd s\Big)^{\beta}
\stackrel{(ii)}{\lesssim} 1+\|v_0\|_{L^q}^{q\beta}
\end{align*}
where in $(i)$ we used Lemma \ref{l:interpolation_L_eta} and that $\phi_{R,r}(t,v)=0$ for all $t\in[0,T]$ such that $\|v\|_{L^r(0,t;L^q)}\geq R$ (cf.\ Step 2 in the proof of Theorem \ref{t:global_cut_off}\eqref{it:global_cut_off_2} for similar compuations). Inspecting the proof of Lemma \ref{l:interpolation_L_eta} , we also have $\beta=\frac{\theta \psi}{2}$ where $\theta<\frac{d}{d+2}$ and $\psi=\frac{2}{q}(q+h_0-1)$. Since $h_0>1+\frac{2}{d}$ and $q>\frac{d}{2}(h_0-1)$, we have $\beta<\zeta$ and therefore
$$
\|\mathcal{D}_{f}\|_{W^{1, \zeta}(0,T;L^{\zeta})}\lesssim 
1+\|v_0\|_{L^q}^{q} \text{ a.s.\ }
$$
Using the above argument, the fact that $q+h_0-1\geq h_0+1$ and Assumption \ref{ass:f_polynomial_growth}\eqref{it:f_polynomial_growth_1}, we have 
\begin{equation*}
\|\mathcal{D}_{F}\|_{H^{1}(0,T;H^{-1})}\lesssim \|\phi_{R,r}(\cdot,v) |v|^{(h_0+1)/2}\|_{L^2(0,T;L^2)}
\lesssim 1+\|v_0\|_{L^q}^{q} \text{ a.s.}
\end{equation*}
To conclude, note that, by Sobolev embeddings, $H^{1}(0,T;H^{-1})\embed C^{1/2}(0,T;H^{-1})$ and 
$$ 
W^{1,\zeta}(0,T;L^{\zeta})\embed C^{(\zeta-1)/\zeta}(0,T;L^{\zeta})\embed C^{(\zeta-1)/\zeta}(0,T;H^{-k})
$$ 
where $k \geq 1$ is large. Thus the conclusion follows \eqref{eq:time_regularity_v} by collecting the previous estimates.
\end{proof}

We are ready to prove Theorem \ref{t:weak_convergence}. Here we follow \cite[Proposition 3.7]{FGL21}. 

\begin{proof}[Proof of Theorem \ref{t:weak_convergence}]
By \eqref{it:weak_convergence_L_q}, we have $\sup_{n\geq 1} \|v_0^{(n)}\|_{L^q}<\infty$. Let $N\geq 1$ be an integer such that $N\geq \sup_{n\geq 1} \|v_0^{(n)}\|_{L^q}$. Fix $\g\in (0,1)$.
For $n\geq 1$, let $v^{(n)}$ be the global $(p,\a,1,q)$--solution to \eqref{eq:reaction_diffusion_system_truncation_2} provided by Theorem \ref{t:global_cut_off}. Let $\mu^{(n)}$ be the law of $v^{(n)}:\O\to \X$ where
$$
\X\stackrel{{\rm def}}{=}L^2(0,T;H^{1-\g})\cap C([0,T];H^{-\g})\cap  L^r(0,T;L^{q}).
$$
By
Theorem \ref{t:global_cut_off}\eqref{it:global_cut_off_2} and \eqref{eq:conseguence_of_estimate}, there exists a constant $K(N)\in (0,\infty)$, independent of $n\geq 1$, such that $v^{(n)}\in \X_K$ a.s.\ for all $n\geq 1$ where
$$
\X_K\stackrel{{\rm def}}{=}\Big\{u\in \X\,:\, 
\sup_{t\in [0,T]}\|u(t)\|_{L^q}+ \|u\|_{L^{q}(0,T;L^{\xi})}\leq K\Big\}.
$$ 
Recall that $\xi=\frac{dq}{d-2}$ if $d\geq 3$ and $\xi\geq \xi_0$ where $\xi_0$ is sufficiently large otherwise (cf.\ Proposition \ref{prop:uniqueness}).
By Theorem \ref{t:global_cut_off}\eqref{it:global_cut_off_2}  and Lemma \ref{l:time_regularity}, there exists $\g_0,\g_1>0$ such that
$$
\sup_{n\geq 1} \E \Big[\sup_{t\in [0,T]}\|v^{(n)}(t)\|_{L^q}^2 + \|v^{(n)}\|_{L^2(0,T;H^1)}^2 
+ \|v^{(n)}\|_{C^{\g_0}(0,T;H^{-\g_1})}^2 +\|v^{(n)}\|_{L^{q}(0,T;L^{\xi})}^2  \Big]<\infty.
$$ 
By Prokhorov's theorem and Lemma \ref{l:compactness}, there exists a probability measure $\mu$ on $\X$ such that $\mu^{(n)} \rightharpoonup \mu$ (up to take a non-relabeled subsequence). Note that $\supp\,\mu\subseteq \X_K$ as $\supp\,\mu^{(n)}\subseteq \X_K$ for all $n\geq 1$.
We now divide the proof into two steps.

\emph{Step 1: Consider the truncated reaction-diffusion with cut-off as in \eqref{it:determinstic_limit_cut_off}:}
\begin{equation}
\label{eq:reaction_diffusion_cut_off_step_1}
\left\{
\begin{aligned}
\partial_t v_i &=(\ellip_i + \ellip)\Delta v_i +  \phi_{R,r}(\cdot,v)\big[\div(F_i(\cdot,v))+ f_i (\cdot,v)\big] ,& \text{ on }&\Tor^d,\\ 
v_i(0)&=v_{0,i}, & \text{ on }&\Tor^d,
\end{aligned}
\right.
\end{equation}
\emph{where $i\in \{1,\dots,\ell\}$. Then }
\begin{equation}
\label{eq:mu_concentrates_on_weak_solutions}
\mu\big(u=(u_i)_{i=1}^{\ell}\in\X_K\,:\, u\text{ is a weak solution to }\eqref{eq:reaction_diffusion_cut_off_step_1}\big)=1.
\end{equation}

Recall that weak solution to \eqref{eq:reaction_diffusion_cut_off_step_1} in the class $\X_K$ are defined in Corollary \ref{cor:uniqueness}.

Fix $\fun\in C^{\infty}(\Tor^d;\R^{\ell})$. Let $\T_{\fun}: \X_K\to C([0,T])$ be given by 
\begin{align*}
&[\T_{\fun}(u)](t)
\stackrel{{\rm def}}{=}\l \fun,u(t)\r- \int_{\Tor^d} v_{0}\cdot \fun\,\dd x \\
&\ \ 
-\sum_{1\leq i\leq \ell} \int_0^t\int_{\Tor^d}\Big( (\ellip+\ellip_i)\, u_i \Delta \fun_i +\phi_{R,r}(\cdot,\cdot) \big[ f_i(\cdot,u) \fun_i -F_i(\cdot,u)\cdot \nabla \fun_i \big] \Big)\,\dd x\,\dd s ,
\end{align*}
where $u\in \X_K$, $t\in [0,T]$ and $\l \cdot,\cdot\r$ denotes the pairing in the duality $(H^{\g},H^{-\g})$.
In the following we prove the continuity of the map $\T_{\fun,f}: \X_K\to C([0,T])$ defined as
\begin{align*}
[\T_{\fun,f} (u)](t)
\stackrel{{\rm def}}{=}\sum_{1\leq i\leq \ell} \int_0^t\int_{\Tor^d}\phi_{R,r}(\cdot,u)  f_i(\cdot,u) \fun_i\,\dd x\,\dd s .
\end{align*}
The remaining terms in $\T_{\fun}$ can be treated analogously using also that $\X_K\subseteq L^2(0,T;H^{1-\g})$. 
By Lebesgue domination theorem, we have, for all $u^{(1)},u^{(2)}\in \X_K$,
\begin{align*}
&\|\T_{\fun,f} (u^{(1)})-\T_{\fun,f}(u^{(2)})\|_{C([0,T])}\\
& \lesssim_{\fun} \int_0^T \int_{\Tor^d}| f(\cdot,u^{(1)})| \big|\phi_{R,r}(\cdot,u^{(1)})-\phi_{R,r}(\cdot,u^{(2)}) \big|\,\dd x\,\dd s
+ \int_0^T \int_{\Tor^d} |f(\cdot,u^{(1)})-f(\cdot,u^{(2)})|\,\dd x\,\dd s\\
& \lesssim \sup_{t\in [0,T]}|\phi_{R,r}(t,u^{(1)})-\phi_{R,r}(t,u^{(2)}) |  \int_0^T \int_{\Tor^d}| f(\cdot,u^{(1)})| \,\dd x\,\dd s
+ \int_0^T \int_{\Tor^d} |f(\cdot,u^{(1)})-f(\cdot,u^{(2)})|\,\dd x\,\dd s\\
& 
\lesssim_R \|u^{(1)}-u^{(2)}\|_{L^r(0,T;L^q)}\Big(1+ \|u^{(1)}\|_{L^h(0,T;L^h)}^h \Big)\\
&\qquad \qquad\qquad\qquad \ \
+ \Big(1+\|u^{(1)}\|_{L^h(0,T;L^h)}^{h-1}+ \|u^{(2)}\|_{L^h(0,T;L^h)}^{h-1}\Big)\|u^{(1)}-u^{(2)}\|_{L^h(0,T;L^h)},
\end{align*}
where we used Assumption \ref{ass:f_polynomial_growth}\eqref{it:f_polynomial_growth_1} and that $\phi$ is bounded and Lipschitz continuous. 
By Remark \ref{r:interpolation_L_eta_2} and $q\geq 2$ we have, for some $\alpha,\beta>0$ and all $u\in \X$,
$$
\| u\|_{L^{h}(0,T;L^h)}\lesssim_{q,h}
\|u\|_{L^{r}(0,T;L^{q})}^{\alpha}\|u\|_{L^q(0,T;L^{\xi})}^{\beta} .
$$
Thus the continuity of $\T_{\fun,f}$ on $\X_K$ follows from by combining the above estimates and using that 
$\|u^{(1)}\|_{L^q(0,T;L^{\xi})},\|u^{(2)}\|_{L^q(0,T;L^{\xi})}\leq K$ a.s.
Since $\T_{\fun}$ is continuous, we may define the pushforward measures of $\mu^{(n)}$ and $\mu$ under the map $\T_{\fun}$, respectively:
$$
\mu_{\fun,\#}^{(n)}\stackrel{{\rm def}}{=}
\mu^{(n)}(\T_{\fun}^{-1}\cdot)\quad \text{ and }\quad \mu_{\fun,\#}\stackrel{{\rm def}}{=}
\mu(\T_{\fun}^{-1}\cdot).
$$ 
Observe that $
\mu_{\fun,\#}^{(n)} \rightharpoonup  \mu_{\fun,\#}$ as $\mu^{(n)} \rightharpoonup \mu$ and that $\mu_{\psi,\#}^{(n)}$ is the law with $\T_{\fun} v^{(n)}$. Moreover $\T_{\fun} v^{(n)}$ satisfies
\begin{equation}
\label{eq:T_psi_n_decomposition}
\T_{\fun} v^{(n)} = \l \fun,v_0^{(n)}- v_0\r + \l \fun,\Mart^{(n)} \r
\end{equation}
where 
$$
\Mart_i^{(n)}(t)
\stackrel{{\rm def}}{=}
\sqrt{c_d \ellip}\sum_{k,\alpha} \theta_k \int_{0}^t (\sigma_{k,\alpha}\cdot \nabla) v_i^{(n)}\,\dd w_s^{k,\alpha}.
$$
By \eqref{eq:time_regularity_martingale} in Lemma \ref{l:time_regularity} and $\|\theta^{(n)}\|_{\ell^{\infty}}\to 0$ (see assumption \eqref{it:theta_goes_to_zero}) we have, for all $\parameter \in (1,\infty)$,
$$
\E\sup_{t\in [0,T]}|\l \fun,\Mart^{(n)}(t)\r|^{2\parameter} \lesssim \|\theta^{(n)}\|_{\ell^{\infty}}^{2\parameter} \to 0 \ \ \text{ as }n\to \infty.
$$
Using the above and assumption \eqref{it:weak_convergence_L_q} in \eqref{eq:T_psi_n_decomposition}, one can check that  
$
\supp\mu_{\fun,\#}= \{0\}.
$
The conclusion follows from the separability of $H^{-\g}$ and the density of the embedding $C^{\infty}\embed H^{-\g}$ (cf.\ the last part of the proof of \cite[Proposition 3.7]{FGL21}).

\emph{Step 2: Let $v$ be as in \eqref{it:determinstic_limit_cut_off}. Then $v^{(n)}\to v$ in probability in $\X$. In particular \eqref{eq:claim_scaling_limit} holds}. It suffices to show that
\begin{equation}
\label{eq:mu_equal_to_v}
\mu= \delta_v,
\end{equation}
where $\delta_v$ is the Dirac measure at $v\in \X$.
To see this recall that $v$ is independent of $\om\in \O$. Hence, 
\begin{align*}
\limsup_{n\to \infty} \P(\|v^{(n)}-v\|_{\X}\geq \varepsilon)&=
\limsup_{n\to \infty}\P\big(v^{(n)}\in \complement B_{\varepsilon}(v)\big)\\
&=
\limsup_{n\to \infty} \mu^{(n)}\big(\complement B_{\varepsilon}(v)\big)
 \stackrel{(i)}{\leq} \delta_v\big(\complement B_{\varepsilon}(v)\big)= 0
\end{align*}
where $\complement B_{\varepsilon}(v)\stackrel{{\rm def}}{=}\{u\in \X\,:\,\|v-u\|_{\X}\geq \varepsilon\}$ and $(i)$ we used $\mu^{(n)} \rightharpoonup \delta_v$ due to \eqref{eq:mu_equal_to_v}.

It remains to prove \eqref{eq:mu_equal_to_v}. By \eqref{it:determinstic_limit_cut_off}, $v$ is the \emph{unique} weak solution in $\X\subseteq \X_K$ to the reaction-diffusion equation with cut-off \eqref{eq:reaction_diffusion_cut_off_step_1} and therefore
\begin{equation}
\label{eq:weak_strong_uniqueness_sets}
\big\{u=(u_i)_{i=1}^{\ell}\in\X_K\,:\, u\text{ is a weak solution to }\eqref{eq:reaction_diffusion_cut_off_step_1}\big\}= \{v\}.
\end{equation}
Hence \eqref{eq:mu_equal_to_v} follows by combining the above with \eqref{eq:mu_concentrates_on_weak_solutions}.
\end{proof}

The arguments of Theorem \ref{t:weak_convergence} also yield a suitable continuity 
of weak solutions for system of deterministic reaction-diffusion equations with cut-off:
\begin{equation}
\label{eq:reaction_diffusion_deterministic_cut_off_weak_limit}
\left\{
\begin{aligned}
\partial_t v_i &=\mu_i\Delta v_i +\phi_{R,r}(\cdot,v)\big[\div(F_i(\cdot,v)) + f_i(\cdot,v)\big], &\text{ on }&\Tor^d,\\
v_i(0)&=v_{0,i},&\text{ on }&\Tor^d.
\end{aligned}
\right.
\end{equation}
As it will be needed in the proof of Theorem \ref{t:delayed_blow_up}, we formulate it in the next result.
Recall that weak solutions to \eqref{eq:reaction_diffusion_deterministic_cut_off_weak_limit} in $\X$ are defined in Corollary \ref{cor:uniqueness}.

\begin{proposition}
\label{prop:weak_to_strong_convergence_deterministic_problem}
Let Assumption \ref{ass:f_polynomial_growth}\eqref{it:f_polynomial_growth_1}--\eqref{it:mass} be satisfied.
Fix $T\in (0,\infty)$ and $R\geq 1$.
Assume that $\mu_i>0$ for all $i\in \{1,\dots,\ell\}$.
Let $\frac{d(h-1)}{2}\vee 2 < q<\infty$. 
Fix $v_0\in L^q$. Let $\xi$ and $\X$ be as in Proposition \ref{prop:uniqueness}. 
\begin{enumerate}[{\rm(1)}]
\item\label{it:weak_to_strong_convergence_deterministic_problem_1} Let $(v_0^{(n)})_{n\geq 1}\subseteq L^q$ be a sequence such that $v_0^{(n)} \rightharpoonup v_0$ in $L^q$.
\item\label{it:weak_to_strong_convergence_deterministic_problem_2}
Suppose that there exists a \emph{unique} weak solution $v\in \X$ to \eqref{eq:reaction_diffusion_deterministic_cut_off_weak_limit} such that, for some $\g_0,\g_1>0$,
$$
v\in \Y\stackrel{{\rm def}}{=}L^2(0,T;H^1) \cap L^{\infty}(0,T;L^{q})\cap  C^{\g_0}(0,T;H^{-\g_1})\cap L^q(0,T;L^{\xi}).
$$
Moreover, for all $n\geq 1$, there exists a weak solution $\vd^{(n)}\in \X$ to \eqref{eq:reaction_diffusion_deterministic_cut_off_weak_limit} with initial data $v_0^{(n)}$ such that 
$
\sup_{n\geq 1}
\|\vd^{(n)}\|_{\Y}<\infty.
$
\end{enumerate}
Then $\vd^{(n)}\to \vd$ in $\X$.
\end{proposition}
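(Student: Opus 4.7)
The plan is to run the deterministic analogue of the argument used in Step 1 of Theorem \ref{t:weak_convergence}, replacing the probabilistic tightness step by a direct compactness extraction and the scaling-limit cancellation of the noise by the weak convergence of the initial data.

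First, I would exploit the uniform bound $\sup_{n\geq 1}\|\vd^{(n)}\|_{\Y}<\infty$ together with the compact embedding $\Y\embed\X$ from Lemma \ref{l:compactness} to extract a (non-relabeled) subsequence along which $\vd^{(n)}\to w$ strongly in $\X$ for some limit $w$. Choosing any $K\geq \sup_n\bigl(\sup_{t\in[0,T]}\|\vd^{(n)}(t)\|_{L^q}+\|\vd^{(n)}\|_{L^q(0,T;L^{\xi})}\bigr)$, the closedness statement in Lemma \ref{l:compactness} ensures $w\in\X_K$.

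Next, I would identify $w$ with $v$ by passing to the limit in the weak formulation. For any test function $\fun\in C^{\infty}(\Tor^d;\R^{\ell})$, define $\T_{\fun}:\X_K\to C([0,T])$ exactly as in Step 1 of Theorem \ref{t:weak_convergence} but with the diffusivities $\ellip_i+\ellip$ replaced by $\mu_i$. The continuity of $\T_{\fun}$ on $\X_K$ is precisely the content of the argument carried out there, which relies on Remark \ref{r:interpolation_L_eta_2}, the Lipschitz regularity of $\phi$, and the polynomial-growth bound of Assumption \ref{ass:f_polynomial_growth}\eqref{it:f_polynomial_growth_1}; the deterministic setting requires no modification. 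Since each $\vd^{(n)}$ is a weak solution to \eqref{eq:reaction_diffusion_deterministic_cut_off_weak_limit} with initial datum $v_0^{(n)}$, one has
$$
[\T_{\fun}(\vd^{(n)})](t) = \lb \fun, v_0-v_0^{(n)}\rb \quad \text{for all }t\in [0,T].
$$
Continuity of $\T_{\fun}$ gives $\T_{\fun}(\vd^{(n)})\to\T_{\fun}(w)$ in $C([0,T])$, while $v_0^{(n)}\rightharpoonup v_0$ in $L^q$ and $\fun\in C^{\infty}\subseteq L^{q/(q-1)}$ yield $\lb\fun,v_0-v_0^{(n)}\rb\to 0$. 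Hence $\T_{\fun}(w)\equiv 0$ for every $\fun$, and so $w\in\X$ is itself a weak solution to \eqref{eq:reaction_diffusion_deterministic_cut_off_weak_limit} with initial datum $v_0$. The uniqueness assumption in \eqref{it:weak_to_strong_convergence_deterministic_problem_2} then forces $w=v$.

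Finally, a standard subsequence argument closes the proof: since every subsequence of $(\vd^{(n)})_{n\geq 1}$ admits a further subsequence converging in $\X$ to the same deterministic limit $v$, the full sequence converges to $v$ in $\X$. The only genuinely delicate point is the continuity of the nonlinear functional $\T_{\fun}$ on $\X_K$, but this has already been addressed in the proof of Theorem \ref{t:weak_convergence} via the subcritical interpolation inequality that controls the $L^h((0,T)\times\Tor^d)$-norm in terms of the $L^r(0,T;L^q)\cap L^q(0,T;L^{\xi})$-norms present in the definition of $\X_K$, so no additional analytic work is required beyond invoking Lemma \ref{l:compactness}.
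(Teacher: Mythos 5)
Your proposal is correct and follows essentially the same route as the paper's proof: subsequence reduction, compactness extraction via Lemma \ref{l:compactness} together with the uniform $\Y$-bound, passage to the limit in the weak formulation exactly as in Step 1 of Theorem \ref{t:weak_convergence} (using the weak convergence $v_0^{(n)}\rightharpoonup v_0$ in place of the vanishing martingale term), and identification of the limit through the uniqueness assumption. The only (immaterial) slip is the sign in $[\T_{\fun}(\vd^{(n)})](t)=\l \fun, v_0^{(n)}-v_0\r$, which of course still tends to zero.
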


In applications \eqref{it:weak_to_strong_convergence_deterministic_problem_2} will be checked using Proposition \ref{prop:global_high_viscosity} and Corollary \ref{cor:uniqueness}. 

\begin{proof}
It is enough to show that for each subsequence of $(\vd^{(n)})_{n\geq1 }$, we may find a subsequence such that $\vd^{(n)}\to \vd$ in $\X$. As above, to economize the notation, we do not relabel subsequences. 
 
By Lemma \ref{l:compactness} and the bound in \eqref{it:weak_to_strong_convergence_deterministic_problem_2}, there exists in $u\in \X$ such that $\vd^{(n)}\to u$ in $\X$. By \eqref{it:weak_to_strong_convergence_deterministic_problem_1} and arguing as in the Step 1 of Theorem \ref{t:weak_convergence} we may pass to the limit in the weak formulation of \eqref{eq:reaction_diffusion_deterministic_cut_off_weak_limit} (cf.\ Corollary \ref{cor:uniqueness}). 
Hence $u\in \X$ is a weak solution to \eqref{eq:reaction_diffusion_deterministic_cut_off_weak_limit}. The uniqueness of $\vd$ (see assumption \eqref{it:weak_to_strong_convergence_deterministic_problem_2}) forces $u=\vd$. 
\end{proof}

\subsection{Proof of Theorem \ref{t:delayed_blow_up}}
\label{ss:proof_global_T}
As a preparatory step for Theorem \ref{t:delayed_blow_up}, we prove the following version of it with sufficiently smooth initial data $v_0$ where $(\theta,\ellip)$ depend only the $L^q$-norm of $v_0$. 
Once this is proved, Theorem \ref{t:delayed_blow_up} follows from such result and a standard density argument. 
Recall that the existence and uniqueness for \eqref{eq:reaction_diffusion} is ensured by Theorem \ref{t:local}.

\begin{proposition}[Delayed blow-up and weak enhanced diffusion -- Smooth data]
\label{prop:delayed_blow_up_smooth}
Let Assumption \ref{ass:f_polynomial_growth} be satisfied. 
Fix $N\geq 1$, $\varepsilon\in (0,1)$, $T,\ellip_0\in (0,\infty)$ and $r\in (1,\infty)$. 
Then there exist
\begin{equation}
\begin{aligned}
\ellip\geq \ellip_0, \quad   R>0 , \quad
\theta\in \ell^2(\Z_0^d) \ \ \text{ with } \ \   \#\{k\,:\,\theta_k\neq 0\}<\infty
\end{aligned}
\end{equation}
such that, for all initial data $v_0\in B^{1-2\frac{1+\a}{p}}_{q,p}(\Tor^d;\R^{\ell})$ satisfying $v_0\geq 0 $ (component-wise)   on $\Tor^d$ and 
$\|v_0\|_{L^{q}(\Tor^d;\R^{\ell})}\leq N$, 
the unique $(p,\a,1,q)$--solution $(v,\tau)$ to \eqref{eq:reaction_diffusion} with $(\ellip,\theta)$ as above satisfies  the assertions \eqref{it:delayed_blow_up}-\eqref{it:enhanced_dissipation} of Theorem \ref{t:delayed_blow_up} and
\begin{equation}
\label{eq:uniform_estimate_smooth_case_L_r_q}
\P\big(\tau\geq T,\, \|v\|_{L^r(0,T,L^q)}\leq R\big)>1-\varepsilon.
\end{equation}
Finally, there exists $K_0>0 $, independent of $v_0$ (but depending on $N\geq 1$), such that 
\begin{equation}
\label{eq:uniform_estimate_smooth_case_H_s_q}
\P\big(\tau\geq T,\, \|v\|_{L^p(0,T,w_{\a_{p,\s}};H^{2-\s,q})}\leq K_0\big)>1-2\varepsilon \ \text{ where } \ \a_{p,\s}\stackrel{{\rm def}}{=}p\big(1-\tfrac{\s}{2}\big)-1.
\end{equation} 
\end{proposition}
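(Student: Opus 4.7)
The strategy follows the roadmap of Subsection \ref{ss:strategy_intro} and implements the three-step scheme described there. First, applying Proposition \ref{prop:global_high_viscosity} with the prescribed $(N,T)$, I would fix $\ellip\geq \ellip_0$ large enough that the enhanced deterministic system \eqref{eq:reaction_diffusion_det_statements} admits a global $(p,q)$--solution $\vd_{v_0}$ on $[0,T]$ for every admissible $v_0$, with a uniform estimate $\|\vd_{v_0}\|_{L^r(0,T;L^q)}\leq R_0$ where $R_0$ depends only on $(N,T,q,p,d,h,r)$; this follows from Proposition \ref{prop:global_high_viscosity}\eqref{it:global_high_viscosity_2} and $L^\infty_t L^q\embed L^r_t L^q$. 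I then set $R\stackrel{{\rm def}}{=}R_0+1$ and let $v^{(n)}_{v_0}$ denote the global $(p,\a,1,q)$--solution from Theorem \ref{t:global_cut_off} to the cut-off SPDE \eqref{eq:reaction_diffusion_system_truncation} with these $(R,r,\ellip)$ and noise intensity $\theta=\theta^{(n)}$ drawn from the sequence \eqref{eq:choice_theta}.

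The technical core is to produce a single $n_*$ such that
\begin{equation*}
\sup_{\substack{\|v_0\|_{L^q}\leq N\\ v_0\geq 0}}\P\bigl(\|v^{(n_*)}_{v_0}-\vd_{v_0}\|_{L^r(0,T;L^q)}>1\bigr)<\varepsilon.
\end{equation*}
I would establish this by contradiction: if it fails, one extracts admissible $v_0^{(n)}$ violating the bound and, by weak $L^q$--compactness of norm-bounded sequences, passes to a subsequence with $v_0^{(n_k)}\rightharpoonup v_0^*$ in $L^q$. The scaling limit Theorem \ref{t:weak_convergence}---whose deterministic weak limit is identified with the strong solution $\vd_{v_0^*}$ via the weak--strong uniqueness Corollary \ref{cor:uniqueness} (using $\|\vd_{v_0^*}\|_{L^r(0,T;L^q)}\leq R_0\leq R-1$)---yields $v^{(n_k)}_{v_0^{(n_k)}}\to\vd_{v_0^*}$ in probability in $L^r(0,T;L^q)$, while Proposition \ref{prop:weak_to_strong_convergence_deterministic_problem} gives $\vd_{v_0^{(n_k)}}\to\vd_{v_0^*}$ in the same norm, contradicting the choice of $v_0^{(n)}$. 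This compactness-plus-scaling-limit step is the main obstacle, since it crucially exploits that Theorem \ref{t:weak_convergence} is formulated for merely weakly converging initial data.

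Once $n_*$ is fixed I set $\theta=\theta^{(n_*)}$. On the event where $\|v^{(n_*)}_{v_0}-\vd_{v_0}\|_{L^r(0,T;L^q)}\leq 1$ one has $\|v^{(n_*)}_{v_0}\|_{L^r(0,T;L^q)}\leq R_0+1=R$, so $\phi_{R,r}(\cdot,v^{(n_*)}_{v_0})\equiv 1$ on $[0,T]$ and $v^{(n_*)}_{v_0}$ actually solves the untruncated SPDE \eqref{eq:reaction_diffusion} on $[0,T]$. Local uniqueness of $(p,\a,1,q)$--solutions then forces $v=v^{(n_*)}_{v_0}$ on $[0,T\wedge\tau)$, and the (deterministic) blow-up criterion of Theorem \ref{t:local}\eqref{it:local_3} upgrades this to $\tau\geq T$ on that event. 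Together with the triangle inequality against $\vd_{v_0}$ this delivers simultaneously the delayed blow-up $\P(\tau\geq T)>1-\varepsilon$, the enhanced diffusion assertion, and the uniform $L^r(L^q)$--bound \eqref{eq:uniform_estimate_smooth_case_L_r_q}.

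For the remaining regularity bound \eqref{eq:uniform_estimate_smooth_case_H_s_q} I would work on the good event $\{\tau\geq T,\,\|v\|_{L^r(0,T;L^q)}\leq R\}$ and apply stochastic maximal $L^p$--regularity in the $(p,\a_{p,\s},\s,q)$--scale of \cite{AV21_SMR_torus} to \eqref{eq:reaction_diffusion}. The nonlinearities are estimated via a variant of Lemma \ref{lem:interpolation_strong_setting} adapted to $\s\in(1,2]$, yielding
$\|f(\cdot,v)\|_{H^{-\s,q}}+\|\div F(\cdot,v)\|_{H^{-\s,q}}\lesssim 1+\|v\|_{L^q}^{\beta_1}\|v\|_{H^{2-\s,q}}^{\beta_2}$ with $\beta_2<1$ by the subcriticality $q>\frac{d(h-1)}{2}$. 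A buckling argument (as in the proof of Theorem \ref{t:global_cut_off}\eqref{it:global_cut_off_1}) combined with $\|v\|_{L^r(0,T;L^q)}\leq R$ on the good event then closes the estimate with a constant $K_0$ depending only on $(N,R,T,p,q,\s,h,d)$. A Chebyshev estimate against the maximal-regularity bound, intersected with the good event of probability at least $1-\varepsilon$, produces the $1-2\varepsilon$ lower bound in \eqref{eq:uniform_estimate_smooth_case_H_s_q}.
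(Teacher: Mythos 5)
Your proposal is correct and follows essentially the same route as the paper's proof: fix $(\ellip,R)$ via Proposition \ref{prop:global_high_viscosity} uniformly over the data ball, prove the uniform-in-$v_0$ convergence of the cut-off solutions to $\vd$ by the same contradiction/weak-compactness argument combining Theorem \ref{t:weak_convergence}, Corollary \ref{cor:uniqueness} and Proposition \ref{prop:weak_to_strong_convergence_deterministic_problem}, de-truncate on the event where the $L^r(0,T;L^q)$-norm stays below $R$, and obtain \eqref{eq:uniform_estimate_smooth_case_H_s_q} by stochastic maximal $L^p$-regularity, subcritical interpolation estimates in $H^{-\s,q}$ and Chebyshev. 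The only cosmetic differences are that the paper formalizes the de-truncation through the exit time $\tau_*$ and the maximality of $(v,\tau)$ (rather than your appeal to the blow-up criterion of Theorem \ref{t:local}), and runs the maximal-regularity estimate up to the exit time $\g$ of the $L^r(L^q)$-norm instead of ``on the good event''.
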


Recall that $(p,q,\a)$ in the above result are fixed in Assumption \ref{ass:f_polynomial_growth}\eqref{it:integrability_exponent_main_assumption}. In particular $\a\in [0,\frac{p}{2}-1)$ and therefore the initial data $v_0$ considered Proposition \ref{prop:delayed_blow_up_smooth} has \emph{positive} smoothness.
As explained below the statement of Theorem \ref{t:delayed_blow_up} the presence of $\s>1$ in \eqref{eq:uniform_estimate_smooth_case_H_s_q} is necessary to obtain $K_0$ independent of $v_0$ (indeed, the Sobolev index of $L^p(0,T,w_{\a_{p,\s}};H^{2-\s,q})$ is equal to the one of  $L^q$).

The above result can be proven following the proof of \cite[Theorem 1.4]{FGL21}. As our setting (slightly) differs from the one of \cite{FGL21}, we include some details.

\begin{proof}[Proof of Proposition \ref{prop:delayed_blow_up_smooth}]
Throughout this proof we let $(N,\varepsilon,T,\ellip_0,r)$ be as in the statement of Proposition \ref{prop:delayed_blow_up_smooth}. Without loss of generality we assume $r\geq r_0$ where $r_0$ is as in Theorem \ref{t:global_cut_off}.
Moreover, to make the argument below more transparent, we display the dependence on the initial data for the equation considered. For instance, the $(p,\a,1,q)$--solution to \eqref{eq:reaction_diffusion} with data $v_0$ will be denoted by $(v(v_0),\tau(v_0))$.

We begin by collecting some useful facts. Set 
\begin{align}
\label{eq:KN_definition}
\KN&\stackrel{{\rm def}}{=}\Big\{v_0\in B^{1-2\frac{1+\a}{p}}_{q,p}(\Tor^d;\R^{\ell})\,:\, v_0\geq 0 \text{ on }\Tor^d \text{ and }
\|v_0\|_{L^{q}(\Tor^d;\R^{\ell})}\leq N\Big\},\\
\label{eq:LN_definition}
\LN&\stackrel{{\rm def}}{=}\big\{v_0\in L^q(\Tor^d;\R^{\ell})\,:\, v_0\geq 0 \text{ on }\Tor^d \text{ and }
\|v_0\|_{L^{q}(\Tor^d;\R^{\ell})}\leq N\big\}.
\end{align}
Note that $\KN\subseteq \LN$.
Proposition \ref{prop:global_high_viscosity} ensures the existence of positive constants $\ellip\geq \ellip_0$ and $R>1$,
both independent of $v_0\in \LN$, for which the deterministic reaction-diffusion equations \eqref{eq:reaction_diffusion_deterministic} with $\mu_i=\ellip_i+ \ellip$ have a $(p,q)$--solution $\vd(v_0)$ on $[0,T]$ for all initial data $v_0\in \LN$ and 
\begin{equation}
\label{eq:R_bound_det_solution}
 \|\vd(v_0)\|_{L^r(0,T;L^q)}\leq R-1.
\end{equation}
Due to \eqref{eq:R_bound_det_solution} and \eqref{eq:def_cut_off}, $\vd(v_0)$ is a $(p,q)$--solution on $[0,T]$ to the deterministic problem with cut-off \eqref{eq:reaction_diffusion_deterministic_cut_off_weak_limit} where $\mu_i=\ellip_i+\ellip $, $R$ as above and initial data $v_0\in \LN$. 

Finally, 
Let $(\theta^{(n)})_{n\geq 1}$ be the sequence defined in \eqref{eq:choice_theta}. For any $n\geq 1$, 
Theorem \ref{t:global_cut_off} provides a unique strong solution $\vcn(v_0)$ to the reaction-diffusion equations with cut-off \eqref{eq:reaction_diffusion_system_truncation} for all initial data $v_0\in \KN$, $R$ is as in \eqref{eq:R_bound_det_solution} and $\theta=\theta^{(n)}$.

The key idea now is to prove that, for all $\varepsilon\in (0,1)$,
\begin{equation}
\label{eq:claim_convergence_v_n_theta_v_star}
\lim_{n\to \infty}\sup_{v_0\in \KN} \P\big( \|\vcn(v_0)-\vd(v_0)\|_{L^r(0,T;L^q)}\geq \varepsilon\big)=0.
\end{equation}

We break the proof of \eqref{eq:claim_convergence_v_n_theta_v_star} in several steps. The proof of \eqref{eq:claim_convergence_v_n_theta_v_star} is postponed to Step 4. In Step 1 we prove that \eqref{eq:claim_convergence_v_n_theta_v_star} implies the assertions \eqref{it:delayed_blow_up}-\eqref{it:enhanced_dissipation} of Theorem \ref{t:delayed_blow_up} and \eqref{eq:claim_convergence_v_n_theta_v_star}.
In Steps 2 we prove additional interpolation estimates, which complements the one in Lemma \ref{lem:interpolation_strong_setting}, and leads to the proof of \eqref{eq:uniform_estimate_smooth_case_H_s_q} given in Step 3.

\emph{Step 1: If \eqref{eq:claim_convergence_v_n_theta_v_star} holds, then there exist $(\ellip,\theta,R)$ independent of $v_0\in \KN$ for which the assertions \eqref{it:delayed_blow_up}-\eqref{it:enhanced_dissipation} of Theorem \ref{t:delayed_blow_up} and \eqref{eq:uniform_estimate_smooth_case_L_r_q} hold.} 

By \eqref{eq:claim_convergence_v_n_theta_v_star}, we can choose $n_*\geq 1$, independent of $v_0\in \KN$, such that 
\begin{equation}
\label{eq:convergence_v_n_theta_v_star_proof_step_1}
 \P\big( \|\vcns(v_0)-\vd(v_0)\|_{L^r(0,T;L^q)}\leq  \varepsilon\big)>  1-\varepsilon .
\end{equation}
Combining \eqref{eq:R_bound_det_solution},  \eqref{eq:convergence_v_n_theta_v_star_proof_step_1} and $\varepsilon<1$, for all $v_0\in \KN$,
\begin{equation}
\label{eq:choice_n_star}
\P\big( \|\vcns(v_0)\|_{L^r(0,T;L^q)}< R\big)>1-\varepsilon.
\end{equation}
Next fix $v_0\in \KN$.
Let $\tau_*$ be the stopping time given by
\begin{equation*}
\tau_*\stackrel{{\rm def}}{=}\inf\big\{t\in [0,T]\,:\, \|\vcns(v_0)\|_{L^r(0,T;L^q)}\geq  R\big\}, \ \  \text{ where }\ \ \inf\emptyset\stackrel{{\rm def}}{=}T.
\end{equation*} 
Note that, due to \eqref{eq:choice_n_star} and the definition of $\tau_*$, we have 
\begin{equation}
\label{eq:tau_star_good}
\P(\tau_*=T)>1-\varepsilon, \ \ \text{ and }\ \ 
\phi_{R,r}(\cdot,\vcns)=1 \text{ on }[0,\tau_*]\times \O.
\end{equation}
By using the second condition in \eqref{eq:tau_star_good}, one can readily check that  
$(\vcns|_{[0,\tau_*]\times \O},\tau_*)$ is a local $(p,\a,1,q)$--solution to  the original problem \eqref{eq:reaction_diffusion} in the sense of Definition \ref{def:solution}. By maximality of $(v,\tau)$ (see the last item of Definition \ref{def:solution}), we have
\begin{equation}
\label{eq:relation_tau_star_tau}
\tau_*\leq \tau \ \text{ a.s.}, \quad \text{and }\quad  \vcns|_{[0,\tau_*]\times \O}=v\  \text{ a.e.\ on }[0,\tau)\times \O.
\end{equation}
Thus the assertions \eqref{it:delayed_blow_up}--\eqref{it:enhanced_dissipation} of Theorem \ref{t:delayed_blow_up} follows by combining \eqref{eq:convergence_v_n_theta_v_star_proof_step_1} and \eqref{eq:tau_star_good}--\eqref{eq:relation_tau_star_tau}. Finally, \eqref{eq:uniform_estimate_smooth_case_L_r_q} follows from \eqref{eq:choice_n_star}--\eqref{eq:relation_tau_star_tau}.

\emph{Step 2: There exist $\alpha_1,\alpha_2>0$, $\beta_1\in (0,h)$ and $\beta_2\in (0,\frac{h+1}{2})$, depending only on $(h,q,\s,d)$, such that, for all $u\in H^{2-\s,q}$,}
\begin{align}
\label{eq:sublinear_estimate_f_H_s}
\|f(\cdot,u)\|_{H^{-\s,q}}&\lesssim 1+ \|u\|_{L^q}^{\alpha_1}\|u\|_{H^{2-\s,q}}^{\beta_1 h},\\
\label{eq:sublinear_estimate_F_H_s}
\|\div(F(\cdot,u))\|_{H^{-\s,q}}&\lesssim 1+ \|u\|_{L^q}^{\alpha_2}\|u\|_{H^{2-\s,q}}^{\beta_2 \frac{h+1}{2}}.
\end{align}
The proof follows as the one of Lemma \ref{lem:interpolation_strong_setting}. However, for the reader's convenience, we include a proof of \eqref{eq:sublinear_estimate_f_H_s}. Recall that $q>\frac{d(h-1)}{2}\vee \frac{d}{d-\s}$ by assumption. By Assumption \ref{ass:f_polynomial_growth}\eqref{it:f_polynomial_growth_1}, 
\begin{align*}
\|f(\cdot,u)\|_{H^{-\s,q}}
\stackrel{(i)}{\lesssim} \|f(\cdot,u)\|_{L^{\zeta}}
\lesssim 1+\|u\|_{L^{h\zeta}}^{h}.
\end{align*}
where in $(i)$ we used the Sobolev embedding $L^{\zeta}\embed H^{-\s,q}$ and $\zeta=\frac{dq}{\s q+d}>1$ (as $q>\frac{d}{d-\s}$). 

Now, if $h\zeta\leq q$, then \eqref{eq:sublinear_estimate_f_H_s} follows with $\alpha_1=h$ and $\beta_1=0$. Next, it remains to discuss the case $h\zeta >q$. In the latter case, we employ Sobolev embeddings once more. Note that 
$$
H^{\varphi,q}\embed L^{h \zeta} \quad \Longleftrightarrow \quad 
\varphi-\frac{d}{q}= -\frac{d}{h\zeta}= -\frac{1}{h}\Big(\s+\frac{d}{q}\Big).
$$
Thus $\varphi=-\frac{\s}{h}+\frac{d}{q}(1-\frac{1}{h})$. Note that $\varphi>0$ since $h\zeta >q$. Moreover $\varphi<2-\s$. To see the latter, note that it is equivalent to $\s+\frac{d}{q}<\frac{2h}{h-1}$ and it is satisfied since $\s<2$ and $q>\frac{d(h-1)}{2}$. Since $[L^q,H^{2-\s,q}]_{\psi}=H^{\varphi,q}$ for $\beta_1=\frac{\varphi}{2-\s} \in (0,1)$, collecting the previous observations we have
\begin{equation*}
\|f(\cdot,u)\|_{H^{-\s,q}}\lesssim 1+ \|u\|_{L^q}^{(1-\beta_1 )h}\|u\|^{\beta_1 h}_{H^{2-\s,q}}.
\end{equation*}
Hence \eqref{eq:sublinear_estimate_f_H_s} follows from the above as $\beta_1 h<1$ is equivalent $q>\frac{d(h-1)}{2}$.

\emph{Step 3: Proof of \eqref{eq:uniform_estimate_smooth_case_H_s_q}}. The claim of this step follows the arguments used in Step 2 of Theorem \ref{t:global_cut_off}\eqref{it:global_cut_off_1}. 
Recall that \eqref{eq:uniform_estimate_smooth_case_L_r_q} was proven in Step 2. Fix $v_0\in \KN$ and set
$$
\g\stackrel{{\rm def}}{=}\inf\{t\in [0,\tau(v_0))\,:\,\|v(v_0)\|_{L^r(0,t;L^q)}\geq R \}\wedge T \quad \text{ and }\quad
 \inf\emptyset\stackrel{{\rm def}}{=} \tau\wedge T,
$$
where $(v(v_0),\tau(v_0))$ is the $(p,\a,1,q)$--solution to \eqref{eq:reaction_diffusion}.
Note that $\g$ is a stopping time due to Remark \ref{r:regularity_paths}\eqref{it:regularity_paths_2}, and $\P(\g=T)>1-\varepsilon$ by \eqref{eq:uniform_estimate_smooth_case_L_r_q}.
In virtue of Step 2, up to enlarge $r_0$ if needed, one can repeat the arguments in Step 2 of Theorem \ref{t:global_cut_off}\eqref{it:global_cut_off_1} with the spaces $(H^{-1,q}H^{1,q})$ and the stochastic interval $[0,\tau\wedge T)\times \O$ are replaced by $(H^{-\s,q},H^{2-\s,q})$ and $[0,\g)\times \O$, respectively. In particular, by using the stochastic maximal $L^p$--regularity estimates (see e.g.\ \cite[Theorem 1.2]{AV21_SMR_torus}), one obtains the analogue of the estimate  \eqref{eq:claim_step_2_estimate_a_priori_estimate_besov_space} in the current situation: 
\begin{equation}
\label{eq:estimate_g_R}
\E\|v\|_{L^p(0,\g,w_{\a_{p,\s}};H^{2-\s,q})}^p
\stackrel{(i)}{\lesssim}_{\theta,\ellip,R} 1+\|v_0\|_{B^0_{q,p}}^p\stackrel{(ii)}{\lesssim}_{\theta,\ellip,R,p,q} 1+\|v_0\|_{L^q}^p.
\end{equation}
Here in $(i)$ we used that $\frac{1+\a_{p,\s}}{p}=1-\frac{\s}{2}$ and that the space for the initial data is 
$(H^{-\s,q}H^{2-\s,q})_{\frac{\s}{2},p}=B^0_{q,p}$ and in $(ii)$ that $L^q\embed B^0_{q,p}$  as $p\geq q$. 
The implicit constants in \eqref{eq:estimate_g_R} depends on $(\theta,\ellip)$ which has been fixed so that \eqref{eq:uniform_estimate_smooth_case_L_r_q} holds. In particular they are independent of $v_0\in \KN$.
Hence, the estimate \eqref{eq:uniform_estimate_smooth_case_H_s_q} follows from \eqref{eq:estimate_g_R}, the Chebyshev inequality and the fact that 
$\P(\g= T)>1-\varepsilon$.

\emph{Step 4: Proof of \eqref{eq:claim_convergence_v_n_theta_v_star}}. Fix $\varepsilon\in (0,1)$.
By contradiction, assume that \eqref{eq:claim_convergence_v_n_theta_v_star} does not hold, i.e.\
\begin{equation*}
\limsup_{n\to \infty}\sup_{v_0\in \KN} \P\big( \| \vcn(v_0)-\vd(v_0)\|_{L^r(0,T;L^q)}\geq \varepsilon\big)>0.
\end{equation*}
Thus there exists a (not-relabeled) subsequence of data $(v_0^{(n)})_{n\geq 1}\subseteq \KN$ such that 
\begin{equation}
\label{eq:contradiction_star}
\lim_{n\to \infty} \P\big( \| \vcn(v_0^{(n)})-\vd(v_0^{(n)})\|_{L^r(0,T;L^q)}\geq \varepsilon\big)>0.
\end{equation}
Moreover, up to extract a further subsequence, we can assume that, as $n\to \infty$,
\begin{equation}
\label{eq:weak_convergence_contradiction}
v_0^{(n)} \rightharpoonup v_0 \text{ in }L^q,\  \text{ for some }\ \  \ v_0\in L^q \ \text{ such that } \ \|v_0\|_{L^q}\leq N.
\end{equation}
Note that  $v_0\in \LN$ as $\KN \ni v^{(n)}_0\geq 0$ on $\Tor^d$ for all $n\geq 1$, see 
\eqref{eq:KN_definition}-\eqref{eq:LN_definition}. The choice of $\ellip$ and the comments below \eqref{eq:R_bound_det_solution} show that there exists a $(p,q)$--solution $\vd^{(n)}(v_0)$ to \eqref{eq:reaction_diffusion_deterministic_cut_off_weak_limit} on $[0,T]$ such that $\sup_{n\geq 1}\|\vd(v_0^{(n)})\|_{\Y}<\infty$ where $\Y$ is as in Proposition \ref{prop:weak_to_strong_convergence_deterministic_problem}. 
Recall that, due to \eqref{eq:R_bound_det_solution}, $\vd(v_0^{(n)})$ are actually $(p,q)$--solutions to \eqref{eq:reaction_diffusion_deterministic} with $\mu_i=\ellip_i+\ellip$ provided by Proposition \ref{prop:global_high_viscosity} and therefore in the class considered in Proposition \ref{prop:weak_to_strong_convergence_deterministic_problem}. By Corollary \ref{cor:uniqueness} and \eqref{eq:R_bound_det_solution}, we also have that $\vd(v_0)\in \X$ is also unique in the class of weak solutions, where $\X$ is as in Proposition \ref{prop:weak_to_strong_convergence_deterministic_problem}.
Hence, the latter result ensures that 
$$
\vd(v^{(n)}_0)\to \vd(v_0) \ \text{ in }L^r(0,T;L^q) \ \ \   \text{  as }\ n\to \infty.
$$
The above and \eqref{eq:contradiction_star} yield
\begin{equation}
\label{eq:contradiction_star_2}
\limsup_{n\to \infty} \P\Big( \| \vcn(v_0^{(n)})-\vd(v_0)\|_{L^r(0,T;L^q)}\geq \frac{\varepsilon}{2}\Big)>0.
\end{equation}
Next we derive a contradiction with Theorem \ref{t:weak_convergence}. To this end we first check its assumptions \eqref{it:weak_convergence_L_q}-\eqref{it:determinstic_limit_cut_off} of Theorem \ref{t:weak_convergence}. Note that 
\eqref{it:weak_convergence_L_q} follows from \eqref{eq:weak_convergence_contradiction} and $v_0^{(n)}\in \KN$ for all $n \geq 1$.
\eqref{it:theta_goes_to_zero} follows from the above choice of $\theta^{(n)}$ as in \eqref{eq:choice_theta}. Finally, \eqref{it:determinstic_limit_cut_off}  follows from $v_0\in \LN$ and the comments below \eqref{eq:R_bound_det_solution}. Let us stress that the uniqueness part of the assumption \eqref{it:determinstic_limit_cut_off} in Theorem \ref{t:weak_convergence}  follows from Corollary \ref{cor:uniqueness} and \eqref{eq:R_bound_det_solution}. Hence Theorem \ref{t:weak_convergence} is applicable and it yields \eqref{eq:claim_scaling_limit} with $v^{(n)}=\vcn(v_0^{(n)})$  and $v=\vd(v_0)$. 
The latter gives a contradiction with \eqref{eq:contradiction_star_2} and completes Step 4.
\end{proof}

To prove 
Theorem \ref{t:delayed_blow_up} we use a density argument and the fact that the conditions in Proposition \ref{prop:delayed_blow_up_smooth} are uniformly w.r.t.\ $\|v_0\|_{L^q}$.
To set up a convenient density argument we need an additional estimate for stochastic reaction diffusion equations with a \emph{modified} cut-off. The choice of the cut-off is now inspired by the estimates \eqref{eq:uniform_estimate_smooth_case_L_r_q}-\eqref{eq:uniform_estimate_smooth_case_H_s_q}.

Fix $K>0$, $\s\in (1,2)$ and $\eta>0$. As in Proposition \ref{prop:delayed_blow_up_smooth}, we set $\a_{p,\s}=p(1-\frac{\s}{2})-1$.
Let $\phi\in C^{\infty}(\R)$ be such that $\phi|_{[0,1]}=1$ and $\phi|_{[2,\infty)}=0$. Finally, set
\begin{equation}
\label{eq:cut_off_psi}
\Phi_{K,r,\s,\eta}(t,v)\stackrel{{\rm def}}{=}\phi\big(K^{-1}\|v\|_{L^r(0,t;L^q)})\cdot \phi\big(K^{-1} \|v\|_{L^p(0,t,w_{\a_{p,\s}};H^{2-\s-\eta,q})}\big).
\end{equation}
Consider the following stochastic reaction equations with (a modified) cut-off:
\begin{equation}
\label{eq:reaction_diffusion_system_truncation_double}
\left\{
\begin{aligned}
\dd v_i -\ellip_i\Delta v_i \,\dd t
&= \Phi_{K,r,\s,\eta} (\cdot,v)\Big[\div (F(\cdot,v))+f_{i}(\cdot, v)\Big]\,\dd t \\
&+ \sqrt{c_d\ellip} \sum_{k,\alpha} \theta_k (\sigma_{k,\alpha}\cdot \nabla) v_i\circ \dd w_t^{k,\alpha}, &  \text{ on }&\Tor^d,\\
v_i(0)&=v_{i,0},  &  \text{ on }&\Tor^d.
\end{aligned}
\right.
\end{equation}
The notion of
$(p,\a,\s,q)$--solutions to \eqref{eq:reaction_diffusion_system_truncation_double} can be given as in Definition \ref{def:solution}.

The main difference of \eqref{eq:reaction_diffusion_system_truncation_double} compared to \eqref{eq:reaction_diffusion_system_truncation} analyzed in Section \ref{s:global_cut_off} is that the action of the cut-off $\Phi_{K,r,\s,\eta}(\cdot,v)$ is stronger than the one used in \eqref{eq:reaction_diffusion_system_truncation}, i.e.\ \eqref{eq:def_cut_off}. 
Let us note that the truncation chosen in \eqref{eq:reaction_diffusion_system_truncation_double} is too strong to run the arguments of Section \ref{s:global_cut_off}. On the other hand, the one in \eqref{eq:def_cut_off} seems not enough to obtain the stability estimate of Lemma \ref{l:stability_estimate_cut_off} below (cf.\ Remark \ref{r:blow_up_criteria_necessity}). Such estimate is the last ingredient in the proof of Theorem \ref{t:delayed_blow_up}. To this end, we need the following estimates.

\begin{lemma}
\label{l:interp_inequality_final_one}
Let Assumption \ref{ass:f_polynomial_growth}\eqref{it:integrability_exponent_main_assumption}--\eqref{it:f_polynomial_growth_1} be satisfied.  
Assume that $
q>\frac{d(h-1)}{2}\vee \frac{d}{d-\s} $ for some $\s\in (1,2)$. There there exist $r_1\in (1,\infty)$ and $\eta_1>0$, depending only on $(h,d,q,\s)$, such that the following estimate holds for all $r\in [r_1,\infty)$ and $\eta\in (0,\eta_1]$:
\begin{multline*}
\|f(\cdot,u^{(1)})-f(\cdot,u^{(2)})\|_{L^p(0,T,w_{\a_{p,\s}},H^{-\s,q})}
+\|\div(F(\cdot,u^{(1)}))-\div(F(\cdot,u^{(2)}))\|_{L^p(0,T,w_{\a_{p,\s}},H^{-\s,q})} \\
\lesssim (1+\|u^{(1)}\|_{\Sp}^{h-1}+\|u^{(2)}\|_{\Sp}^{h-1})\|u^{(1)}-u^{(2)}\|_{\Sp},
\end{multline*}
for all  $u^{(1)},u^{(2)}\in \Sp\stackrel{{\rm def}}{=}L^r(0,T;L^q)\cap L^p(0,T,w_{\a_{p,\s}};H^{2-\s-\eta,q})$.
\end{lemma}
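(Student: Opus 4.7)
The strategy mirrors that of Lemma \ref{lem:interpolation_strong_setting}, but now we must keep track of the \emph{difference} and exploit the slack coming from strict subcriticality $q>\tfrac{d(h-1)}{2}$ in order to trade a bit of spatial regularity (the parameter $\eta$) against a large but finite amount of time integrability (the parameter $r$). First I would treat the reaction term. Fix $\zeta\in(1,q)$ close to $\tfrac{dq}{d+\s q}$ so that $L^{\zeta}\embed H^{-\s,q}$. By the Lipschitz bound in Assumption \ref{ass:f_polynomial_growth}\eqref{it:f_polynomial_growth_1} and H\"older in space with exponents $(\tfrac{\xi}{h-1},\xi)$ where $\xi=h\zeta$,
\[
\|f(\cdot,u^{(1)})-f(\cdot,u^{(2)})\|_{H^{-\s,q}}
\lesssim \big(1+\|u^{(1)}\|_{L^{\xi}}^{h-1}+\|u^{(2)}\|_{L^{\xi}}^{h-1}\big)\|u^{(1)}-u^{(2)}\|_{L^{\xi}}.
\]
Sobolev embedding gives $H^{\alpha,q}\embed L^{\xi}$ for $\alpha=\tfrac{d(h-1)-\s q}{hq}$, and the assumptions $q>\tfrac{d(h-1)}{2}$ and $q<\tfrac{d(h-1)}{\s}$ force $\alpha\in(0,2-\s)$. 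Hence for any $\eta\in(0,2-\s-\alpha)$ one has the Gagliardo--Nirenberg interpolation
\[
\|u\|_{L^{\xi}}\lesssim \|u\|_{L^q}^{1-\theta}\|u\|_{H^{2-\s-\eta,q}}^{\theta},\qquad \theta\stackrel{{\rm def}}{=}\frac{\alpha}{2-\s-\eta},
\]
with the crucial property $\theta h<1$ provided $\eta$ is small enough (indeed $\theta h\to \tfrac{d(h-1)-\s q}{q(2-\s)}\cdot\tfrac{1}{1}<1$ as $\eta\downarrow 0$ exactly because $q>\tfrac{d(h-1)}{2}$).

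Combining the two displays pointwise in time yields an estimate of the form
\[
\|f(\cdot,u^{(1)})-f(\cdot,u^{(2)})\|_{H^{-\s,q}}
\lesssim 1+\sum_{j=1}^{2}A_j(t)^{(h-1)(1-\theta)}B_j(t)^{(h-1)\theta}\,A_*(t)^{1-\theta}B_*(t)^{\theta},
\]
where $A_\bullet(t)=\|\cdot(t)\|_{L^q}$ and $B_\bullet(t)=\|\cdot(t)\|_{H^{2-\s-\eta,q}}$, applied to $u^{(j)}$ and to $u^{(1)}-u^{(2)}$ respectively. Raising to the $p$-th power and integrating against $w_{\a_{p,\s}}(t)\,dt$, I would then apply H\"older in time with the three exponents
$$\Big(\frac{1}{\theta h},\frac{p(1-\theta)(h-1)}{\ \cdot\ },\frac{p(1-\theta)}{\ \cdot\ }\Big)$$
adjusted so that (i) the total $B$-power is absorbed into $\|\cdot\|_{L^p(0,T,w_{\a_{p,\s}};H^{2-\s-\eta,q})}$ (this is legitimate since $\theta h<1$), and (ii) the remaining $A$-factors are placed in unweighted $L^{r_1}(0,T;L^q)$ for some $r_1(h,q,d,\s,\eta)\in(1,\infty)$. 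For any $r\geq r_1$ a further application of H\"older in time then upgrades the $L^{r_1}$ norms to $L^r$ norms, producing the desired factor $(1+\|u^{(j)}\|_{\Sp}^{h-1})\|u^{(1)}-u^{(2)}\|_{\Sp}$.

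For the conservative term I would repeat the same scheme with $(h-1)/2$ in place of $h-1$ and with the Sobolev embedding $L^{q}\embed H^{1-\s,q}$ (valid since $\s\geq 1$) used to bound $\|\div(F(u^{(1)})-F(u^{(2)}))\|_{H^{-\s,q}}\lesssim \|F(u^{(1)})-F(u^{(2)})\|_{L^{q}}$. The analogous interpolation exponent is then $\theta'\cdot\tfrac{h+1}{2}$, which again stays strictly below $1$ by the same subcriticality argument, so the weighted H\"older step goes through unchanged. The main obstacle is therefore not a single hard estimate, but the bookkeeping: one must check simultaneously that a single pair $(\eta_1,r_1)$ works for both the reaction and the conservative contributions, uniformly in $r\geq r_1$ and $\eta\in(0,\eta_1]$. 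This is a continuity argument: at the endpoint $\eta=0$, $r=\infty$, all relevant exponents satisfy strict inequalities thanks to the strict subcriticality, so a small neighbourhood of admissible $(\eta,r)$ exists and depends only on $(h,d,q,\s)$.
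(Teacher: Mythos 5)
Your treatment of the reaction term $f$ is sound and is a legitimate alternative to the paper's argument: the paper instead invokes the difference estimate of \cite[Lemma 3.2]{AV22} in a subcritical space $H^{-\s+2\beta,q}$ and then proves the single space--time embedding $\Sp\embed L^{ph}(0,T,w_{\a_{p,\s}};H^{-\s+2\beta,q})$ by interpolation plus a continuity argument at the endpoint $(\eta,1/r)=(0,0)$, whereas you redo the bookkeeping by hand (pointwise Lipschitz bound, H\"older and Sobolev in space, interpolation, weighted H\"older in time). Your key exponent computation $\theta h\to\frac{d(h-1)-\s q}{q(2-\s)}<1$ is exactly the subcriticality condition the paper uses. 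A minor discrepancy: your $r_1$ comes out depending on $p$ as well, while the paper obtains dependence on $(h,d,q,\s)$ only because $\frac{1+\a_{p,\s}}{p}=1-\frac{\s}{2}$; this is cosmetic, since $p$ is fixed in the application.

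The conservative term, however, contains a genuine gap. You bound $\|\div(F(\cdot,u^{(1)}))-\div(F(\cdot,u^{(2)}))\|_{H^{-\s,q}}\lesssim\|F(\cdot,u^{(1)})-F(\cdot,u^{(2)})\|_{L^q}$, i.e.\ you use $L^q\embed H^{1-\s,q}$ and discard the $\s-1>0$ negative derivatives that are available. After H\"older in space you then need $\|u\|_{L^{q\frac{h+1}{2}}}$, whence the Sobolev exponent $\alpha'=\frac{d(h-1)}{q(h+1)}$ and interpolation weight $\theta'=\frac{\alpha'}{2-\s-\eta}$; the total power of the $H^{2-\s-\eta,q}$--norm in your estimate is $\theta'\frac{h+1}{2}\to\frac{d(h-1)}{2q(2-\s)}$ as $\eta\downarrow0$. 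This is below $1$ only if $q>\frac{d(h-1)}{2(2-\s)}$, which is strictly stronger than $q>\frac{d(h-1)}{2}$ because $\s>1$, and is in fact incompatible with the standing assumption $q<\frac{d(h-1)}{\s}$ whenever $\s\geq\frac43$ (and fails for $q$ close to $\frac{d(h-1)}{2}$ even when $\s<\frac43$). In that regime your H\"older-in-time step cannot place all $H^{2-\s-\eta,q}$ factors in $L^p(w_{\a_{p,\s}})$, so the claimed estimate does not follow as written — contrary to your assertion that ``the same subcriticality argument'' applies. The fix stays within your scheme: use the sharp embedding $L^{\rho}\embed H^{1-\s,q}$ with $\rho=\frac{dq}{d+(\s-1)q}<q$ (the analogue of what you did for $f$ with $H^{-\s,q}$); then the Sobolev exponent becomes $\varphi'=\frac{d(h-1)-2(\s-1)q}{q(h+1)}$ and the total power $\frac{h+1}{2}\cdot\frac{\varphi'}{2-\s}=\frac{d(h-1)-2(\s-1)q}{2q(2-\s)}<1$ is again equivalent to $q>\frac{d(h-1)}{2}$. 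The paper sidesteps this issue because \cite[Lemma 3.2]{AV22} treats $f$ and $\div F$ simultaneously, the growth of $F$ being calibrated so that $\div(F(\cdot,v))$ scales like $f(\cdot,v)$.
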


\begin{proof}
The proof follows the argument in Step 2 of Proposition \ref{prop:delayed_blow_up_smooth}. We content ourself to prove the estimate for $f(\cdot,u^{(1)})-f(\cdot,u^{(2)})$ as the other one is similar. To economize the notation, in the proof below, we write $\a$ instead of $\a_{p,\s}=p(1-\frac{\s}{2})-1$ if no confusion seems likely. 

By \cite[Lemma 3.2]{AV22}, there exists $\beta\in (1-\frac{1+\a}{p},1-\frac{h}{h-1}\frac{1+\a}{p})$ such that 
$$
\|f(\cdot,u^{(1)})-f(\cdot,u^{(2)})\|_{H^{-\s,q}}\lesssim (1+\|u^{(1)}\|_{H^{-\s+2\beta,q}}^{h-1}+\|u^{(2)}\|_{H^{-\s+2\beta,q}}^{h-1})
\|u^{(1)}-u^{(2)}\|_{H^{-\s+2\beta,q}}.
$$
Let us recall that $\beta<1-\frac{h}{h-1}\frac{1+\a}{p}$ is equivalent to the \emph{subcriticality} of the $(p,\a,\s,q)$-setting. 
Since 
$
\big\||g_1|^{h-1} |g_2|\big\|_{L^p(0,T,w_{\a})}\leq \|g_1\|_{L^{ph}(0,T,w_{\a})}^{h-1} \|g_2\|_{L^{ph}(0,T,w_{\a})}
$ by H\"{o}lder inequality, it remains to show the existence of some $r\in (1,\infty)$ and $\eta>0$ such that
\begin{equation}
\label{eq:embedding_Z}
L^r(0,T;L^q)\cap L^p(0,T,w_{\a};H^{2-\s-\eta,q}) \embed L^{ph}(0,T,w_{\a};H^{-\s+2\beta,q}).
\end{equation}
To prove \eqref{eq:embedding_Z} one can argue as follows. By interpolation, for all $\g\in (0,1)$, 
\begin{equation}
\label{eq:interpolation_z_intermediate_step}
L^r(0,T;L^q)\cap L^p(0,T,w_{\a};H^{2-\s-\eta,q})\embed
L^{r_{\g}}(0,T,w_{\a_{\g}};H^{\g(2-\s-\eta),q})
\end{equation}
where 
$$
\frac{1}{r_{\g}}= \frac{1-\g}{r}+ \frac{\g}{p} \qquad \text{ and }\qquad
\frac{\a_{\g}}{r_{\g}}=\frac{\g \a}{p}.
$$
Without loss of generality we may assume $2\beta-\s<2-\s-\eta$ as $\beta<1$. Hence, we can choose $\g\stackrel{{\rm def}}{=}\frac{2\beta-\s}{2-\s-\eta}\in (0,1)$ in the above. Note that $H^{\g(2-\s-\eta),q}=H^{-\s+2\beta,q}$. By \eqref{eq:interpolation_z_intermediate_step} and \cite[Proposition 2.1(3)]{AV19_QSEE_2}, \eqref{eq:embedding_Z} follows provided 
\begin{equation}
\label{eq:r_theta_condition_embedding_zeta}
r_{\g}>ph \qquad \text{ and }\qquad \frac{1+\a_{\g}}{r_{\g}}<\frac{1+\a}{p}.
\end{equation}
By continuity, \eqref{eq:r_theta_condition_embedding_zeta} holds provided it holds for $\eta=0$ and $r=\infty$. In the latter case $\a_{\g}=\a$, $r_{\g}=\frac{p}{\g}$ and $\g=\frac{2\beta-\s}{2-\s}<1$. Thus, in that case, the second in \eqref{eq:r_theta_condition_embedding_zeta} is automatically satisfied. It remains to check that $r_{\g}>ph$. Since $\a=p(1-\frac{\s}{2})-1$, we have $\g=\frac{\beta-1+\frac{1+\a}{p}}{\frac{1+\a}{p}}$ and therefore
$$
r_{\g}>ph
\qquad \Longleftrightarrow\qquad
\frac{1}{r_{\g}}= \frac{\g}{p}= \frac{\beta-1+\frac{1+\a}{p}}{1+\a}< \frac{1}{ph} .
$$
The latter condition holds as it is equivalent to $\beta< 1-\frac{h}{h-1}\frac{1+\a}{p}$ which holds by construction.
\end{proof}

The next result is the last ingredient we need to prove Theorem \ref{t:delayed_blow_up}.

\begin{lemma}[Stability estimate for \eqref{eq:reaction_diffusion_system_truncation_double}]
\label{l:stability_estimate_cut_off}
Fix $T\in (0,\infty)$. Suppose that Assumption \ref{ass:f_polynomial_growth}. 
Let $K>0$ and $\s\in (1,2)$ (where $h$ is as in Assumption \ref{ass:f_polynomial_growth}) and assume that
$$
q>\frac{d(h-1)}{2} \vee  \frac{d}{d-\s} \qquad \text{ and }\qquad p\geq \frac{2}{2-\s}\vee q.
$$
Let $(r_1,\eta_1)$ be as in Lemma \ref{l:interp_inequality_final_one} and fix $r\in [r_1,\infty)$, $\eta\in (0,\eta_1)$.
Then for each $v_0\in L^q$, there exists a (unique) global $(p,\a_{p,\s},\s,q)$--solution $v_{(K,r,\s,\eta)}(v_0)$ to \eqref{eq:reaction_diffusion_system_truncation_double} on $[0,T]$.
Moreover, there exists a constant $C_0(p,q,K,r,\s,\theta,\eta,T)>0$ such that, for all $v_{0}^{(1)},v_{0}^{(2)}\in L^q$,
\begin{equation}
\label{eq:stability_estimate_K_delta}
\E\|v^{(1)}-v^{(2)}\|_{L^p(0,T,w_{\a_{p,\s}};H^{2-\s,q}) \cap L^r(0,T;L^q)}^p
\leq C_0\|v_{0}^{(1)}-v_{0}^{(2)}\|_{L^q}^p,
\end{equation}
where $v^{(j)}\stackrel{{\rm def}}{=}v_{(K,r,\s,\eta)}(v_0^{(j)})$ is the solution to \eqref{eq:reaction_diffusion_system_truncation_double} with data $v_0^{(j)}$.
\end{lemma}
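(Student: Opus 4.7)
Local existence of a $(p,\a_{p,\s},\s,q)$-solution of \eqref{eq:reaction_diffusion_system_truncation_double} for any $v_0\in L^q$ follows by adapting Proposition \ref{prop:local_cut_off} to the stronger non-local cut-off $\Phi_{K,r,\s,\eta}$, using that $L^q\hookrightarrow B^0_{q,p}=(H^{-\s,q},H^{2-\s,q})_{\s/2,p}$ is the trace space of the $(p,\a_{p,\s},\s,q)$-setting since $\a_{p,\s}=p(1-\s/2)-1$. Global existence is automatic: once $\Phi_{K,r,\s,\eta}(\cdot,v)\equiv 0$, \eqref{eq:reaction_diffusion_system_truncation_double} becomes a linear SPDE (heat plus transport noise), which admits a unique global solution in $L^p(0,T,w_{\a_{p,\s}};H^{2-\s,q})$ by the stochastic maximal $L^p$-regularity of \cite{AV21_SMR_torus}.

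\textbf{Stability: setup and splitting.} Set $w:=v^{(1)}-v^{(2)}$ and $\Phi^{(j)}:=\Phi_{K,r,\s,\eta}(\cdot,v^{(j)})$. Then $w$ solves a linear SPDE with the same transport noise acting linearly on $w$ and with source
\begin{equation*}
G:=\Phi^{(1)}\big[\div F(\cdot,v^{(1)})+f(\cdot,v^{(1)})\big]-\Phi^{(2)}\big[\div F(\cdot,v^{(2)})+f(\cdot,v^{(2)})\big].
\end{equation*}
Stochastic maximal $L^p$-regularity \cite{AV21_SMR_torus} with weight $w_{\a_{p,\s}}$ reduces \eqref{eq:stability_estimate_K_delta} to controlling $\E\|G\|_{L^p(0,T,w_{\a_{p,\s}};H^{-\s,q})}^p$ by $C_K\|v_0^{(1)}-v_0^{(2)}\|_{L^q}^p$ up to an absorbable multiple of $\E\|w\|_{L^p(w_{\a_{p,\s}};H^{2-\s,q})}^p$; the $L^r(0,T;L^q)$ part of the target norm in \eqref{eq:stability_estimate_K_delta} comes from the mixed-derivative embedding $L^p(w_{\a_{p,\s}};H^{2-\s,q})\cap W^{1,p}(w_{\a_{p,\s}};H^{-\s,q})\hookrightarrow\Sp$, proven as in Lemma \ref{l:interp_inequality_final_one}. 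To estimate $G$ I use the decomposition
\begin{equation*}
\Phi^{(1)}f(v^{(1)})-\Phi^{(2)}f(v^{(2)})=\Phi^{(1)}\big[f(v^{(1)})-f(v^{(2)})\big]+[\Phi^{(1)}-\Phi^{(2)}]f(v^{(2)}),
\end{equation*}
and analogously for the $\div F$ contribution. On $\{\Phi^{(1)}\neq 0\}$ the cut-off enforces $\|v^{(1)}\|_{\Sp([0,t])}\leq 2K$, so Lemma \ref{l:interp_inequality_final_one} controls the first summand in $L^p(w_{\a_{p,\s}};H^{-\s,q})$ by $C_K\|w\|_\Sp$ (provided $\|v^{(2)}\|_\Sp$ is also controlled), while the smoothness of $\phi$ yields the scalar Lipschitz estimate $\|\Phi^{(1)}-\Phi^{(2)}\|_{L^\infty(0,T)}\leq \tfrac{C}{K}\|w\|_\Sp$.

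\textbf{Main obstacle.} The genuine difficulty is the cross term $[\Phi^{(1)}-\Phi^{(2)}]f(v^{(2)})$, since $\Phi^{(2)}=0$ does \emph{not} imply a bound on $\|v^{(2)}\|_\Sp$. The remedy is to rewrite on $\{\Phi^{(2)}=0\}$
\begin{equation*}
\Phi^{(1)}f(v^{(2)})=\Phi^{(1)}f(v^{(1)})+\Phi^{(1)}\big[f(v^{(2)})-f(v^{(1)})\big],
\end{equation*}
and symmetrically on $\{\Phi^{(1)}=0\}$, reducing the estimate on each subregion to an expression supported where one of the $v^{(j)}$ is bounded in $\Sp$ by $2K$. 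Combined with an $L^p(\O)$-moment bound $\E\|v^{(j)}\|_\Sp^{mp}\leq C_{K,m,T,\theta,\ellip}$ that is uniform in $v_0^{(j)}$ (obtained by splitting $[0,T]$ at the first vanishing time of $\Phi^{(j)}$, using the cut-off bound $\|v^{(j)}\|_{\Sp([0,\cdot])}\leq 2K$ before that time, and stochastic maximal $L^p$-regularity for the linear tail afterwards), plus H\"older in $\O$, this yields the desired Lipschitz-type bound on $G$. A standard iteration on small sub-intervals of $[0,T]$ then absorbs the $\|w\|_\Sp$-contribution into the LHS of the maximal-regularity estimate and produces \eqref{eq:stability_estimate_K_delta}.
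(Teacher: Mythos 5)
Your overall architecture (difference equation, stochastic maximal regularity, reduce everything to a Lipschitz bound for the truncated nonlinearity, then absorb the lower-order norm) matches the paper, but the treatment of the cross term --- which you correctly identify as the genuine difficulty --- has a real gap. On the region $\{\Phi^{(2)}=0,\ \Phi^{(1)}\neq 0\}$ your rewriting $\Phi^{(1)}f(\cdot,v^{(2)})=\Phi^{(1)}f(\cdot,v^{(1)})+\Phi^{(1)}[f(\cdot,v^{(2)})-f(\cdot,v^{(1)})]$ merely reconstitutes $\Phi^{(1)}f(\cdot,v^{(1)})$, and neither piece is Lipschitz in $w=v^{(1)}-v^{(2)}$ as you use it: the first is only bounded by a constant $C_K$ (no smallness in $w$ unless you additionally exploit that on this set $\Phi^{(1)}=\Phi^{(1)}-\Phi^{(2)}$ is itself $\lesssim K^{-1}\|w\|_{\Sp}$, which you do not), and the second still carries the uncontrolled factor $\|v^{(2)}\|_{\Sp}^{h-1}$. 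Your fallback --- a moment bound $\E\|v^{(j)}\|_{\Sp}^{mp}\leq C$ plus H\"older in $\O$ --- cannot close the argument: H\"older raises the moment of $\|w\|_{\Sp}$ strictly above $p$, which cannot be absorbed into the $p$-th moment maximal regularity estimate; moreover the claimed moment bound uniform in $v_0^{(j)}$ is false (on the initial segment, before the cut-off vanishes, the maximal-regularity norm of $v^{(j)}$ depends on $\|v_0^{(j)}\|_{L^q}$, and the running $\Sp$-bound at the vanishing time does not control the trace norm needed to restart the linear estimate), and in any case the resulting constant would depend on the data, contradicting the fact that $C_0$ in \eqref{eq:stability_estimate_K_delta} depends only on $(p,q,K,r,\s,\theta,\eta,T)$.

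The fix is purely deterministic and is exactly why the strengthened cut-off $\Phi_{K,r,\s,\eta}$ was introduced: truncating the very norm $\|\cdot\|_{\Sp(t)}$ that appears in Lemma \ref{l:interp_inequality_final_one} makes $u\mapsto \Phi_{K,r,\s,\eta}(\cdot,u)[\div F(\cdot,u)+f(\cdot,u)]$ globally Lipschitz from $\Sp(T)$ into $L^p(0,T,w_{\a_{p,\s}};H^{-\s,q})$ with a constant depending only on $K$, pathwise and for arbitrary arguments. Concretely, letting $e_j$ be the exit time of $t\mapsto\|u^{(j)}\|_{\Sp(t)}$ at the level where the cut-off dies and assuming w.l.o.g.\ $e_2\leq e_1$, one groups the difference as $(\Phi^{(1)}-\Phi^{(2)})\,[\div F(\cdot,u^{(1)})+f(\cdot,u^{(1)})]+\Phi^{(2)}\,[\,\cdot\text{-difference}\,]$: the first term is supported on $[0,e_1]$, where $u^{(1)}$ is controlled and the prefactor is $\lesssim K^{-1}\|u^{(1)}-u^{(2)}\|_{\Sp(T)}$ by the Lipschitz continuity of $\phi$; the second is supported on $[0,e_2]$, where \emph{both} arguments are controlled, so Lemma \ref{l:interp_inequality_final_one} applies. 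The point is that the stand-alone nonlinearity must sit at the solution with the \emph{later} exit time, so no uncontrolled $\Sp$-norm ever appears and no moments of $\|v^{(j)}\|_{\Sp}$ are needed. After this, the remaining step is also not a plain small-interval iteration (delicate here because of the weight at $t=0$ and the non-local-in-time cut-off): one shows by interpolation that the $\Sp(T)$-norm is lower order with respect to the maximal-regularity norms modulo $L^p(0,T,w_{\a_{p,\s}};H^{-\s,q})$, absorbs, and closes with a Gronwall argument on $\E\|v^{(1)}-v^{(2)}\|_{C([0,t];H^{-\s,q})}^p$.
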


\begin{proof}
The existence of a (unique) global $(p,\a_{p,\s},\s,q)$--solution to \eqref{eq:reaction_diffusion_system_truncation_double} follows as in the proof of Theorem \ref{t:global_cut_off} with minor modifications. To avoid repetitions, we only give the proof of  \eqref{eq:stability_estimate_K_delta}. 
To economize the notation, in this proof, we write $\Phi(t,v)$ instead of $\Phi_{K,r,\s,\eta}(t,v)$ and we let
\begin{align}
\label{eq:X_space_stability_proof}
\Sp(t)&\stackrel{{\rm def}}{=}L^p(0,t,w_{\a_{p,\s}};H^{2-\s-\eta,q})\cap L^r(0,t;L^q)& \text{ for }& t>0,\\
\label{eq:non_stability_proof}
\non(\cdot,v)&\stackrel{{\rm def}}{=} \Phi (\cdot,v)\big[\div (F(\cdot,v))+f_{i}(\cdot, v)\big]& \text{ for }& v\in 	\Sp(T).
\end{align}

\emph{Step 1: There exists $C(h,d,q,\s,p)>0$ such that, for all $u^{(1)},u^{(2)} \in \Sp(T)$,}
\begin{align*}
\|\non(\cdot,u^{(1)})-\non(\cdot,u^{(2)})\|_{L^p(0,T,w_{\a_{p,\s}};H^{-\s,q})}
&\lesssim \|u^{(1)}-u^{(2)}\|_{\Sp(T)}.
\end{align*}

The proof of Step 1 follows as the one for Step 2 in Theorem \ref{t:global_cut_off}\eqref{it:global_cut_off_1}. 
For the reader's convenience we give a sketch.
For $j\in \{1,2\}$, fix $u^{(j)} \in\Sp(T)$ and set
$$
e^{(j)} \stackrel{{\rm def}}{=}\inf\big\{t\in [0,T]\,:\,\|u^{(j)}\|_{\Sp(t)}\geq K\big\} \quad \text{ where }\quad \inf\emptyset \stackrel{{\rm def}}{=}T.
$$
Without loss of generality we assume that $e_2\leq e_1$.  
Note that 
\begin{align*}
\non(\cdot,u^{(1)})
-
\non(\cdot,u^{(2)})
&=\underbrace{\Big(\Phi (\cdot ,u^{(1)})- \Phi (\cdot ,u^{(2)})\Big) \big[\div (F(\cdot,u^{(1)}))+f_{i}(\cdot, u^{(1)})\big]}_{I_1\stackrel{{\rm def}}{=}}\\
&+\underbrace{\Phi (\cdot ,u^{(2)})\Big(\div (F(\cdot,u^{(1)}))+f_{i}(\cdot, u^{(1)})-\div (F(\cdot,u^{(2)}))-f_{i}(\cdot, u^{(2)})\Big)}_{I_2\stackrel{{\rm def}}{=} }.
\end{align*}
Note that $\Phi (s,u^{(1)})- \Phi (s,u^{(2)})=0$ for all $s\geq e_1$ since $e_2\leq e_1$. The definition of $e_1$ and Lemma \ref{l:interp_inequality_final_one} yield
\begin{align*}
I_1
& \leq \Big(\sup_{t\in [0,T]}|\Phi (\cdot,u^{(1)})- \Phi (\cdot,u^{(2)})|\Big)\big\|\div (F(\cdot,u^{(1)}))+f_{i}(\cdot, u^{(1)})\big\|_{L^p(0,e_1,w_{\a};H^{-\s,q})}\\
&\lesssim_{K} \|u^{(1)}-u^{(2)}\|_{\Sp(T)} .
\end{align*}
Similarly, by Lemma \ref{l:interp_inequality_final_one} we have 
\begin{align*}
I_2 
&\lesssim \big\|F(\cdot,u^{(1)})-F(\cdot,u^{(2)})\big\|_{L^p(0,e_2,w_{\a};H^{1-\s,q})}
+\big\|f_{i}(\cdot, u^{(1)})-f_{i}(\cdot, u^{(2)})\big\|_{L^p(0,e_2,w_{\a};H^{-\s,q})}\\
&\lesssim_K \|u^{(1)}-u^{(2)}\|_{\Sp(T)}.
\end{align*}
The claim of Step 1 follows by collecting the estimates for $I_1$ and $I_2$.

\emph{Step 2: There exists $N(p,q,K,r,\s,\eta,\theta,T)>0$ such that}
\begin{equation}
\begin{aligned}
\label{eq:claim_Step_2_stability_estimate}
&\E\|v^{(1)}-v^{(2)}\|_{L^p(0,T,w_{\a_{p,\s}};H^{2-\s,q})}^p+
\E\|
\non(\cdot,u^{(1)})
-
\non(\cdot,u^{(2)})\|_{L^p(0,T,w_{\a_{p,\s}};H^{-\s,q})}^p\\
&\qquad \qquad \qquad \qquad \qquad  
\leq N\|v_0^{(1)}-v_0^{(2)}\|_{L^q}^p +
N\E\|v^{(1)}-v^{(2)}\|_{L^p(0,T,w_{\a_{p,\s}};H^{-\s,q})}^p.
\end{aligned}
\end{equation}

The point in \eqref{eq:claim_Step_2_stability_estimate} is that we are able to bound the maximal regularity $L^p(w_{\a_{p,\s}};H^{2-\s,q})$--norm of the difference  $v^{(1)}-v^{(2)}$ in term of the weaker $L^p(w_{\a_{p,\s}};H^{-\s,q})$-one.

First we estimate $\E\|v^{(1)}-v^{(2)}\|_{L^p(0,T,w_{\a_{p,\s}};H^{2-\s,q})}^p$. To this end, set $\vdiff\stackrel{{\rm def}}{=}v^{(1)}-v^{(2)}$. Note that, for $i\in \{1,\dots,\ell\}$, $V$ is a $(p,\a_{p,\s},\s,q)$--solution to 
\begin{equation}
\label{eq:truncated_psi_stability_proof_equation_difference}
\begin{aligned}
\dd V_i -(\ellip_i+\ellip)\Delta V_i \,\dd t
&=\Big[\non(\cdot,v^{(1)})-\non(\cdot,v^{(2)})\Big]\,\dd t \\
& + \sqrt{c_d\ellip} \sum_{k,\alpha=1} \theta_n (\sigma_{k,\alpha}\cdot \nabla) V_i\, \dd w_t^{k,\alpha}  \qquad   \text{on }\Tor^d.
\end{aligned}
\end{equation}
The above problem is complemented with the initial condition 
\begin{equation}
\label{eq:V_initial_data}
V(0)=V_0 \ \ \text{ where } \ \ V_0\stackrel{{\rm def}}{=}v_0^{(1)}-v_0^{(2)}.
\end{equation}
The stochastic maximal $L^p$-regularity estimate of \cite[Theorem 1.2]{AV21_SMR_torus} yields, for all $\g\in [0,\frac{1}{2})$,
\begin{align}
\label{eq:smr_applied_to_V}
\E\|V\|_{H^{\g,p}(0,T;w_{\a_{p,\s}};H^{2-\s-2\g,q})}^p
&\lesssim \|V_0\|_{L^q}^p+
\E\|\non(\cdot,u^{(1)})-\non(\cdot,u^{(2)})\|_{L^{p}(0,T;w_{\a_{p,\s}};H^{-\s,q})}^p\\
\nonumber
&\lesssim \|V_0\|_{L^q}^p+\E \|V\|_{\Sp(T)}^p
\end{align}
where in the last inequality we apply Step 1. Recall that $\Sp$ is as in \eqref{eq:X_space_stability_proof}.
 
Now the idea is to prove that $\Sp(T)$ is lower order compared to the maximal regularity norms. More precisely we prove the existence of $\g\in (0,\frac{1}{2})$ such that, for each $\varepsilon\in (0,1)$,
\begin{equation}
\label{eq:Sp_is_lower_order}
\|u\|_{Z_{r,\eta}(T)}\leq \varepsilon \|u\|_{H^{\g,p}(0,T;w_{\a_{p,\s}};H^{2-\s-2\g,q})\cap L^{p}(0,T;w_{\a_{p,\s}};H^{2-\s,q})}+
C_{\varepsilon} \|u\|_{L^{p}(0,T;w_{\a_{p,\s}};H^{-\s,q})}.
\end{equation}
If on the LHS\eqref{eq:Sp_is_lower_order} we replace $\Sp(T)$ by $L^p(0,T,w_{\a_{p,\s}};H^{2-\s-\eta,q})$, then the above fact is clear from standard interpolation inequalities and the fact that $\eta>0$. By \eqref{eq:X_space_stability_proof} it remains to prove the estimate \eqref{eq:Sp_is_lower_order} with $\Sp(T)$ 
replaced by $L^r(0,T;L^q)$. To this end, fix $\zeta\in (r\vee p\vee \frac{2}{2-\s},\infty)$ and set $\g\stackrel{{\rm def}}{=}1-\frac{\s}{2}-\frac{1}{\zeta}\in(0,\frac{1}{2})$. Note that $\g-\frac{1+\a_{p,\s}}{p}=-\frac{1}{\zeta}$ and $2-\s-2\g=\frac{2}{\zeta}$. Sobolev embeddings with power weights show (see e.g.\ \cite[Proposition 2.7]{AV19_QSEE_1}) 
\begin{equation}
\label{eq:sob_embedding_Lr_0}
H^{\g,p}(0,T,w_{\a_{p,\s}};H^{2-\s-2\g,q})=
H^{\g,p}(0,T,w_{\a_{p,\s}};H^{2/\zeta,q})\embed L^{\zeta} (0,T;H^{2/\zeta,q}). 
\end{equation}
By standard interpolation inequality, one has, for all $\varphi\in (0,1)$
$$
\|u\|_{L^{p_{\varphi}}(0,T,w_{\a_{\varphi}};H^{s_{\varphi},q})}\lesssim \|u\|_{L^{p}(0,T,w_{\a_{p,\s}};H^{-\s,q})}^{1-\varphi}
\|u\|_{L^{\zeta} (0,T;H^{2/\zeta,q}) }^{\varphi} 
$$
 where $\frac{1}{p_{\varphi}}=\frac{1-\varphi}{p}+\frac{1}{\zeta}$, 
$\frac{\a_{\varphi}}{p_{\varphi}}=\frac{(1-\varphi)\a}{p}$ and $s_{\varphi}=-\s(1-\varphi)+ \frac{2 \varphi}{\zeta}$. 
Note that $H^{s_{\varphi},q}\embed L^q$ for all $\varphi\in [\frac{\s\zeta}{2+\s\zeta},1)$. By continuity, one sees that there exists $\varphi_0(\zeta,\a,p)\in (\frac{\s\zeta}{2+\s\zeta},1)$ such that $\frac{1+\a_{\varphi_0}}{p_{\varphi_0}}<\frac{1}{r}$. Indeed, by letting $\varphi\uparrow 1$ the previous condition is equivalent to $\frac{1}{\zeta}<\frac{1}{r}$ which holds since $\zeta>r$. Hence,  the H\"{o}lder inequality yields
$
L^{p_{\varphi_0}}(0,T,w_{\a_{\varphi_0}};L^q)
\embed L^r(0,T;L^q),
$
cf.\ \cite[Proposition 2.1(3)]{AV19_QSEE_2}.
Collecting the previous observations, one sees that \eqref{eq:Sp_is_lower_order} with $\Sp(T)$ 
replaced by $L^r(0,T;L^q)$ follows by $\varphi_0<1$, Young inequality and \eqref{eq:sob_embedding_Lr_0}.

By \eqref{eq:smr_applied_to_V}--\eqref{eq:Sp_is_lower_order} with $\varepsilon>0$ small enough, one gets, for $\g=1-\frac{\s}{2}-\frac{1}{\zeta}$,
\begin{align*}
\E\|v^{(1)}-v^{(2)}\|_{H^{\g,p}(0,T,w_{\a_{p,\s}};H^{2-\s-2\g,q})\cap L^p(0,T,w_{\a_{p,\s}};H^{2-\s,q})}^p&\\
\leq N\big[\|v_0^{(1)}-v_0^{(2)}\|_{L^q}^p &+
\E\|v^{(1)}-v^{(2)}\|_{L^p(0,T,w_{\a_{p,\s}};H^{-\s,q})}^p\big]
\end{align*}
for some $N(p,q,K,r,\s,\eta,\theta,T)>0$.
The estimate for the second term on the LHS\eqref{eq:claim_Step_2_stability_estimate} 
follows by combining Step 1, \eqref{eq:Sp_is_lower_order} and the previous estimate. 
This concludes the proof of  \eqref{eq:claim_Step_2_stability_estimate}.

\emph{Step 3: Conclusion}. Let $(V,V_0)$ be as in Step 2, see \eqref{eq:truncated_psi_stability_proof_equation_difference}--\eqref{eq:V_initial_data}. By \eqref{eq:truncated_psi_stability_proof_equation_difference}, the Burkholder-Davis-Gundy inequality yields, for some constant $c_0(p,q,K,r,\s,\theta,\eta)>0$ and for all $t\in [0,T]$ (see \cite[Theorem 4.15]{AV19_QSEE_2} for similar computations)
\begin{align*}
&\E\|V\|_{C([0,t];H^{-\s,q})}^p\\
&\leq c_0\Big[\|V_0\|_{L^q}^p+ \E\|V\|_{L^p(0,t,w_{\a_{p,\s}};H^{2-\s,q})}^p+\E\|\non(\cdot,u^{(1)})-\non(\cdot,u^{(2)})\|_{L^p(0,T,w_{\a_{p,\s}};H^{-\s,q})}^p \Big]\\
&\leq c_0 N \Big[\|V_0\|_{L^q}^p + 
\E\|V\|_{L^p(0,t,w_{\a_{p,\s}};H^{-\s,q})}^p\Big]
\end{align*}
where in the last estimate we used Step 2.  

Setting $X_t\stackrel{{\rm def}}{=}\E \|V\|_{C([0,t];H^{-\s,q})}^p$, the above inequality yields $X_t \lesssim X_0 +  \int_0^t X_s\,\dd s$ for all $t\in [0,T]$ (recall that $\a_{p,\s}\geq 0$). Thus the Grownall inequality shows that 
$
X_T \lesssim_T X_0,
$
i.e.\
$$
\E\|V\|_{C([0,T];H^{-\s,q})}^p\lesssim \|V_0\|_{L^q}^p .
$$
The estimate \eqref{eq:stability_estimate_K_delta} follows from the above inequality, \eqref{eq:claim_Step_2_stability_estimate} and $V=v^{(1)}-v^{(2)}$.
\end{proof}

\begin{proof}[Proof of Theorem \ref{t:delayed_blow_up}]
Fix $(N,T,\varepsilon,\ellip_0,r)$.
Let $(p,\a_{p,\s},\s,q)$ and $(r_1,\eta_1)$ be as in  \eqref{eq:p_a_p_s_assumption_local} and Lemma \ref{l:stability_estimate_cut_off}, respectively. 
Without loss of generality we assume that $r\in[ r_1,\infty)$. Finally, fix $\eta\in (0,\eta_1]$.

Since $\|v_0\|_{L^q}\leq N$, for each $\g\in (0,1)$ there exists $v_0^{(\g)}$ such that 
\begin{equation}
\label{eq:decomposition_data_L_q_data}
 v_0^{(\g)}\in C^{\infty}   \qquad \text{ and }\qquad  
\|v_0-v_0^{(\g)}\|_{L^q}\leq \g.
\end{equation}
In particular $\|v_0^{(\g)}\|_{L^q}\leq N+1$. 
For all $\g\in (0,1)$, let $(v^{(\g)},\tau^{(\g)})$ be the $(p,0,1,q)$--solution to  \eqref{eq:reaction_diffusion} provided by Theorem \ref{t:local} and Remark \ref{r:regularity_paths}\eqref{it:regularity_paths_2}. 

Let $(\theta,\ellip,R,K_0)$ be as in Proposition \ref{prop:delayed_blow_up_smooth} with $\a=0$ and $(T,\ellip_0,r,p,q)$ as above and $(N,\varepsilon)$ replaced by $(N+1,\frac{\varepsilon}{12})$. Note that $(\ellip,\theta)$ are independent of $v_0$ satisfying \eqref{eq:data_L_q_N_statement}. Since $v_0^{(\g)}$ is smooth, Proposition \ref{prop:delayed_blow_up_smooth} 
applies with the above choice of $(\ellip,\theta,R)$ and it ensures that 
\begin{align}
\label{eq:consequence_proposition_eta_lives_up_to_T_smooth_0}
\P\Big(\tau^{(\g)}\geq T,\, \|v^{(\g)}-\vd^{(\g)}\|_{L^r(0,T;L^q)}\leq\frac{\varepsilon}{6}\Big)&>1-\frac{\varepsilon}{6},\\
\label{eq:consequence_proposition_eta_lives_up_to_T_smooth}
\P\Big(\tau^{(\g)}\geq T,\|v^{(\g)}\|_{L^r(0,T;L^q)\cap L^p(0,T,w_{\a_{p,\s}};H^{2-\s,q})}\leq K_0+R\Big)
&<1-\frac{\varepsilon}{6},
\end{align}
where $\vd^{(\g)}$ is the $(p,q)$--solution to \eqref{eq:reaction_diffusion_deterministic} on $[0,T]$ with $\mu_i=\ellip+\ellip_i$ and initial data $v_0^{(\g)}$. Note that the existence of $\vd^{(\g)}$ is also part of the result of Proposition \ref{prop:delayed_blow_up_smooth}.

\emph{Step 1: Theorem \ref{t:delayed_blow_up}\eqref{it:delayed_blow_up} holds and there exists $\g_0\in (0,1)$ such that }
\begin{equation}
\label{eq:step_1_additional_claim_proof_main_result}
\P\Big(\tau\wedge \tau^{(\g)}\geq T,\, \|v-v^{(\g)}\|_{L^r(0,T;L^q)}\leq\frac{\varepsilon}{3}\Big)>1-\frac{\varepsilon}{3} \ \ \ \text{ for all }\g\in (0,\g_0).
\end{equation}
Consider the truncated problem \eqref{eq:reaction_diffusion_system_truncation_double} with 
$K\stackrel{{\rm def}}{=}K_0+R+1$ and $(r,\eta)$ as the beginning of the current proof. 
Let us denote by $v_{(K,r,\s,\eta)}$ and $v_{(K,r,\s,\eta)}^{(\g)}$ the $(p,\a_{p,\s},\s,q)$--solution \eqref{eq:reaction_diffusion_system_truncation_double} with initial data $v_0$ and $v_0^{(\g)}$, respectively. 
By Lemma \ref{l:stability_estimate_cut_off} and Chebyshev's inequality, we can find $\g_0(N,\varepsilon,T,\ellip_0,r,p,q)\in (0,1)$ such that, for all $\g\in (0,\g_0)$, 
\begin{align*}
\P(\V)>1-\frac{\varepsilon}{6} \ \  \text{ where }\ \ 
\V\stackrel{{\rm def}}{=}\Big\{\|v_{(K,r,\s,\eta)}-v_{(K,r,\s,\eta)}^{(\g)}\|_{L^r(0,T;L^q)\cap L^p(0,T,w_{\a_{p,\s}};H^{2-\s,q})}\leq \frac{\varepsilon}{3}\Big\}.
\end{align*}

Since $K\geq K_0+R$, the uniqueness of $v_{(K,r,\s,\eta)}^{(\g)}$ yields
\begin{equation}
\begin{aligned}
\label{eq:v_K_s_equal_v}
\tau\wedge T&\geq  T\  \text{ on }\V_0\quad \text{ and }\quad v_{(K,r,\s,\eta)}^{(\g)}=v^{(\g)} \  \text{ a.e.\ on }[0,\tau^{(\g)}\wedge T]\times \V_0,\\
\V_0&\stackrel{{\rm def}}{=}\Big\{\tau^{(\g)}\geq T,\|v^{(\g)}\|_{L^r(0,T;L^q)}+ \|v^{(\g)}\|_{L^p(0,T,w_{\a_{p,\s}};H^{2-\s,q})}\leq  K_0+R\Big\}.
\end{aligned}
\end{equation}
To see \eqref{eq:v_K_s_equal_v} one can argue as Step 1 of Proposition \ref{prop:delayed_blow_up_smooth}. Indeed,  let
$$
\mu\stackrel{{\rm def}}{=}\inf\{t\in [0,\tau)\,:\, \|v^{(\g)}\|_{L^r(0,t;L^q)}+ \|v^{(\g)}\|_{L^p(0,t,w_{\a_{p,\s}};H^{2-\s,q})} \geq K_0+R\}\wedge T,
$$
where $\inf\emptyset\stackrel{{\rm def}}{=}\tau^{(\g)}\wedge T$. Note that $\mu=\tau^{(\g)}\wedge T=T$ on $\V_0$.
Then $\Phi_{K,r,\s,\eta}(\cdot,v_{(K,r,\s,\eta)}^{(\g)})=1$ a.e.\ on $[0,\mu)\times \O$, see \eqref{eq:cut_off_psi}. Therefore $(v^{(\g)},\mu)$ is a $(p,\a_{p,\s},\s,q)$--solution to \eqref{eq:reaction_diffusion_system_truncation_double} with data $v_0^{(\eta)}$. Combining the uniqueness of $v_{(K,r,\s,\eta)}$ and the fact that $\{\mu=\tau^{(\g)}\wedge T\}\supseteq \V_0$, one obtains \eqref{eq:v_K_s_equal_v}.

Next, note that, by \eqref{eq:consequence_proposition_eta_lives_up_to_T_smooth} and \eqref{eq:v_K_s_equal_v}, we have $\P(\V_0)>1-6^{-1}\varepsilon$.  
Therefore
\begin{equation}
\label{eq:lower_bound_prob_W}
\P(\W)>1-\frac{\varepsilon}{3}, \quad \text{ where }\quad \W\stackrel{{\rm def}}{=}\V \cap \V_0.
\end{equation}
Recall that $K=K_0+R+1$.
The triangular inequality, \eqref{eq:v_K_s_equal_v} and the definitions of $\V_0$ yield
\begin{align*}
\|v_{(K,r,\s,\eta)}\|_{L^r(0,T;L^q)\cap L^p(0,T,w_{\a_{p,\s}};H^{2-\s,q})}
\leq K\ \ \  \text{ a.s.\ on } \W.
\end{align*}

Arguing as below \eqref{eq:v_K_s_equal_v}, the above and a stopping time argument readily yields 
\begin{equation}
\label{eq:v_v_K_s_equality}
\tau\geq T \ \text{ a.s.\ on } \W\quad \text{ and } \quad v_{(K,r,\s,\eta)}=v \ \text{ a.e.\ on }[0,\tau\wedge T)\times \W.
\end{equation} 
The first in \eqref{eq:v_v_K_s_equality} and \eqref{eq:lower_bound_prob_W} prove assertion \eqref{it:delayed_blow_up} of  Theorem \ref{t:delayed_blow_up}. 

Finally, to prove \eqref{eq:step_1_additional_claim_proof_main_result}, note that, the definition of $\V\supseteq \W$, the fact that $\tau^{(\g)}\geq T$ on $\V_0$, \eqref{eq:v_K_s_equal_v} and \eqref{eq:v_v_K_s_equality} imply
$$
\Big\{
\tau\wedge \tau^{(\g)}\geq T,\, \|v-v^{(\g)}\|_{L^r(0,T;L^q)}\leq\frac{\varepsilon}{3}\Big\}\supseteq \W.
$$
Thus \eqref{eq:step_1_additional_claim_proof_main_result} 
follows from \eqref{eq:lower_bound_prob_W}.

\emph{Step 2: Theorem \ref{t:delayed_blow_up}\eqref{it:enhanced_dissipation} holds}. 
Let $\vd$ be as described below \eqref{eq:consequence_proposition_eta_lives_up_to_T_smooth}. 
By Proposition \ref{prop:global_high_viscosity}\eqref{it:sol_operator_continuous} there exists $\g_1(v_0,T,\varepsilon
,q,p)\in (0,1)$ such that, for all $\g\in (0,\g_1)$,
\begin{equation}
\label{eq:continuity_eta_det}
\|\vd-\vd^{(\g)}\|_{L^r(0,T;L^q)}\leq \frac{\varepsilon}{3}.
\end{equation}
Next fix $\g\in (0,\g_0\wedge \g_1)$. 
The triangular inequality shows that 
\begin{align*}
&\{\tau\geq T,\, \|v-\vd\|_{L^r(0,T;L^q)}\leq\varepsilon\}
\supseteq \Big\{\tau\wedge \tau^{(\g)}\geq T,\|v-v^{(\g)}\|_{L^r(0,T;L^q)}\leq\frac{\varepsilon}{3}\Big\}\\
&\quad \cap \Big\{\tau^{(\g)}\geq T,\, \|v^{(\g)}-\vd^{(\g)}\|_{L^r(0,T;L^q)}\leq \frac{\varepsilon}{3}\Big\}
\cap \Big\{\|\vd^{(\g)}-\vd\|_{L^r(0,T;L^q)}\leq\frac{\varepsilon}{3}\Big\}.
\end{align*}
Therefore Theorem \ref{t:delayed_blow_up}\eqref{it:enhanced_dissipation} follows by combining the latter inclusions and \eqref{eq:consequence_proposition_eta_lives_up_to_T_smooth_0}, \eqref{eq:step_1_additional_claim_proof_main_result}, \eqref{eq:continuity_eta_det}.
\end{proof}

\subsection{Proof of Theorem \ref{t:delayed_blow_up_t_infty}}
\label{ss:proof_global_infty}
Following the arguments of \cite{FL19,FGL21} we deduce Theorem \ref{t:delayed_blow_up_t_infty} from Theorem \ref{t:delayed_blow_up} and Lemma \ref{l:global_high_viscosity_infty}. As in  \cite{FL19} we need that the \emph{stochastic} problem \eqref{eq:reaction_diffusion} is globally well-posed for small initial data, see assumption b) in \cite[Theorem 1.5]{FGL21}. 
This will be the content of the following result.

\begin{proposition}[Global existence with small initial data]
\label{prop:small_global_q_0}
Let Assumption \ref{ass:f_polynomial_growth} be satisfied. Let $N\geq 1$ and let $v_0\in L^q(\Tor^d;\R^{\ell})$ be such that $\|v_0\|_{L^q}\leq N$ and $v_0\geq 0$ on $\Tor^d$ (component-wise). Suppose that Assumption \ref{ass:f_polynomial_growth}\eqref{it:mass} holds with $\m_0=0$ and $\m_1<0$. 
Assume that 
$$
\s\in (1,2), \ \ \ \  q>\frac{d(h-1)}{2}\vee \frac{d}{d-\s} \ \ \  \text{ and } \ \ \ p\geq \frac{2}{2-\s}\vee q.
$$ 
Let $(v,\tau)$ be the $(p,\a_{p,\s},q,\s)$-solution to \eqref{eq:reaction_diffusion} provided by Theorem \ref{t:local} (recall $\a_{p,\s}=p(1-\frac{\s}{2})-1$). 
Then for each 
$$
\varepsilon\in (0,1), \ \ \ r\in (2,\infty) \ \ \ \text{ and }\  \ \  \frac{d(h-1)}{2}\vee 2\leq q_0<q_1\leq  q
$$ 
there exist $S,\eta>0$, depending only on $(\m_j,\alpha_i,r,q_0,q_1,r, q,p,d,h,N,\varepsilon)$, such that for all stopping time $\g\in [S,\infty)$ a.s.
$$
\P \big(\tau>\g,\,\|v(\g)\|_{L^{q_1}}\leq \eta \big) \geq 1-\varepsilon 
\ \  \Longrightarrow  \ \
\P\big(\tau=\infty,\, \|v\|_{L^r(\g,\infty;L^{q_0})}\leq \varepsilon\big)\geq 1-\varepsilon.
$$
\end{proposition}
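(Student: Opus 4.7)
My plan is to exploit the exponentially decaying $L^1$--mass (available because $\m_0=0$ and $\m_1<0$, see Theorem \ref{t:local}\eqref{it:local_1}) together with the energy balance for $\|v_i\|_{L^{q_1}}^{q_1}$, in which the martingale vanishes by $\div\,\sigma_{k,\alpha}=0$ (cf.\ \eqref{eq:energy_Lq_balance}), to propagate the $L^{q_1}$--smallness forward in time. Set
$$
\V\stackrel{{\rm def}}{=}\{\tau>\g,\ \|v(\g)\|_{L^{q_1}}\leq \eta\},\qquad \lambda\stackrel{{\rm def}}{=}\inf\{t\in [\g,\tau)\,:\,\|v(t)\|_{L^{q_1}}>2\eta\}
$$
with $\inf\emptyset\stackrel{{\rm def}}{=}\tau$. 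If I can show that on $\V$ one has $\lambda=\tau$ and $\sup_{t\in[\g,\tau)}\|v(t)\|_{L^{q_1}}\leq 2\eta$, then since $q_1\geq \frac{d(h-1)}{2}\vee 2$, the blow--up criterium Theorem \ref{t:local}\eqref{it:local_3} will force $\tau=\infty$ on $\V$.

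To obtain the a--priori bound, I apply the It\^{o} formula to $\|v_i(\cdot)\|_{L^{q_1}}^{q_1}$ on $[\g,\lambda)$ (as in Step 1 of the proof of Theorem \ref{t:global_cut_off}\eqref{it:global_cut_off_2}): the divergence--free condition cancels the martingale, and the Stratonovich--to--It\^{o} correction $\ellip \Delta v_i$ is balanced by the It\^{o} correction from the noise. The right--hand side then contains only the contribution of $(f,F)$, which I bound via the interpolation argument used in Steps 1--2 of Proposition \ref{prop:global_high_viscosity}: the subcriticality $q_1>\frac{d(h-1)}{2}$ and the Gagliardo--Nirenberg inequality \eqref{eq:interpolation_inequality_mean_zero} (applied to $|v_i|^{q_1/2}$, after splitting off its spatial mean) yield, for some $\varphi\in (0,1)$ and $\beta<1$,
\begin{align*}
\Big|\int_{\g}^{t}\!\int_{\Tor^d}|v_i|^{q_1-2}[f_i(\cdot,v)v_i-(q_1-1)F_i(\cdot,v)\cdot\nabla v_i]\,dxds\Big|
\lesssim \Big(\|v\|_{L^{\infty}(\g,t;L^1)}^{q_1(1-\varphi)}+1\Big)\mathcal{G}(t)^{\beta},
\end{align*}
where $\mathcal{G}(t)=\max_i\int_\g^t\!\int_{\Tor^d}|v_i|^{q_1-2}|\nabla v_i|^2\,dxds$. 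By Theorem \ref{t:local}\eqref{it:local_1} with $\m_0=0$ and $\m_1<0$, on $\V\cap\{t<\tau\}$ one has $\|v(t)\|_{L^1}\leq C(N)e^{\m_1 t}\leq C(N) e^{\m_1 S}$, which can be made arbitrarily small by taking $S$ large. Using Young's inequality on $\mathcal{G}(t)^{\beta}$ to absorb the gradient term on the left, and summing over $i$, I deduce
$$
\sup_{t\in[\g,\lambda)}\|v(t)\|_{L^{q_1}}^{q_1}\leq \|v(\g)\|_{L^{q_1}}^{q_1}+\delta(S,\eta),\qquad \text{with } \delta(S,\eta)\to 0\text{ as }\eta\to 0\text{ and }S\to \infty.
$$
Hence $\|v(\g)\|_{L^{q_1}}\leq \eta$ combined with $\eta$ small enough and $S$ large enough gives $\sup_{t\in[\g,\lambda)}\|v(t)\|_{L^{q_1}}<2\eta$ on $\V$, forcing $\lambda=\tau$ and (by the blow--up criterium) $\tau=\infty$ on $\V$.

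For the quantitative $L^r(\g,\infty;L^{q_0})$ bound I interpolate between the uniform $L^{q_1}$ ball and the decaying $L^1$ norm: letting $\theta\stackrel{{\rm def}}{=}\frac{1/q_0-1/q_1}{1-1/q_1}\in (0,1)$, one has $\|v(t)\|_{L^{q_0}}\leq \|v(t)\|_{L^1}^{\theta}\|v(t)\|_{L^{q_1}}^{1-\theta}\leq C(N)^{\theta}(2\eta)^{1-\theta}e^{\theta\m_1 t}$ on $\V\cap\{t<\infty\}$. Since $\m_1<0$, integrating in $t\in [\g,\infty)$ yields
$$
\|v\|_{L^r(\g,\infty;L^{q_0})}\leq C(N,r,q_0,q_1,\m_1)\,\eta^{1-\theta}e^{\theta\m_1\g/r}\quad \text{on }\V,
$$
which is less than $\varepsilon$ after choosing $\eta$ small and $S$ large (depending on $(N,r,q_0,q_1,\m_1,\varepsilon)$). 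The final probabilistic conclusion follows by intersecting with $\V$ and $\P(\V)\geq 1-\varepsilon$.

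The main obstacle is closing the $L^{q_1}$ bootstrap: one must verify that the exponential $L^1$ decay is enough to dominate the supercritical term $\int|v|^{q_1+h-1}$ produced by the polynomial growth of $(f,F)$, uniformly over the (random) size of $v(\g)$. This is essentially Step 2 of Proposition \ref{prop:global_high_viscosity} transported to a random time interval, so most of the technical work can be imported directly; what remains is a careful bookkeeping to ensure that the thresholds $S$ and $\eta$ depend only on the parameters listed in the statement and not on the specific realization of $v$ or on the noise $(\theta,\ellip)$.
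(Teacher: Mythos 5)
Your overall architecture (Itô formula for $\|v_i\|_{L^{q_1}}^{q_1}$ with vanishing martingale, blow--up criterium of Theorem \ref{t:local}\eqref{it:local_3} with $q_1>\frac{d(h-1)}{2}\vee 2$, and final interpolation of a uniform $L^{q_1}$ bound against the exponentially decaying mass to get the $L^r(\g,\infty;L^{q_0})$ smallness) is the same as the paper's. However, the step that is supposed to close the estimate contains a genuine gap. You claim that the reaction term obeys
$\big|\int_{\g}^{t}\int_{\Tor^d}|v_i|^{q_1-2}[f_i(\cdot,v)v_i-(q_1-1)F_i(\cdot,v)\cdot\nabla v_i]\,dx\,ds\big|\lesssim\big(\|v\|_{L^{\infty}(\g,t;L^1)}^{q_1(1-\varphi)}+1\big)\mathcal{G}(t)^{\beta}$ with $\beta<1$, i.e.\ that the supercritical term $\int|v|^{q_1+h-1}$ can be interpolated using only the $L^\infty_t L^1_x$ norm as the low norm and a sublinear power of the energy. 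This is false for the range of $h$ covered by the proposition. A scaling count (test with $v_\lambda(x)=\lambda^{-2/(h-1)}\phi(x/\lambda)$, $\lambda\downarrow 0$) shows that $\int|v_\lambda|^{q_1+h-1}$ and $\mathcal{G}(v_\lambda)$ blow up at the \emph{same} rate while $\|v_\lambda\|_{L^1}\to 0$ whenever $d(h-1)>2$; hence the right-hand side, which carries $\mathcal{G}^\beta$ with $\beta<1$ and a bounded prefactor, is eventually dominated by the left-hand side. Such an estimate can only hold if $h<1+\tfrac2d$, i.e.\ when $L^1$ is at or below the critical space $L^{\frac d2(h-1)}$; the subcriticality hypothesis $q_1>\frac d2(h-1)$ buys $\beta<1$ only when the low norm is $L^{q_1}$ (or $L^{r}_tL^{q_1}_x$), as in Lemma \ref{l:interpolation_L_eta} and \eqref{eq:interpolation_inequality_mean_zero}. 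Moreover, even granting your inequality, the Young absorption would not give $\delta(S,\eta)\to 0$: the prefactor contains ``$+1$'', so after absorbing $\mathcal{G}$ the additive constant is of order one, independent of $S$ and $\eta$, and your bootstrap threshold $2\eta$ cannot be maintained.

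The paper closes the argument differently, and this is the missing idea. From the Itô identity one derives (using the mean--zero interpolation \eqref{eq:interpolation_inequality_mean_zero} with $\sup_t\|v\|_{L^{q_1}}^{q_1}$ and the energy on the right, the mass decay only for the mean part) the \emph{self-referential superlinear} inequality
$\|v\|_{L^{q_1+h-1}(\g,t;L^{q_1+h-1})}^{q_1}\le c_0\big(\eta^{q_1}+e^{-|\m_1|S}+\|v\|_{L^{q_1+h-1}(\g,t;L^{q_1+h-1})}^{q_1+h-1}\big)$,
which is then closed not by absorption but by the continuity/barrier (``buckling'') argument of Step 3 of Proposition \ref{prop:global_high_viscosity}: since $x\mapsto c_0^{-1}x-x^{1+\frac{h-1}{q_1}}$ has a positive maximum $M_0$ and the quantity $\|v\|_{L^{q_1+h-1}(\g,t)}^{q_1}$ is continuous in $t$ and starts at $0$, the smallness $\eta^{q_1}+e^{-|\m_1|S}<M_0$ traps it below the argmax. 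This yields only a \emph{uniform} bound $\sup_{t\in[\g,\tau)}\|v(t)\|_{L^{q_1}}\le K$ (not smallness of order $\eta$, which is neither needed nor provable by your route); the smallness and decay are recovered afterwards, exactly as in your last paragraph, by interpolating the bounded $L^{q_1}$ norm with the exponentially decaying $L^1$ mass in $L^{q_0}$ with $q_0<q_1$, and then choosing $S$ large. If you replace your absorption step by this barrier argument, the rest of your proposal goes through.
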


Proposition \ref{prop:small_global_q_0} ensures the absence blow-up with high probability provided $\|v(\g)\|_{L^{q_1}}$ is small with high probability as well.  
The smallness of the norm $\|v(\g)\|_{L^{q_0}}$ is not surprising as the mass is exponentially decreasing by Theorem \ref{t:local}\eqref{it:local_1} with $\m_0=0$ and $\m_1<0$. 

Next we prove Proposition \ref{prop:small_global_q_0} and afterwards Theorem \ref{t:delayed_blow_up_t_infty}.

\begin{proof}[Proof of Proposition \ref{prop:small_global_q_0}]
The proof follows as the one of Lemma \ref{l:global_high_viscosity_infty} (see also Proposition \ref{prop:global_high_viscosity}). Here we use the smallness of $\eta$ instead of the one of $\mu^{-1}$.
Let $(r,q_0,q_1,\varepsilon)$ be as in the statement of Proposition \ref{prop:small_global_q_0}. 
For $\eta,S>0$ and a stopping time $\g:\O\to [S,\infty)$, set 
\begin{equation}
\label{eq:v_gamma_eta}
\V_{\g,\eta}\stackrel{{\rm def}}{=}\{\tau> \g,\,\|v(\g)\|_{L^{q_0}}\leq \eta\}\in \F_{\gamma}.
\end{equation}

Below
we also write $\V$ instead of $\V_{\g,\eta}$ if no confusion seems likely. Below we assume that 
\begin{equation}
\label{eq:global_small_data_assumption_epsilon}
\P(\V)>1-\varepsilon.
\end{equation}
As in Lemma \ref{l:global_high_viscosity_infty}, below, we frequently use that the exponential decay of the mass:
\begin{equation}
\label{eq:exponential_decay_proof}
\int_{\Tor^d} |v|\,\dd x \lesssim_N e^{-|\m_1| t }\ \ \ \text{ a.s.\ for all }t\in [0,\tau). 
\end{equation}
The above follows from Theorem \ref{t:local}\eqref{it:local_1},  $\m_0=0$, $\m_1<0$ and $\|v_0\|_{L^q}\leq N$.

\emph{Step 1: Let $(S,\eta)$ be positive constants. There exist a constant $c_0(\m_i,\alpha_j,q_0,q_1,r,q,p,d,h)\geq 1$, independent of $(S,\eta)$, such that a.s.\ on $\V$ and for all $t\in [\g,\tau)$ } 
\begin{align}
\label{eq:step_1_small_data_sup_norm}
\sup_{t\in [\g,t)}\|v\|^{q_1}_{L^{q_1}}
&\leq c_0\Big(\eta^{q_1}+ e^{- |\m_1| S }+ \int_{\g}^t \int_{\Tor^d}  |v|^{q_1+h-1}\,\dd x\, \dd s \Big),\\
\label{eq:step_1_small_data_q_norm}
\|v\|_{L^{q_1+h-1}(\g,t;L^{q_1+h-1})}^{q_1}
&\leq c_0\Big(\eta^{q_1}+ e^{-|\m_1| S  }+ \int_{\g}^t \int_{\Tor^d}  |v|^{q_1+h-1}\,\dd x\, \dd s\Big).
\end{align} 
Here we follow the proof of Steps 1--2 in Proposition \ref{prop:global_high_viscosity}. Recall that $v$ is regular on $(0,\tau)\times \O$ by Theorem \ref{t:local}\eqref{it:local_2}. Thus, the It\^{o} formula yields, a.s.\ for all $t\in [\g,\tau)$ and $i\in \{1,\dots,\ell\}$,
\begin{align*}
\|v_i(t)\|_{L^{q_1}}^{q_1}
&+\ellip_i q (q-1) \int_{\g}^t\int_{\Tor^d} |v_i|^{q_1-2}|\nabla v_i|^2 \,\dd x\, \dd s\\
&=\|v_i(\g)\|_{L^{q_1}}^{q_1}
+q \int_{\g}^{t}\int_{\Tor^d} |v_i|^{q_1-2}\big[ f_i(\cdot,v)v_i - (q-1)F_i (\cdot,v)\cdot \nabla v_i \big]\,\dd x\, \dd s.
\end{align*}
As in the proof of Theorem \ref{t:global_cut_off}\eqref{it:global_cut_off_2} the martingale part in the previous identity vanishes since $\div\,\sigma_{k,\alpha}=0$. Next we estimate the RHS of the previous inequality. Thus
\begin{multline*}
\Big|\int_{\g}^{t}\int_{\Tor^d} |v_i|^{q_1-2} f_i(\cdot,v)v_i\, \dd x\, \dd s\Big|
\lesssim 
\int_{\g}^{t}\int_{\Tor^d}| v_i|+ |v_i|^{q_1+h-1} \,\dd x\, \dd s\\
\lesssim
\int_{\g}^{t}\int_{\Tor^d} e^{-|\m_1| t } + |v_i|^{q_1+h-1} \,\dd x\, \dd s
\lesssim  e^{-|\m_1| S}+\int_{\g}^{t}\int_{\Tor^d} |v_i|^{q_1+h-1} \,\dd x\, \dd s,
\end{multline*}
where  we used \eqref{eq:exponential_decay_proof} and $\g\in [S,\infty)$ a.s. 
In the above the implicit constants depends only on $(\m_i,\alpha_j,q_0,q_1,q,p,d,h,N)$.
Reasoning as in Step 1 (resp.\ 2) of Proposition \ref{prop:global_high_viscosity}, one can check that \eqref{eq:step_1_small_data_sup_norm} (resp.\  \eqref{eq:step_1_small_data_q_norm}) holds. To avoid repetitions we omit the details.

\emph{Step 2: There exists $M_0(\m_i,\alpha_j,q_0,q_1,q,p,d,h)>0$ for which the following assertion holds. Suppose that $S\geq S_0$ and $\eta\geq \eta_0$ satisfy}
\begin{equation}
\label{eq:eta_S_small_big_m_0}
\eta_0^{q_1}+ e^{-|\m_1| S_0}< M_0.
\end{equation}
\emph{Then there exists $K(\m_i,\alpha_j,q_0,q_1,q,p,d,h)>0$ such that}
\begin{equation}
\label{eq:estimate_v_stochastic_q_1}
\sup_{t\in [S,\tau)} \|v(t)\|_{L^{q_1}} \leq K\ \ \text{ a.s.\ on }\V.
\end{equation}
To prove \eqref{eq:estimate_v_stochastic_q_1} we argue as in Step 3 of Proposition \ref{prop:global_high_viscosity}. Let $\zeta\stackrel{{\rm def}}{=} \frac{h-1}{q_1}>1$ and $\psi_{\zeta,R}(x)\stackrel{{\rm def}}{=} R\, x- x^{1+\zeta}$ for $x\in [0,\infty)$ where $R=c_0^{-1}$. Here $c_0$ is as in Step 1. Then \eqref{eq:step_1_small_data_q_norm} is equivalent to  
$$
\psi_{\zeta,R}\big(\|v\|_{L^{q_1+h-1}(0,t;L^{q_1+h-1})}^{q_1}\big)\leq \eta^{q_1}+ e^{-|\m_1| S}\quad \text{ for all }0\leq t<\tau\text{ and a.s.\ on $\V$}.
$$
Note that $\psi_{\zeta,R}$ has unique maximum on $[0,\infty)$. Set $M_0(c_0,\zeta)\stackrel{{\rm def}}{=}\max_{\R_+} \psi_{\zeta,R}$ and  
$m_0(c_0,\zeta)\stackrel{{\rm def}}{=}\argmax_{\R_+} \psi_{\zeta,R}$. Next we choose $(S_0,\eta_0)$ as in \eqref{eq:eta_S_small_big_m_0} with $M_0$ as above. Then, if $S\geq S_0$ and $\eta\leq \eta_0$, then the above inequality readily yields (cf.\ Figure \ref{fig:1} for a similar situation)
$$
\|v\|_{L^{q_1+h-1}(0,t;L^{q_1+h-1})}^{q_1}\leq m_0 \ \ \ \text{ for all $0\leq t<\tau$}.
$$
The estimate \eqref{eq:estimate_v_stochastic_q_1} follows by combining the previous inequality and \eqref{eq:step_1_small_data_sup_norm}.

\emph{Step 3: Let $(S_0,\eta_0)$ be as in Step 2. If $S\geq S_0$ and $\eta\leq  \eta_0$, then $\tau=\infty$ a.s.\ on $\V$}. 
 By Theorem \ref{t:local}\eqref{it:local_3} and the fact that $q_1>\frac{d(h-1)}{2}\vee 2$ we have, for all $0<s<T<\infty$, 
\begin{align*}
\P(\{s<\tau<T\}\cap \V)
\stackrel{\eqref{eq:estimate_v_stochastic_q_1}}{\leq} \P\Big(s<\tau<T,\, \sup_{t\in [s,\tau)} \|v(t)\|_{L^{q_1}}<\infty\Big)=0.
\end{align*}
By letting $s\downarrow 0$ and $T\uparrow \infty$, we have $\P(\{\tau<\infty\}\cap \V)=0$. Hence $\tau=\infty$ a.s.\ on $\V$.

\emph{Step 4: Let $\varepsilon>0$ and assume that \eqref{eq:global_small_data_assumption_epsilon} holds. Let $(S_0,\eta_0,K)$ be as in Step 2. Then there exists $S_1>0$ depending only on $(\m_i,\alpha_j,r,q_0,q_1,q,p,d,h,N,\varepsilon)$ such that, if $S\geq S_1\vee S_0$ and $\eta\leq \eta_0$, then we have}
$$
\P(\tau=\infty,\, \|v\|_{L^r(\g,\tau;L^{q_0})}\leq \varepsilon)>1-\varepsilon.
$$
To prove the claim, by Step 3 and the fact that $\P(\V)>1-\varepsilon$, it is enough to show that 
\begin{equation}
\label{eq:W_estimate_conclusion_long_time_existence_small_data}
\|v\|_{L^r(\g,\infty;L^{q_0})} \leq \varepsilon \quad \text{ a.s.\ on }\V.
\end{equation}
Recall that $\g\in [S,\infty)$ a.s. By interpolating \eqref{eq:exponential_decay_proof} and \eqref{eq:estimate_v_stochastic_q_1}, for some $\m_0(\m_i,q_0,q_1)>0$, we obtain $\|v(t)\|_{L^{q_0}}\lesssim e^{-t |\m_1|}$ a.s.\ on $\V$ for all $t\geq S$. Where the implicit constants depends only on $(\m_i,\alpha_j,q_0,q_1,q,p,d,h,N,\varepsilon)$.
Hence \eqref{eq:W_estimate_conclusion_long_time_existence_small_data} follows by choosing $S_1$ large enough.
\end{proof}

The proof of Theorem \ref{t:delayed_blow_up_t_infty} follows by combining the Theorem \ref{t:delayed_blow_up}, Proposition \ref{prop:small_global_q_0} and the exponential decay of solution to \eqref{eq:reaction_diffusion_deterministic} shown in Lemma \ref{l:global_high_viscosity_infty}. 
For the reader's convenience, before going into the proof, we summarize the main argument. By Theorem \ref{t:delayed_blow_up} and Lemma \ref{l:global_high_viscosity_infty}, we know that $v(\g)-\vd(\g)$ and $\vd(\g)$ are small provided $\g\geq S$ is big enough (here $v$ and $\vd$ is the solution to \eqref{eq:reaction_diffusion} and \eqref{eq:reaction_diffusion_deterministic}, respectively). Thus $v(\g)$ is small as well. Hence Theorem \ref{t:delayed_blow_up_t_infty} follows from the previous observation and Proposition \ref{prop:small_global_q_0}.

\begin{proof}[Proof of Theorem \ref{t:delayed_blow_up_t_infty}]
Let $(N,\varepsilon,\ellip_0,r,q_0)$ be as in the statement of Theorem \ref{t:delayed_blow_up_t_infty}. Recall that $q_0<q$ and without loss of generality we may assume that $q_0>\frac{d(h-1)}{2}\vee \frac{d}{d-\s}$. Finally fix $q_1\in (q_0,q)$.

Next we collect some further parameters which are independent of $v_0$ satisfying \eqref{eq:data_L_q_N_statement_global}. 
Let $\mu_0>0$ be as in Lemma \ref{l:global_high_viscosity_infty} and let $(S,\eta)$ be as in Proposition \ref{prop:small_global_q_0} with $\varepsilon$ replaced by $\frac{\varepsilon}{2}$. 
Lemma \ref{l:global_high_viscosity_infty} ensures the existence of $T>0$, independent of $v_0$ satisfying \eqref{eq:data_L_q_N_statement_global}, for which the following assertion is satisfied provided $\ellip\geq \mu_0$: 
For all $v_0$ as in \eqref{eq:data_L_q_N_statement_global}, there exists a $(p,q)$-solution $\vd$ on $[0,\infty)$ to the deterministic problem \eqref{eq:reaction_diffusion_det_statements} satisfying
\begin{align}
\label{eq:decay_infinity_vd_final_proof}
\|\vd\|_{L^r(T,\infty;L^{q_1})}+\sup_{t\geq T}\|\vd(t)\|_{L^{q_1}}\leq   \frac{\varepsilon\wedge \eta}{4}.
\end{align}
Without loss of generality we may assume $S\leq T$ and $\ellip_0\leq \mu_0$.  

Let $(\ellip,\theta)$ be such that Theorem \ref{t:delayed_blow_up} holds with 
$$
(N,T,\varepsilon,\ellip_0,r)\quad \text{ replaced by }\quad \Big(N,T+1, \frac{\varepsilon\wedge \eta}{4}, \mu_0,r\Big).
$$ 
Note that $(\ellip,\theta)$ is independent of $v_0$ satisfying \eqref{eq:data_L_q_N_statement_global}  due to the independence of $(\ellip,\theta)$ on the initial data in Theorem \ref{t:delayed_blow_up} and the choice of $(S,\eta,\mu_0)$.
With the above choice of the parameters we can now complete the proof of Theorem \ref{t:delayed_blow_up_t_infty}. 
Indeed, Theorem \ref{t:delayed_blow_up}\eqref{it:enhanced_dissipation} ensures that 
\begin{equation}
\label{eq:enhanced_dissipation_T_1_final}
\P\Big(\tau\geq T+1,\, \|v-\vd\|_{L^r(0,T+1;L^q)}\leq\frac{\varepsilon\wedge \eta}{4}\Big)
>1-\frac{\varepsilon\wedge \eta}{4}
\end{equation}
where $\vd$ is $(p,q)$--solution to \eqref{eq:reaction_diffusion_det_statements} as described before \eqref{eq:decay_infinity_vd_final_proof}.

Note that \eqref{eq:enhanced_dissipation_T_1_final} together with \eqref{eq:decay_infinity_vd_final_proof} show
\begin{equation}
\label{eq:V_varepsilon_final_proof}
\P(\V_{\varepsilon})
>1-\frac{\varepsilon}{2} \quad \text{ where }\quad 
\V_{\varepsilon}\stackrel{{\rm def}}{=}\Big\{\tau\geq T+1,\, \|v\|_{L^r(T,T+1;L^{q_1})}\leq\frac{ \varepsilon\wedge \eta}{2}\Big\}.
\end{equation}
Recall that, by Theorem \ref{t:local}\eqref{it:local_2}, the paths $[s,\tau) \ni t\mapsto \|v(t)\|_{L^{q_1}}$ are  a.s.\ continuous for all $s>0$. By \eqref{eq:V_varepsilon_final_proof}, for each $\om \in \V_{\varepsilon}$ there exists $t\in (T,T+1)$ such that $\|v(t,\om)\|_{L^{q_1}}\leq  \eta$. 
Hence the stopping time 
$$ 
\g\stackrel{{\rm def}}{=} \one_{\{\tau>T\}}
\left\{
\begin{aligned}
&\inf\{t\in [T,\tau)\,:\, \|v(t)\|_{L^{q_1}}\leq \eta\}\wedge (T+1) &\text{ on }  \ &\{\|v(T)\|_{L^{q_1}}> \eta \},\\
&T& \text{ on }\ &\{\|v(T)\|_{L^{q_1}}\leq \eta \},
\end{aligned}
\right.
$$
where $\inf\emptyset\stackrel{{\rm def}}{=} \tau$,
satisfies 
$$
\P(\tau>\g\vee T,\, \|v(\g\vee T)\|_{L^{q_1}}\leq \eta)\geq \P(\V_{\varepsilon})>1-\frac{\varepsilon}{2}.
$$
The previous and 
Proposition \ref{prop:small_global_q_0} yield
\begin{equation}
\label{eq:conclusion_t_infty_final_step}
\P\Big(\tau=\infty,\, \|v\|_{L^r(\gamma\vee T,\infty;L^{q_0})}\leq \frac{\varepsilon}{2}\Big)\geq 1-\frac{\varepsilon}{2}.
\end{equation}
The above already proves Theorem \ref{t:delayed_blow_up_t_infty}\eqref{it:delayed_blow_up_global}. While \eqref{it:enhanced_dissipation_global} follows by combining \eqref{eq:decay_infinity_vd_final_proof},  \eqref{eq:enhanced_dissipation_T_1_final}, \eqref{eq:conclusion_t_infty_final_step} and the fact that $\g\vee T\in [T,T+1]$ a.s.
\end{proof}

\bigskip

\noindent {\bf Acknowledgements.}
The author thanks Lorenzo Dello Schiavo, Lucio Galeati and Mark Veraar for helpful comments.
The author acknowledges Caterina Balzotti for her support in creating the picture. The author thanks the anonymous referee for helpful comments.

\medskip

\noindent {\bf Declarations -- Data availability.} This manuscript has no associated data.

\bibliographystyle{alpha-sort}
\bibliography{literature}

\def\polhk#1{\setbox0=\hbox{#1}{\ooalign{\hidewidth
  \lower1.5ex\hbox{`}\hidewidth\crcr\unhbox0}}} \def\cprime{$'$}
\begin{thebibliography}{HNVW16}

\bibitem[ALV23]{ALV21}
A.~Agresti, N.~Lindemulder, and M.C. Veraar.
\newblock On the trace embedding and its applications to evolution equations.
\newblock {\em Mathematische Nachrichten}, 296(4):1319--1350, 2023.

\bibitem[AV20]{AV19}
A.~Agresti and M.C. Veraar.
\newblock Stability properties of stochastic maximal {$L^p$}-regularity.
\newblock {\em J. Math. Anal. Appl.}, 482(2):123553, 35, 2020.

\bibitem[AV21a]{AV21_SMR_torus}
A.~Agresti and M.C. Veraar.
\newblock Stochastic maximal ${L}^p({L}^q)$-regularity for second order systems
  with periodic boundary conditions.
\newblock {\em arXiv preprint arXiv:2106.01274}, 2021.
\newblock To appear in {A}nnales de l'{I}nstitut {H}enri {P}oincar\'e {(B)}
  {P}robability and {S}tatistics.

\bibitem[AV21b]{AV20_NS}
A.~Agresti and M.C. Veraar.
\newblock Stochastic {N}avier-{S}tokes equations for turbulent flows in
  critical spaces.
\newblock {\em arXiv preprint arXiv:2107.03953}, 2021.

\bibitem[AV22a]{AV19_QSEE_1}
A.~Agresti and M.C. Veraar.
\newblock Nonlinear parabolic stochastic evolution equations in critical spaces
  {P}art {I}. {S}tochastic maximal regularity and local existence.
\newblock {\em Nonlinearity}, 35(8):4100, 2022.

\bibitem[AV22b]{AV19_QSEE_2}
A.~Agresti and M.C. Veraar.
\newblock Nonlinear parabolic stochastic evolution equations in critical spaces
  part {II}.
\newblock {\em J. Evol. Equ.}, 22(2):Paper No. 56, 2022.

\bibitem[AV23]{AV22}
A.~Agresti and M.C. Veraar.
\newblock {R}eaction-diffusion equations with transport noise and critical
  superlinear diffusion: {L}ocal well-posedness and positivity.
\newblock {\em Journal of Differential Equations}, 368:247--300, 2023.

\bibitem[AKM19]{SKM19_book}
S.~Armstrong, T.~Kuusi, and J.-C. Mourrat.
\newblock {\em Quantitative stochastic homogenization and large-scale
  regularity}, volume 352 of {\em Grundlehren der mathematischen
  Wissenschaften}.
\newblock Springer, Cham, 2019.

\bibitem[AMT00]{AMT00_convergence}
A.~Arnold, P.~Markowich, and G.~Toscani.
\newblock On large time asymptotics for drift-diffusion-{P}oisson systems.
\newblock In {\em Proceedings of the {F}ifth {I}nternational {W}orkshop on
  {M}athematical {A}spects of {F}luid and {P}lasma {D}ynamics ({M}aui, {HI},
  1998)}, volume~29, pages 571--581, 2000.

\bibitem[BBPS21]{BBPS21_PTRF}
J.~Bedrossian, A.~Blumenthal, and S.~Punshon-Smith.
\newblock Almost-sure enhanced dissipation and uniform-in-diffusivity
  exponential mixing for advection-diffusion by stochastic {N}avier-{S}tokes.
\newblock {\em Probab. Theory Related Fields}, 179(3-4):777--834, 2021.

\bibitem[BBPS22a]{BBP22_AP}
J.~Bedrossian, A.~Blumenthal, and S.~Punshon-Smith.
\newblock Almost-sure exponential mixing of passive scalars by the stochastic
  {N}avier-{S}tokes equations.
\newblock {\em Ann. Probab.}, 50(1):241--303, 2022.

\bibitem[BBPS22b]{BBPS22}
J.~Bedrossian, A.~Blumenthal, and S.~Punshon-Smith.
\newblock Lagrangian chaos and scalar advection in stochastic fluid mechanics.
\newblock {\em J. Eur. Math. Soc. (JEMS)}, 24(6):1893--1990, 2022.

\bibitem[BL76]{BeLo}
J.~Bergh and J.~L{\"o}fstr{\"o}m.
\newblock {\em Interpolation spaces. {A}n introduction}.
\newblock Springer-Verlag, Berlin, 1976.
\newblock Grundlehren der Mathematischen Wissenschaften, No. 223.

\bibitem[CGV19]{CGV19_global}
M.C. Caputo, T.~Goudon, and A.F. Vasseur.
\newblock Solutions of the 4-species quadratic reaction-diffusion system are
  bounded and {$C^\infty$}-smooth, in any space dimension.
\newblock {\em Anal. PDE}, 12(7):1773--1804, 2019.

\bibitem[CKRZ08]{CKRZ08_mixing}
P.~Constantin, A.~Kiselev, L.~Ryzhik, and A.~Zlato\v{s}.
\newblock Diffusion and mixing in fluid flow.
\newblock {\em Ann. of Math. (2)}, 168(2):643--674, 2008.

\bibitem[DG15]{DG15_boundedness}
K.~Dareiotis and M.~Gerencs\'{e}r.
\newblock On the boundedness of solutions of {SPDE}s.
\newblock {\em Stoch. Partial Differ. Equ. Anal. Comput.}, 3(1):84--102, 2015.

\bibitem[DJT20]{DJT20}
E.S. Daus, A.~J\"{u}ngel, and B.Q. Tang.
\newblock Exponential time decay of solutions to reaction-cross-diffusion
  systems of {M}axwell-{S}tefan type.
\newblock {\em Arch. Ration. Mech. Anal.}, 235(2):1059--1104, 2020.

\bibitem[DHV16]{DHV16}
A.~Debussche, M.~Hofmanov\'{a}, and J.~Vovelle.
\newblock Degenerate parabolic stochastic partial differential equations:
  quasilinear case.
\newblock {\em Ann. Probab.}, 44(3):1916--1955, 2016.

\bibitem[DP22]{DP22_two_scale}
A.~Debussche and U.~Pappalettera.
\newblock Second order perturbation theory of two-scale systems in fluid
  dynamics.
\newblock {\em arXiv preprint arXiv:2206.07775}, 2022.

\bibitem[DF06]{DF06}
L.~Desvillettes and K.~Fellner.
\newblock Exponential decay toward equilibrium via entropy methods for
  reaction-diffusion equations.
\newblock {\em J. Math. Anal. Appl.}, 319(1):157--176, 2006.

\bibitem[DFT17]{DFT17}
L.~Desvillettes, K.~Fellner, and B.~Q. Tang.
\newblock Trend to equilibrium for reaction-diffusion systems arising from
  complex balanced chemical reaction networks.
\newblock {\em SIAM J. Math. Anal.}, 49(4):2666--2709, 2017.

\bibitem[DFM08]{DiFFM08_entropy}
M.~DiFrancesco, K.~Fellner, and P.A. Markowich.
\newblock The entropy dissipation method for spatially inhomogeneous
  reaction--diffusion-type systems.
\newblock {\em Proc. R. Soc. Lond. Ser. A Math. Phys. Eng. Sci.},
  464(2100):3273--3300, 2008.

\bibitem[FMT20]{FMT20}
K.~Fellner, J.~Morgan, and B.Q. Tang.
\newblock Global classical solutions to quadratic systems with mass control in
  arbitrary dimensions.
\newblock {\em Ann. Inst. H. Poincar\'{e} Anal. Non Lin\'{e}aire},
  37(2):281--307, 2020.

\bibitem[FT18]{FT18_normalized_entropy}
K.~Fellner and B.Q. Tang.
\newblock Convergence to equilibrium of renormalised solutions to nonlinear
  chemical reaction--diffusion systems.
\newblock {\em Zeitschrift f{\"u}r angewandte Mathematik und Physik},
  69(3):1--30, 2018.

\bibitem[Fis15]{F15_global_renormalized}
J.~Fischer.
\newblock Global existence of renormalized solutions to entropy-dissipating
  reaction-diffusion systems.
\newblock {\em Arch. Ration. Mech. Anal.}, 218(1):553--587, 2015.

\bibitem[Fis17]{F17_weak_strong_uniqueness}
J.~Fischer.
\newblock Weak-strong uniqueness of solutions to entropy-dissipating
  reaction-diffusion equations.
\newblock {\em Nonlinear Anal.}, 159:181--207, 2017.

\bibitem[FHKM22]{FKKM22}
J.~Fischer, K.~Hopf, M.~Kniely, and A.~Mielke.
\newblock Global existence analysis of energy-reaction-diffusion systems.
\newblock {\em SIAM J. Math. Anal.}, 54(1):220--267, 2022.

\bibitem[Fla11]{F15_book}
F.~Flandoli.
\newblock {\em Random perturbation of {PDE}s and fluid dynamic models}, volume
  2015 of {\em Lecture Notes in Mathematics}.
\newblock Springer, Heidelberg, 2011.
\newblock Lectures from the 40th Probability Summer School held in Saint-Flour,
  2010, \'{E}cole d'\'{E}t\'{e} de Probabilit\'{e}s de Saint-Flour.

\bibitem[FGL21a]{FGL21}
F.~Flandoli, L.~Galeati, and D.~Luo.
\newblock Delayed blow-up by transport noise.
\newblock {\em Comm. Partial Differential Equations}, 46(9):1757--1788, 2021.

\bibitem[FGL21b]{FGL21_mixing}
F.~Flandoli, L.~Galeati, and D.~Luo.
\newblock Quantitative convergence rates for scaling limit of {SPDE}s with
  transport noise.
\newblock {\em arXiv preprint arXiv:2104.01740}, 2021.

\bibitem[FGP10]{FGP10}
F.~Flandoli, M.~Gubinelli, and E.~Priola.
\newblock Well-posedness of the transport equation by stochastic perturbation.
\newblock {\em Invent. Math.}, 180(1):1--53, 2010.

\bibitem[FHLN22]{FHLN20}
F.~Flandoli, M.~Hofmanov\'{a}, D.~Luo, and T.~Nilssen.
\newblock Global well-posedness of the 3{D} {N}avier-{S}tokes equations
  perturbed by a deterministic vector field.
\newblock {\em Ann. Appl. Probab.}, 32(4):2568--2586, 2022.

\bibitem[FL21]{FL19}
F.~Flandoli and D.~Luo.
\newblock High mode transport noise improves vorticity blow-up control in 3{D}
  {N}avier-{S}tokes equations.
\newblock {\em Probab. Theory Related Fields}, 180, 2021.

\bibitem[FP22]{FP21}
F.~Flandoli and U.~Pappalettera.
\newblock From additive to transport noise in {2D} fluid dynamics.
\newblock {\em Stochastics and Partial Differential Equations: Analysis and
  Computations}, pages 1--41, 2022.

\bibitem[Gal20]{G20_convergence}
L.~Galeati.
\newblock On the convergence of stochastic transport equations to a
  deterministic parabolic one.
\newblock {\em Stoch. Partial Differ. Equ. Anal. Comput.}, 8(4):833--868, 2020.

\bibitem[GL23]{GL22}
L.~Galeati and D.~Luo.
\newblock {LDP} and {CLT} for {SPDE}s with transport noise.
\newblock {\em Stochastics and Partial Differential Equations: Analysis and
  Computations}, pages 1--58, 2023.

\bibitem[GM18]{GM18_conservation_laws}
B.~Gess and M.~Maurelli.
\newblock Well-posedness by noise for scalar conservation laws.
\newblock {\em Comm. Partial Differential Equations}, 43(12):1702--1736, 2018.

\bibitem[GY21]{GY21}
B.~Gess and I.~Yaroslavtsev.
\newblock Stabilization by transport noise and enhanced dissipation in the
  {K}raichnan model.
\newblock {\em arXiv preprint arXiv:2104.03949}, 2021.

\bibitem[GI63]{GE63_turbulence}
I.~Glassman and Eberstein I.J.
\newblock Turbulence effects in chemical reaction kinetics measurements.
\newblock {\em AIAA Journal}, 1(6):1424--1426, 1963.

\bibitem[GNO20]{GNO20_regularity}
A.~Gloria, S.~Neukamm, and F.~Otto.
\newblock A regularity theory for random elliptic operators.
\newblock {\em Milan J. Math.}, 88(1):99--170, 2020.

\bibitem[Gra08]{Grafakos1}
L.~Grafakos.
\newblock {\em Classical {F}ourier analysis}, volume~86 of {\em Graduate Texts
  in Mathematics}.
\newblock Springer Verlag, 2008.

\bibitem[Gra14]{Moder_Fourier_Analysis}
L.~Grafakos.
\newblock {\em Modern {F}ourier analysis}, volume 250 of {\em Graduate Texts in
  Mathematics}.
\newblock Springer, New York, third edition, 2014.

\bibitem[HNVW16]{Analysis1}
T.P. Hyt\"onen, J.M.A.M.~van Neerven, M.C. Veraar, and L.W. Weis.
\newblock {\em Analysis in {B}anach spaces. {V}ol. {I}. {M}artingales and
  {L}ittlewood-{P}aley theory}, volume~63 of {\em Ergebnisse der Mathematik und
  ihrer Grenzgebiete. 3. Folge.}
\newblock Springer, 2016.

\bibitem[IXZ21]{IXZ21_TAMS}
G.~Iyer, X.~Xu, and A.~Zlato\v{s}.
\newblock Convection-induced singularity suppression in the {K}eller-{S}egel
  and other non-linear {PDE}s.
\newblock {\em Trans. Amer. Math. Soc.}, 374(9):6039--6058, 2021.

\bibitem[Kal02]{Kal}
O.~Kallenberg.
\newblock {\em Foundations of modern probability}.
\newblock Probability and its Applications (New York). Springer-Verlag, New
  York, second edition, 2002.

\bibitem[KX16]{KX_16_keller}
A.~Kiselev and X.~Xu.
\newblock Suppression of chemotactic explosion by mixing.
\newblock {\em Arch. Ration. Mech. Anal.}, 222(2):1077--1112, 2016.

\bibitem[KD86]{KD86_turbulence_chemical_reactions}
M.M. Koochesfahani and P.E. Dimotakis.
\newblock Mixing and chemical reactions in a turbulent liquid mixing layer.
\newblock {\em Journal of Fluid Mechanics}, 170:83–112, 1986.

\bibitem[Kra68]{K68}
R.H. Kraichnan.
\newblock Small-scale structure of a scalar field convected by turbulence.
\newblock {\em The Physics of Fluids}, 11(5):945--953, 1968.

\bibitem[Kra94]{K94}
R.H. Kraichnan.
\newblock Anomalous scaling of a randomly advected passive scalar.
\newblock {\em Physical review letters}, 72(7):1016, 1994.

\bibitem[Kry94]{Kry94a}
N.V. Krylov.
\newblock A generalization of the {L}ittlewood-{P}aley inequality and some
  other results related to stochastic partial differential equations.
\newblock {\em Ulam Quart.}, 2(4):16 ff., approx.\ 11 pp.\ (electronic), 1994.

\bibitem[Kry99]{Kry}
N.V. Krylov.
\newblock An analytic approach to {SPDE}s.
\newblock In {\em Stochastic partial differential equations: six perspectives},
  volume~64 of {\em Math. Surveys Monogr.}, pages 185--242. Amer. Math. Soc.,
  Providence, RI, 1999.

\bibitem[Kry13]{Kry13}
N.V. Krylov.
\newblock A relatively short proof of {I}t\^{o}'s formula for {SPDE}s and its
  applications.
\newblock {\em Stoch. Partial Differ. Equ. Anal. Comput.}, 1(1):152--174, 2013.

\bibitem[Lan22]{L22_avereged_NS}
T.~Lange.
\newblock Regularization by noise of an averaged version of the
  {N}avier-{S}tokes equations.
\newblock {\em arXiv preprint arXiv:2205.14941}, 2022.

\bibitem[LW22]{LW22}
J.~Lankeit and M.~Winkler.
\newblock Global existence in reaction-diffusion systems with mass control
  under relaxed assumptions merely referring to cross-absorptive effects.
\newblock {\em J. Evol. Equ.}, 22(1):Paper No. 14, 23, 2022.

\bibitem[LW76]{LW76}
P.A. Libby and F.A. Williams.
\newblock Turbulent flows involving chemical reactions.
\newblock {\em Annual Review of Fluid Mechanics}, 8(1):351--376, 1976.

\bibitem[LV21]{LV21_singular}
E.~Lorist and M.C. Veraar.
\newblock Singular stochastic integral operators.
\newblock {\em Anal. PDE}, 14(5):1443--1507, 2021.

\bibitem[Luo21]{L21}
D.~Luo.
\newblock Enhanced dissipation for stochastic {N}avier-{S}tokes equations with
  transport noise.
\newblock {\em arXiv preprint arXiv:2111.12931}, 2021.

\bibitem[MK99]{MK99_simplified}
A.J. Majda and P.R. Kramer.
\newblock Simplified models for turbulent diffusion: theory, numerical
  modelling, and physical phenomena.
\newblock {\em Phys. Rep.}, 314(4-5):237--574, 1999.

\bibitem[MC98]{MC98}
M.P. Mart{\i}n and G.V. Candler.
\newblock Effect of chemical reactions on decaying isotropic turbulence.
\newblock {\em Physics of Fluids}, 10(7):1715--1724, 1998.

\bibitem[MV12]{MV12}
M.~Meyries and M.C. Veraar.
\newblock Sharp embedding results for spaces of smooth functions with power
  weights.
\newblock {\em Studia Math.}, 208(3):257--293, 2012.

\bibitem[NVW07]{NVW1}
J.M.A.M.~van Neerven, M.C. Veraar, and L.W. Weis.
\newblock Stochastic integration in {UMD} {B}anach spaces.
\newblock {\em Ann. Probab.}, 35(4):1438--1478, 2007.

\bibitem[NVW12]{MaximalLpregularity}
J.M.A.M.~van Neerven, M.C. Veraar, and L.W. Weis.
\newblock Stochastic maximal {$L^p$}-regularity.
\newblock {\em Ann. Probab.}, 40(2):788--812, 2012.

\bibitem[NVW15]{NVW13}
J.M.A.M.~van Neerven, M.C. Veraar, and L.W. Weis.
\newblock Stochastic integration in {B}anach spaces---a survey.
\newblock In {\em Stochastic analysis: a series of lectures}, volume~68 of {\em
  Progr. Probab.}, pages 297--332. Birkh\"{a}user/Springer, Basel, 2015.

\bibitem[Pie10]{P10_survey}
M.~Pierre.
\newblock Global existence in reaction-diffusion systems with control of mass:
  a survey.
\newblock {\em Milan J. Math.}, 78(2):417--455, 2010.

\bibitem[PS97]{PS97_blow_up}
M.~Pierre and D.~Schmitt.
\newblock Blowup in reaction-diffusion systems with dissipation of mass.
\newblock {\em SIAM J. Math. Anal.}, 28(2):259--269, 1997.

\bibitem[PS16]{pruss2016moving}
J.~Pr\"{u}ss and G.~Simonett.
\newblock {\em Moving interfaces and quasilinear parabolic evolution
  equations}, volume 105 of {\em Monographs in Mathematics}.
\newblock Birkh\"{a}user/Springer, 2016.

\bibitem[PSW18]{CriticalQuasilinear}
J.~Pr\"{u}ss, G.~Simonett, and M.~Wilke.
\newblock Critical spaces for quasilinear parabolic evolution equations and
  applications.
\newblock {\em J. Differential Equations}, 264(3):2028--2074, 2018.

\bibitem[Rot84]{R84_global}
F.~Rothe.
\newblock {\em Global solutions of reaction-diffusion systems}, volume 1072 of
  {\em Lecture Notes in Mathematics}.
\newblock Springer-Verlag, Berlin, 1984.

\bibitem[Saw18]{BevosBook}
Y.~Sawano.
\newblock {\em Theory of {B}esov spaces}, volume~56 of {\em Developments in
  Mathematics}.
\newblock Springer, Singapore, 2018.

\bibitem[She18]{S18_book}
Z.~Shen.
\newblock {\em Periodic homogenization of elliptic systems}, volume 269 of {\em
  Operator Theory: Advances and Applications}.
\newblock Birkh\"{a}user/Springer, Cham, 2018.
\newblock Advances in Partial Differential Equations (Basel).

\bibitem[STM91]{SH91_turbulence}
H.~Stapountzis, P.~Tzavellas, and T.~Moros.
\newblock Effects of turbulence on the mixing and chemical reaction for cross
  flow and coflowing jets.
\newblock In {\em Advances in Turbulence 3}, pages 300--311. Springer Berlin
  Heidelberg, 1991.

\bibitem[Tri95]{Tri95}
H.~Triebel.
\newblock {\em Interpolation theory, function spaces, differential operators}.
\newblock Johann Ambrosius Barth, Heidelberg, second edition, 1995.

\bibitem[Ver81]{V81_reg_by_noise}
A.J. Veretennikov.
\newblock On strong solutions and explicit formulas for solutions of stochastic
  integral equations.
\newblock {\em Mathematics of the USSR-Sbornik}, 39(3):387, 1981.

\bibitem[ZCB20]{ZCB20_fluid}
B.~Zhang, X.~Chang, and C.~Bai.
\newblock End-wall ignition of methane-air mixtures under the effects of
  {CO2}/{Ar}/{N2} fluidic jets.
\newblock {\em Fuel}, 270:117485, 2020.

\end{thebibliography}

\end{document}